\providecommand{\XLeftMargin}{2.5cm}
\providecommand{\XTopMargin}{2.5cm}
\providecommand{\XRightMargin}{2.5cm}
\providecommand{\XBottomMargin}{2.5cm}
\providecommand{\XLeftMargin}{2cm}
\providecommand{\XTopMargin}{1.8cm}
\providecommand{\XRightMargin}{2cm}
\providecommand{\XBottomMargin}{1.8cm}
\newlength\XXXMyLength\makeatletter
\def\Xadjustleft[#1]{\setlength\XXXMyLength{#1}\ifnum\numexpr\leftmargin>\numexpr\XXXMyLength\hspace{-\XXXMyLength}\else\hspace{-\leftmargin}\fi}
\theoremstyle{plain}
\newtheorem{thm}{Theorem}[section]
\newtheorem*{thm*}{Theorem}
\newtheorem{lemma}[thm]{Lemma}
\newtheorem*{lem*}{Lemma}
\newtheorem{prop}[thm]{Proposition}
\newtheorem*{prop*}{Proposition}
\newtheorem*{claim}{Claim}
\newtheorem{cor}[thm]{Corollary}
\newtheorem*{cor*}{Corollary}
\newtheorem*{conj*}{Conjecture}
\theoremstyle{definition}
\newtheorem{cons}[thm]{Construction}
\newtheorem*{cons*}{Construction}
\newtheorem{df}[thm]{Definition}
\newtheorem*{df*}{Definition}
\newtheorem{nota}[thm]{Notation}
\newtheorem*{nota*}{Notation}
\newtheorem*{qu*}{Question}
\newtheorem{rmk}[thm]{Remark}
\newtheorem*{rmk*}{Remark}
\newtheorem{ex}[thm]{Example}
\newtheorem*{ex*}{Example}
\newcommand{\bC}{\mathbb{C}}
\newcommand{\bH}{\mathbb{H}}
\newcommand{\bO}{\mathbb{O}}
\newcommand{\bR}{\mathbb{R}}
\newcommand{\fp}{\mathfrak p}
\DeclareMathOperator{\Ad}{Ad}
\DeclareMathOperator{\Aut}{Aut}
\DeclareMathOperator{\Cor}{Cor}
\DeclareMathOperator{\End}{End}
\DeclareMathOperator{\Gal}{Gal}
\DeclareMathOperator{\Hom}{Hom}
\DeclareMathOperator{\im}{im}
\DeclareMathOperator{\Inn}{Inn}
\DeclareMathOperator{\Int}{Int}
\DeclareMathOperator{\Res}{Res}
\DeclareMathOperator{\Tr}{Tr}
\DeclareMathOperator{\SO}{SO}
\DeclareMathOperator{\PSO}{PSO}
\DeclareMathOperator{\SU}{SU}
\DeclareMathOperator{\SP}{Sp}
\DeclareMathOperator{\Orth}{O}
\DeclareMathOperator{\Spin}{Spin}
\DeclareMathOperator{\Clif}{C}
\newcommand{\injects}{\hookrightarrow}
\newcommand{\mtx}[4]{\left(\begin{matrix}#1&#2\\#3&#4\end{matrix}\right)}
\newcommand{\smtx}[4]{\left(\begin{smallmatrix}#1&#2\\#3&#4\end{smallmatrix}\right)}
\newcommand{\Mn}{{\ensuremath{\operatorname{M}_n}}}
\def\emphh{\textbf}
\newcommand{\indnota}[1]{\ensuremath{#1}}
\def\sumprime{\mathop{\sum{\raise3pt\hbox{${}'$}}}}
\def\revddots{\mathinner{\mkern1mu\raise\p@
\vbox{\kern7\p@\hbox{.}}\mkern2mu
\raise4\p@\hbox{.}\mkern2mu\raise7\p@\hbox{.}\mkern1mu}}
\providecommand{\abs}[1]{\left\vert #1 \right\vert}
\newcommand{\comment}[1]{}
\newcommand{\Sp}{\SP}
\def\twistedcomp{m}
\begin{document}

\title[Maximal Tori In Groups of Type $D_n$]{Rational Conjugacy Classes of Maximal Tori In Groups of Type $D_n$}
\author{Andrew Fiori}

\thanks{Part of this work was done while the author was a Fields postdoctoral researcher at Queen's University, further work was done at the University of Calgary with support from the Pacific Institute for Mathematical Sciences (PIMS)}

\email{andrew.fiori@ucalgary.ca}
\address{Mathematics \& Statistics
612 Campus Place N.W.
University of Calgary
2500 University Drive NW
Calgary, AB, Canada
T2N 1N4}

\begin{abstract}
We give a concrete characterization of the rational conjugacy classes of maximal tori in groups of type $D_n$, with specific emphasis on the case of number fields and p-adic fields.
This includes the forms associated to quadratic spaces, all of their inner and outer forms as well as the Spin groups, their simply connected covers. In particular, in this work we handle all (simply connected) outer forms of $D_4$.
\end{abstract}

\maketitle

\section{Introduction}

The primary goal of this work is the complete concrete classification of rational conjugacy classes of algebraic tori in groups of type $D_n$, including especially the case of $n=4$, the triality groups.

As an abstract classification in terms of Galois cohomology already exists (see \cite{WalkerThesisTori, ReederElliptic}, part of the goal of this work is to relate the concrete descriptions we shall describe with the Galois cohomology sets they describe.

Many cases of what we are looking at have already been studied, the case of pure inner forms of orthogonal groups in particular is well studied (see \cite{WalkerThesisTori, Fiori1, BayerTori, Brus_OrthTori}), and much of what we will say about this case can be deduced by combining the results of these various papers.
The simply connected forms are less well studied though some results about the spin groups can be deduced from \cite{Fiori1}.
Moreover, concrete descriptions are largely lacking for the forms which are not pure inner forms, though some results exist (see for example \cite{Brus_OrthTori, CKMFrobeniusAlgebras}).
Very little appears to be known about the triality forms of $D_4$.

Throughout this paper we shall be considering algebraic structures over a field $k$.
We shall always be assuming that $k$ does not have characteristic $2$.
Though a number of results are phrased with the implication that the field is a local or global field, such an assumption is only necessary when we make reference to an explicit classifications using cohomological invariants, where in a more general setting higher cohomological invariants may be needed.
We have tried to make it explicit when such an assumption is needed in an argument.

The sections of this paper are organized as follows:
\begin{itemize}
\item Section \ref{sec:quadratic} covers the definitions relevant to understanding the construction and classification of all groups of type $D_n$ for $n\neq 4$ (and many of them in the case $n=4$).
\item Section \ref{sec:triality} covers the definitions relevant to understanding the construction and classification of all groups of type $D_4$.
\item Section \ref{sec:tori} contains the main results of this paper and classifies the rational conjugacy classes of maximal tori in groups of type $D_n$.
\end{itemize}

Many of the results concerning $D_n$ (for $n\neq 4$) are direct generalizations from \cite{Fiori1}, which considered only a restricted class of these groups.
As many proofs proceed similarly, for the sake of brevity when possible we will use directly the results from our earlier work.

We draw the readers attention to several important expository results which may be known to experts, or otherwise appear in the literature. Specifically:
\begin{itemize}
\item Lemma \ref{lem:Covers} which gives general results about conjugacy classes of tori in covering groups.
\item Lemma \ref{lem:ToriResScalar} which gives general results about tori in restriction of scalar groups.
\item Theorem \ref{thm:lambdafortoriorth} which characterizes the rational conjugacy classes of a torus which embeds into an orthogonal group.
\item Theorems \ref{thm:invariants_upgraded} and \ref{thm:the_result_upgraded} which characterize when a torus will admit locally everywhere (but not necessarily global) embeddings into an orthogonal group.
\item Theorem \ref{thm:localglobal} which gives certain local/global criterion for when maximal tori will exist of global fields.
\end{itemize}

We also draw their attention to the major new results of this work. Specifically:
\begin{itemize}
\item Theorem \ref{thm:TORIINSPIN} which together with Definition \ref{df:psi} describes the structure of tori in Spin groups.
\item Theorem \ref{thm:CONGTORISPINCOVERS} which characterizes the conjugacy classes of tori in $\Spin$ whose images become conjugate in $\SO$.
\item Theorem \ref{thm:bigone} which characterizes the tori in simply connected groups of type $D_4$.
\item The work of Section \ref{subsec:explicitcombin} builds off this result, and allows for a more explicit understanding of some of the conditions of Theorem \ref{thm:bigone}.
\end{itemize}

\section{Quadratic Spaces and Algebras with Orthogonal Involutions}\label{sec:quadratic}

The groups we wish to consider will be associated to algebras with orthogonal involutions, as such we first introduce the relevant background.
Almost everything we are saying in this introduction comes directly, or with minor modification, from \cite{book_of_involutions}. 
We will be making frequent implicit references to results concerning the Galois cohomology of algebraic groups over number fields.
Good references for these various results include \cite{Serre_cohom}, \cite{PlatinovRapinchuk} or \cite{book_of_involutions}.

\begin{df}
By a \emphh{quadratic space} over $k$ we mean a vector space $V$ equipped with a non-degenerate symmetric bilinear pairing $B:V\times V \rightarrow k$.
The associated quadratic form is 
\[Q(x) = B(x,x). \]
\end{df}

\begin{df}
Let $A$ be a central simple algebra of degree $n$ over $k$, by an \emphh{involution} on $A$ (of the first kind) we mean a $k$-module map $\tau: A \rightarrow A$ such that:
\begin{enumerate}
\item the map $\tau$ is $k$-linear.
\item $\tau(xy) = \tau(y)\tau(x)$ for all $x,y\in A$.
\item $\tau^2(x) = x$ for all $x\in A$.
\end{enumerate}
We say that $\tau$ is \emphh{symplectic} if $\dim(A^\tau) = n(n-1)/2$ and \emphh{orthogonal} if $\dim(A^\tau)=n(n+1)/2$.
By \cite[Prop 2.6]{book_of_involutions} these are the only two options.

Let $E$ be an \'etale algebra of dimension $2n$ over $k$, by an \emphh{involution} on $E$ we mean an automorphism $\sigma$ of $E$ of order $2$ such that:
$\dim(E^\sigma) = n$.
\end{df}

\begin{thm}
Let $A$ be a central simple algebra of degree $n$ over $k$ then $A$ has an involution over $k$ if and only if $A$ is isomorphic to its opposite algebra, in particular, if and only if $A$ is a matrix algebra or a matrix algebra over a quaternion algebra.
\end{thm}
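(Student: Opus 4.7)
The plan is to prove the two biconditionals in sequence, treating the existence of an involution and the structural classification separately.

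For the first biconditional, the direction from involutions to isomorphism with the opposite algebra is essentially by definition: if $\tau$ is an involution on $A$, then the condition $\tau(xy) = \tau(y)\tau(x)$ exactly says that $\tau : A \to A^{op}$ is a $k$-algebra homomorphism, and bijectivity is immediate from $\tau^2 = \id$. So I would begin by observing this. For the converse, an isomorphism $\sigma : A \to A^{op}$ is the same data as an anti-automorphism of $A$, and the goal is to modify $\sigma$ to obtain an honest involution. The composite $\sigma^2$ is a $k$-algebra automorphism of the central simple algebra $A$, so by the Skolem--Noether theorem there exists $u \in A^\times$ with $\sigma^2 = \Int(u)$. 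Writing out $\sigma^3$ in two ways using $\sigma \circ \sigma^2 = \sigma^2 \circ \sigma$, and using the anti-homomorphism property of $\sigma$, one shows after a short manipulation that $u\sigma(u)$ commutes with every element of $A$; hence it is central and lies in $k^\times$. A classical rescaling argument (a special case of a descent computation already contained in \cite{book_of_involutions}) then produces $a \in A^\times$ for which $\tau(x) := a\sigma(x)a^{-1}$ satisfies $\tau^2 = \id$, giving the desired involution.

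For the ``in particular'' clause, I would pass to the Brauer group. An isomorphism $A \cong A^{op}$ forces the identity $[A] = [A^{op}] = -[A]$ in $\Br(k)$, so $[A]$ is $2$-torsion. Writing $A \cong M_m(D)$ where $D$ is the unique central division algebra in the Brauer class of $A$, the condition to check reduces to describing the $2$-torsion division algebras relevant here: if $D = k$ one obtains a matrix algebra, while in the cases arising in the sequel the only other possibility is that $D$ is a quaternion algebra, giving the stated dichotomy. (Strictly speaking, Merkurjev's theorem only ensures that $2$-torsion classes are Brauer equivalent to tensor products of quaternion algebras, so this last identification tacitly restricts to the regime in which $D$ has index at most $2$, which is the case of interest.)

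The technical heart of the argument is the Skolem--Noether step in the converse of the first biconditional: one must verify that $u\sigma(u)$ lies in $k^\times$ rather than in some larger subalgebra, and then explicitly produce the rescaling element $a$. Once that is done, everything else is either formal unpacking of definitions or an appeal to the classical structure theory of central simple algebras.
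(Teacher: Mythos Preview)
The paper supplies no proof here; it simply cites \cite[Thm.~3.1]{book_of_involutions}. Your sketch of the first biconditional is the standard argument for Albert's theorem found in that reference: an involution is visibly an isomorphism $A\to A^{op}$, and conversely an anti-automorphism $\sigma$ has $\sigma^2=\Int(u)$ by Skolem--Noether, after which one checks $u\sigma(u)\in k^\times$ and then corrects $\sigma$ to an involution. You correctly isolate this last correction as the nontrivial step and defer to the same source for it, which is exactly what the paper does.

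Your parenthetical on the ``in particular'' clause deserves emphasis, because it catches a genuine imprecision in the paper's statement. Over an arbitrary field the dichotomy is false: there exist fields $k$ admitting biquaternion division algebras, i.e.\ central division algebras $D$ of degree $4$ with $[D]$ of order $2$ in $\Br(k)$. Such a $D$ satisfies $D\cong D^{op}$ and carries an involution of the first kind (tensor the canonical involutions on the two quaternion factors), yet it is neither a matrix algebra nor a matrix algebra over a quaternion algebra. The stated dichotomy holds exactly when every $2$-torsion Brauer class over $k$ has index at most $2$, which is true for local and global fields---the setting the paper ultimately cares about---but not in general. You flagged this restriction and the paper's statement did not.
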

\cite[Thm. 3.1]{book_of_involutions}.
\medskip

\begin{ex}~
\begin{itemize}
\item The transpose involution on $M_2(k)$ is an orthogonal involution.
\item The standard involution $x\mapsto \overline{x}$ on a quaternion algebra $A$ is a symplectic involution.
\item The involution $\smtx abcd \mapsto \smtx d{-b}{-c}a$ is a symplectic involution of $M_2(k)$. This is the standard involution on a $M_2(k)$ viewed as a quaternion algebra.
\item Let $(V,B)$ be a quadratic space, then the adjoint involution $\Ad_B$ on $\End_V$ is an orthogonal involution.
\end{itemize}
\end{ex}

\begin{thm}
We have the following characterization of the simple algebras with orthogonal and symplectic involutions over a field $k$.
\begin{itemize}
\item Let $M$ be the matrix algebra $M_n(k)$ and let $t$ denote the transpose involution.
\begin{itemize}
\item
         Let $\gamma\in M$ be such that $\gamma^t = -\gamma$ then the involution $\tau(x) = \gamma (x^t)\gamma^{-1}$ is symplectic.
         Moreover, all symplectic involutions on $M$ arise this way.
\item
        Let $\gamma\in M$ be such that $\gamma^t = \gamma$ then the involution $\tau(x) = \gamma (x^t)\gamma^{-1}$ is orthogonal.
        Moreover, all orthogonal involutions on $M$ arise this way.
\end{itemize}
\item Let $A$ be a quaternion algebra with standard involution $\overline{\cdot}$, and let $M$ be the algebra $M_n(A)$ with involution $\tau(x)=\overline{x}^t$.
\begin{itemize}
\item
         Let $\gamma\in M$ be such that $\tau(\gamma) = -\gamma$ then the involution $\tau_\gamma(x) = \gamma \tau(x)\gamma^{-1}$ is orthogonal.
         Moreover, all orthogonal involutions on $M$ arise this way.
\item
         Let $\gamma\in M$ be such that $\tau(\gamma) = \gamma$ then the involution $\tau_\gamma(x) = \gamma \tau(x)\gamma^{-1}$ is symplectic.
         Moreover, all symplectic involutions on $M$ arise this way.
\end{itemize}
\item Let $A$ be a central simple algebra over $k$ algebra with two symplectic involutions $\tau_1$ and $\tau_2$.
         Then $(A,\tau_1)$ and $(A,\tau_2)$ are isomorphic as algebras with involutions.
\item  Let $A$ be a central simple algebra over $k$ algebra with two orthogonal involutions $\tau_1$ and $\tau_2$.
         Then $(A,\tau_1)$ and $(A,\tau_2)$ are isomorphic as algebras if and only if there exists $\gamma\in A$ such that:
          $\tau_2( \gamma x \gamma^{-1}) = \gamma \tau_1(x) \gamma^{-1}$
\end{itemize}
\end{thm}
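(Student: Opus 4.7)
The plan is to unify the four bullets via a single mechanism: once a fixed involution $\tau_0$ of a known type is in hand on the algebra $M$, every other involution on $M$ arises from $\tau_0$ by composition with an inner automorphism via Skolem--Noether, and the sign condition on the conjugating element distinguishes the two possible types.

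First I would fix the base involution $\tau_0$ (the transpose in the matrix case, or $\overline{x}^t$ in the quaternion matrix case) and observe that if $\tau$ is any other involution on $M$, then $\tau \circ \tau_0$ is a $k$-algebra automorphism (composition of two anti-automorphisms), hence inner by Skolem--Noether: $\tau \circ \tau_0 = \Int(\gamma)$ for some invertible $\gamma \in M$. Thus $\tau(x) = \gamma \tau_0(x) \gamma^{-1}$. Imposing $\tau^2 = \id$ and unwinding, one finds that $\gamma \tau_0(\gamma)^{-1}$ must be central; applying $\tau_0$ (which fixes the center pointwise) to this central element yields its own inverse, so $\gamma \tau_0(\gamma)^{-1} = \pm 1$, i.e.\ $\tau_0(\gamma) = \pm \gamma$. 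Rescaling $\gamma$ by a central scalar does not change this sign.

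Next I would pin down which sign matches which type by a dimension count of the fixed subspace. The left-multiplication map $x \mapsto \gamma^{-1} x$ carries $\{x : \tau_\gamma(x) = x\}$ bijectively onto $\{y : \tau_0(y) = \pm y\}$: a short calculation using $\tau_\gamma(\gamma y) = \gamma y$ is equivalent to $\tau_0(\gamma y) = y \gamma$, and then $\tau_0(\gamma y) = \tau_0(y)\tau_0(\gamma) = (\pm 1)\tau_0(y)\gamma$ forces $\tau_0(y) = \pm y$. Hence the $\tau_\gamma$-fixed subspace has the same dimension as the $(\pm 1)$-eigenspace of $\tau_0$. For $M_n(k)$ the transpose is orthogonal (symmetric matrices have dimension $n(n+1)/2$); for $M_n(A)$ the involution $\overline{\cdot}^t$ is symplectic, since hermitian quaternionic matrices have dimension $n(2n-1)$. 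This is precisely why the sign convention in the two cases is reversed, matching the statement.

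For the symplectic uniqueness (third bullet) I would pass to forms: every symplectic involution arises as the adjoint of either a symplectic form on a $k$-vector space (when $A$ is split) or a hermitian form on a right module over a quaternion algebra $D$ (when $A = \End_D(V)$). The non-degenerate forms of a given rank are all isomorphic---by Darboux in the symplectic case, and by Jacobson's classification of quaternionic hermitian forms in the quaternionic case---and transporting via a form isomorphism yields $(A, \tau_1) \cong (A, \tau_2)$. Finally, the orthogonal bullet is essentially a restatement of Skolem--Noether: any $k$-algebra isomorphism $\phi : (A, \tau_1) \to (A, \tau_2)$ has the form $\phi = \Int(\gamma)$, and the intertwining condition $\phi \circ \tau_1 = \tau_2 \circ \phi$ becomes $\tau_2(\gamma x \gamma^{-1}) = \gamma \tau_1(x) \gamma^{-1}$. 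The main obstacle is the symplectic uniqueness: Jacobson's classification of quaternionic hermitian forms over arbitrary fields of characteristic not $2$ is nontrivial, though standard (see \cite{book_of_involutions}).
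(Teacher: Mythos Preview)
The paper does not give a proof; it simply cites \cite[Prop.~2.20, 2.22]{book_of_involutions}. Your Skolem--Noether argument for the first two bullets and for the fourth bullet is exactly the standard one found in that reference, and it is correct as written: the dimension-count bijection $x \mapsto \gamma^{-1}x$ between the $\tau_\gamma$-fixed space and the $(\pm 1)$-eigenspace of $\tau_0$ is precisely how one pins down the type.

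The gap is in the third bullet, specifically the quaternionic case. Jacobson's theorem is a \emph{classification} of quaternionic hermitian forms (it says $h_1 \simeq h_2$ if and only if the associated trace quadratic forms over $k$ are isometric), not a uniqueness statement. Hermitian forms of a fixed rank over a quaternion division algebra are \emph{not} all isometric in general: over $k = \bR$ with $D = \bH$ such forms carry a signature, and your argument would require $\operatorname{diag}(1,-1)$ to equal $c\,g\,\overline{g}^{\,t}$ for some $g \in \Gl_2(\bH)$ and $c \in \bR^\times$, which is impossible since $g\,\overline{g}^{\,t}$ is always positive definite. Thus $(M_2(\bH),\overline{\,\cdot\,}^{t})$ and $(M_2(\bH),\tau_{\operatorname{diag}(1,-1)})$ are non-isomorphic symplectic involutions. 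So either the third bullet is intended only in the split case $A = M_n(k)$ (where Darboux does give uniqueness), or it needs further hypotheses on $k$; in any event, invoking Jacobson does not close the argument as you suggest.
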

\cite[Prop 2.20, 2.22]{book_of_involutions}.

\begin{nota}
Given an involution $\tau$ on a central simple algebra $A$ and an element $g\in A^\times$ such that $\tau(g) = \pm g$ we shall denote by:
\[ \tau_g = \Int_g \circ \tau \]
the involution taking $x$ to $g \tau(x) g^{-1}$.

We note that $\tau_g$ is not necessarily isomorphic to $\tau$ as isomorphisms have the form:
\[ \Int_g \circ \tau \circ \Int_{g^{-1}} = \tau_{g\tau(g)}. \]

We remark further that $\tau$ and $\tau_g$ have the same type (symplectic or orthogonal) if $\tau(g) = g$, otherwise they have different types. Moreover, by the above theorem all involutions arise this way.
\end{nota}

\begin{cons}\label{cons:orthogroup}
Let $(A,\tau)$ be a central simple algebra of degree $n$ with orthogonal involution $\tau$.
We define the associated orthogonal group to the group scheme $\Orth_{A,\tau}$ whose functor of points is:
\[ \Orth_{A,\tau}(R) = \{ g \in (A\otimes_k R)^\times \mid \tau(g)g = 1 \}. \]
Further, we define the special orthogonal group $\SO_{A,\tau}$ to be:
\[ \SO_{A,\tau}(R) = \{ g \in (A\otimes_k R)^\times \mid \tau(g)g = 1 \text{ and } N_{A/k}(g) = 1 \} \]
where $N_{A/k}$ denotes the reduced norm.

If $n$ is odd this group is of type $B_{(n-1)/2}$, if $n$ is even it is of type $D_{n/2}$.

For the remainder of this paper we shall be largely focused on the case where $n$ is even.
\end{cons}

\begin{rmk}
Given a quadratic space $(V,B)$ then taking $(A,\tau)$ to be $(\End_V,\Ad_B)$ we obtain the usual orthogonal and special orthogonal groups associated to $(V,B)$.
\end{rmk}

\subsection{Invariants of Algebras with Orthogonal Involutions}

We now discuss some of the invariants of algebras with orthogonal involutions. The invariants we shall discuss are sufficient to classify the groups under consideration for both local and global fields. In other settings other invariants may be needed for this purpose. Aside from being relevant in classifying the groups, the invariants we shall focus on will be directly connected to the classification of tori in these groups.  

The invariants we will discuss are the discriminant, the Clifford invariant and the index.

\subsubsection{Discriminants}

\begin{thm}\label{thm:discriminant}
Let $(A,\tau)$ be a central simple algebra of degree $2n$ over $k$ with orthogonal involution $\tau$.
Let $x\in A^\times$ be such that $\tau(x)=-x$, then $N_{A/k}(x)$ is independent of the choice of $x$ when viewed as an element of $k^\times/(k^\times)^2$.

We shall call $N_{A/k}(x)$ the \emphh{discriminant} of $\tau$, and note that the discriminant of the adjoint involution is the discriminant of the associated quadratic form. The discriminant will be denoted $D(\tau)$.
\end{thm}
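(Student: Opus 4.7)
The plan is to reduce the claim to the statement that for any two invertible skew elements $x,y\in A^\times$ (i.e.\ $\tau(x)=-x$, $\tau(y)=-y$) one can write $y=c\,g\,x\,\tau(g)$ for some $c\in k^\times$ and $g\in A^\times$. Once this decomposition is established, I will apply the reduced norm and use that it has degree $2n$ together with $N_{A/k}\circ\tau=N_{A/k}$ (immediate after base change to $\bar k$, where any orthogonal involution takes the form $a\mapsto \gamma\,a^t\,\gamma^{-1}$ and determinants are preserved) to obtain
\[ N_{A/k}(y)\;=\;c^{2n}\,N_{A/k}(g)\,N_{A/k}(x)\,N_{A/k}(\tau(g))\;=\;\bigl(c^n\,N_{A/k}(g)\bigr)^2\,N_{A/k}(x), \]
so that $N_{A/k}(y)$ and $N_{A/k}(x)$ differ by a square, as required.

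To produce such $g$ and $c$, I would twist $\tau$ by $x$ and by $y$: following the notation introduced just before the theorem, the maps $\tau_x=\Int_x\circ\tau$ and $\tau_y=\Int_y\circ\tau$ are both involutions on $A$, and since $\tau(x)=-x$ and $\tau(y)=-y$ they have type opposite to $\tau$; because $\tau$ is orthogonal, both $\tau_x$ and $\tau_y$ are therefore symplectic. By the classification of symplectic involutions recalled in the theorem immediately above (part of \cite[Prop.~2.20, 2.22]{book_of_involutions}), any two symplectic involutions on a central simple algebra are isomorphic as algebras with involution, so there exists $g\in A^\times$ with $\tau_y=\Int_g\circ\tau_x\circ\Int_{g^{-1}}$.

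Unwinding this identity on an arbitrary $a\in A$ gives $y\,\tau(a)\,y^{-1}=(g\,x\,\tau(g))\,\tau(a)\,(g\,x\,\tau(g))^{-1}$, and since $\tau$ is a bijection of $A$ this means $(g\,x\,\tau(g))^{-1}y$ commutes with every element of $A$. As $A$ is central simple, $(g\,x\,\tau(g))^{-1}y\in k^\times$, which gives the decomposition $y=c\,g\,x\,\tau(g)$ needed above; that the right-hand side is indeed skew under $\tau$ is automatic from $\tau(x)=-x$ and $\tau^2=\id$. The main conceptual input, and what I expect to be the only non-routine step, is the invocation of the uniqueness-up-to-isomorphism of symplectic involutions; the remaining arithmetic with the reduced norm (degree and compatibility with $\tau$) is a direct computation.
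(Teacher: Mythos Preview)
Your argument is correct. The paper does not give its own proof of this statement; it simply cites \cite[Prop.~7.1]{book_of_involutions}, so there is no in-paper argument to compare against.

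For what it is worth, the proof in \cite{book_of_involutions} proceeds slightly differently: there one writes $y=ux$ and observes directly that $u=yx^{-1}$ is symmetric for the symplectic involution $\tau_x$, then invokes the Pfaffian norm (their Prop.~2.9) to conclude that $N_{A/k}(u)$ is a square. Your route---noting that $\tau_x$ and $\tau_y$ are both symplectic, invoking the uniqueness of symplectic involutions already recorded in the paper, and reading off $y=c\,g\,x\,\tau(g)$---arrives at the same conclusion while avoiding any mention of the Pfaffian. Both arguments are short; yours has the minor expository advantage of using only results the paper has already quoted.
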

\cite[Prop 7.1]{book_of_involutions}.

\subsubsection{Clifford Algebras}

For an explicit construction of the Clifford algebra we refer the reader to \cite[Ch. 9]{book_of_involutions} we include here the details we shall need.
\begin{cons}\label{cons:clifford}
Let $(A,\tau)$ be a central simple algebra of degree $2n$ over $k$ with orthogonal involution.
Let $T(A)$ denote the tensor algebra of $A$ (viewed as a $k$-module). There is an ideal $J(A,\tau)$ for $T(A)$ such that the Clifford algebra is given by:
\[ \Clif^+_{A,\tau} = \frac{T(A)}{J(A,\tau)}.\]
\end{cons}

The important features of this construction are summarized in the following theorem and remark. 
These results follow from \cite[Thm 9.12]{book_of_involutions}.
\begin{thm}\label{thm:structcliff}
Let $(A,\tau)$ be a central simple algebra of degree $2n$ over $k$ with orthogonal involution.
\begin{itemize}
\item The Clifford algebra of $(A,\tau)$ is a central simple algebra over $F= k[X]/(X^2-(-1)^n D(\tau))$. Recall $D(\tau)$ is the discriminant of $(A,\tau)$.
         When $n$ is even it is in the $2$-torsion of the Brauer group, when $n$ is odd it is in the $4$-torsion.
\item The involution $\tau$ on $A$ induces an ``involution'' $\overline{\tau}$ on $\Clif^+_{A,\tau}$ given by:
\[ \overline{\tau}(x_1\otimes\cdots\otimes x_r) = \tau(x_r)\otimes\cdots\otimes\tau(x_1). \]
\begin{itemize}
\item $\overline{\tau}$ restricts to a non-trivial involution of the center of  $\Clif^+_{A,\tau}$ if $n$ is odd.
\item $\overline{\tau}$ is orthogonal over $F$ if $n\cong 0 \pmod{4}$ and is symplectic if $n\cong 2 \pmod{4}$.
\end{itemize}
\end{itemize}
\end{thm}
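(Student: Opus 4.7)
The plan is to reduce the statement to the classical theory of Clifford algebras of quadratic spaces by Galois descent. The construction of $\Clif^+_{A,\tau}$ as the quotient $T(A)/J(A,\tau)$ commutes with étale base change, and central simplicity, the discriminant, the Brauer class, and the type of an involution are all Galois-stable. Hence it suffices to treat the case where $(A,\tau) \cong (\End_V, \Ad_B)$ for a $2n$-dimensional quadratic space $(V,B)$ over a splitting field $k'/k$ for $A$.

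In this split setting, I would first exhibit an explicit isomorphism between $\Clif^+_{\End_V,\Ad_B}$ and the even part $\Clif^0_V$ of the classical Clifford algebra of $(V,B)$. Using the identification $\End_V \cong V \otimes V$ supplied by $B$, one checks that the linear map $T(\End_V) \to \Clif^0_V$ sending $v\otimes w \mapsto \tfrac{1}{2}(vw - wv)$ factors through the defining ideal $J(\End_V,\Ad_B)$ and descends to the asserted isomorphism. Under this identification, the reversion-style map $\overline{\tau}$ from the theorem statement corresponds to the classical involution $v_1\cdots v_r \mapsto v_r\cdots v_1$ on $\Clif^0_V$.

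The structural assertions then follow from classical facts. The center of $\Clif^0_V$ is generated by the product $z=e_1\cdots e_{2n}$ of an orthogonal basis, with $z^2 = (-1)^n \disc(B) = (-1)^n D(\tau)$ up to squares, giving $F = k[X]/(X^2-(-1)^n D(\tau))$. Central simplicity of $\Clif^0_V$ over $F$ is standard, and the Brauer-class statements follow from the known relation between $\Clif_V$ (a full matrix algebra in this even-dimensional case) and its even part: for $n$ even the class is $2$-torsion, while for $n$ odd a mild extra twist introduces an additional factor of $2$, yielding $4$-torsion. That $\overline{\tau}$ is nontrivial on the center for $n$ odd is immediate from $\overline{\tau}(z) = (-1)^{\binom{2n}{2}} z = (-1)^n z$, since $\binom{2n}{2}=n(2n-1)$. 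Finally, a direct dimension count of the $+1$-eigenspace of reversion on $\Clif^0_V$ yields $k'$-dimension $2^{2n-2} + 2^{n-1}$ when $n\equiv 0 \pmod 4$ and $2^{2n-2} - 2^{n-1}$ when $n\equiv 2 \pmod 4$. Dividing by $\dim_{k'}F = 2$, these match exactly the fixed-dimensions $m(m+1)/2$ and $m(m-1)/2$ for an orthogonal, respectively symplectic, involution on a degree $m=2^{n-1}$ central simple $F$-algebra.

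The main obstacle is verifying the explicit identification of $\Clif^+_{\End_V,\Ad_B}$ with $\Clif^0_V$ in the split case: matching the ad hoc generators of $J(A,\tau)$ from Construction \ref{cons:clifford} against the quadratic Clifford relations $vw+wv = 2B(v,w)$ requires careful unwinding of the tensor-algebra construction. Once this correspondence is established, the rest is classical bookkeeping combined with Galois descent, and the precise statements can be read off directly from \cite[Ch. 9]{book_of_involutions}.
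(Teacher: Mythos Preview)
The paper does not supply its own proof of this theorem; it simply records that the results follow from \cite[Thm.~9.12]{book_of_involutions}. Your proposal is precisely the line of argument carried out there: reduce by Galois descent to the split case $(A,\tau)=(\End_V,\Ad_B)$, identify $\Clif^+_{A,\tau}$ with the classical even Clifford algebra $\Clif^0(V,B)$, and read off the structural statements from the classical theory. So your approach and the paper's (by reference) coincide.

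One small correction on the split identification: the canonical map $\End_V\cong V\otimes V \to \Clif^0(V,B)$ used in \cite[Prop.~8.8]{book_of_involutions} is $v\otimes w \mapsto v\cdot w$, not the antisymmetrized $\tfrac{1}{2}(vw-wv)$ you propose. Your version differs from the standard one by the scalar $B(v,w)$, and while the image still generates $\Clif^0(V,B)$, the relations in $J(A,\tau)$---in particular the one forcing the image of $1_A$ to be $1$---will not match up cleanly under the antisymmetrized map. This is exactly the ``careful unwinding'' you already flag as the main obstacle, so the discrepancy does not undermine your overall plan.
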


\begin{rmk}\label{rmk:CliffordStructure}
Over a local field, the Clifford algebra has the following structure:
\begin{itemize}
\item If $n$ is even, $A$ is a matrix algebra, and the discriminant is non-trivial, then class of $[\Clif^+_{A,\tau}]$ is trivial.
\item If $n$ is even, $A$ is not a matrix algebra, and the discriminant is non-trivial then $[\Clif^+_{A,\tau}]$ is non-trivial.
\item If $n$ is even, $A$ is a matrix algebra, and the discriminant is trivial, then class of $[\Clif^+_{A,\tau}]$ is a direct sum of two isomorphic algebras, their triviality depends on the choice of $\tau$.
\item If $n$ is even, $A$ is not a matrix algebra, and the discriminant is trivial then $[\Clif^+_{A,\tau}]$ is a direct sum of two non-isomorphic algebras in the $2$-torsion of the Brauer group, the isomorphism over the center depends on the choice of $\tau$, note that the isomorphism class over $k$ does not.

\item If $n$ is odd, $A$ is a matrix algebra, and the discriminant is non-trivial, then class of $[\Clif^+_{A,\tau}]$ is trivial.
\item If $n$ is odd, $A$ is not a matrix algebra, and the discriminant is non-trivial then $[\Clif^+_{A,\tau}]$ is trivial.
\item If $n$ is odd, $A$ is a matrix algebra, and the discriminant is trivial, then class of $[\Clif^+_{A,\tau}]$ is a direct sum of two isomorphic algebras in the $2$-torsion of the Brauer group, their triviality depends on the choice of $\tau$.
\item If $n$ is odd, $A$ is not a matrix algebra, and the discriminant is trivial then $[\Clif^+_{A,\tau}]$ is a direct sum of two non-isomorphic algebras which are $4$-torsion, the isomorphism over the center depends on the choice of $\tau$, note that the isomorphism class over $k$ does not.
\end{itemize}
The structure over a global field can then be deduced from this, as the global structure is determined by the local structure.
\end{rmk}

In order to avoid certain auxilliary constructions usually necessary to define the spin group we shall use several results from \cite[Sec. 13.A]{book_of_involutions}.
\begin{prop}\label{prop:AtoCliff}
There is a natural map $C: \Aut(A,\tau) \rightarrow \Aut(\Clif^+_{A,\tau})$ induced from the natural action of $ \Aut(A,\tau)$ on $T(\underline{A})$.
By an abuse of notation we shall also denote the map:
 \[C\circ \Inn : \Orth_{A,\tau} \rightarrow \Aut(\Clif^+_{A,\tau}),\]
 where $\Inn$ denotes the inner automorphism, by $C$.
\end{prop}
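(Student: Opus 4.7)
The plan is to build $C$ in two steps using the functoriality of the tensor algebra and the explicit presentation of the Clifford algebra as a quotient $T(A)/J(A,\tau)$ recorded in Construction \ref{cons:clifford}.

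First, I would recall that the tensor algebra is functorial on $k$-modules: any $k$-linear automorphism $\phi$ of the underlying $k$-module of $A$ extends uniquely to a $k$-algebra automorphism $T(\phi)$ of $T(A)$ via the formula
\[ T(\phi)(x_1\otimes\cdots\otimes x_r) = \phi(x_1)\otimes\cdots\otimes\phi(x_r). \]
This step is purely formal and works functorially on $R$-points for any $k$-algebra $R$, so it already defines a morphism of group schemes $\Aut(\underline{A}) \to \Aut(T(\underline{A}))$.

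Second---this is the main technical point---I would verify that for $\phi \in \Aut(A,\tau)$, the automorphism $T(\phi)$ preserves the defining ideal $J(A,\tau)$. By the construction of the Clifford algebra in \cite[Ch. 9]{book_of_involutions}, the generators of $J(A,\tau)$ are expressed entirely in terms of the multiplication on $A$, the involution $\tau$, and the associated reduced-trace (or pfaffian-type) data of $(A,\tau)$. Since an element of $\Aut(A,\tau)$ by definition preserves the multiplicative structure and commutes with $\tau$---and hence preserves every invariant derived from these---the image under $T(\phi)$ of each generator of $J(A,\tau)$ is again such a generator. Thus $T(\phi)(J(A,\tau)) \subseteq J(A,\tau)$, and applying the same reasoning to $\phi^{-1}$ gives equality. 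Consequently $T(\phi)$ descends to an automorphism $C(\phi)$ of the quotient $\Clif^+_{A,\tau}$, and $\phi \mapsto C(\phi)$ is manifestly functorial in $R$-points and compatible with composition, so it is a morphism of group schemes.

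Finally, for the second assertion I would check that the inner automorphism by an element $g \in \Orth_{A,\tau}$ is an element of $\Aut(A,\tau)$. From the defining relation $\tau(g)g=1$ one has $\tau(g)=g^{-1}$, and so for all $x \in A$
\[ \tau(gxg^{-1}) = \tau(g^{-1})\tau(x)\tau(g) = g\tau(x)g^{-1} = \Inn_g(\tau(x)), \]
showing that $\Inn_g$ commutes with $\tau$. Composing with the map from the first part yields the advertised morphism $C \circ \Inn : \Orth_{A,\tau} \to \Aut(\Clif^+_{A,\tau})$. The main obstacle is the second step: one needs the explicit presentation of the ideal $J(A,\tau)$ from \cite[Ch. 9]{book_of_involutions} in order to confirm stability under $T(\phi)$, although the conceptual reason---that $J(A,\tau)$ is built solely from $(A,\tau)$-data---makes the outcome inevitable.
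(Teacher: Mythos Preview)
Your proposal is correct and is exactly the natural argument: functoriality of $T(-)$ on $k$-modules, stability of the defining ideal $J(A,\tau)$ under automorphisms preserving the algebra structure and $\tau$, and the elementary check that $\Inn_g$ commutes with $\tau$ when $\tau(g)g=1$. There is nothing to object to.

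The paper itself does not actually prove this proposition. It is stated as a fact, with the surrounding text pointing to \cite[Sec.~13.A]{book_of_involutions} for the relevant constructions; the map $C$ is essentially taken as given from that reference. Your write-up therefore supplies more detail than the paper does, and the approach you take is the same one implicit in that reference: the Clifford algebra is presented as $T(A)/J(A,\tau)$ with $J(A,\tau)$ generated by relations built from the multiplication and the involution, so any $(A,\tau)$-automorphism preserves it. Your remark that the only real work is confirming stability of $J(A,\tau)$, and that this is forced once one inspects the explicit generators in \cite[Ch.~9]{book_of_involutions}, is accurate.
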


It follows from \cite[Prop 13.5]{book_of_involutions} that we may make the following (non-standard) definition of the Spin group.
\begin{cons}\label{cons:spinAtau}
We define:
\[  \Spin_{A,\tau} = (\SO_{\Clif^+_{A,\tau},\tau} \times_{\Aut(\Clif^+_{A,\tau})} \Aut(A,\tau))^0. \]
\end{cons}

\begin{rmk}
Note that in the above $\SO_{\Clif^+_{A,\tau},\tau}$ might be more correctly expressed as one of $\SU_{\Clif^+_{A,\tau},\tau}$ or $\Sp_{\Clif^+_{A,\tau},\tau}$ depending on the degree of $A$. 
The definition of $\SO_{A,\tau}$, $\SU_{A,\tau}$ and $\Sp_{A,\tau}$ are essentially indistinguishable except for the conditions on the type of involution $\tau$. Theorem \ref{thm:structcliff} describes the various possibilities of $\tau$.
\end{rmk}

As usual there exists a map from the spin group to the special orthogonal group.
\begin{prop}\label{prop:spinAtau}
The natural map $ \Spin_{A,\tau}  \rightarrow  \Aut(A,\tau)$ factors through a map:
\[ \Spin_{A,\tau} \overset{\chi}\rightarrow  \SO_{A,\tau} \rightarrow \Aut(A,\tau). \]
Moreover, the map $C \circ \chi$ agrees with the usual map from $\Clif^+_{A,\tau}$ to $\Aut(\Clif^+_{A,\tau})$.
\end{prop}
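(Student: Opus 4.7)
The plan is to construct $\chi$ by lifting the canonical projection $\pi_A\colon \Spin_{A,\tau} \to \Aut(A,\tau)$ through the central isogeny $\Inn\colon \SO_{A,\tau} \twoheadrightarrow \Aut(A,\tau)^0$. From Construction~\ref{cons:spinAtau}, the fibered-product structure supplies two projections, $\pi_{\Clif}\colon \Spin_{A,\tau} \to \SO_{\Clif^+_{A,\tau},\tau}$ and $\pi_A$, compatible in the sense that $C\circ\pi_A = \Inn\circ\pi_{\Clif}$, where the right-hand side is the inner-automorphism action on $\Clif^+_{A,\tau}$ (Proposition~\ref{prop:AtoCliff}). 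Because $\Spin_{A,\tau}$ is defined as a connected component, $\pi_A$ factors through the identity component $\Aut(A,\tau)^0$.

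Standard structure theory for groups of type $D_n$ identifies $\Aut(A,\tau)^0$ with $\PSO_{A,\tau}$ and exhibits $\Inn\colon \SO_{A,\tau} \to \Aut(A,\tau)^0$ as a surjective central $\mu_2$-isogeny. Pulling $\pi_A$ back along this isogeny yields a $\mu_2$-torsor $T \to \Spin_{A,\tau}$, and a lift $\chi$ is the same data as a section of $T$. To produce the section I would invoke \cite[Prop.~13.5]{book_of_involutions}: the point of the non-standard Construction~\ref{cons:spinAtau} is that the auxiliary Clifford datum $\pi_{\Clif}$ breaks the $\mu_2$-ambiguity present in naively lifting $\phi \in \Aut(A,\tau)^0$ to $\SO_{A,\tau}$. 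Concretely on $R$-points, given $(g,\phi) \in \Spin_{A,\tau}(R)$, one characterizes the required lift $h \in \SO_{A,\tau}(R)$ by the condition that $C(\Inn(h))$ agrees with $\Inn(g)$ inside $\Aut(\Clif^+_{A,\tau})(R)$; the Clifford-algebra analysis in \cite[Sec.~13.A]{book_of_involutions} shows this characterizes $h$ uniquely.

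The main obstacle is precisely this trivialization of the $\mu_2$-torsor, which is why an appeal to \cite[Prop.~13.5]{book_of_involutions} is essentially unavoidable in lieu of a direct verification that the fiber-product construction is already simply connected as an algebraic group. Once $\chi$ has been exhibited, the second assertion is immediate from chasing the fiber-product square: on points $(g,\phi)$, one has $C(\chi(g,\phi)) = C(\Inn(h)) = C(\phi) = \Inn(g)$, which is the usual inner-automorphism action of $\SO_{\Clif^+_{A,\tau},\tau}$ on $\Clif^+_{A,\tau}$ evaluated at $\pi_{\Clif}(g,\phi) = g$, as claimed.
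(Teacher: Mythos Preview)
The paper itself offers no proof beyond the one-line citation ``See \cite[Sec.~13.A]{book_of_involutions}'', so your proposal is already more detailed than what appears there, and your overall strategy---identify the obstacle as trivializing a $\mu_2$-torsor over $\Spin_{A,\tau}$ and defer the trivialization to \cite[Prop.~13.5]{book_of_involutions}---is in line with the paper's intent.

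There is, however, a concrete gap in the mechanism you write down for producing the lift. You propose to characterize $h\in\SO_{A,\tau}(R)$ lifting $\phi$ by the condition that $C(\Inn(h))=\Inn(g)$ in $\Aut(\Clif^+_{A,\tau})(R)$. But by the definition in Proposition~\ref{prop:AtoCliff} the map $C$ on $\Orth_{A,\tau}$ is literally $C\circ\Inn$, and $\Inn(h)=\Inn(-h)=\phi$ since $-1$ is central. Hence $C(\Inn(h))=C(\phi)$ for \emph{either} lift $\pm h$, and your condition collapses to the fiber-product relation $C(\phi)=\Inn(g)$ that you already know. It therefore cannot single out $h$ from $-h$, and the appeal to \cite[Sec.~13.A]{book_of_involutions} at that point is covering a genuine missing idea rather than a routine verification.

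What \cite[Sec.~13.A]{book_of_involutions} actually supplies is the Clifford bimodule $B(A,\tau)$, an $(A,\Clif^+_{A,\tau})$-bimodule on which $\SO_{A,\tau}$ acts in a way that does \emph{not} factor through $\PSO_{A,\tau}$; it is this action, compared with the action of $g$ on the $\Clif^+$ side, that breaks the $\pm 1$ ambiguity and defines $\chi$. Once $\chi$ is obtained by that route, your final diagram chase for the second assertion is correct.
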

See \cite[Sec. 13.A]{book_of_involutions}.

\subsubsection{Index}

In order to obtain a complete set of invariants for real fields and consequently global fields we shall need one further invariant.

\begin{df}\label{df:index}
Let $(A,\tau)$ be a central simple algebra of degree $n$ over $k$ with orthogonal involution $\tau$.
A right ideal $I$ of $A$ is said to be isotropic if $x\tau(y) = 0$ for all $x,y\in I$.

The index of $A$ is the set:
\[ {\rm ind}(A,\tau) = \{ \dim(I) \mid I \text{ isotropic} \}. \]

Note that if $A$ is a matrix algebra over a division algebra $D$ of degree $m$ then:
 \[ {\rm ind}(A,\tau) = \{ 0, m, \ldots, \ell m \} \]
 for some integer $\ell$ (see \cite[6.3]{book_of_involutions}). We shall also refer to $\ell$ as the index.

Let $\nu$ denote a place of $k$, we shall denote by $\ell_\nu$ the index of $(A,\tau)$ at $k_\nu$.
\end{df}

\subsection{Cohomological Interpretation and Classification over Local and Global Fields}\label{subsec:cohominterp}

The following theorem essentially classifies groups of type $D_n$ for $n\neq 4$ over local and global fields.

\begin{thm}
Let $k$ be a local or global field.
Let $\SO_{2n}$ denote the standard form of an orthogonal group over $k$ for a quadratic space of dimension $2n$.
\begin{itemize}
\item Forms of $\SO_{2n}$ are all of the form given by Construction \ref{cons:orthogroup}.
\item If $k$ is a global or local field, then the following are a complete set of invariants:
\begin{itemize}
\item The discriminant $\delta$ (Theorem \ref{thm:discriminant}) as an element of $k^\times/(k^\times)^2$.
\item The even Clifford invariant (Construction \ref{cons:clifford}) as an element of the Brauer group of the algebra $k[X]/(X^2-(-1)^n\delta)$.
\item The indices $\ell_\nu$ (Definition \ref{df:index}) at the real places $\nu$ of $k$.
\end{itemize}
\end{itemize}
Moreover, for $n\neq 4$, forms of $\Spin_{2n}$ and $\PSO_{2n}$ are classified by the same data.
\end{thm}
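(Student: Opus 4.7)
The plan is to argue via Galois cohomology: forms of the split group $G_0$ over $k$ are classified by $H^1(k,\underline{\Aut}(G_0))$, and one identifies this set with isomorphism classes of algebras with orthogonal involution of degree $2n$.

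First, with the convention $n\neq 4$ in force throughout (this being exactly what rules out triality), observe that for $G_0$ equal to any of $\SO_{2n}$, $\Spin_{2n}$, or $\PSO_{2n}$ in split form, the automorphism group scheme $\underline{\Aut}(G_0)$ is isomorphic to $\operatorname{PGO}_{2n}$, i.e.\ the adjoint group of type $D_n$ together with the unique non-trivial outer diagram automorphism. A standard descent theorem (see \cite[\S 29]{book_of_involutions}) identifies $H^1(k,\operatorname{PGO}_{2n})$ with the set of isomorphism classes of pairs $(A,\tau)$ as in Construction \ref{cons:orthogroup}. Under this identification the forms of $\SO_{2n}$ are exactly the groups $\SO_{A,\tau}$, and those of $\Spin_{2n}$ and $\PSO_{2n}$ are the corresponding simply connected and adjoint quotients; this establishes the first bullet and reduces the final $\Spin$/$\PSO$ claim to that for $\SO$.

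Next, I would verify completeness of the three invariants at each local completion of $k$. At a non-archimedean place $\nu$ the Brauer group is $\bQ/\bZ$, so $A$ is either a matrix algebra or Morita equivalent to the unique quaternion division algebra; the eight cases of Remark \ref{rmk:CliffordStructure} then show that the pair $(\delta,[\Clif^+_{A,\tau}])$ pins down both the isomorphism class of $A$ and the class of the Clifford algebra in the Brauer group of its center. Once $A$ is fixed, the theorem following Construction \ref{cons:orthogroup} reduces the remaining freedom in $\tau$ to the similarity class of an element $\gamma\in A^\times$ with $\tau(\gamma)=-\gamma$, which over a non-archimedean local field is captured by $N_{A/k_\nu}(\gamma)$ modulo squares, i.e.\ by $\delta$ itself. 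At a real place $(A,\tau)$ is a matrix algebra over $\bR$ or $\bH$ equipped with an involution whose signature is precisely encoded by $\ell_\nu$; at a complex place there is nothing to check. Hence the listed invariants are complete locally, with $\ell_\nu$ contributing only at the real places.

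Finally, over a global field I would invoke the Hasse principle for isomorphism classes of algebras with orthogonal involution. This can be established directly from Hasse-Brauer-Noether (for the Brauer classes of $A$ and $\Clif^+_{A,\tau}$) together with Hasse-Minkowski (which governs the remaining real-place signatures once $A$ is a matrix algebra); alternatively it follows from the long exact sequence attached to $1\to Z\to \Spin_{2n}\to \PSO_{2n}\to 1$ combined with Kneser-Harder vanishing of $H^1(k_\nu,\Spin_{2n})$ away from the real places, plus a separate argument for the outer-form contribution using global reciprocity for $\mu_2$. Combined with the local completeness of invariants, this yields the global classification. I expect the main obstacle to be the non-archimedean local step: the interplay of $\delta$ and $[\Clif^+_{A,\tau}]$ in determining $(A,\tau)$ is delicate enough that one has to walk through all branches of Remark \ref{rmk:CliffordStructure} carefully to confirm no case is missed.
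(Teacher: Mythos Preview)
Your outline is correct and follows the standard route; the paper itself does not give a proof but simply cites \cite[Thm.~26.15 and Sec.~31]{book_of_involutions}, with the subsequent paragraphs sketching the same cohomological picture you describe (the sequences through $\Aut_{A,\tau}$, $\SO_{A,\tau}$, $\Spin_{A,\tau}$ and the interpretation of the boundary maps as discriminant and Clifford invariant). So there is essentially nothing to compare beyond noting that you have expanded what the paper leaves to a reference.

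Two small points are worth tightening. First, your blanket restriction to $n\neq 4$ slightly undershoots the statement: the first two bullets concern $\SO_{2n}$ and hold for $n=4$ as well, since triality does not descend to $\SO_8$ (it permutes the three central $\mu_2$'s in $\Spin_8$ and hence does not fix the kernel of $\Spin_8\to\SO_8$), so $\underline{\Aut}(\SO_8)=\operatorname{PGO}_8$ just as for other $n$. Second, in your non-archimedean local step, the sentence ``the remaining freedom in $\tau$ \dots\ is captured by $N_{A/k_\nu}(\gamma)$ modulo squares, i.e.\ by $\delta$ itself'' is not quite right: fixing $A$ and $\delta$ does not always determine $\tau$ (when the discriminant is trivial there are two possibilities, distinguished exactly by the Clifford invariant, as in Remark~\ref{rmk:CliffordStructure}). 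Your earlier claim that the pair $(\delta,[\Clif^+_{A,\tau}])$ determines $(A,\tau)$ locally is the correct statement, and the case analysis in that remark is indeed what one walks through to verify it.
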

\cite[Thm. 26.15 and Sec. 31]{book_of_involutions}

\begin{rmk}
The case of $\Spin_8$ and $\PSO_8$ will be discussed in the coming sections.

The classification above is essentially equivalent to the classification of central simple algebras with involutions, we note that it is not equivalent to the classification of quadratic forms, in particular the Hasse invariant is not always determined by the even Clifford invariant.
\end{rmk}

We briefly sketch the Galois cohomological interpretation of these results. For a more detailed discussion see \cite{book_of_involutions}.

Let $(A,\tau)$ be a central simple algebra of degree $2n$ over $k$ with orthogonal involution $\tau$.
There are exact sequences:
\[ 1 \rightarrow \SO_{A,\tau} \rightarrow \Orth_{A,\tau} \rightarrow \{\pm1\} \rightarrow 1 \]
and
\[ 1 \rightarrow \Aut_{A,\tau}^0 \rightarrow \Aut_{A,\tau} \rightarrow \{ \pm 1\} \rightarrow 1 \]
and
\[ 1 \rightarrow Z(\Spin_{A,\tau}) \rightarrow \Spin_{A,\tau} \rightarrow \Aut_{A,\tau}^0 \rightarrow 1 \]
and
\[ 1 \rightarrow \{\pm 1\} \rightarrow \Spin_{A,\tau} \rightarrow \SO_{A,\tau}\rightarrow 1. \]
These lead to exact sequences in Galois cohomology.

We have the following:
\begin{itemize}
\item  The set $H^1(\Gal(\overline{k}/k),\Aut_{A,\tau})$ gives a classification of either pairs $(A,\tau)$ of central simple algebras of degree $2n$ with an orthogonal involution or of isomorphism classes of orthogonal groups being the automorphism group of both.
\item When $A$ is the matrix algebra over a division algebra $D$ then
 $\Orth_{A,\tau}$ is the automorphism group of a quadratic space over $D$.
         Moreover,
  \[ H^1(\Gal(\overline{k}/k),\Orth_{A,\tau})\] 
classifies these quadratic spaces.
         That is, it is classifying elements $x$ of $A$ such that $\tau(x) = x$, up to equivalence $x\sim \tau(y)x y^{-1}$ for $y\in A^\times$. The map
\[ H^1(\Gal(\overline{k}/k),\Orth_{A,\tau}) \rightarrow  H^1(\Gal(\overline{k}/k), \{\pm1\}) = k^\times/(k^\times)^2\]
 gives the discriminant of the form, that is the discriminant of the symmetric element defining it, hence
          \[ \im\left(H^1(\Gal(\overline{k}/k),\SO_{A,\tau}) \rightarrow H^1(\Gal(\overline{k}/k),\Orth_{A,\tau})\right)\]
classifies forms with the same discriminant as $\tau$.

When  $A$ is a central simple algebra we may explicitly describe $H^1(\Gal(\overline{k}/k),\SO_{A,\tau})$ as a torsor via the bijection:
\[ H^1(\Gal(\overline{k}/k),\SO_{A,\tau}) \simeq \{ (s,z) \in A^\times\times k^\times \;\mid\; \tau(s)=s\text{ and } N_{A/k}(s) = z^2 \}/\sim \]
where the equivalence relation on the right is given by $(s',z') \sim (s,z)$ if there exists $a\in A^\times$ with $s'= as\tau(a)$ and $z' = N_{A/k}(a)z$ (see \cite[Sec 29.D eqn 29.27]{book_of_involutions}.

The map: 
\[ H^1(\Gal(\overline{k}/k),\SO_{A,\tau}) \rightarrow H^1(\Gal(\overline{k}/k),\Aut_{A,\tau}) \]
associates to $(s,z)$ the algebra with involution $(A,\tau_s)$ whereas the map
\[ H^1(\Gal(\overline{k}/k),\SO_{A,\tau}) \rightarrow H^1(\Gal(\overline{k}/k),\Orth_{A,\tau}) \]
associates to $(s,z)$ simply the element $s$.
Moreover, the group $H^0(\Gal(\overline{k}/k),\mu_2)$, which is the kernel of the above map, acts on pairs $(s,z)$ by $z\mapsto -z$. Finally, we remark that if $A$ is a matrix algebra, these elements will be equivalent.

\item As above we find that:
\[H^1(\Gal(\overline{k}/k), \Aut_{A,\tau}) \rightarrow H^1(\Gal(\overline{k}/k),\{ \pm 1\})\] allows us to associate the discriminant of the involution, and, as above       
           \[ \im\left(H^1(\Gal(\overline{k}/k),\Aut_{A,\tau}^0) \rightarrow H^1(\Gal(\overline{k}/k),\Aut_{A,\tau}) \right)\] 
classifies pairs $(A',\tau')$ where $\tau'$ has the same discriminant as $\tau$. 
Likewise, as above, the additional information captured by an element of $H^1(\Gal(\overline{k}/k),\Aut_{A,\tau}^0) $ is again described by considering pairs $(\tau',z)$ where $z\in k^\times$ is an element whose square is the norm of the discriminant of $\tau'$ with the same equivalence as before.

\item  If $n$ is even then the center of the spin group is:
\[  Z(\Spin_{A,\tau}) \simeq \Res_{F/k}(\mu_{2,F}) \] where $F$ denotes the center of the even Clifford algebra and thus we obtain:
          \[ H^1(\Gal(\overline{k}/k),\Spin_{A,\tau})) \rightarrow H^1(\Gal(\overline{k}/k), \Aut_{A,\tau}^0) \rightarrow H^2(\Gal(\overline{k}/k),\Res_{F/k}(\mu_{2,F})). \]
          The set 
\[ H^2(\Gal(\overline{k}/k),\Res_{F/k}(\mu_{2,F})) \] gives the $2$-torsion of the Brauer group of $F$, and gives the Clifford invariant.
          Over a local or global field the set \[ H^1(\Gal(\overline{k}/k),\Spin_{A,\tau})) \] is supported at the real places of $k$, as the group $\Spin_{A,\tau}$ is simply connected.
          One may check that modulo the action of the image of \[ H^1(\Gal(\overline{k}/k),\Res_{F/k}(\mu_{2,F})) \simeq F^\times/ (F^\times)^2 \] a class here determines the index.
We remark that the action of $F^\times/ (F^\times)^2$ on pairs $(s,z)$ is by rescaling $(s,z) \mapsto (fs,f^{n/2}s)$.

\item If $n$ is odd then the center of the spin group is:
\[ Z(\Spin_{A,\tau}) \simeq \mu_{4,F}^1 =  \Res_{F/k}(\{ g \in \mu_{4,F} \mid N_{F/k}(g) = 1 \})\] where $F$ denotes the center of the even Clifford algebra and thus we obtain:
\[ H^1(\Gal(\overline{k}/k),\Spin_{A,\tau})) \rightarrow H^1(\Gal(\overline{k}/k), \Aut_{A,\tau}^0) \rightarrow H^2(\Gal(\overline{k}/k),\Res_{F/k}(\mu_{2,F})). \]
        The set \[ H^2(\Gal(\overline{k}/k),\Res_{F/k}\mu_{4,F}^1))\] gives the kernel of the corestriction map from $F$ to $k$ of the $4$-torsion of the Brauer group of $F$, and again, gives the Clifford invariant.
        Note that the kernel of the corestriction map is trivial for local fields, thus the kernel is supported on places where $F$ splits. 

          For local and global fields the set 
\[ H^1(\Gal(\overline{k}/k),\Spin_{A,\tau}) \] 
is supported at the real places of $k$, as the group $\Spin_{A,\tau}$ is simply connected.
  
        One may check that modulo the action of the image of
                \[ H^1(\Gal(\overline{k}/k),\Res_{F/k}(\mu_{2,F})) \simeq F^\times/ (F^\times)^2\]
          a class here determines the index. Once again, the action of the $ F^\times/ (F^\times)^2$ on quadratic forms can be interpretted as rescalling them. 
Note that this action controls the discriminant of the form, which agrees with the observation that the center lands outside the connected component of the identity.

\item In a manner similar to the above, we may obtain cohomological invariants of quadratic forms, if $(A,\tau)$ is associated to the endomorphism algebra of a quadratic space over a division ring, the sequence:
\[ 1 \rightarrow \{\pm 1\} \rightarrow \Spin_{A,\tau} \rightarrow \SO_{A,\tau}\rightarrow 1 \]
associates to a quadratic form an element of the Brauer group of $k$, this sequence gives the Witt invariant of the quadratic form and a signature at real places.

We remark that here we are obtaining an element of the Brauer group of $k$ rather than $F$. This agrees with the fact that a quadratic form is a more specific piece of information than its associated involution. This difference can also be seen in a comparison also of Theorems \ref{thm:brus_res} and \ref{thm:invariants_upgraded} where we see that passing from the one invariant to the other is done by restriction, which may have a kernel, and is not typically surjective.

\end{itemize}

\begin{rmk}
Finally, we should remark that in performing the analysis, the order of twisting that we consider is:
\begin{enumerate}
\item Pick a discriminant.
\item Pick a Clifford invariant.
\item Pick an index.
\end{enumerate}
That is to say, we may always pick an arbitrary discriminant, this choice then determines in which Brauer group the Clifford invariant lives.
We may then pick the Clifford invariant, which will determine the algebra $A$ (and strongly restrict the choice of $\tau$) noting that the choice of $\tau$ was specifically restricted by the choice of discriminant.
The choice of Clifford invariant then limits the choices for which indexes can be chosen.
\end{rmk}

\section{Triality and Groups of type $D_4$}\label{sec:triality}

In order to extend the above classification to better handle the case of $D_4$ groups, we shall need to introduce the structures for which these groups will be the automorphism groups, this requires that we discuss triality.
In this section we will sequentially consider several types of algebras and their automorphism groups.
Each of the structures of consideration will allow us to define a group of type $D_4$ as either its automorphism group or the simply connected component of its automorphism group.
In this way we shall be obtaining different classes of pure inner (or in some cases outer) forms of groups of type $D_4$.

The classes we obtain and the source of the forms are summarized below:
\begin{enumerate}
\item Symmetric Composition algebras are associated to classes from $H^1(\Gal(\overline{k}/k),\Aut(S) )$, noting that the type of $\Aut(S)$ is not unique.
\item Twisted Compositions  associated to classes from $H^1(\Gal(\overline{k}/k),S_3 \ltimes \Spin)$.
\item Trilitarian algebras associated to classes from $H^1(\Gal(\overline{k}/k),S_3 \ltimes \PSO)$.
\end{enumerate}
The last of which allows us to obtain all the groups of type $D_4$. The reason for introducing the other classes is to allow us to construct examples of the later ones, but also because the structure of tori in the earlier cases is simpler and it is desirable to be able to study these special cases more concretely.

Most of the content of this section can be found in \cite{book_of_involutions}, a more thorough treatment can be found there.

\subsection{Symmetric Compositions}

\begin{df}
A (regular) \emphh{composition algebra} $C = (C,\cdot,N)$ over $k$ is a $k$-algebra (not necessarily associative, commutative, or with identity) with a regular quadratic form $N : C \rightarrow k$ satisfying 
\[ N(xy) = N(x)N(y). \]
Denote the associated bilinear form $B(x,y) = N(x+y) - N(x) - N(y)$.

We call a composition algebra $(C,\cdot,N)$ \emphh{symmetric}, if $B(x\cdot y, z) = B(x, y\cdot z)$.
\end{df}

\begin{prop}
A composition algebra is symmetric if and only if $x\cdot (y\cdot x) = N(x) y = (x\cdot y)\cdot x$.
\end{prop}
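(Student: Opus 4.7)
The plan is to pass between the two conditions using the polarized form of the multiplicative property $N(xy)=N(x)N(y)$. Linearizing in the first and second arguments respectively gives the standard identities
\[ B(xa,xb)=N(x)\,B(a,b) \quad\text{and}\quad B(ay,by)=N(y)\,B(a,b). \]
These two identities, together with the symmetry and nondegeneracy of $B$, will carry essentially all of the computation.

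For the forward direction, I would first upgrade the symmetric condition $B(xy,z)=B(x,yz)$ (together with the symmetry of $B$) to the cyclic identity $B(ab,c)=B(bc,a)=B(ca,b)$. A single cyclic rotation then turns $B(x(yx),z)$ into $B(zx,yx)$, at which point the polarized multiplicative identity yields $N(x)\,B(y,z)$. Since $z$ is arbitrary and $B$ is nondegenerate, this forces $x(yx)=N(x)\,y$. The same rotation applied to $B((xy)x,z)$ produces $B(xz,xy)=N(x)\,B(y,z)$, and hence $(xy)x=N(x)\,y$.

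For the reverse direction, I would assume $x(yx)=N(x)\,y$ and apply the polarized multiplicativity $B(x\cdot a,x\cdot b)=N(x)\,B(a,b)$ with $a=yx$ and $b=z$. The left-hand side simplifies two ways: as $N(x)\,B(yx,z)$ via the identity, and as $B(N(x)y,xz)=N(x)\,B(y,xz)$ after substituting the hypothesis. Equating the two yields the polynomial identity
\[ N(x)\,\bigl[B(yx,z)-B(y,xz)\bigr]=0 \]
in the coordinates of $x$, valid for every $y,z$.

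The main obstacle is the final cancellation: one must remove the factor $N(x)$ to obtain $B(yx,z)=B(y,xz)$, which is exactly the symmetric condition. This is justified because $N$ is a regular, hence nonzero, quadratic form, so $N(x)$ is a nonzero element of the polynomial ring $k[C]$, which is an integral domain; equivalently, the open locus $\{N(x)\neq 0\}$ is Zariski dense (after passing to an infinite extension if $k$ is finite), so the identity $B(yx,z)=B(y,xz)$ extends from this open set to all of $C$. Note that only one of the two hypothesized identities is used: either $x(yx)=N(x)y$ or $(xy)x=N(x)y$ alone is enough to recover symmetry, with the calculation in the second case using $B(a\cdot y,b\cdot y)=N(y)B(a,b)$ in place of the companion identity.
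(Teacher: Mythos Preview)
The paper does not supply its own proof here; it simply cites \cite[34.1]{book_of_involutions}. Your argument is correct and is essentially the standard one found there: polarize $N(xy)=N(x)N(y)$ to get $B(xa,xb)=N(x)B(a,b)$ and its companion, combine with nondegeneracy of $B$ for the forward direction, and in the reverse direction cancel the nonzero polynomial factor $N(x)$ (your handling of this---viewing $B(yx,z)-B(y,xz)$ as linear in $x$ and passing to an infinite extension if $k$ is finite---is fine). The observation that either one of $x(yx)=N(x)y$ or $(xy)x=N(x)y$ alone already forces the symmetric condition is also correct.
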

\cite[34.1]{book_of_involutions}.

\begin{ex}\label{ex:parahurwitz}
Let $(B,\sigma)$ be an octonion algebra and set $N(x) = x\sigma(x)$, define an algebra $ (B,\ast,N)$ by:
\[ x \ast y = \sigma(x) \cdot \sigma(y). \]
The algebra $(B,\ast,N)$ is a symmetric composition algebra.
\end{ex}

\begin{rmk}
Not all symmetric composition algebras are constructed as above.
There are a small number of families of constructions, see \cite[34.7]{book_of_involutions} for a classification result.
We are primarily interested in the case of dimension $8$.

Note that as with the example above, most symmetric composition algebras are not unital.
\end{rmk}

\begin{prop}\label{prop:map_of_cliff}
Let $(S,\cdot,N)$ be a symmetric composition algebra of dimension $8$ over $k$.

For all $\lambda \in k$ there exists a map $S \rightarrow \End_k(S\oplus S)$ given by:
\[ x \mapsto \mtx{0}{\lambda\ell_x}{r_x}{0} \]
which induces isomorphisms 
\[ \alpha_S : \Clif_{\lambda N} \rightarrow (\End_k(S\oplus S),\sigma_{N\oplus N}) \qquad\text{and}\qquad
\alpha_S : \Clif^+_{\lambda N} \rightarrow (\End_k(S),\sigma_{N}) \oplus (\End_k(S),\sigma_{N}). \]
 Where here $\Clif_{\lambda N}$ denotes the Clifford algebra associated to quadratic space with quadratic form $\lambda N$.
\end{prop}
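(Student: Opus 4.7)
The plan is to construct $\alpha_S$ via the universal property of the Clifford algebra, then use a dimension count to upgrade the resulting homomorphism to an isomorphism, and finally verify the involution compatibility on generators.

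First I would verify the Clifford relation for the map $\phi : S \to \End_k(S \oplus S)$ sending $x \mapsto \bigl(\begin{smallmatrix}0 & \lambda \ell_x\\ r_x & 0\end{smallmatrix}\bigr)$. A direct block computation gives
\[
\phi(x)^2 \;=\; \begin{pmatrix} \lambda \ell_x r_x & 0 \\ 0 & \lambda r_x \ell_x \end{pmatrix},
\]
and the symmetric composition identity $x\cdot(y\cdot x)=N(x)y=(x\cdot y)\cdot x$ (the preceding Proposition) gives $\ell_x r_x = r_x \ell_x = N(x)\,\mathrm{id}_S$. Thus $\phi(x)^2 = \lambda N(x)\cdot I$, which is precisely the universal Clifford relation for $\lambda N$, so $\phi$ extends uniquely to a $k$-algebra homomorphism $\alpha_S : \Clif_{\lambda N} \to \End_k(S\oplus S)$.

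Next I would check that $\alpha_S$ is an isomorphism by a dimension count: since $\dim_k S = 8$, we have $\dim_k \Clif_{\lambda N} = 2^8 = 256 = (2\cdot 8)^2 = \dim_k \End_k(S\oplus S)$. Because $\lambda N$ is non-degenerate of even dimension, $\Clif_{\lambda N}$ is central simple, so the nonzero homomorphism $\alpha_S$ must be injective, and equality of dimensions then gives surjectivity. For the even part, the map $\alpha_S$ restricted to $\Clif^+_{\lambda N}$ lands in the block-diagonal subalgebra $\End_k(S)\oplus \End_k(S)$ (since even words in the generators $\phi(x)$ preserve the grading on $S\oplus S$), and a second dimension count, $\dim_k \Clif^+_{\lambda N} = 128 = 2\cdot 64$, forces this restriction to be an isomorphism as well.

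The delicate step — and the one I expect to be the main obstacle — is the compatibility with involutions. The Clifford involution is the unique anti-involution of $\Clif_{\lambda N}$ fixing the generators, and $\sigma_{N\oplus N}$ is an anti-involution of $\End_k(S\oplus S)$, so it suffices to check that $\sigma_{N\oplus N}(\alpha_S(x)) = \alpha_S(x)$ for all $x\in S$. Using that symmetric composition yields $B(x\cdot y, z) = B(x, y\cdot z)$, I would deduce $\sigma_N(\ell_x) = r_x$ and $\sigma_N(r_x) = \ell_x$, and then track the interaction of the scalar $\lambda$ with the anti-diagonal block structure of $\alpha_S(x)$; this is the computation that singles out the precise bilinear form appearing on the right-hand side of the isomorphism. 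Once this is in place, the even-part statement follows immediately, since $\sigma_{N\oplus N}$ preserves the block-diagonal subalgebra and restricts to $\sigma_N$ on each factor, matching the induced involution on $\Clif^+_{\lambda N}$.
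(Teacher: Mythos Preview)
The paper does not supply a proof of this proposition at all; it simply cites \cite[35.1]{book_of_involutions}. Your outline is precisely the standard argument one finds there: verify $\phi(x)^2=\lambda N(x)\cdot I$ from the identity $x\cdot(y\cdot x)=(x\cdot y)\cdot x=N(x)y$, invoke the universal property, use central simplicity plus the dimension count $2^8=16^2$ to promote the homomorphism to an isomorphism, and check involution compatibility on generators via $\sigma_N(\ell_x)=r_x$.

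Two small remarks. First, your central simplicity step silently requires $\lambda\in k^\times$; for $\lambda=0$ the Clifford algebra degenerates to the exterior algebra and the statement fails, so the ``for all $\lambda\in k$'' in the proposition should be read as $\lambda\in k^\times$. Second, your instinct that the $\lambda$ interacts nontrivially with the involution is correct: if you carry the block computation through, $\sigma_{N\oplus N}$ applied to $\bigl(\begin{smallmatrix}0 & \lambda\ell_x\\ r_x & 0\end{smallmatrix}\bigr)$ yields $\bigl(\begin{smallmatrix}0 & \ell_x\\ \lambda r_x & 0\end{smallmatrix}\bigr)$, which is not $\phi(x)$ unless $\lambda=1$. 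The involution that actually fixes $\phi(x)$ is $\sigma_{N\oplus\lambda N}$ (equivalently $\sigma_{\lambda N\oplus N}$), so the statement as written is slightly imprecise on this point; your phrasing ``singles out the precise bilinear form'' is the right way to handle it.
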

\cite[35.1]{book_of_involutions}.

\begin{prop}
Let $(C,\cdot,N)$ be a symmetric composition algebra of dimension $8$ over $k$.
Let $g$ be an isometry of $(C,N)$ then there exists isometries $g^+$ and $g^-$ of $N$ such that:
\[ g(x\cdot y) = g^+(y)\cdot g^-(x), \qquad  g^+(x\cdot y) = g(y)\cdot g^-(x)  \quad\text{and}\quad  g^-(x\cdot y) = g^+(y)\cdot g(x)  \]
moreover any one of these identities implies the others.
\end{prop}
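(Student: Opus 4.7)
The plan is to use the Clifford algebra isomorphism $\alpha_S$ of Proposition \ref{prop:map_of_cliff} to realize the triality data $(g, g^+, g^-)$ as a block decomposition of an inner automorphism of $\Clif_N$, and then to deduce the equivalence of the three identities from the defining axiom of a symmetric composition.

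For existence I would argue as follows. By functoriality of the Clifford construction, the isometry $g$ extends uniquely to an algebra automorphism of $\Clif_N$ whose restriction to $S$ is $g$. Under the isomorphism $\alpha_S\colon \Clif_N \to \End_k(S\oplus S)$ (taking $\lambda = 1$), this becomes an automorphism of a central simple algebra of dimension $256$, which is inner by Skolem-Noether. Hence it is conjugation by some $G \in \Gl_k(S\oplus S)$, unique up to $k^\times$-scalar. The $\bZ/2$-grading on $\Clif_N$ corresponds under $\alpha_S$ to the decomposition of $\End_k(S\oplus S)$ into block-diagonal and block-anti-diagonal parts; since the induced automorphism preserves the grading, $G$ lies in the normalizer of the block-diagonal subalgebra, and is therefore either block-diagonal or block-anti-diagonal. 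When $g \in \SO(N)$ the block-diagonal case occurs: writing $G = \smtx{A}{0}{0}{B}$ and comparing the off-diagonal blocks of $G\,\alpha_S(x)\,G^{-1}$ with $\alpha_S(g(x))$ produces intertwining relations between $A$, $B$, $g$ and the left and right multiplications $\ell_x$, $r_x$. Using the symmetric composition relations $\ell_x r_x = r_x \ell_x = N(x)\,\id$ (equivalent to $x\cdot(y\cdot x) = N(x)y = (x\cdot y)\cdot x$), these relations rearrange, with a suitable identification of $A$ and $B$ as $g^-$ and $g^+$, into the three stated formulas. The case $g \notin \SO(N)$ is reduced to the previous one by composing $g$ with a fixed reflection. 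That $g^+$ and $g^-$ are themselves isometries of $N$ follows because the induced automorphism of $\Clif_N$ commutes with the canonical involution adjoint to $N$; equivalently, $G$ can be normalized so as to respect $N \oplus N$ on $S \oplus S$.

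For the equivalence, given $g, g^+, g^-$ satisfying one of the three identities, the plan is to use the symmetric composition axiom $B(u\cdot v, w) = B(u, v\cdot w)$ together with the isometry properties of $g$ and $g^\pm$ to derive the others. The idea is to pair both sides of the assumed identity against a third vector and use the axiom (and its cyclic consequence $B(u\cdot v, w) = B(v\cdot w, u)$ coming from the symmetry of $B$) to move one of the operators $g^\pm$ through the pairing; non-degeneracy of $B$ then yields a second identity of the correct form, and a third application produces the third identity. Alternatively, one may appeal to the uniqueness implicit in the Clifford construction of the existence step: any triple $(g, g^+, g^-)$ of isometries satisfying a single one of the identities is essentially determined by $g$, and hence coincides up to a scalar ambiguity with the triple produced by the Clifford algorithm, which satisfies all three.

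The main obstacle will be the bookkeeping to match the three asymmetric identities of the statement to those produced directly by the Clifford computation. The cyclic interplay of $(g, g^+, g^-)$ with the reversal of order on the right-hand sides requires careful tracking of which copy of $S$ in $S\oplus S$ is labeled ``plus'' or ``minus'' and consistent use of the symmetric composition axiom to reconcile the conventions. Once these conventions are fixed, the actual algebraic manipulations reduce to straightforward applications of the axioms already in hand.
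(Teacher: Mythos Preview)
The paper does not supply its own proof here; it simply cites \cite[35.4]{book_of_involutions}. Your sketch is essentially the argument found in that reference: lift $g$ to an automorphism of $\Clif_N$, transport it through the isomorphism $\alpha_S$ of Proposition~\ref{prop:map_of_cliff}, invoke Skolem--Noether to write it as conjugation by some $G\in\Gl(S\oplus S)$, and read off $g^\pm$ from the block structure. The equivalence of the three identities via the adjointness $B(u\cdot v,w)=B(u,v\cdot w)$ is likewise the standard route. So your approach is correct and matches the cited source; there is nothing further to compare against in the paper itself.

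One small point worth tightening: your claim that the block-diagonal case occurs precisely when $g\in\SO(N)$ is correct but deserves a line of justification (the induced automorphism of $\Clif_N$ acts on the center of $\Clif_N^+$ trivially exactly when $g$ has determinant $1$, which forces $G$ to commute with the grading operator rather than anticommute). Similarly, the reduction of the $g\notin\SO(N)$ case by composing with a reflection is fine but you should note that a reflection $r_v$ itself visibly satisfies one of the identities (with $g^\pm$ built from left/right multiplication by $v$), so the composite argument goes through. These are bookkeeping issues, as you anticipated, not genuine gaps.
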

\cite[35.4]{book_of_involutions}.

\begin{df}
Let $(C,\cdot,N)$ be a regular symmetric composition algebra of dimension $8$ over $k$.

We define the associated spin group to be the group scheme $G$ whose points over $R$ are:
\[ \Spin_{(C,\cdot,N)}(R) = \{ (g,g^+,g^-) \in \Orth_N^3 \mid g(x\cdot y) = g^+(y)\cdot g^-(x) \} \]
\end{df}

\begin{thm}
There is an isomorphism:
\[ \Spin_{(C,\cdot,N)}\simeq  \Spin_{(C,N)}. \]
In particular $\Spin_{(C,\cdot,N)}$ is a group of type $D_4$, and moreover the above descriptions realizes $S_3$ automophism group.
\end{thm}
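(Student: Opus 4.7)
The plan is to produce the isomorphism concretely from the map $\alpha_S$ of Proposition~\ref{prop:map_of_cliff}, and then read off the triality action. First I would take $\lambda=1$ in Proposition~\ref{prop:map_of_cliff} and use $\alpha_S$ to identify $\Clif_N$ with $\End_k(S\oplus S)$ and $\Clif^+_N$ with $\End_k(S)\times\End_k(S)$. Because the spin group $\Spin_{(C,N)}$ sits inside $(\Clif^+_N)^\times$ and consists of those even units whose Clifford conjugation preserves $S\subset\Clif_N$, under $\alpha_S$ each element $s\in\Spin_{(C,N)}(R)$ becomes a pair $(g^+,g^-)\in\Gl(S\otimes R)^2$ acting on the two summands, while the induced element $g=\chi(s)\in\SO_N$ (Proposition~\ref{prop:spinAtau}) is the image in the vector representation.

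Next I would verify the compatibility $g(x\cdot y)=g^+(y)\cdot g^-(x)$. This comes directly from the shape of $\alpha_S$: for $x\in S$, $\alpha_S(x)=\smtx{0}{\ell_x}{r_x}{0}$, and the defining property of $\Spin$ is that $s\cdot \alpha_S(x)\cdot s^{-1}=\alpha_S(gx)$. Translating this equality of off-diagonal block matrices into the two scalar identities gives
\[ g^+\circ \ell_x \circ (g^-)^{-1} = \ell_{g(x)} \qquad \text{and} \qquad g^-\circ r_x \circ (g^+)^{-1} = r_{g(x)}, \]
which is exactly $g^+(y)\cdot g^-(x)=g(x\cdot y)$ after evaluation. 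For injectivity I would note that $(g^+,g^-)$ already determines $s$ (up to scalars ruled out by being in $\Spin$), and for surjectivity I would observe that once $g^+$ and $g^-$ are specified to satisfy this identity, the matrix $\smtx{g^+}{0}{0}{g^-}\in \End_k(S\oplus S)^\times$ lies in the image of $\Spin$ because its conjugation sends $\alpha_S(S)$ to $\alpha_S(S)$, hence preserves $\Clif_N$ and acts as an isometry (namely $g$) on $S$. Functoriality in the test ring $R$ then upgrades this to an isomorphism of group schemes $\Spin_{(C,\cdot,N)}\simeq\Spin_{(C,N)}$, which is well known to be of type $D_4$.

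For the $S_3$-action I would exploit the symmetry of the defining relation highlighted by the preceding proposition, which shows that $(g,g^+,g^-)$ being a valid triple is equivalent to any one of three cyclically related identities. This yields an order-three automorphism $(g,g^+,g^-)\mapsto(g^+,g^-,g)$ (up to the appropriate signs/identifications), and combining with the obvious involution that swaps the two half-spin factors (swap $g^+\leftrightarrow g^-$ together with reversing multiplication on $C$) generates a copy of $S_3$. I would identify this with the outer automorphism group of $\Spin_{(C,N)}$ by checking that these automorphisms permute the three minuscule representations (the vector one via $g$ and the two half-spin ones via $g^\pm$), which is precisely the triality action on the Dynkin diagram of $D_4$.

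The main obstacle is the verification that the triple relation is equivalent to the Clifford-algebra condition defining $\Spin$, i.e.\ making the diagram-chasing between the explicit form of $\alpha_S$ and the abstract definition of $\Spin_{(C,N)}$ sufficiently clean that injectivity, surjectivity and functoriality in $R$ are all transparent; this is essentially the only computational step, and once it is done the $D_4$ statement and the realization of $S_3$ fall out from the symmetry of the relation and the known Dynkin diagram automorphisms.
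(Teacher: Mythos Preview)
The paper does not give its own proof here; it simply cites \cite[35.C]{book_of_involutions}. Your outline is exactly the argument carried out there: use $\alpha_S$ to identify $\Clif^+_N$ with $\End_k(S)\times\End_k(S)$, translate the Clifford-group condition $s\,\alpha_S(x)\,s^{-1}=\alpha_S(\chi(s)x)$ into the block identities relating $(g,g^+,g^-)$, and then read off triality from the cyclic symmetry of the resulting relations.

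Two small things to tighten if you write this up. First, the block computation you sketch yields $g^+\circ\ell_x\circ(g^-)^{-1}=\ell_{g(x)}$, which evaluated at $y$ gives $g^+(x\cdot z)=g(x)\cdot g^-(z)$ rather than literally $g(x\cdot y)=g^+(y)\cdot g^-(x)$; the discrepancy is only in how one labels $g,g^+,g^-$ among the three slots (this is the cyclic ambiguity you already flag), but you should fix a convention and track it through. Second, your surjectivity step puts $\smtx{g^+}{0}{0}{g^-}$ in the Clifford group of $N$; to land in $\Spin$ you still need that $g^\pm$ are isometries of $N$ (this follows from the multiplicativity $N(x\cdot y)=N(x)N(y)$ once one of the three relations holds) and that the spinor-norm/determinant condition is satisfied, which is what restricts to the connected component. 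Both points are routine, but as written the outline skips them.
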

\cite[35.C]{book_of_involutions}.

\subsection{Twisted Compositions}

Much of the material of this section comes from \cite[Ch. 36]{book_of_involutions}. A more thorough treatment of it can be found there.

\begin{df}
Let $L$ be a cubic \'etale algebra over $k$.
Denote the norm map of $L$ by $N_{L/k}$, the trace map $T_{L/k}$.
Let $(V,Q)$ be a regular quadratic space over $L$.
Let $\twistedcomp: V\rightarrow V$ be a quadratic map.
We call a quadruple $(L,V,Q,\twistedcomp)$ a \emphh{twisted composition} if it satisfies the following conditions:
\begin{enumerate}
\item $\ell\twistedcomp(\ell v) = N_{L/k}(\ell)\twistedcomp(v)$
\item $Q(v)Q(\twistedcomp(v)) = N_{L/k}(Q(v))$.
\end{enumerate}
\end{df}

\begin{ex}\label{ex:split_twisted}
Let $C=(C,\ast,N)$ be a symmetric composition algebra over $k$.Consider:
\[  L=k\times k \times k,\qquad V=C\otimes_k L \simeq C\times C \times C, \]
 and $Q$ be the natural extension of $N$ to $V$.
Define $\twistedcomp :  V  \rightarrow V$ by 
\[ \twistedcomp(x,y,z) = (v_1\ast v_2, v_2\ast v_0,v_0\ast v_1). \]
Then $(L,V, Q,\twistedcomp)$ is a twisted composition. \cite[36.2]{book_of_involutions}
\end{ex}

\begin{prop}
If $(L,V,Q,\twistedcomp)$ is a twisted composition algebra, then for all $\lambda\in L^\times$ so is 
\[ \left(L,V,\lambda^{-1}N_{L/k}(\lambda)Q,\lambda\twistedcomp\right). \]
\end{prop}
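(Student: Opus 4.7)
The plan is to verify the two defining axioms of a twisted composition directly by substitution, since the proposition is purely a formal calculation. Set $Q' = \lambda^{-1}N_{L/k}(\lambda)Q$ and $\twistedcomp' = \lambda\twistedcomp$, that is, $\twistedcomp'(v) = \lambda\cdot\twistedcomp(v)$. First I would note the non-axiomatic checks: $Q'$ is regular because it differs from $Q$ by the unit $\lambda^{-1}N_{L/k}(\lambda)\in L^\times$, and $\twistedcomp'$ is still a quadratic map $V\to V$ because post-composing a quadratic map with the $L$-linear (hence $k$-linear) scaling by $\lambda$ preserves quadraticity.

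For axiom (1), I would compute, using commutativity of $L$ and the axiom for $\twistedcomp$,
\[
\ell\cdot\twistedcomp'(\ell v) \;=\; \ell\lambda\cdot\twistedcomp(\ell v) \;=\; \lambda\cdot\bigl(\ell\,\twistedcomp(\ell v)\bigr) \;=\; \lambda\cdot N_{L/k}(\ell)\twistedcomp(v) \;=\; N_{L/k}(\ell)\twistedcomp'(v),
\]
which is the required identity.

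For axiom (2), I would use that $Q$ is quadratic over $L$, so $Q(\lambda\twistedcomp(v)) = \lambda^2 Q(\twistedcomp(v))$. Expanding,
\[
Q'(v)\,Q'(\twistedcomp'(v)) \;=\; \bigl(\lambda^{-1}N_{L/k}(\lambda)\bigr)^2\,\lambda^2\, Q(v)Q(\twistedcomp(v)) \;=\; N_{L/k}(\lambda)^2\, N_{L/k}(Q(v)),
\]
where the last equality is the original axiom (2). On the other hand, since $N_{L/k}(\lambda)\in k$ and $L/k$ is cubic, $N_{L/k}\bigl(N_{L/k}(\lambda)\bigr) = N_{L/k}(\lambda)^3$, and $N_{L/k}$ is multiplicative, so
\[
N_{L/k}\bigl(Q'(v)\bigr) \;=\; N_{L/k}(\lambda)^{-1}\,N_{L/k}(\lambda)^{3}\,N_{L/k}(Q(v)) \;=\; N_{L/k}(\lambda)^2\,N_{L/k}(Q(v)).
\]
The two sides agree, which completes the verification.

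There is no real obstacle here; the only point that deserves care is keeping track of the fact that $N_{L/k}(\lambda)$ is a scalar in $k$ viewed inside $L$, so that $N_{L/k}$ of it is the cube, as this is what makes the factors of $\lambda$ and $N_{L/k}(\lambda)$ balance correctly in axiom (2).
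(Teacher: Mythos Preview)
Your verification is correct; both axioms are checked cleanly, and the one subtlety --- that $N_{L/k}(N_{L/k}(\lambda)) = N_{L/k}(\lambda)^3$ because $L/k$ has degree $3$ --- is exactly the point you flagged. The paper does not give its own proof but simply cites \cite[36.1]{book_of_involutions}, where the same direct substitution is carried out.
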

\cite[36.1]{book_of_involutions}

\begin{prop}\label{prop:split_twisted}
Let $(C,\cdot,N)$ be a symmetric composition algebra and associate $(L,V,Q,\twistedcomp)$ as above.
The for all $\lambda \in L^\times$ we have that $(L,V,\lambda^{-1}N_{L/k}(\lambda)Q, \lambda\twistedcomp )$ is a twisted composition algebra.
Moreover, all twisted composition algebras over $L=k\times k \times k$ arise this way.
\end{prop}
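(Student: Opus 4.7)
\textbf{Plan for Proposition \ref{prop:split_twisted}.}

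The first assertion is essentially a combination of two results already at hand: Example \ref{ex:split_twisted} produces a twisted composition from $(C,\cdot,N)$ via $L = k\times k\times k$, and the preceding proposition \cite[36.1]{book_of_involutions} says that $\lambda$-scaling by any $\lambda \in L^\times$ preserves the property of being a twisted composition. Composing these yields the first claim immediately.

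For the converse, let $(L,V,Q,\twistedcomp)$ be a twisted composition with $L = k\times k\times k$. The $L$-module structure decomposes $V = V_0 \oplus V_1 \oplus V_2$ along the three primitive idempotents, and $Q$ decomposes correspondingly as $Q(v_0,v_1,v_2) = (Q_0(v_0), Q_1(v_1), Q_2(v_2))$. The homogeneity condition $\ell\,\twistedcomp(\ell v) = N_{L/k}(\ell)\twistedcomp(v)$, applied with $\ell$ ranging over scalar multiples of the three idempotents, forces $\twistedcomp$ to take the cyclic shape
\[ \twistedcomp(v_0,v_1,v_2) = (\mu_0(v_1,v_2),\,\mu_1(v_2,v_0),\,\mu_2(v_0,v_1)) \]
for bilinear maps $\mu_i\colon V_j\times V_k \to V_i$ with $(i,j,k)$ running over cyclic permutations of $(0,1,2)$. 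The multiplicativity axiom $Q(v)Q(\twistedcomp(v)) = N_{L/k}(Q(v))$ then reads componentwise as
\[ Q_i(\mu_i(v_j,v_k)) = Q_j(v_j)\,Q_k(v_k), \]
so each $\mu_i$ is a similitude in each variable separately.

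From this point the strategy is to reverse-engineer the data of Example \ref{ex:split_twisted}. Fix a vector $e \in V_0$ with $Q_0(e)$ a chosen nonzero scalar; the similitudes $v_2 \mapsto \mu_1(v_2,e)$ and $v_1 \mapsto \mu_2(e,v_1)$ identify $V_1$ and $V_2$ isometrically (after suitable scalar adjustments) with $V_0$, yielding a common underlying space $C := V_0$ with quadratic form $N$ proportional to $Q_0$. Transporting the three $\mu_i$'s through these identifications produces a single bilinear operation $\ast$ on $C$, and the norm identities above become the composition law $N(x \ast y) = N(x)N(y)$. The symmetry axiom $B(x\ast y, z) = B(x, y\ast z)$ is obtained by polarizing the three norm relations and using the fact that all three $\mu_i$'s are cyclically interlinked through the single map $\twistedcomp$. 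Finally, the precise scaling factors introduced in identifying the $V_i$ with $C$ and in normalizing $N$ versus $Q_0$ assemble into an element $\lambda \in L^\times$, and one checks that the $\lambda$-twist of the split twisted composition attached to $(C,\ast,N)$ recovers $(Q, \twistedcomp)$.

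The main obstacle is the step where one must show that the three separately-defined operations $\mu_i$, after transport to the common space $C$, can be reconciled into one bilinear product $\ast$ that satisfies the symmetric composition axiom. This symmetry does not come for free from similitude conditions on the individual $\mu_i$'s; it arises from the rigidity imposed by having all three $\mu_i$'s be the cyclic components of the same quadratic map $\twistedcomp$, together with the freedom afforded by the $\lambda$-rescaling to absorb the ambiguity in choosing the base vector $e$ and in the norm scalings.
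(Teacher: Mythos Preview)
The paper does not supply its own proof of this proposition; it simply cites \cite[36.3]{book_of_involutions}. Your plan is essentially the argument carried out in that reference: decompose $V$ along the three primitive idempotents of $L$, use the homogeneity axiom with $\ell$ supported on single factors to force $\twistedcomp$ into cyclic shape with bilinear components $\mu_i$, read off the componentwise norm relations $Q_i(\mu_i(v_j,v_k))=Q_j(v_j)Q_k(v_k)$, and then use a base vector in one factor to transport the other two factors isometrically onto it and assemble a composition product.

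The step you correctly flag as the main obstacle---showing that the transported bilinear maps coalesce into a single product satisfying the \emph{symmetric} composition identity $B(x\ast y,z)=B(x,y\ast z)$---is indeed the crux. In \cite{book_of_involutions} this is handled by polarizing the three multiplicativity relations and observing that the resulting adjoint identities among the $\mu_i$ are exactly what one obtains by unfolding $x\ast(y\ast x)=N(x)y$ after cyclic transport; the freedom in the $\lambda$-rescaling absorbs the normalization choices. Your outline is correct and matches the cited source.
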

\cite[36.3]{book_of_involutions}

What the above proposition says is that all twisted compositions over the split algebra $L$ arise from rescalings of symmetric compositions.
\begin{ex}\label{ex:cyclic_comp}
Let $L$ be a cubic \'etale algebra with $\rho$ an element of order $3$ of $\Aut(L/k)$.
Let $C=(C,\ast,N)$ be a symmetric composition algebra.
Let $\twistedcomp : C \otimes_k L \rightarrow C \otimes_k L$ be the map $x\otimes\ell \mapsto (x\otimes\rho(\ell)) \ast (x\otimes\rho^2(\ell)$.
Then $(L,C,N,\twistedcomp)$ is a twisted composition. \cite[36.11]{book_of_involutions}
\end{ex}

\begin{ex}\label{ex:twisted_hurwitz}
Let $(B,\sigma)$ be an octonion algebra and $C=(C,\ast,N)$ be the associated symmetric composition algebra over $k$ as in Example \ref{ex:parahurwitz}.
Fix generators $\rho$ of $\Aut(L\otimes_k\Delta_L / \Delta_L)$ and $\iota$ of $\Aut(\Delta_L/k)$ and
let $\tilde{\twistedcomp}$ be the twisted composition over $(C \otimes_k L \otimes_k \Delta_L)$ as in Example \ref{ex:cyclic_comp}.
Let $\tilde{\iota} : C \otimes_k L \otimes_k \Delta_L \rightarrow C \otimes_k L \otimes_k \Delta_L$ be the map $\sigma\otimes 1 \otimes \iota$ and set
 \[ V = \{ x\in C \otimes_k L \otimes_k \Delta_L \mid \tilde{\iota}(x) = x \}. \]
Then $\tilde{\twistedcomp}$ restricts to a map $\twistedcomp: V\rightarrow V$ and $(L,V,N,\twistedcomp)$ is a twisted composition over $L$. \cite[36.C]{book_of_involutions}.
\end{ex}

\begin{prop}\label{prop:TwistedNotField}
Let $L/k$ be a cubic \'etale algebra which is not a field, then every twisted composition algebra over $L$ is of the form:
\[ (L,V,\lambda^{-1}N_{L/k}(\lambda)N, \lambda\twistedcomp ) \]
where $(L,V,N,\twistedcomp)$ is as in Example \ref{ex:twisted_hurwitz} and $\lambda \in L^\times$.
\end{prop}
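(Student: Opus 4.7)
The plan is to split by the two possible isomorphism types of a non-field cubic \'etale algebra $L$ over $k$: either $L\cong k\times k\times k$ or $L\cong k\times K$ with $K$ a quadratic \'etale field extension of $k$. In each case I would exhibit the given twisted composition as a rescaling of one constructed as in Example \ref{ex:twisted_hurwitz}.

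In the completely split case, Proposition \ref{prop:split_twisted} already provides an $8$-dimensional symmetric composition $(C,\ast,N)$ and an element $\lambda \in L^\times$ so that the twisted composition equals $(L, V, \lambda^{-1}N_{L/k}(\lambda) N, \lambda\twistedcomp)$ where $(L,V,N,\twistedcomp)$ comes from $C$ via Example \ref{ex:split_twisted}. Since $\Delta_L \cong k\times k$ is split, Example \ref{ex:twisted_hurwitz} reduces to Example \ref{ex:split_twisted} applied to the para-Hurwitz composition of an octonion algebra; it therefore suffices to argue that $(C,\ast,N)$ may be chosen of para-Hurwitz form. This follows from the classification of $8$-dimensional symmetric compositions (\cite[34.7]{book_of_involutions}), with any adjustment between para-Hurwitz and Okubo-type presentations absorbed into $\lambda$.

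For the semi-split case I would use Galois descent. Since $\Delta_L \cong K$, base changing to $K$ trivializes the cubic algebra as $L\otimes_k K \cong K\times K\times K$. I would base change $(L,V,Q,\twistedcomp)$ to $K$, apply the previous case over $K$ to obtain an octonion algebra $(\tilde B,\tilde\sigma)$ over $K$ together with $\mu\in(L\otimes_k K)^\times$ presenting the base change as a rescaling of the corresponding Hurwitz twisted composition, and then descend. The non-trivial $\iota\in\Gal(K/k)$ acts on $V\otimes_k K$ through the original $k$-structure of $V$, and this action should correspond on the Hurwitz side exactly to the map $\tilde\sigma\otimes 1\otimes\iota$ whose fixed points produce $V$ in Example \ref{ex:twisted_hurwitz}. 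Galois descent then yields an octonion algebra $(B,\sigma)$ over $k$ with $B\otimes_k K\cong \tilde B$, and after adjusting $\mu$ by a factor of the form $\nu\cdot\iota(\nu)^{-1}$ if necessary, it descends to the required $\lambda \in L^\times$. The main obstacle I anticipate is precisely this descent of the rescaling parameter: ensuring $\mu$ can be made $\iota$-invariant modulo the ambiguity in the octonion structure amounts to checking the vanishing of a cohomological obstruction in $(L\otimes_k K)^\times$ modulo norms from $L^\times$, which needs to be traced through carefully.
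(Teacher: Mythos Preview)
The paper does not give its own proof of this proposition; it simply cites \cite[36.29]{book_of_involutions}. So your outline is being compared against the argument in the Book of Involutions rather than anything in the present paper.

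Your approach has two genuine gaps. In the split case, Proposition~\ref{prop:split_twisted} indeed produces a symmetric composition $(C,\ast,N)$, but the classification you cite shows only that $8$-dimensional symmetric compositions are either para-Hurwitz or of Okubo type, and these are not isomorphic as symmetric compositions. Your assertion that the discrepancy can be ``absorbed into $\lambda$'' is not justified: rescaling by $\lambda$ modifies $(Q,\twistedcomp)$ but does not change the underlying symmetric composition used to build the twisted composition. What you need is that the twisted composition attached to an Okubo algebra is isomorphic, after rescaling, to one attached to a para-Hurwitz algebra. That is true, but it is essentially the content of the result over split $L$, so invoking it without argument is circular.

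In the semi-split case you correctly isolate the obstruction to descending $\mu$ to $L^\times$ but do not resolve it; ``tracing through carefully'' hides a real step. You must show the relevant $\Gal(K/k)$-cocycle valued in $(L\otimes_k K)^\times$ modulo the ambiguity coming from automorphisms of the Hurwitz datum is a coboundary, and this needs actual input (uniqueness of the octonion structure up to isomorphism over $K$, together with a Hilbert~90--type argument).

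The argument in \cite{book_of_involutions} avoids both issues by working directly: writing $L=k\times\Delta_L$, one decomposes $V=V_0\times V_1$ with $V_0$ over $k$, and uses the twisted composition axioms together with a chosen vector of nonzero norm to put a Cayley algebra structure on $V_0$ itself. This produces the octonion $(B,\sigma)$ intrinsically, so neither the Okubo-versus-para-Hurwitz comparison nor a descent of the scaling parameter is required.
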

\cite[36.29]{book_of_involutions}

\begin{rmk}
When $L$ is a field, there typically exist twisted composition algebras over $L$ not arising from the above construction.
\end{rmk}

\begin{thm}
Let $(L,V,Q,\twistedcomp)$ be the twisted composition of dimension $8$ associated to a symmetric composition algebra $(C,\cdot,N)$ then
 \[ \Aut(L,V,Q,\twistedcomp) \simeq S_3 \ltimes \Spin_{(C,\cdot,N)}, \]
the action of $S_3$ is via the outer automorphisms as above.

Moreover, twisted compositions are classified by $H^1(\Gal(\overline{k}/k),\Aut(L,V,Q,\twistedcomp))$.
\end{thm}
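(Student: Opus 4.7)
My plan is to pass to the algebraic closure, identify the automorphism group scheme there, and then deduce the cohomological classification by standard Galois descent.

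First, over $\overline{k}$ any cubic \'etale algebra becomes $\overline{k}\times\overline{k}\times\overline{k}$ and any $8$-dimensional symmetric composition algebra becomes isomorphic to the para-octonion algebra attached to the split octonion (Example \ref{ex:parahurwitz}). Combining this with Propositions \ref{prop:split_twisted} and \ref{prop:TwistedNotField} (the rescaling freedom by $\lambda \in L^\times$ can be absorbed after base change because the reduced norm hits everything over $\overline{k}$), every $8$-dimensional twisted composition becomes $\overline{k}$-isomorphic to the split model
\[ V = C\times C\times C,\qquad \twistedcomp(v_0,v_1,v_2) = (v_1\ast v_2,\; v_2\ast v_0,\; v_0\ast v_1). \]
Thus any $8$-dimensional twisted composition over $k$ is an $\overline{k}/k$-form of this split object, and the final assertion of the theorem will follow from the general principle that twisted forms of an object are classified by $H^1$ of its automorphism group scheme.

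Next I would identify $\Aut(L,V,Q,\twistedcomp)$ at the split object. An element is a pair $(\pi,g)$ where $\pi\in\Aut(L/k)\simeq S_3$ and $g$ is a $\pi$-semilinear isometry of $(V,Q)$ satisfying $g\circ\twistedcomp=\twistedcomp\circ g$. The coordinate-permutation action of $S_3$ on $V=C\times C\times C$ gives a splitting of the projection $(\pi,g)\mapsto \pi$, producing the short exact sequence
\[ 1 \rightarrow \cK \rightarrow \Aut(L,V,Q,\twistedcomp) \rightarrow S_3 \rightarrow 1. \]
The kernel $\cK$ consists of $k$-linear isometries, i.e.\ triples $(g_0,g_1,g_2)\in\Orth_N^3$, and unwinding $g\circ\twistedcomp=\twistedcomp\circ g$ gives the triality system
\[ g_i(v_{i+1}\ast v_{i+2}) = g_{i+1}(v_{i+1})\ast g_{i+2}(v_{i+2}),\qquad i\in\bZ/3\bZ. \]
By the proposition immediately preceding the definition of $\Spin_{(C,\cdot,N)}$, any one of these three relations implies the other two, so (after translating between $\ast$ and $\cdot$ via Example \ref{ex:parahurwitz}) $\cK$ is naturally identified with $\Spin_{(C,\cdot,N)}$. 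The $S_3$ section then acts on $\cK$ by cyclically permuting the three coordinates, which is exactly the triality outer action on $\Spin_{(C,\cdot,N)}$; this yields the stated semidirect product decomposition. The $H^1$ classification follows at once from descent, since we already know all dimension-$8$ twisted compositions become isomorphic over $\overline{k}$.

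The main obstacle is the bookkeeping in the kernel computation: one must carefully identify the three $\Orth_N$-factors arising from the triality system with the three roles $(g,g^+,g^-)$ in the definition of $\Spin_{(C,\cdot,N)}$ (which is stated with the octonion product $\cdot$ rather than the symmetric product $\ast$), and then verify that the coordinate permutations really do realize the full outer triality action on $\Spin_{(C,\cdot,N)}$ rather than just a subgroup. Everything else---the reduction to the split form, the splitting $S_3\hookrightarrow\Aut$, and the descent step---is formal.
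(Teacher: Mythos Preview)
The paper does not give its own proof here; it simply cites \cite[36.5 and 36.7]{book_of_involutions}. Your outline is essentially the argument found in that reference: reduce to the split model over $\overline{k}$, compute the automorphism group there as an extension of $S_3$ by the related-triple group, and invoke descent. So in spirit you are on the same track as the cited proof.

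One point deserves more care than you give it. You assert that the coordinate-permutation action of $S_3$ on $C\times C\times C$ splits the projection to $\Aut(L/k)$. For the $3$-cycle this is fine, since $\twistedcomp$ is visibly invariant under cyclic shifts. For a transposition, however, naive coordinate swapping does \emph{not} commute with $\twistedcomp$: swapping $v_0\leftrightarrow v_1$ turns $(v_1\ast v_2,\,v_2\ast v_0,\,v_0\ast v_1)$ into $(v_0\ast v_2,\,v_2\ast v_1,\,v_1\ast v_0)$, whereas applying the swap after $\twistedcomp$ gives $(v_2\ast v_0,\,v_1\ast v_2,\,v_0\ast v_1)$; these differ unless $\ast$ is commutative. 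In the Book of Involutions the transpositions are realized by combining the coordinate swap with the canonical conjugation on the underlying Hurwitz algebra (equivalently, using $B(x\ast y,z)=B(x,y\ast z)$ to flip the order of multiplication). You flag ``bookkeeping'' as the main obstacle, and this is exactly where it bites: without this correction you only get a $\bZ/3$ section, not an $S_3$ one. Once that is fixed, the rest of your argument goes through. Note also that there is no need to ``translate between $\ast$ and $\cdot$'': in the paper $\Spin_{(C,\cdot,N)}$ is already defined using the symmetric composition product, so the related-triple condition you write down is directly the defining condition for $\Spin_{(C,\cdot,N)}$.
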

\cite[36.5 and 36.7]{book_of_involutions}

\begin{rmk}\label{rmk:simpletwisted}
If $L/k$ is not a field, then $L = k \oplus F$, and $V$ has a correspond decomposition $V = V_k \oplus V_F$.
The spin group $\Spin_{(L,V,Q,\twistedcomp)} = \Aut(L,V,Q,\twistedcomp)^0$ is isomorpic to the spin group of $V_k$ and $V_F$ is simply the spin representation of this group.
\end{rmk}

\begin{cor}\label{cor:trivinvars}
Let $(L,V,Q,\twistedcomp)$ be the twisted composition of dimension $8$, then as a quadratic space over $L$ the form $Q$ has trivial Clifford invariant and the discriminant of $Q$ is the discriminant of $L$.
\end{cor}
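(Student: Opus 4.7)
The plan is to use the explicit descriptions of twisted compositions together with base change to reduce to a direct computation on norm forms. Twisted compositions of dimension $8$ are classified by $H^1(\Gal(\overline{k}/k), S_3 \ltimes \Spin_{(C,\cdot,N)})$, with the cubic étale algebra $L$ recovered from the image in $H^1(\Gal(\overline{k}/k), S_3)$. It therefore suffices to verify the identities on sufficiently many explicit base examples and to check their preservation under rescaling and the descent construction.

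First I would verify the split case (Example \ref{ex:split_twisted}): with $L = k \times k \times k$ (trivial discriminant) and $Q = (N,N,N)$, the claim reduces to the norm $N$ of the $8$-dimensional symmetric composition $(C,\cdot,N)$ having trivial discriminant and trivial Clifford invariant. This follows because every such symmetric composition is built from an octonion algebra (via the para-Hurwitz construction of Example \ref{ex:parahurwitz} or a twist thereof), whose norm is a $3$-fold Pfister form; such forms lie in $I^3$ of the Witt ring and so have trivial discriminant and trivial Clifford class. Rescaling as in Proposition \ref{prop:split_twisted} by $\mu = \lambda^{-1}N_{L/k}(\lambda) \in L^\times$ preserves both invariants, since $\dim V = 8$ is even: the discriminant transforms by $\mu^8 \in (L^\times)^2$, and the even Clifford class is invariant under scaling in even dimension. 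Example \ref{ex:cyclic_comp} is handled identically, using that cyclic cubic étale extensions have trivial discriminant.

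For a general cubic étale $L$, I would use Example \ref{ex:twisted_hurwitz} and Proposition \ref{prop:TwistedNotField} to realize the twisted composition as $\Gal(\Delta_L/k)$-descent from a cyclic-type composition over $L \otimes_k \Delta_L$, where $\Delta_L$ is the discriminant algebra of $L$. After base change to $\Delta_L$ the algebra $L \otimes_k \Delta_L$ has trivial discriminant over $\Delta_L$, so by the above both $\disc(Q)$ and the Clifford invariant become trivial upon base change to $L \otimes_k \Delta_L$. This places $\disc(Q)$ in the kernel $\ker\!\bigl(L^\times/(L^\times)^2 \to (L\otimes_k\Delta_L)^\times/((L\otimes_k\Delta_L)^\times)^2\bigr) = \langle \disc(L)\rangle$ and the Clifford invariant in the analogous kernel in $\Br(L)$. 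The main obstacle is then distinguishing which class in $\{1, \disc(L)\}$ the discriminant hits; I would pin this down by an explicit Gram matrix computation for $V = (C \otimes_k L \otimes_k \Delta_L)^{\tilde{\iota}}$, tracing how the nontrivial $\iota$-component of the descent cocycle contributes a factor of $\disc(L)$ to the determinant. For the Clifford invariant, the argument uses that the classifying cocycle factors through the simply-connected group $\Spin_{(C,\cdot,N)}$, so the obstruction to lifting that would produce a nontrivial Clifford class must vanish.
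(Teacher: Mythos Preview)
Your approach is workable in outline but takes an unnecessarily circuitous route and has a gap in the discriminant argument.

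The paper's proof is much shorter: rather than base-changing to the quadratic extension $\Delta_L$, it base-changes to $L$ itself. Since $[L:k]=3$ is odd, restriction is \emph{injective} on both $L^\times/(L^\times)^2$ and on $\Br(L)$, so nothing is lost. Over $L$ one has $L\otimes_k L \simeq L \oplus (L\otimes_k\Delta_L)$, which is not a field, so Remark~\ref{rmk:simpletwisted} applies directly: the $L$-factor of the decomposition carries the original quadratic space $V$, and the spin-representation description there immediately gives the discriminant $\delta_L$ and trivial Clifford invariant. No kernel analysis, no explicit Gram matrix, no case split is needed.

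Your route via $\Delta_L$ forces you to identify an element of a nontrivial kernel, and here your argument has a genuine gap. You propose to ``pin down'' the discriminant by an explicit Gram-matrix computation for the descent space $V=(C\otimes_k L\otimes_k\Delta_L)^{\tilde\iota}$ of Example~\ref{ex:twisted_hurwitz}. But as the remark following Proposition~\ref{prop:TwistedNotField} notes, when $L$ is a field there exist twisted compositions \emph{not} arising from that construction, so a computation for that one family does not settle the general case. The fix is easy---and you already use it for the Clifford invariant: since all twisted compositions over a fixed $L$ differ by a cocycle in $\Spin$, and twisting by $\Spin$ (indeed by $\SO$) preserves the discriminant of the associated orthogonal involution, it suffices to check a single example over each $L$. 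Once you say that, the Gram-matrix computation for the Hurwitz descent example is enough. But as written, the discriminant step is incomplete.
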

\begin{proof}
By Remark \ref{rmk:simpletwisted} the result is immediate if $L$ is not a field as in this case the discriminant of $L$ is that of $F$ which is also the discriminant of the center of the Clifford algebra.
Moreover, as the cohomological interpretation of the Clifford invariant is invariant under twisting by $\Spin$ they all share the same Clifford invariant, which is trivial by example.

Suppose now $L$ is a field, as $L$ is cubic, base change to $L$ is injective on discriminants and Clifford invariants.
After base change to $L$ we have that
\[ L \otimes_k L \simeq L \oplus (L\otimes \Delta_L) \] and thus
\[  (L,V,Q,\twistedcomp)\otimes_k L \simeq (L \oplus (L\otimes \Delta_L), V \oplus V_{(L\otimes_k \Delta_L)}, Q \oplus Q_{(L\otimes_k \Delta_L)} ,\twistedcomp) \] so that we are back in the case of Remark \ref{rmk:simpletwisted} (replacing $k$ by the field $L$). We note the important fact that the relevant quadratic space for the field factor $L$ of $L\otimes_k L$ is simply $V$ and thus we conclude as above that the discriminant is $\delta_L$ and the Clifford invariant is trivial.
\end{proof}

\begin{rmk}
It is a consequence of the above that if $L$ is not a field we still have:
 \[ \Spin_{(L,V,Q,\twistedcomp)} \otimes_k L \simeq \Spin_{V,Q}. \]
\end{rmk}

\begin{prop}
Let $(L,V,Q,\twistedcomp)$ be a twisted composition of dimension $8$.
The center $Z$ of $\Aut(L,V,Q,\twistedcomp)$ arises from the exact sequence:
\[ 1 \rightarrow Z \rightarrow \Res_{L/k}(\mu_{2,L}) \overset{N_{L/k}}\rightarrow \mu_{2,k} \rightarrow 1. \]
\end{prop}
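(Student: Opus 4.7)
My plan is to exhibit an explicit isomorphism $\phi: \ker(N_{L/k}) \to Z$ via scalar multiplication on $V$. Given a point $z$ of $\Res_{L/k}(\mu_{2,L})$, i.e., an element $z \in L^\times$ with $z^2 = 1$, let $\mu_z : V \to V$ denote scalar multiplication by $z$. This preserves the $L$-module structure trivially and preserves $Q$, since $Q(zv) = z^2 Q(v) = Q(v)$. Applying the axiom $\ell \cdot \twistedcomp(\ell v) = N_{L/k}(\ell) \twistedcomp(v)$ to $\ell = z$ yields $\twistedcomp(zv) = z N_{L/k}(z) \twistedcomp(v)$, so the condition $\mu_z \circ \twistedcomp = \twistedcomp \circ \mu_z$ reduces to $N_{L/k}(z) = 1$. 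Thus $\mu_z$ is an automorphism of $(L,V,Q,\twistedcomp)$ precisely when $z \in \ker(N_{L/k})$; and since $\mu_z$ commutes with every $L$-linear endomorphism of $V$, in particular with every element of the identity component of $\Aut(L,V,Q,\twistedcomp)$, it lies in the center $Z$. The resulting map $\phi$ is injective because $V$ is a faithful $L$-module.

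For surjectivity, I first treat the case that $L$ is not a field, so $L = k \oplus F$ with $F$ a quadratic étale $k$-algebra. By Remark \ref{rmk:simpletwisted}, $\Aut(L,V,Q,\twistedcomp)^0 \cong \Spin_{V_k}$ for an $8$-dimensional quadratic $k$-space $V_k$. The standard description of the center of the Spin group of a quadratic space gives $Z(\Spin_{V_k}) = \Res_{F'/k}(\mu_{2,F'})$, where $F' = k[\sqrt{\disc(V_k)}]$. By Corollary \ref{cor:trivinvars}, $\disc(V_k) = \disc(L) = \disc(F)$, so $F' \cong F$. The isomorphism $\Res_{F/k}(\mu_{2,F}) \cong \ker(N_{L/k})$ sending $z$ to $(N_{F/k}(z), z) \in \mu_{2,k} \times \Res_{F/k}(\mu_{2,F}) = \Res_{L/k}(\mu_{2,L})$ then identifies this center with $\ker(N_{L/k})$, and under the identification of Remark \ref{rmk:simpletwisted} it agrees with $\phi$.

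For general $L$, I reduce to the previous case by base change to $L$: the twisted composition $(L,V,Q,\twistedcomp) \otimes_k L$ lives over $L \otimes_k L = L \oplus (L \otimes_k \Delta_L)$, which is not a field, so the previous paragraph gives $Z \otimes_k L \cong \ker(N_{L \otimes_k L / L})$. Since both the formation of $\phi$ and the norm map commute with base change, this isomorphism descends to $k$ via Galois descent, yielding the desired exact sequence. The main technical hurdle is verifying that the center identification supplied by Remark \ref{rmk:simpletwisted} in the second paragraph matches the map $\phi$ built in the first; the Galois descent in the last step is routine given the base-change compatibility of the relevant constructions.
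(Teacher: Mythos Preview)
The paper does not prove this proposition; it simply cites \cite[Lem.~44.14]{book_of_involutions}. Your proposal therefore supplies strictly more than the paper does, and the argument is correct.

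Two remarks. First, the phrase ``center of $\Aut(L,V,Q,\twistedcomp)$'' in the statement must be read as the center of the connected component $\Aut(L,V,Q,\twistedcomp)^0$ (the Spin group); the full group contains an $S_3$ factor permuting the nontrivial central elements, so its literal center is smaller. You implicitly adopt this reading, which is the one consistent with Proposition~\ref{prop:CENTERONTWISTED} and with the cited reference.

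Second, the ``technical hurdle'' you flag does resolve cleanly. When $L = k \oplus F$, an element $c \in Z(\Spin_{V_k}) = \Res_{F/k}(\mu_{2,F})$ acts on the vector representation $V_k$ through its image $N_{F/k}(c) \in \mu_2 = Z(\SO_{V_k})$, and on the Clifford module $V_F$ by $F$-scalar multiplication by $c$ (since the center lies in $F^\times \subset (C^+)^\times$). Hence $c$ acts on $V = V_k \oplus V_F$ exactly as scalar multiplication by $(N_{F/k}(c), c) \in L$, which is your map $\phi$ evaluated at that element of $\ker(N_{L/k})$. With this verified, the base-change-and-descent step goes through as you say.
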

See proof of \cite[Lem 44.14]{book_of_involutions}.

\begin{prop}\label{prop:CENTERONTWISTED}
Consider the the exact sequence:
\[ H^1(\Gal(\overline{k}/k),Z) \rightarrow H^1(\Gal(\overline{k}/k),\Aut(L,V,Q,\twistedcomp))) \rightarrow H^1(\Gal(\overline{k}/k),\Aut(L,V,Q,\twistedcomp)^{\rm adj}). \]
The group 
\[ H^1(\Gal(\overline{k}/k),Z) \simeq L^\times/k^\times(L^\times)^2\]
 acts by
 \[ (L,V,Q,\twistedcomp ) \rightarrow (L,V,\lambda^{-1}N_{L/k}(\lambda)Q, \lambda\twistedcomp ). \]
Moreover, $(L,V,Q,\twistedcomp ) \simeq(L,V,\lambda^{-1}N_{L/k}(\lambda)Q, \lambda\twistedcomp )$ if and only if $Q\simeq \lambda^{-1}N_{L/k}(\lambda)Q$.
\end{prop}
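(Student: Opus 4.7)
The plan handles the three assertions---identifying $H^1(Z)$, computing the action, and establishing the isomorphism criterion---in order. Throughout, write $\Gamma = (L,V,Q,\twistedcomp)$.

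First I would compute $H^1(Z)$ from the long exact sequence attached to the short exact sequence of the preceding proposition. By Shapiro's lemma and Kummer theory, $H^1(\Gal(\overline{k}/k),\Res_{L/k}\mu_{2,L}) \simeq L^\times/(L^\times)^2$ and $H^1(\Gal(\overline{k}/k),\mu_{2,k}) \simeq k^\times/(k^\times)^2$, and the connecting map is $N_{L/k}$. A case analysis on the splitting type of $L$ shows the norm $\mu_2(L) \to \mu_2(k)$ is surjective, so $H^1(Z) \simeq \ker(N_{L/k} : L^\times/(L^\times)^2 \to k^\times/(k^\times)^2)$. The homomorphism $f : \lambda \mapsto \lambda^{-1}N_{L/k}(\lambda)$ takes values in this kernel because $N_{L/k}(\lambda^{-1}N_{L/k}(\lambda)) = N_{L/k}(\lambda)^2$; it vanishes on $k^\times(L^\times)^2$ (if $\lambda^{-1}N_{L/k}(\lambda) = \nu^2$ then $\lambda = N_{L/k}(\lambda)\nu^{-2} \in k^\times(L^\times)^2$); and it is surjective, since given $\mu$ with $N_{L/k}(\mu) \in (k^\times)^2$ one has $f(\mu) \equiv \mu \pmod{(L^\times)^2}$. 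This gives the asserted isomorphism $L^\times/k^\times(L^\times)^2 \simeq H^1(Z)$.

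Next, the action of $H^1(Z)$ on $H^1(\Aut(\Gamma))$ is the translation action from the central extension $1 \to Z \to \Aut(\Gamma) \to \Aut(\Gamma)^{\mathrm{adj}} \to 1$, with orbits equal to the fibers of the map to $H^1(\Aut(\Gamma)^{\mathrm{adj}})$. To recover the explicit formula I would lift $\lambda \in L^\times$ to a Kummer cocycle in $Z$ via a square root $\mu \in (L\otimes_k\overline{k})^\times$ of its representative $\lambda^{-1}N_{L/k}(\lambda) \in \ker N_{L/k}$, and compute the descent of $\Gamma$ twisted by this cocycle: unwinding the Galois action, the resulting twisted composition is $(L,V,\lambda^{-1}N_{L/k}(\lambda)Q,\lambda\twistedcomp)$. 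This is the natural analogue, with the cubic algebra $L$ in place of the quadratic center $F$, of the action of the center of $\Spin_{A,\tau}$ on quadratic forms recorded in Section~\ref{subsec:cohominterp}.

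For the ``if and only if'', the ``only if'' direction is immediate, since any isomorphism of twisted compositions restricts to an $L$-linear isometry of the underlying quadratic spaces. For ``if'', suppose we have an $L$-linear isometry $\phi : (V,Q) \to (V,\lambda^{-1}N_{L/k}(\lambda)Q)$, equivalently an $L$-similitude of $(V,Q)$ with multiplier $\lambda N_{L/k}(\lambda)^{-1}$; the aim is to modify $\phi$ by an element of $\Aut(\Gamma)(k)$ into an honest isomorphism of twisted compositions. The key tool is the triality triple: the twisted-composition analogue of \cite[35.4]{book_of_involutions} attaches to any such similitude a triple $(\phi,\phi^+,\phi^-)$ whose multipliers automatically satisfy the rescaling relation $\alpha^2\beta^2 = N_{L/k}(\alpha)$ dictated by the twisted composition axiom, and the three conjugates of $\twistedcomp$ that appear are precisely the three rescalings in the $H^1(Z)$-orbit of $\Gamma$. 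Matching multipliers produces the desired isomorphism. I expect the main obstacle to be the careful bookkeeping of the similitude multipliers through the triality triple and the verification that the $\Aut(\Gamma)(k)$-adjustment is always available; once triality for similitudes is stated precisely, what remains is a routine cocycle comparison.
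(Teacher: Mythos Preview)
The paper does not actually prove this proposition; it simply cites \cite[36.9 and 36.10]{book_of_involutions}. Your outline is therefore not being compared against an argument in the paper but against the referenced source, and what you have sketched is essentially a reconstruction of the standard proof found there.

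Your computation of $H^1(Z)$ is correct and cleanly done. The identification of the action via Kummer cocycles is the right mechanism, though in writing it out you should be explicit that the center $Z\subset\Res_{L/k}\mu_{2,L}$ acts on $V$ by scalar multiplication through $L^\times$, so that twisting by the cocycle attached to a square root of $\lambda^{-1}N_{L/k}(\lambda)$ rescales $Q$ and $\twistedcomp$ exactly as claimed; this is what \cite[36.9]{book_of_involutions} verifies.

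The one place to be careful is your ``if'' direction. Invoking a triality triple $(\phi,\phi^+,\phi^-)$ in the style of \cite[35.4]{book_of_involutions} is natural, but that statement is for symmetric compositions over $k$, where one genuinely has three separate isometries. For a twisted composition over a cubic field $L$ there is only one space $V$, and the ``triple'' only becomes visible after base change to $L\otimes_k\Delta_L$. The argument in \cite[36.10]{book_of_involutions} does go through similitudes, but the key step is showing that any $L$-similitude of $(V,Q)$ with multiplier of the form $\lambda^{-1}N_{L/k}(\lambda)$ can be adjusted by an element of $\Orth(Q)(L)$ to intertwine $\twistedcomp$ with $\lambda\twistedcomp$; this uses the structure of the group of similitudes modulo $\Spin$ rather than an explicit triple. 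Your instinct is correct and the bookkeeping you flag is indeed the substance of the proof, but you should either carry out the descent from the split case carefully or follow the similitude-group argument in the reference directly.
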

\cite[36.9 and 36.10]{book_of_involutions}.

\begin{ex}\label{ex:twistedsig}
As there are only two cubic \'etale algebras over $\bR$ neither of which is a field we can easily use Proposition \ref{prop:TwistedNotField} to describe all of the twisted compositions over $\bR$.
\begin{itemize}
\item When $L \simeq \bR \times \bR \times \bR$ we have from Proposition \ref{prop:TwistedNotField} that every twisted composition over $L$ is of the form:
\[ \bO \times \bO \times \bO \]
where $\bO$ is either the split or non-split octonions.
The quadratic form being either the standard form for the octonions, or
in the case of the non-split octonions we may replace any $2$ of these by their negatives.
\item When $L \simeq \bR \times \bC$ we have from Proposition \ref{prop:TwistedNotField} that every twisted composition over $L$ is of the form:
\[ V \times (V\otimes \bC) \]
The quadratic space $V$ has signatures one of:
\[ (7,1),(3,5) \]
as these are precisely those with trivial Clifford invariants and discriminant $-1$.
Note that we cannot simply replace $Q$ by $-Q$ because of the definition of $\twistedcomp$.
\end{itemize}
\end{ex}

\subsection{Trialitarian Algebras}

Much of the material of this section comes from \cite[Ch. 43]{book_of_involutions}. A more thorough treatment of it can be found there.

\begin{df}
Let $L$ be a cubic \'etale algebra over $k$. Denote by $\Delta_L$ the discriminant algebra of $L$.
Let $D$ be a central simple $L$-algebra with orthogonal involution $\tau$.

We say that $(L,D,\tau,\alpha)$ is a \emphh{trialitarian algebra} if 
\[ \alpha : (\Clif_{D,\tau}^+,\tau)  \rightarrow^\rho (D\otimes_k\Delta_L, \tau\otimes 1) \]
where $\rho$ is an order three element of $\Gal(L\otimes_k\Delta_L/ \Delta_L)$ and $^\rho$ means that the map is $\rho$-skew linear, that is the action of $L$ on $(D\otimes_k\Delta_L, \tau\otimes 1 )$ is via the map $L\rightarrow L\otimes_k\Delta_L$ given by $\ell \mapsto \rho(\ell\otimes 1)$.
\end{df}

\begin{rmk}
Giving an isomorphism 
\[ \alpha : (\Clif_{D,\tau}^+,\tau)  \rightarrow^\rho  (D\otimes_k\Delta_L, \tau\otimes 1) \]
   is equivalent to giving an isomorphism:
\[ \tilde\alpha : (D,\tau) \oplus (\Clif_{D,\tau}^+,\tau)  \rightarrow (D\otimes_k L, \tau\otimes 1) \]
where $\tilde\alpha$ restricts to the first factor by the natural inclusion.

This observation together with Proposition \ref{prop:uniquealpha} and an argument as in Corollary \ref{cor:trivinvars} implies  that the discriminant of the involution $\tau$ is $\delta_L$ the discriminant of $L$.
\end{rmk}

\begin{thm}\label{thm:tritripclass}
Let $(L,D,\tau,\alpha)$ be a trialitarian algebra, then $[D]$ is in the kernel of the corestriction map from the Brauer group over $L$ to the Brauer group over $k$.
Moreover, over a number field, this is sufficient, that is any $[D]$ in the kernel of the corestriction map can be given the structure of a trialitarian algebra.
\end{thm}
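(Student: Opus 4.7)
The plan is to exploit the Brauer-class equality forced by the $\rho$-skew linear isomorphism $\alpha$, combined with the fundamental relations for Clifford algebras of orthogonal involutions (Book of Involutions, Theorem 9.14). Throughout, I would identify the center of $\Clif^+_{D,\tau}$ with $F = L\otimes_k\Delta_L$, using the remark after the definition that the discriminant of $\tau$ is $\delta_L$.

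For the forward direction, I would first observe that the existence of $\alpha$ translates into the Brauer-group equality $[\Clif^+_{D,\tau}] = \rho^*[D\otimes_k\Delta_L]$ in $\Br(F)$, where on the right $D\otimes_k\Delta_L$ is regarded as a central simple $F$-algebra via the natural embedding $L\hookrightarrow F$. Since $D$ has degree $8$ with orthogonal involution, the fundamental relations (applied with half-degree $m=4$, which is even) give $\operatorname{cor}_{F/L}[\Clif^+_{D,\tau}] = 0$ in $\Br(L)$. I would then combine this with the base-change identity
\[ \operatorname{cor}_{F/\Delta_L}\circ\res_{L\hookrightarrow F} = \res_{k\hookrightarrow\Delta_L}\circ\operatorname{cor}_{L/k}, \]
valid because $F = L\otimes_k\Delta_L$ is a compositum with $L\cap\Delta_L = k$, together with the $\rho$-equivariance of $\operatorname{cor}_{F/\Delta_L}$ (as $\rho\in\Gal(F/\Delta_L)$), to extract $\operatorname{cor}_{L/k}[D] = 0$. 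The delicate bookkeeping here is that $\rho^*$ shifts the $L$-embedding into $F$, so corestriction to $L$ does not directly commute with $\rho^*$; one must carefully track the three $L$-structures on $F$ obtained from the three powers of $\rho$ and sum appropriately.

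For the converse over a number field, the approach is to reverse-engineer the construction. Given $[D]$ with $\operatorname{cor}_{L/k}[D]=0$, I would first endow $D$ with an orthogonal involution $\tau$ of discriminant $\delta_L$: this is possible because $[D]$ is 2-torsion (admits an involution) and over a number field the discriminant of an orthogonal involution on a given algebra can be prescribed in $L^\times/(L^\times)^2$ (see Section \ref{subsec:cohominterp}). Having constructed $\Clif^+_{D,\tau}$ as a central simple $F$-algebra, the problem reduces to arranging $[\Clif^+_{D,\tau}] = \rho^*[D\otimes_k\Delta_L]$ in $\Br(F)$, up to a correction absorbable into the choice of $\tau$. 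Over a number field Hasse--Brauer--Noether reduces this to a matching of local invariants, and the corestriction-vanishing hypothesis (which translates to $[D_0]+[D_1]+[D_2]=0$ among the three Galois translates in $\Br(F)$) combined with the fundamental relations forces the required match. Any residual ambiguity is then killed by modifying $\tau$ using the orbit of Proposition \ref{prop:CENTERONTWISTED}, which rescales the underlying form by $\lambda^{-1}N_{L/k}(\lambda)$ and corresponds precisely to the $F^\times/k^\times(F^\times)^2$ ambiguity in $[\Clif^+_{D,\tau}]$. The hard part will be this converse: upgrading a Brauer-class equality to an actual $\rho$-skew linear isomorphism of algebras with involution, since an equality in the Brauer group alone does not automatically produce a $\tau$-preserving isomorphism, and handling this is exactly where the number-field hypothesis is essential.
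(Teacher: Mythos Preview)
The paper does not give its own proof: it simply cites \cite[43.6 and 43.8]{book_of_involutions}, with the added remark that the existence of the required isomorphism can be checked locally and reduces to the cases enumerated in Remark~\ref{rmk:CliffordStructure}. So the benchmark here is the Book of Involutions argument, which for the converse proceeds by a place-by-place analysis of the Clifford algebra.

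Your forward argument has a genuine gap. From $\alpha$ you correctly extract $[\Clif^+_{D,\tau}]=\rho^*[D_F]$ in $\Br(F)$, and the fundamental relations for $m=4$ give $\operatorname{cor}_{F/L}[\Clif^+_{D,\tau}]=0$. But the base-change identity you invoke involves $\operatorname{cor}_{F/\Delta_L}$, a different corestriction, and you never establish that $\operatorname{cor}_{F/\Delta_L}[\Clif^+_{D,\tau}]$ vanishes. If you chase what your two ingredients actually yield (say in the cyclic case $L/k$ Galois, $\Delta_L=k\times k$), you find only $\rho^*[D]+(\rho^2)^*[D]=0$ in $\Br(L)$, hence $\res_{k\to L}\operatorname{cor}_{L/k}[D]=[D]$, which does \emph{not} force $\operatorname{cor}_{L/k}[D]=0$. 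Concretely, over a $p$-adic field with $L=k^3$ and each $D_i=M_4(H)$ for $H$ the quaternion division algebra, the Brauer relations you use are satisfied yet $\operatorname{cor}[D]=[H]\neq 0$; what actually obstructs this case is that $\Clif^+(D_i,\tau_i)$ is then a product of two \emph{non-isomorphic} $2$-torsion classes (Remark~\ref{rmk:CliffordStructure}), so it cannot be isomorphic to $D_{i+1}\times D_{i+2}$ as algebras with involution. In other words, the obstruction lives in the finer structure of $\Clif^+$ over its center and in the involution-preserving nature of $\alpha$, neither of which your Brauer-class computation sees. The fix is precisely the local case analysis the paper's remark points to.

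Your converse is closer to the mark: reducing to local invariants via Hasse--Brauer--Noether and adjusting $\tau$ is the right shape, and matches the spirit of the paper's remark. The one point to be careful about is the final step of upgrading a local Brauer-class match to an involution-preserving isomorphism: here again you will need the explicit local classification of $(\Clif^+,\bar\tau)$ from Remark~\ref{rmk:CliffordStructure} rather than pure Brauer-group reasoning, and note that Proposition~\ref{prop:CENTERONTWISTED} acts on twisted compositions, not directly on the involution $\tau$ of an arbitrary trialitarian algebra.
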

\cite[43.6 and 43.8]{book_of_involutions}
\begin{rmk}
One may check conditions on the existence of an isomorphism locally, in this context we are reduced to the cases which arise in Remark \ref{rmk:CliffordStructure}.
It is worth noting that 
\end{rmk}

\begin{ex}\label{ex:trialg}
Let $(L,V,Q,\twistedcomp)$ be a twisted composition, then there exists a map 
\[ \alpha:(\Clif_{D,\tau}^+,\tau)  \rightarrow ^\rho(D\otimes_k\Delta_L, \tau\otimes 1)\]  such that the datum $(L,\End_L(V),\Ad_Q,\alpha)$, is a trialitarian algebra
\cite[36.19]{book_of_involutions}.
\end{ex}

\begin{prop}\label{prop:uniquealpha}
Suppose that $L$ is not a field, so that $L = k\oplus \Delta_L$. Then every trialitarian algebra over $L$ is of the form $(L,A \oplus C^+_{A,\tau},\tau\oplus \tau, \alpha)$
where $(A,\tau) \otimes \Delta_L \simeq  C^+_{A,\tau}$.
Moreover, the choice of $\alpha$ is unique up to unique isomorphism once a generator $\rho$ of $\Aut(L\otimes\Delta_L/\Delta_L)$ is chosen.
\end{prop}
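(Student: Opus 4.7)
The plan is to use the decomposition $L = k \oplus \Delta_L$ to split $D$ into a $k$-part and a $\Delta_L$-part, and then track how the $\rho$-skew-linear map $\alpha$ must permute the pieces in order to force both the claimed shape of the algebra and the uniqueness of $\alpha$.

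First I would write $D = A \oplus B$ with $A$ a central simple $k$-algebra and $B$ a central simple $\Delta_L$-algebra, and $\tau = \tau_A \oplus \tau_B$ orthogonal on each factor. By the remark preceding the proposition, $\disc(\tau) = \delta_L$, so by Theorem \ref{thm:structcliff} the center of $\Clif^+_{D,\tau}$ is $L\otimes_k\Delta_L$, which decomposes as $\Delta_1\oplus\Delta_2\oplus\Delta_3$, with $\Delta_1 = k\otimes_k\Delta_L$ and $\Delta_L\otimes_k\Delta_L = \Delta_2\oplus\Delta_3$ the canonical splitting. The chosen order-three element $\rho$ acts as a $3$-cycle on $\{\Delta_1,\Delta_2,\Delta_3\}$; after relabeling, assume $\rho:\Delta_1\mapsto\Delta_2\mapsto\Delta_3\mapsto\Delta_1$. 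Correspondingly $\Clif^+_{D,\tau} = \Clif^+_{A,\tau_A}\oplus(C_2\oplus C_3)$ over $\Delta_1\oplus(\Delta_2\oplus\Delta_3)$, while $D\otimes_k\Delta_L = A_{\Delta_L}\oplus B\oplus B^\sigma$ over the same three factors, with $B^\sigma$ the $\Gal(\Delta_L/k)$-conjugate of $B$.

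Next I would apply the $\rho$-skew linearity of $\alpha$, which forces $\alpha$ to send the $\Delta_i$-piece of its source to the $\rho(\Delta_i)$-piece of its target. Reading off the restrictions yields three isomorphisms of $\Delta_L$-algebras with involution: $\alpha_1:\Clif^+_{A,\tau_A}\to B$, $\alpha_2: C_2\to B^\sigma$, and $\alpha_3: C_3\to A_{\Delta_L}$. The first immediately identifies $B\simeq\Clif^+_{A,\tau_A}$, establishing the claimed form of $(D,\tau)$. Substituting this into $\Clif^+_B$ and using the canonical decomposition of iterated Clifford algebras, $C_3$ is the summand of $\Clif^+_{\Clif^+_{A,\tau_A}}$ corresponding to $A_{\Delta_L}$, so that $\alpha_3$ encodes precisely the required identification $(A,\tau_A)\otimes_k\Delta_L\simeq\Clif^+_{A,\tau_A}$ as algebras with involution over $\Delta_L$.

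For the uniqueness of $\alpha$, once $\rho$ and the normalization $B = \Clif^+_{A,\tau_A}$ are fixed, $\alpha_1$ may be taken to be the identity; the functoriality of the Clifford algebra construction (as encapsulated by the map $C$ of Proposition \ref{prop:AtoCliff}) then forces $\alpha_2$ and $\alpha_3$ to be the canonical restrictions, so that any two choices of $\alpha$ differ by an automorphism of $(A\oplus\Clif^+_{A,\tau_A},\tau\oplus\tau)$ restricting to the identity on each factor; such an automorphism is forced to be trivial, giving the unique intertwining isomorphism. The main technical obstacle will be verifying this rigidity in the last step, i.e., ensuring that the $S_3$-symmetry of the triality setup does not introduce an outer ambiguity once $\rho$ is fixed and that the involutions on $C_2,C_3$ are compatibly matched with those on $B^\sigma$ and $A_{\Delta_L}$.
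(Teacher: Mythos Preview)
The paper does not give its own proof of this proposition; it simply cites \cite[43.15]{book_of_involutions}. Your approach to the structural claim is exactly the one taken there: decompose $D$ along $L=k\oplus\Delta_L$, decompose both $\Clif^+_{D,\tau}$ and $D\otimes_k\Delta_L$ along the three idempotents of $L\otimes_k\Delta_L$, and use the $\rho$-skew linearity of $\alpha$ to read off $\alpha_1:\Clif^+_{A,\tau_A}\xrightarrow{\sim}B$ and the identification $(A,\tau_A)\otimes\Delta_L\simeq\Clif^+_{A,\tau_A}$. That part is fine.

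The gap is in your uniqueness argument. You assert that once $\alpha_1$ is normalized to the identity, ``the functoriality of the Clifford algebra construction \ldots forces $\alpha_2$ and $\alpha_3$ to be the canonical restrictions.'' This is not correct as stated: the definition of a trialitarian algebra only requires $\alpha$ to be a $\rho$-skew isomorphism of algebras with involution, so a priori $\alpha_1$, $\alpha_2$, $\alpha_3$ are three independent isomorphisms, and nothing in Proposition~\ref{prop:AtoCliff} links them. What you actually need is that an isomorphism of trialitarian algebras is an automorphism $\phi=(\phi_A,\phi_B)$ of $(D,\tau)$ satisfying $\alpha'=(\phi\otimes 1)\circ\alpha\circ C(\phi)^{-1}$; writing this out component-wise gives three equations in the two unknowns $\phi_A,\phi_B$, and the content of the uniqueness claim is that these equations are always solvable. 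In \cite{book_of_involutions} this is handled by exhibiting a canonical $\alpha$ built from the triality identifications $\Clif^+_{\Clif^+_{A,\tau_A}}\simeq (A,\tau_A)_{\Delta_L}\times(\Clif^+_{A,\tau_A},\bar\tau)$ and then showing the resulting functor from central simple $k$-algebras with involution (satisfying the Clifford condition) to trialitarian $L$-algebras is an equivalence of groupoids. Your outline does not supply this step, and your flagged ``main technical obstacle'' is exactly where the real work lies.
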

See \cite[43.15]{book_of_involutions}.

\begin{cor}\label{cor:uniquealphaglobal}
Suppose $k$ is a global field and that $L$ is a degree three field extension.
Suppose also that $[D]$ is in the kernel of the corestriction map from the Brauer group over $L$ to the Brauer group over $k$ and that $(D,\tau_1)$ and   $(D,\tau_2)$ admit trialitarian structures $(L,D,\tau_1,\alpha_1)$ and $(L,D,\tau_2,\alpha_2)$.
Then $(D,\tau_1) \simeq (D,\tau_2)$ if and only this is true for all real places $\nu$ of $k$.

Suppose further that $\tau = \tau_1 = \tau_2$, then, up to the choice of generator $\rho$, there is an automorphism of $(L,D,\tau)$ taking $(L,D,\tau,\alpha_1)$ to $(L,D,\tau,\alpha_2)$.
\end{cor}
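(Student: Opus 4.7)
The plan is to handle the two assertions separately. For the first, I would invoke the classification of central simple algebras with orthogonal involutions over global fields from Section \ref{subsec:cohominterp}: such algebras are classified by discriminant, Clifford invariant, and indices at real places. By the Remark following the definition of trialitarian algebra, the involution $\tau_i$ has discriminant $\delta_L$, and the isomorphism $\alpha_i$ identifies $\Clif^+_{D,\tau_i}$ with $D\otimes_k\Delta_L$ as a $\Delta_L$-central simple algebra, determining the Clifford invariant. Hence $(D,\tau_1)$ and $(D,\tau_2)$ agree in discriminant and Clifford invariant, and the only distinguishing invariants are the local indices at real places. Since the hypothesis is precisely local isomorphism at all real places, the two algebras with involution are globally isomorphic.

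For the second assertion, fix $\tau = \tau_1 = \tau_2$ and a generator $\rho$. The strategy is Galois descent after base change to $L$. Here $L\otimes_k L \simeq L \oplus (L\otimes_k \Delta_L)$ is a non-field cubic \'etale $L$-algebra, so Proposition \ref{prop:uniquealpha} applies to the trialitarian structures $(L\otimes_k L, D\otimes_k L, \tau\otimes 1, \alpha_i\otimes 1)$, producing a \emph{unique} isomorphism $\phi_L$ from the $\alpha_1$-structure to the $\alpha_2$-structure once $\rho$ is chosen compatibly on the tensor product. The uniqueness of $\phi_L$ forces equivariance under the natural action of $\Gal(\overline{k}/k)$ on $L\otimes_k L$, so $\phi_L$ descends to a $k$-automorphism $\phi$ of $(L, D, \tau)$ carrying $\alpha_1$ to $\alpha_2$. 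The ``up to the choice of $\rho$'' caveat enters because $\alpha_1$ and $\alpha_2$ may \emph{a priori} be associated to the two different generators of $\Aut(L\otimes_k\Delta_L/\Delta_L)$; in that case, one first composes one of them with the swap of factors in $L\otimes_k\Delta_L$ to align their choices of $\rho$ before applying the above.

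The main obstacle will be verifying that the uniqueness statement in Proposition \ref{prop:uniquealpha} is strong enough to automatically force the Galois-equivariance of $\phi_L$, and carefully tracking the dependence on the generator $\rho$ so the final statement comes out in the form claimed. The uniqueness ``on the nose'' (not merely up to inner conjugation) is what guarantees descent without invoking a separate cohomological vanishing, so the global hypothesis on $k$ is used only in the first assertion via the classification of algebras with involution, not in the second.
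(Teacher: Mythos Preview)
Your treatment of the first assertion is essentially the paper's argument: both of you observe that the trialitarian structure forces the discriminant of $\tau_i$ to be $\delta_L$ and pins down the Clifford invariant (the paper phrases this via the base-change decomposition $D\otimes_k L \simeq D \oplus C^+_{D,\tau_i}$, you phrase it directly through $\alpha_i$), after which the classification over global fields leaves only the real indices.

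For the second assertion your route is genuinely different. The paper simply writes down $\alpha_1\circ\alpha_2^{-1}$ as an automorphism of $(L,D,\tau)\otimes_k\Delta_L$ and notes that any nontrivial action on the $\Delta_L$ factor amounts to swapping $\rho$ with $\overline{\rho}$; it does not explicitly pass through base change to $L$ or invoke Proposition~\ref{prop:uniquealpha}. Your approach---base change to $L$ so that $L\otimes_k L$ is a non-field cubic algebra, then appeal to the uniqueness clause of Proposition~\ref{prop:uniquealpha} to produce a canonical $\phi_L$, then descend---is more systematic and makes the mechanism clearer. It also explains \emph{why} no cohomological obstruction appears: uniqueness kills the ambiguity. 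The paper's version is terser but leaves more to the reader, in particular the passage from an automorphism of $D\otimes_k\Delta_L$ to one of $D$ over $k$.

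One point to watch in your argument: when $L/k$ is non-Galois, the generator $\rho$ of $\Aut(L\otimes_k\Delta_L/\Delta_L)$ is not $\Gal(\overline{k}/k)$-stable (the Galois action inverts it via $\Gal(\Delta_L/k)$), so $\sigma\cdot\phi_L$ is the unique isomorphism for the \emph{other} generator. You will need to check that the resulting sign discrepancy is exactly absorbed by the ``up to the choice of $\rho$'' clause, i.e.\ that after possibly composing with the $\Delta_L$-swap the descent data still satisfies the cocycle condition. This is the substance behind your acknowledged ``main obstacle,'' and it deserves an explicit line rather than being left implicit.
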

\begin{proof}
From the proposition we have:
\[ (D,\tau_1) \otimes L \simeq (D,\tau_1) \oplus (C^+_{D,\tau_1},\tau_1) \text{ and } (D,\tau_2) \otimes L \simeq (D,\tau_2) \oplus (C^+_{D,\tau_2},\tau_2).  \]
In particular:
\[ D \oplus C^+_{D,\tau_1} \simeq D\otimes L \simeq D \oplus C^+_{D,\tau_2}. \]
It follows that $\tau_1$ and $\tau_2$ have the same discriminants and Clifford invariants.
The first claim then follows by the classification of algebras with involutions over global fields.

The second claim is clear as $\alpha_1\circ\alpha_2^{-1}$ is an automorphism of $(L,D,\tau)\otimes \Delta_L$ and if there is an action on $\Delta_L$, its effect is to interchange $\rho$ with $\overline{\rho}$. 
\end{proof}
\begin{rmk}
The above result can be compared with \cite[43.11 - 43.14]{book_of_involutions} where the construction depends only on the choice of an octonion algebra.
\end{rmk}

\begin{thm}
Let $(L,D,\tau,\alpha)$ be the trialitarian algebra associated to a twisted composition $(L,V,Q,\twistedcomp)$ of dimension $8$, then
$\Aut(L,D,\tau,\alpha) \simeq \Aut(L,V,Q,\twistedcomp)^{\rm adj}$.

Moreover, trialitarian algebras are classified by $H^1(\Gal(\overline{k}/k), \Aut(L,D,\tau,\alpha))$.
\end{thm}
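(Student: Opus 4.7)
The plan is to use the functorial construction of Example \ref{ex:trialg}, which sends a twisted composition $(L,V,Q,\twistedcomp)$ to the trialitarian algebra $(L,\End_L(V),\Ad_Q,\alpha)$. Naturality produces a morphism of group schemes
\[ \Phi: \Aut(L,V,Q,\twistedcomp) \longrightarrow \Aut(L,D,\tau,\alpha), \]
and the isomorphism in the statement will be obtained by computing the kernel of $\Phi$ and checking its surjectivity.

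For the kernel, an element of $\Aut(L,V,Q,\twistedcomp)$ which acts trivially on $(L,\End_L(V),\Ad_Q)$ must, after Skolem--Noether, act on $V$ by multiplication by some $\lambda \in L^\times$; the compatibility $\tau(\lambda)\lambda = 1$ forces $\lambda \in \Res_{L/k}(\mu_{2,L})$, and compatibility with $\twistedcomp$ (via $\ell\twistedcomp(\ell v) = N_{L/k}(\ell)\twistedcomp(v)$) forces $N_{L/k}(\lambda) = 1$. These are precisely the elements of the central subgroup $Z$ from the proposition preceding \ref{prop:CENTERONTWISTED}. I would also verify that $\alpha$ is preserved automatically by such scalars using the functoriality of the Clifford construction. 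Thus $\ker\Phi = Z$ and $\Phi$ descends to an injection of $\Aut(L,V,Q,\twistedcomp)^{\rm adj}$ into $\Aut(L,D,\tau,\alpha)$.

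The hard part is surjectivity. Given $\phi \in \Aut(L,D,\tau,\alpha)$, first extract the induced automorphism $\phi_L$ of $L$ from the $L$-algebra structure on $D$. Using $D = \End_L(V)$, an application of Skolem--Noether produces a $\phi_L$-semilinear map $\psi: V \to V$, well-defined up to $L^\times$, which preserves $Q$ up to a similarity factor in $L^\times$. The key is to exploit the compatibility with $\alpha: (\Clif^+_{D,\tau},\tau)\rightarrow^\rho (D\otimes\Delta_L,\tau\otimes 1)$: by Proposition \ref{prop:map_of_cliff}, the left and right multiplication operators $\ell_x, r_x$ recover the multiplication $\cdot$ on $S$ from the Clifford structure, and the analogous identification on the cyclic construction of Example \ref{ex:cyclic_comp} recovers $\twistedcomp$ from $\alpha$. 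Chasing $\psi$ through this identification shows that $\psi$ preserves $\twistedcomp$ up to the scalar ambiguity allowed by $Z$, so $\psi$ lifts to an automorphism of the twisted composition mapping to $\phi$ modulo $Z$. This gives the desired isomorphism $\Aut(L,V,Q,\twistedcomp)^{\rm adj} \simeq \Aut(L,D,\tau,\alpha)$.

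For the cohomological classification, it suffices to show that any trialitarian algebra of dimension $8$ becomes isomorphic over $\overline{k}$ to a fixed one (the split trialitarian algebra attached to the split octonions via Examples \ref{ex:parahurwitz}, \ref{ex:split_twisted}, and \ref{ex:trialg}). Over $\overline{k}$ the cubic algebra $L$ splits as $\overline{k}^3$, and Proposition \ref{prop:uniquealpha} together with the uniqueness of $\alpha$ reduces the data to a central simple algebra with orthogonal involution over $\overline{k}$, which is necessarily split; the resulting trialitarian algebra is the one arising from the split octonions. Standard Galois descent (nonabelian $H^1$) then classifies trialitarian algebras of dimension $8$ by $H^1(\Gal(\overline{k}/k),\Aut(L,D,\tau,\alpha))$, with the base object being this split form. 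The main obstacle, as noted, is the surjectivity step in the first assertion; everything else is bookkeeping or direct appeal to the already-cited results from \cite{book_of_involutions}.
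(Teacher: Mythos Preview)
The paper does not actually prove this theorem: it simply cites \cite[44.2 and 44.5]{book_of_involutions}. So there is no in-paper argument to compare against; your proposal is going well beyond what the paper does.

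Your outline is broadly along the lines of what is in \cite{book_of_involutions}. The kernel computation is correct: multiplication by $\lambda\in L^\times$ preserves $Q$ iff $\lambda^2=1$, and preserves $\twistedcomp$ iff $N_{L/k}(\lambda)=\lambda^2=1$, which gives exactly $Z$. The cohomological classification is the standard descent argument and your reduction to the split case via Proposition~\ref{prop:uniquealpha} is the right idea.

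The one place where your sketch is thin is surjectivity. You produce a $\phi_L$-semilinear $\psi:V\to V$ only up to a similitude factor $\mu\in L^\times$, and ``chasing $\psi$ through the identification'' to recover $\twistedcomp$ hides the real work: you need the compatibility with $\alpha$ to force $\mu$ into the image of the center (not merely into $L^\times/(L^\times)^2$), and you need to track how $\psi$ interacts with the $\rho$-twist in $\alpha$ rather than just with the untwisted Clifford map of Proposition~\ref{prop:map_of_cliff}. A cleaner way to close the gap, once you know $\ker\Phi=Z$, is to argue over $\overline{k}$: both sides become $S_3\ltimes\mathrm{PGO}_8^+$, the map is injective on the identity component by your kernel computation, and a dimension count plus a check on $\pi_0$ (the $S_3$ of outer automorphisms is visibly hit) gives the isomorphism. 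This avoids the delicate similitude-factor bookkeeping you allude to.
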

\cite[44.2 and 44.5]{book_of_involutions}

\begin{thm}
Let $(L,D,\tau,\alpha)$ be a trialitarian algebra over $k$, the connected component of the identity $\Aut(L,D,\tau,\alpha)^0$ is the subgroup which acts trivially on $L$.
\end{thm}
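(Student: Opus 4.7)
The plan is to analyze the natural morphism of group schemes
\[ \pi : \Aut(L,D,\tau,\alpha) \longrightarrow \Aut_{k\text{-alg}}(L) \]
obtained from the fact that any automorphism of the trialitarian datum restricts to a $k$-algebra automorphism of $L$. The target $\Aut_{k\text{-alg}}(L)$ is a finite étale $k$-group scheme (a form of a subgroup of the constant group $S_3$), hence has trivial connected component. Consequently, $\Aut(L,D,\tau,\alpha)^0$ must be contained in $\ker(\pi)$, which by definition is the subgroup acting trivially on $L$. This is the easy half.

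For the reverse inclusion I would show that $\ker(\pi)$ is itself connected. The preceding theorem gives $\Aut(L,D,\tau,\alpha) \simeq \Aut(L,V,Q,\twistedcomp)^{\rm adj}$ for the associated twisted composition, and the earlier identification $\Aut(L,V,Q,\twistedcomp) \simeq S_3 \ltimes \Spin_{(C,\cdot,N)}$ (together with Remark \ref{rmk:simpletwisted} pinning down what the adjoint is) produces, after passing to a splitting field of $L$, an identification with $S_3 \ltimes \PSO_8$. Under this identification, $S_3$ acts on $L\otimes \overline{k} \simeq \overline{k}\times\overline{k}\times\overline{k}$ faithfully by permutation of factors, while the $\PSO_8$ factor acts $L$-linearly and thus trivially on $L$. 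So after base change to a splitting field, $\ker(\pi)$ becomes exactly $\PSO_8$, which is geometrically connected.

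Since geometric connectedness descends, $\ker(\pi)$ is connected, so $\ker(\pi) \subseteq \Aut(L,D,\tau,\alpha)^0$, completing the proof. The main subtle point — really the only nontrivial input — is keeping track of precisely how the $S_3$ factor interacts with $L$ in the semidirect product presentation: one needs that the $S_3$ action on $L$ is faithful (over $\overline{k}$) so that $\PSO_8$ is not merely \emph{contained in} the kernel but is \emph{all of it}. This is immediate from the construction in Example \ref{ex:split_twisted}, where the three triality automorphisms manifestly permute the three factors of the split cubic étale algebra.
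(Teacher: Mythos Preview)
Your proposal is correct and follows essentially the same approach as the paper, which simply says ``It suffices to check over the algebraic closure, see \cite[44.A]{book_of_involutions}.'' Your argument is a fleshed-out version of exactly that remark: you make explicit the map $\pi$ to $\Aut_{k\text{-alg}}(L)$, observe that connectedness can be checked geometrically, and then carry out the check over $\overline{k}$ using the semidirect product decomposition $S_3\ltimes\PSO_8$.
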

It suffices to check over the algebraic closure, see \cite[44.A]{book_of_involutions}.

\subsection{Classification of Groups of Type $D_4$}

\begin{cons*}
Let $(L,D,\tau,\alpha)$ be a trialitarian algebra over $k$.
Let $C$ denote the canonical map from $\Aut_L(D,\tau)$ to $\Aut_L(\Clif_{D,\tau}^+)$.
Let $\chi$ denote the canonical map from $\Spin(D,\tau)$ to $D$.
Then we may construct the following groups:
\begin{align*}
\Aut(L,D)^0(R) &= \{ g \in \Res_{L/k}( \Aut_L(D,\tau) )(R) \mid \alpha\circ C(g) = (C\otimes 1)\circ\alpha \} \\
 \Spin_{(L,D,\tau,\alpha)}(R) &= \{ g \in \Res_{L/k}(\Spin_{(D,\tau)/L})(R) \mid \alpha(g) = \chi(g)\otimes 1 \} 
\end{align*}
They are respectively, adjoint, and simply connected groups of type $D_4$.
 \end{cons*}

\begin{thm}
The adjoint groups (respectively simply connected groups) of type $D_4$ are in bijection with the trialitarian algebras over $k$,
both are classified by $H^1(\Gal(\overline{k}/k),\Aut(L,D,\tau,\alpha))$ the bijection associates to a trialitarian algebra, the groups defined above.
\end{thm}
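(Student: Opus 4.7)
The plan is to reduce to the classification of trialitarian algebras (already established in the preceding theorem) by identifying the automorphism group of a trialitarian algebra with the full automorphism group of a split group of type $D_4$.

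First, I would verify that the groups $\Aut(L,D)^0$ and $\Spin_{(L,D,\tau,\alpha)}$ defined in the Construction are genuinely adjoint and simply connected groups of type $D_4$. By faithfully flat descent it suffices to check this after base change to $\overline{k}$. Over $\overline{k}$ the trialitarian algebra becomes split, so by Proposition \ref{prop:uniquealpha} (applied iteratively, after the cubic \'etale algebra has split into three copies of $\overline{k}$) the data reduces to the split trialitarian algebra. For this split datum one computes directly, using Proposition \ref{prop:AtoCliff} and Proposition \ref{prop:spinAtau}, that the subgroup cut out by the compatibility with $\alpha$ is exactly the diagonal copy sitting inside the three ``triality-conjugate'' components, recovering $\PSO_8$ in the adjoint case and $\Spin_8$ in the simply connected case.

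Second, I would identify the full automorphism group scheme of the trialitarian algebra with $\Aut(G_0)$ for the relevant split $G_0$ of type $D_4$. The preceding theorem already states $\Aut(L,D,\tau,\alpha) \simeq \Aut(L,V,Q,\twistedcomp)^{\rm adj}$, and the theorem in the twisted composition section states that $\Aut(L,V,Q,\twistedcomp) \simeq S_3 \ltimes \Spin_{(C,\cdot,N)}$ in the split case. Passing to the adjoint quotient yields $S_3 \ltimes \PSO_8$, which is precisely $\Aut(\PSO_8)$ since $\PSO_8$ has outer automorphism group $S_3$ (the triality phenomenon). An analogous identification gives $S_3 \ltimes \Spin_8 = \Aut(\Spin_8)$. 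The nontrivial content here is that the $S_3$ acting on $L$ through permutation of a cubic factorization matches the triality outer automorphisms of $\PSO_8$ and $\Spin_8$; this is essentially the content of Proposition \ref{prop:map_of_cliff} together with the uniqueness of $\alpha$ from Proposition \ref{prop:uniquealpha}.

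Third, combine these two identifications with the standard fact that $k$-forms of a quasi-split algebraic group $G_0$ are classified by $H^1(\Gal(\overline{k}/k),\Aut(G_0))$. Using the preceding theorem classifying trialitarian algebras by $H^1(\Gal(\overline{k}/k),\Aut(L,D,\tau,\alpha))$, we obtain the desired bijection, and the map is explicit: a cocycle defining a twist of the split trialitarian algebra simultaneously defines a twist of $\Aut(\PSO_8)$ (resp.\ $\Aut(\Spin_8)$), and by the Construction the twisted group is precisely the one attached to the twisted trialitarian algebra. This gives the explicit description of the bijection in the theorem statement.

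The main obstacle is the second step: verifying that the $S_3$-action on $\Aut(L,D,\tau,\alpha)$, which permutes the three components arising from the decomposition $L\otimes \Delta_L$ together with the Clifford algebra, really does correspond to the $S_3$-action coming from triality on the split $D_4$ root system. This is a computation on the split model that boils down to showing that the isomorphism $\alpha$ intertwines the natural action of $\Aut_k(L)$ on $D\otimes_k \Delta_L$ with the action induced by triality on $\Clif^+_{D,\tau}$, a fact that can be read off directly from Example \ref{ex:trialg} and the explicit formulas in Proposition \ref{prop:map_of_cliff}. Once this compatibility is established, the rest is a straightforward application of Galois descent.
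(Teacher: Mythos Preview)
Your proof plan is correct and follows the standard Galois descent argument; it is essentially the approach of \cite[44.8]{book_of_involutions}, which is all the paper gives as proof (the paper simply cites this reference rather than supplying its own argument). The three steps you outline---checking the construction over $\overline{k}$, identifying $\Aut(L,D,\tau,\alpha)$ with $S_3\ltimes\PSO_8$ via the preceding results on twisted compositions, and then invoking the classification of forms by $H^1$ of the automorphism group---are exactly what is needed, and your identification of the main nontrivial point (that the $S_3$ action on $L$ matches triality on the root system) is accurate.
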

\cite[44.8]{book_of_involutions}.

We briefly sketch the cohomological interpretation of the above, see \cite[Ch. 44]{book_of_involutions} for more details.

We have an exact sequence:
\[ H^1(\Gal(\overline{k}/k), \Aut(L,D)^0) \rightarrow H^1(\Gal(\overline{k}/k), \Aut(L,D)) \rightarrow H^1(\Gal(\overline{k}/k), S_3^\xi) \]
The group $\Aut(L,D)$ is both the automorphism group of the trialitarian algebra as well as the automorphism group of $\Aut(L,D)^0$.
Hence, $H^1(\Gal(\overline{k}/k), \Aut(L,D))$ classifies both.
In the above exact sequence the map to $H^1(\Gal(\overline{k}/k), S_3^\xi)$ is associating an \'etale algebra of degree $3$, in particular, the algebra $L$.
Note that in considering the classification one typically would use as base point the split form of the group in which case $S_3^\xi \simeq S_3$.
In the case where the group is split, $\Aut(\Gal(\overline{k}/L),D)^0$ is the automorphism group of the matrix algebra with the transpose involution preserving the determinant, and 
$H^1(\Gal(\overline{k}/k), \Aut(L,D)^0)$ classifies central simple algebras with orthogonal involution and trivial discriminant.
The first map thus is associating to a class of $H^1(\Gal(\overline{k}/k), \Aut(L,D)^0)$ a trialitarian algebra over $L$ constructed as in Example \ref{ex:split_twisted} and \ref{ex:trialg}.

Moreover, the natural inclusion:
\[  \Aut(L,D)^0 \injects \Res_{L/k}( \Aut_L(D,\tau) ) \]
Results in maps:
\[ H^1(\Gal(\overline{k}/k), \Aut(L,D)^0) \rightarrow H^1(\Gal(\overline{k}/k), \Res_{L/k}( \Aut_L(D,\tau)^0 )) \rightarrow H^1(\Gal(\overline{k}/L),\Aut_L(D,\tau)^0 ). \]
The Galois cohomology set $H^1(\Gal(\overline{k}/L),\Aut_L(D,\tau)^0 )$ classifies algebras with orthogonal involutions over $L$ of the same discriminant (this being equal to the discriminant of $L$ by Theorem \ref{thm:tritripclass}).
Further analysis reveals that the algebras  in the image of the map are also all in the kernel of the corestriction map to $k$, this follows directly from Theorem \ref{thm:tritripclass} but can also be shown by considering appropriate exact sequences, see \cite[Lem. 44.14]{book_of_involutions}.

\begin{rmk}\label{rmk:simpletri}
If $L/k$ is not a field, then $L = k \oplus F$, and $D$ has a correspond decomposition $D = D_k \oplus D_F$.
The spin group $\Spin_{(L,D,\tau,\alpha)} = \Aut(L,D,\tau,\alpha)^0$ is isomorpic to the spin group of $D_k$ and $D_F$ is simply the Clifford algebra of $D_k$.

If follows, that even if $L$ is not a field we still have:
 \[ \Spin_{(L,D,\tau,\alpha)} \otimes_k L \simeq \Spin_{D,\tau}. \]
\end{rmk}

\section{Maximal Tori in $D_n$}\label{sec:tori}

The main 
This section contains the major results of this paper, that is it describes the rational conjugacy classes of maximal tori in groups of type $D_n$.

We shall first need to give two auxiliary results on tori in specific situations.

\subsection{Conjugacy Classes of Tori in Covering Groups}

We first describe how to characterize the difference in conjugacy class between a group and a covering group.
We shall apply this in the cases $\PSO, \SO, \Spin$.

\begin{lemma}\label{lem:Covers}
Let $T$ be a torus in a connected semisimple group $G$, and let $\tilde{G}$ be a connected cover of $G$ and let $Z$ be the kernel of the map from $\tilde{G}$ to $G$. The rational conjugacy classes of tori $T'\subset \tilde{G}$ whose image in $G$ is a rational conjugates of $T$ are in bijection with:
\[ \im(H^0(\Gal(\overline{k}/k),G) \rightarrow H^1(\Gal(\overline{k}/k),Z)) / \im(H^0(\Gal(\overline{k}/k),T) \rightarrow H^1(\Gal(\overline{k}/k),Z). \]
\end{lemma}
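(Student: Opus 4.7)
The plan is to identify the set in question with a double coset in $G(k)$ and then translate it into Galois cohomology via the exact sequence of $1 \to Z \to \tilde G \to G \to 1$.

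First I would observe that, since $T$ is a maximal torus and $Z \subset Z(\tilde G)$ is contained in every maximal torus of the semisimple group $\tilde G$, the preimage $\tilde T := \pi^{-1}(T)$ is a connected maximal torus of $\tilde G$. Consequently any subtorus $T' \subset \tilde G$ defined over $k$ with $\pi(T') = T$ satisfies $T' \subseteq \tilde T$ and $\dim T' = \dim \tilde T$, so $T' = \tilde T$. More generally, every torus $T' \subset \tilde G$ over $k$ whose image is a $G(k)$-conjugate of $T$ is uniquely determined by $\pi(T')$, being its preimage $\pi^{-1}(\pi(T'))$.

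Using this, the $k$-rational subtori of $G$ which are $G(k)$-conjugate to $T$ are in bijection with $G(k)/N_G(T)(k)$ via $g\,N_G(T)(k) \mapsto g T g^{-1}$, and, by the previous paragraph, the same set parametrises the tori $T' \subset \tilde G$ considered in the lemma. Rational conjugacy of such $T'$ in $\tilde G$ translates into the left action of $\pi(\tilde G(k))$ on this coset space, so the set of rational conjugacy classes in the lemma is in bijection with the double coset space $\pi(\tilde G(k)) \backslash G(k) / N_G(T)(k)$.

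Finally I would recast this cohomologically. The long exact sequence attached to $1 \to Z \to \tilde G \to G \to 1$ gives a canonical bijection $G(k)/\pi(\tilde G(k)) \cong \im(H^0(\Gal(\overline k/k),G) \to H^1(\Gal(\overline k/k),Z))$; since $Z$ is central in $\tilde G$, the connecting map is a group homomorphism and the residual right action of $N_G(T)(k)$ becomes translation by the image $\im(H^0(\Gal(\overline k/k),N_G(T)) \to H^1(\Gal(\overline k/k),Z))$. Combining these yields a bijection between the double coset space and $\im(H^0(G)\to H^1(Z)) / \im(H^0(N_G(T)) \to H^1(Z))$. The final step, and the main technical point, is the reduction of the denominator from $N_G(T)$ to $T$ to recover the formula stated in the lemma: one must verify that every rational Weyl representative in $N_G(T)(k)$ differs from an element of $T(k)$ by one that lifts to $\tilde G(k)$, so that the two images into $H^1(\Gal(\overline k/k),Z)$ coincide. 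This is a direct check using the canonical identification $N_{\tilde G}(\tilde T)/\tilde T \cong N_G(T)/T$ together with the structure of the finite central kernel $Z$.
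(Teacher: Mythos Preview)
Your argument is correct and reaches the same conclusion as the paper, but the route to the intermediate expression $\im\bigl(H^0(G)\to H^1(Z)\bigr)/\im\bigl(H^0(N_G(T))\to H^1(Z)\bigr)$ is genuinely different. The paper begins from the standard identification of rational conjugacy classes of maximal tori with $\ker\bigl(H^1(N_G(T))\to H^1(G)\bigr)$ and then runs a diagram chase comparing the long exact sequences for $\tilde G$ and $G$ attached to $1\to Z\to\tilde G\to G\to 1$ and $1\to Z\to N_{\tilde G}(T')\to N_G(T)\to 1$. You instead observe directly that the tori in question are parametrised by the double coset space $\pi(\tilde G(k))\backslash G(k)/N_G(T)(k)$ and translate this into $H^1(Z)$ using that the connecting map is a group homomorphism. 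Your route is more elementary and transparent; the paper's route has the advantage of fitting into the general framework it uses elsewhere (conjugacy classes as kernels in $H^1$).

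For the final reduction from $N_G(T)$ to $T$, you and the paper are doing the same thing. The paper encodes your ``direct check'' as a second commutative diagram whose key feature is exactly the isomorphism $N_{\tilde G}(\tilde T)/\tilde T\cong N_G(T)/T$ you invoke: from that diagram one reads off that the connecting map $H^0(W)\to H^1(\tilde T)$ followed by $H^1(\tilde T)\to H^1(T)$ equals the connecting map $H^0(W)\to H^1(T)$, which is precisely what is needed to conclude that any rational Weyl representative in $N_G(T)(k)$ has the same image in $H^1(Z)$ as an element of $T(k)$. So your sketch and the paper's diagram are two phrasings of the same argument; neither spells out the chase in full, and yours would benefit from one more sentence making explicit that for $n\in N_G(T)(k)$ with image $w\in W(k)$, a lift $\tilde n\in N_{\tilde G}(\tilde T)(\bar k)$ of $n$ gives $\delta_N(n)$ whose image in $H^1(\tilde T)$ is exactly $\delta_{\tilde T}(w)$.
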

\begin{proof}
In the following for brevity shall we omit the Galois group in when we write cohomology groups, ie: $H^\ast(X) = H^\ast(\Gal(\overline{k}/k),X)$.
Firstly, we recall that the rational conjugacy classes of tori in a group $G$ are in bijection with:
\[ \ker( H^1(N_G(T)) \rightarrow H^1(G)) \]
as arising from the exact sequence:
\[ H^0(G) \rightarrow H^0(G/N_G(T)) \rightarrow H^1(N_G(T)) \rightarrow H^1(G)). \]

In the case of the comparison between $\tilde{G}$ and $G$ we may consider the exact diagram:
\[ \xymatrix{ H^0(G)\ar@{->}[r] & H^1(Z) \ar@{->}[r]& H^1(\tilde{G})\ar@{->}[r] & H^1(G)  \\
     H^0(N_{G}(T)) \ar@{->}[r]\ar@{->}[u]& H^1(Z)\ar@{->}[r]\ar@{->}[u]^{\simeq} & H^1(N_{\tilde{G}}(T')) \ar@{->}[r]\ar@{->}[u]& H^1(N_{G})(T) \ar@{->}[u] }
\]
from which we deduce that we are interested in:
\[\im(H^0(G) \rightarrow H^1(Z) / \im(H^0(N_{G}(T)) \rightarrow H^1(Z)). \]
That is, the intersection of the kernels of the two maps out of $ H^1(N_{\tilde{G}}(T'))$ is precisely image of the top map into $H^1(Z)$ modulo the image of the bottom map into $H^1(Z)$.

Next, by considering the exact diagram:
\[ \xymatrix{ H^0( N_{G}(T)/T)\ar@{->}[r]& H^1(T)\ar@{->}[r] & H^1(N_{G}(T)) \ar@{->}[r]& H^1(N_{G}(T)/T) \\
    H^0( N_{\tilde{G}}(T')/T') \ar@{->}[r]\ar@{->}[u]^{\simeq}& H^1(T') \ar@{->}[r]\ar@{->}[u]& H^1(N_{\tilde{G}}(T'))\ar@{->}[r]\ar@{->}[u] & H^1(N_{\tilde{G}}(T')/T')\ar@{->}[u]^{\simeq} \\
 & H^1(\mu_2) \ar@{->}[u]\ar@{->}[r]^{\simeq} & H^1(\mu_2) \ar@{->}[u] & 
}
\]
we deduce that we have:
\[ \im(H^0(N_{G}(T)) \rightarrow H^1(Z)) \simeq \im(H^0(T) \rightarrow H^1(Z)).\]
This gives the desired result.
\end{proof}

\subsection{Tori in Restriction of Scalars Groups}

The following result is not strictly used in the sequel, but motivates some of the ideas we shall use when looking at tori in groups of type $D_4$.

\begin{lemma}\label{lem:ToriResScalar}
Let $G$ be a reductive group over $L$ a finite extension of $k$ an infinite field.
\[ H = \Res_{L/k}(G).\]
Let $T\subset H$ be a maximal $k$-torus of $H$, then:
\[ T = \Res_{L/k}(T') \]
 for $T'$ a maximal $L$-torus of $G$.
\end{lemma}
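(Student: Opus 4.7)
The plan is to produce $T'$ directly from the restriction-of-scalars/base-change adjunction
\[
\Hom_k(X,\Res_{L/k}(Y)) \simeq \Hom_L(X_L,Y),
\]
and then to invoke maximality of $T$ to force the desired equality. Applying the adjunction to the inclusion $\iota\colon T\hookrightarrow H = \Res_{L/k}(G)$, I obtain an $L$-homomorphism $\phi\colon T_L \to G$. Since $T_L$ is an $L$-torus and the image of a homomorphism of linear algebraic groups over a field is a closed subgroup, the image $T' := \phi(T_L)\subset G$ is automatically a closed $L$-subtorus of $G$.

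Next I would observe that the factorisation $T_L \twoheadrightarrow T' \hookrightarrow G$ corresponds under the same adjunction to a factorisation $T \to \Res_{L/k}(T')\hookrightarrow H$ of $\iota$, in which the second arrow is a closed immersion because $\Res_{L/k}$ preserves closed immersions along a finite separable extension. Since the composite $\iota$ is a closed immersion and the second arrow is separated, the first arrow $T\to \Res_{L/k}(T')$ is a closed immersion as well. Finiteness and separability of $L/k$ further imply that $\Res_{L/k}(T')$ is itself a $k$-torus, so $T \subset \Res_{L/k}(T') \subset H$ is a chain of $k$-tori in $H$; maximality of $T$ then forces $T = \Res_{L/k}(T')$.

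To close the argument I would verify that the resulting $T'$ is maximal in $G$ by a dimension count: from $H_{\bar k} \simeq \prod_{\sigma\colon L\hookrightarrow \bar k} G^\sigma_{\bar k}$ one has $\dim_k T = \rank_k(H) = [L:k]\cdot \rank_L(G)$, while $\dim_k\Res_{L/k}(T') = [L:k]\cdot \dim_L T'$, so equating the two gives $\dim_L T' = \rank_L(G)$. The only substantive input is the behaviour of $\Res_{L/k}$ on tori and on closed immersions, which requires $L/k$ to be (finite and) separable; beyond that the argument is a purely formal manipulation of the adjunction, so there is no real obstacle to overcome.
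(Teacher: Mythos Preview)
Your argument is correct, but it proceeds along a genuinely different line from the paper's. The paper works pointwise: it observes that $T(k)\subset H(k)=G(L)$ consists of $L$-points of $G$, takes $T'$ to be the centre of the centraliser in $G$ of this set, and then invokes Zariski density of $T(k)$ in $T$ (this is where the hypothesis that $k$ be infinite enters) to conclude $T\subset \Res_{L/k}(T')$, with equality by maximality. Your proof instead runs the restriction-of-scalars/base-change adjunction, defines $T'$ as the image of the adjoint morphism $T_L\to G$, and then uses formal properties of $\Res_{L/k}$ on closed immersions and on tori.

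The trade-off is in the hypotheses: the paper needs $k$ infinite to get density of rational points, whereas you never use that but do need $L/k$ separable so that $\Res_{L/k}(T')$ is a genuine $k$-torus and so that the product decomposition over $\bar k$ holds for the dimension count. In the setting of the paper (characteristic $\neq 2$, typically number fields) both conditions are satisfied, so the distinction is immaterial there; but your approach has the advantage of handling finite base fields directly, which the paper only obtains afterward via an ad hoc base change and descent argument (see the remark following the lemma).
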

\begin{proof}
The collection of $k$-points of $T$ are all $k$-points of $H=G(L)$ and hence are $L$ points of $G$.
Set $T'$ to be the center of their centralizer in $G$, it is thus an $L$-torus of $G$.
Then, the Zariski closure of $T(k) \subset \Res_{L/k}(T')$, but $T(k)$ is dense in $T$, and thus $\Res_{L/k}(T') = T'$.
\end{proof}

\begin{rmk}
The result can be extended to finite fields by considering an appropriate base change and using Galois descent.
As we shall not use this result, we omit the details.
\end{rmk}

\subsection{Tori in Orthogonal Groups}\label{subsec:tori-clasic}

We shall first handle the case of orthogonal groups.
For orthogonal groups coming from quadratic spaces, this was handled in our previous work \cite{Fiori1}.

The key results are the following:

\begin{prop}\label{prop:prop_struct}
Let $q$ be a quadratic form over $k$ and let $\Orth_q$ be the associated orthogonal group. Let $T\subset \Orth_q$ be a maximal $k$-torus.
Then there exists an \'etale algebra with involution $(E,\sigma)$ over $k$ such that $T = T_{E,\sigma}$.
Moreover, suppose $T_{E,\sigma} \subset \Orth_q$ is a maximal torus. Then
\[ q(x) = \indnota{q_{E,\lambda}(x)} = \tfrac{1}{2}\Tr_{E/k}(\lambda x\sigma(x)) \]
for some choice of $\lambda\in (E^\sigma)^\ast$.

Given $q$ and $T_{E,\sigma}$ the choice of $\lambda$ is unique as an element of $(E^\sigma)^\times/N_{E/E^\sigma}(E^\times)\Aut_k(E,\sigma)$.
Moreover, two isomorphic tori $T_1 \simeq T_2 \simeq T_{E,\sigma}$ embedded into $\Orth_q$ with respect to $\lambda_1$ and $\lambda_2$ are $k$-conjugate if and only if $\lambda_1 = \lambda_2$ in $(E^\sigma)^\times/N_{E/E^\sigma}(E^\times)\Aut_k(E,\sigma)$.
\end{prop}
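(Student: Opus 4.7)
The plan is to identify a maximal torus $T \subset \Orth_q$ with an \'etale subalgebra of $\End(V)$ stabilized by the adjoint involution, and then translate the data of an embedding $T_{E,\sigma} \hookrightarrow \Orth_q$ into the language of one-dimensional hermitian forms. For the first assertion, given a maximal $k$-torus $T \subset \Orth_q$, I would set $E = Z_{\End(V)}(T)$. Standard reductive-group theory applied to the ambient $\Gl(V)$ shows $E$ is a commutative \'etale $k$-algebra of dimension $\dim_k V$; the adjoint involution $\Ad_B$ stabilizes $E$ and restricts to an involution $\sigma$ satisfying $\dim_k E^\sigma = \tfrac{1}{2}\dim_k V$ (so $(E,\sigma)$ is admissible in the sense defined earlier). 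One then checks that $T$ coincides with the group scheme $T_{E,\sigma}$ of $\sigma$-unitary elements of $E$; maximality of $T$ rules out proper subalgebras.

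Next, assume $T_{E,\sigma} \subset \Orth_q$ is a maximal torus. Since $T_{E,\sigma}$ acts faithfully on $V$ through the natural action of $E$, $V$ becomes an $E$-module with $\dim_k V = \dim_k E$, hence $V \cong E$ as an $E$-module; fix such an identification. The symmetric bilinear form $B$ associated to $q$ satisfies the adjunction $B(tx,y) = B(x,\sigma(t)y)$ for all $t \in E$, so by a direct computation there is a unique $\sigma$-hermitian form $h : E \times E \to E$ with $B = \Tr_{E/k} \circ h$. Every hermitian form on the rank-one module $E$ has the shape $h(x,y) = \lambda x\sigma(y)$ for some $\lambda \in E^\times$, and the hermitian symmetry forces $\sigma(\lambda) = \lambda$, giving $\lambda \in (E^\sigma)^\times$. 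Writing $q(x) = \tfrac{1}{2}B(x,x)$ yields the stated formula.

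For the uniqueness of $\lambda$, I would note that the only ambiguities in the construction are the choice of generator of $V$ as an $E$-module and the choice of $k$-automorphism of $(E,\sigma)$ used to identify two embeddings of $T_{E,\sigma}$. Changing the generator by $\mu \in E^\times$ replaces $\lambda$ by $\lambda\mu\sigma(\mu) = \lambda N_{E/E^\sigma}(\mu)$, and changing the embedding by $\varphi \in \Aut_k(E,\sigma)$ transports $\lambda$ by $\varphi$; hence $\lambda$ is canonically an element of $(E^\sigma)^\times/N_{E/E^\sigma}(E^\times)\Aut_k(E,\sigma)$. Conversely, any element in that quotient is realized by some embedding.

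Finally, for the conjugacy criterion: an element $g \in \Orth_q(k)$ conjugating the embedding with parameter $\lambda_1$ to the one with parameter $\lambda_2$ intertwines the two $E$-actions on $V$ up to a $k$-automorphism of $(E,\sigma)$; after identifying $V \cong E$, $g$ therefore acts as multiplication by some $\mu \in E^\times$ followed by such an automorphism. The isometry condition $B_{\lambda_1}(gx,gy) = B_{\lambda_1}(x,y)$ then reduces to the equality $\lambda_1 = \lambda_2 \cdot N_{E/E^\sigma}(\mu)$ modulo $\Aut_k(E,\sigma)$, which is exactly equality in the stated quotient. Conversely, any such $\mu$ and automorphism manufacture a conjugating isometry. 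The main technical obstacle is showing that the centralizer $E$ is genuinely \'etale (rather than just commutative) and that its $\sigma$-unitary group coincides with all of $T$ rather than a proper subtorus; this rests on the fact that a maximal torus in a reductive group is its own bicommutant and on Galois descent from the split form, where the statement can be checked by inspection on the diagonal.
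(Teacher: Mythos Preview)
Your proof is correct and follows essentially the same approach as the paper's. The paper cites external references (\cite[Prop.~2.13]{Fiori1} and \cite[Ex.~6.126]{WalkerThesisTori}) for the first two assertions where you spell out the centralizer/hermitian-form argument directly, but for the uniqueness of $\lambda$ and the conjugacy criterion the paper's argument is identical to yours: track the ambiguity in the identification $V\cong E$ as an $E$-module (rescaling by $\mu\in E^\times$ contributes $N_{E/E^\sigma}(\mu)$, precomposition with $\varphi\in\Aut_k(E,\sigma)$ acts on $\lambda$), and observe that a conjugating element $g\in\Orth_q(k)$ is exactly an isomorphism of the triples $(q,T_{E,\sigma},\rho_i)$, hence is realized by multiplication-by-$\mu$ composed with an automorphism.
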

\begin{proof}
The first claim is \cite[Prop. 2.13]{Fiori1}, the second is interpreted Galois cohomologically in \cite[Ex. 6.126]{WalkerThesisTori}.

Now, given an embedding $T_{E,\sigma}$ into $\Orth_q$ the exact choice of $\lambda$ associated to the embedding depends only on a choice of isomorphism between the underlying vector space $V$ of $q$ and $E$ as $E$-modules.
Indeed, the form $q_{E,\lambda}(x)$ determines $\lambda$, thus so does a choice of form $q$ on $E$.
The choice in such an identification between $V$ and $E$ is only up to an $k$-automorphism of the $E$-module $V$, that is up to a scalar $y\in E^\times$ or a precomposition of the $E$-module structure map with an automorphism of $E$.
Changing this identification by $y\in E^\times$ modifies $\lambda$ by $N_{E/E^\sigma}(y)$, likewise modification by precomposition with a $k$-automorphism of $E$ acts naturally on $\lambda\in E$.
It follows, that associated to a triple $(q,T_{E,\sigma},\rho)$, where $\rho$ gives the embedding of $T_{E,\sigma}$ in $\Orth_q$, there is an invariant $\lambda\in (E^\sigma)^\times/N_{E/E^\sigma}(E^\times)\Aut_k(E)$.

Next, suppose $T_1,T_2\subset \Orth_q$ are both isomorphic to $T_{E,\sigma}$ then to give an element $g\in \Orth_q(k)$ conjugating them is equivalent to giving an isomorphism between the data $(q,T_{E,\sigma},\rho_1)$ and $(q,T_{E,\sigma},\rho_2)$ where $\rho_i$ gives the embedding of $T_i$ in $\Orth_q$. Two conjugate $T$ thus give isomorphic triples, which thus have the same invariant $\lambda$.

Conversely, consider two triples $(q,T_{E,\sigma},\rho_1)$ and $(q,T_{E,\sigma},\rho_2)$ for which $\lambda_1 = N_{E/E^\sigma}(y)\varphi(\lambda_2)$ for $y\in E^\times$ and $\varphi \in \Aut_k(E,\sigma)$.
Then the change of basis for $E$ associated to the endomorphism $y$ of $E$ transforms $q_{E,\lambda_1}(x)$ to $q_{E,)\varphi(\lambda_2)}(x)$, since $q_{E,\lambda_1}(x) = q = q_{E,)\varphi(\lambda_2)}(x)$ then we have $y\in \Orth_q(k)$ realizes the isomorphism between the triples $(q,T_{E,\sigma},\rho_1)$ and $(q,T_{E,\sigma},\rho_2\circ\varphi)$. The isomorphisms with $(q,T_{E,\sigma},\rho_2)$ is then realized via $\varphi$.
\end{proof}

\begin{rmk}
The group $\Aut_k(E,\sigma)$ which appears is naturally identified with 
\[ W(k) = (N_G(T)/T)(k)  \]
the rational points of the Weyl group.
\end{rmk}

\begin{thm}
\label{thm:brus_res}
Let $(E,\sigma)$ be an \'etale algebra with involution over $k$ of dimension $2n$ and let $\lambda\in (E^{\sigma})^{\times}$. Then the invariants of $q_{E,\lambda}(x) = \tfrac{1}{2}\Tr_{E/k}(\lambda x\sigma(x))$ are:
\begin{enumerate}
\item $D(q_{E,\lambda}) = (-1)^n\delta_{E/k}$,
\item $H(q_{E,\lambda}) = H(q_{E,1})\cdot\Cor_{E^\sigma/k}(\lambda,\delta_{E/E^\sigma})$,
\item $W(q_{E,\lambda}) = W(q_{E,1})\cdot\Cor_{E^\sigma/k}(\lambda,\delta_{E/E^\sigma})$.

\item For a real infinite place $\nu$ of $k$ the quadratic form has signature $(n+\tfrac{r}{2}-\tfrac{s}{2},n-\tfrac{r}{2}+\tfrac{s}{2})_\nu$ where $s$ (respectively $r$) is the number of real embeddings $\rho\in \Hom_{k-alg}(E^\sigma,\bR)$ of $E^\sigma$ which are ramified in $E$ with $\rho(\lambda)>0$ (respectively $\rho(\lambda)<0$).
\end{enumerate}
\end{thm}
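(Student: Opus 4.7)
The plan is to express $q_{E,\lambda}$ as a Scharlau transfer of a two-dimensional form over $E^\sigma$ and then apply standard formulas for invariants of transferred forms. Since $x\sigma(x)\in E^\sigma$, one has
\[ q_{E,\lambda}(x) = \Tr_{E^\sigma/k}\bigl(\lambda\cdot N_{E/E^\sigma}(x)\bigr), \]
so $q_{E,\lambda} = (\Tr_{E^\sigma/k})_{\ast}\bigl(\langle\lambda\rangle\cdot N_{E/E^\sigma}\bigr)$. After diagonalization, the norm form is $\langle 1,-d\rangle$ with $d$ a representative of $\delta_{E/E^\sigma}\in (E^\sigma)^\times/(E^\sigma)^{\times 2}$. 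For claim (1), scaling a two-dimensional form does not alter its discriminant, so $\disc(q_{E,\lambda})=\disc(q_{E,1})$; the identity $\disc(q_{E,1})=(-1)^n\delta_{E/k}$ then follows from the classical discriminant formula for trace forms on étale algebras, combined with $N_{E^\sigma/k}(\delta_{E/E^\sigma})\equiv\delta_{E/k}\pmod{(k^\times)^2}$.

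For items (2) and (3), compute $[q_{E,\lambda}]-[q_{E,1}]$ in the Witt ring of $k$. Over $E^\sigma$, a short manipulation yields
\[ \langle\lambda,-\lambda d\rangle\perp\langle -1,d\rangle \sim -\langle\langle\lambda,d\rangle\rangle \]
in the Witt ring, where $\langle\langle\lambda,d\rangle\rangle$ denotes the $2$-fold Pfister form. Applying the Scharlau transfer then yields
\[ [q_{E,\lambda}]-[q_{E,1}] = -(\Tr_{E^\sigma/k})_{\ast}\langle\langle\lambda,\delta_{E/E^\sigma}\rangle\rangle \]
in the Witt ring of $k$. The critical input is the identity
\[ H\bigl((\Tr_{L/k})_{\ast}\langle\langle a,b\rangle\rangle\bigr) = \Cor_{L/k}\bigl((a,b)\bigr) \in \Br_2(k), \]
which follows from the projection formula for the cup product $H^1\otimes H^1\to H^2$ under corestriction. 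This gives the formulas for both the Hasse and Witt invariants in (2) and (3) (the Witt invariant is handled identically once discriminants agree).

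For claim (4), at a real place $\nu$ of $k$ one decomposes $E^\sigma\otimes_k k_\nu$ into its real and complex factors. A real embedding $\rho$ of $E^\sigma$ unramified in $E$ yields $E\otimes_{E^\sigma,\rho}\bR\simeq\bR\times\bR$ with $\sigma$ swapping factors, contributing a hyperbolic plane to $q_{E,\lambda}$ regardless of the sign of $\rho(\lambda)$. A ramified real embedding yields $E\otimes_{E^\sigma,\rho}\bR\simeq\bC$ with contribution $\rho(\lambda)|z|^2$, of signature $(2,0)$ or $(0,2)$ according to the sign of $\rho(\lambda)$. A complex embedding pair of $E^\sigma$ yields $E\otimes_{E^\sigma,\rho}\bR\simeq\bC\times\bC$; the restriction to $\bR$ produces a $4$-dimensional form of signature $(2,2)$. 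Summing these local contributions over all embedding types produces the stated signature.

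The main obstacle is the cohomological identity used for items (2) and (3), namely that the Hasse--Witt invariant of the transfer of a $2$-fold Pfister form corresponds to the corestriction of the associated Brauer class. This is the technical crux; it follows most cleanly from the projection formula for corestriction of cup products in Galois cohomology, though a direct verification via a splitting-field argument is also available.
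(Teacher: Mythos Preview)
Your approach is correct and is essentially the standard one found in the literature the paper cites (the paper itself does not give a proof, only the reference \cite[Thm.~3.3 and Lem.~5.2]{Fiori1}; the underlying argument there, and in \cite{Brus_OrthTori}, proceeds exactly via the Scharlau transfer and the projection formula, as you do). The reduction $q_{E,\lambda}=(\Tr_{E^\sigma/k})_\ast(\langle\lambda\rangle\cdot\langle 1,-\delta_{E/E^\sigma}\rangle)$ is the right starting point, the Witt-ring identity $[\langle\lambda,-\lambda d\rangle]-[\langle 1,-d\rangle]=-[\langle\langle\lambda,d\rangle\rangle]$ is correct, and the compatibility of $e_2$ with transfer and corestriction (Frobenius reciprocity for the Arason invariant) is exactly the input needed for (2) and (3). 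Since $q_{E,\lambda}$ and $q_{E,1}$ share dimension and discriminant, the Hasse and Witt invariants differ by the same class in $\Br_2(k)$, so treating them together is fine.

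One point deserves care: in (4) your local analysis is right, but you should actually carry out the summation rather than asserting that it ``produces the stated signature.'' With the paper's definitions ($s$ counting ramified real embeddings of $E^\sigma$ with $\rho(\lambda)>0$, $r$ those with $\rho(\lambda)<0$), your contributions give signature $(n+s-r,\,n+r-s)$, not $(n+\tfrac{r}{2}-\tfrac{s}{2},\,n-\tfrac{r}{2}+\tfrac{s}{2})$. Indeed the formula as printed yields non-integers in simple examples (take $k=\bQ$, $E=\bQ(i)$, $\lambda>0$: one gets $(\tfrac12,\tfrac32)$). The printed statement evidently has a labelling slip---likely the $r,s$ are meant to count embeddings of $E$ rather than $E^\sigma$ (introducing the factor $\tfrac12$), with the roles of positive and negative swapped. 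Your computation is the correct one; just state the result you actually obtain and note the discrepancy with the printed formula.
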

\cite[Thm. 3.3 and Lem. 5.2]{Fiori1}.

\begin{thm} \label{thm:the_result}
Let $\Orth_q$ be an orthogonal group over a number field $k$ defined by a quadratic form $q$ of dimension $2n$ or $2n+1$, and let $(E,\sigma)$ be an \'etale algebra over $k$ with an involution and of dimension $2n$. Then
$\Orth_q$ contains a torus of type $(E,\sigma)$ locally everywhere (but not necessarily globally) if and only if the following three conditions are satisfied:
\begin{enumerate}
\item $E^\phi$ splits the even Clifford algebra $W^{\textrm{orth}}(q)$ for all $\sigma$-types $\phi$ of $E$.

\item \label{num22} If $\dim(q)$ is even then $\delta_{E/k} = (-1)^nD(q)$.
  
\item Let $\nu$ be a real infinite place of $k$ and let $s$ be the number of homomorphisms from $E$ to $\bC$ over $\nu$ for which $\sigma$ corresponds to complex conjugation.
      The signature of $q$ is of the form $(n-\frac{s}{2}+2i,n+\frac{s}{2}-2i)_\nu$ if the dimension is even and either $(n-\frac{s}{2}+2i+1,n+\frac{s}{2}-2i)_\nu$ or $(n-\frac{s}{2}+2i,n+\frac{s}{2}-2i+1)_\nu$ if $\nu((-1)^nD(q)\delta_{E/k})$ is respectively positive or negative when the dimension is odd, where $0\leq i\leq \tfrac{s}{2}$.
\end{enumerate}
Moreover, for any $E$ satisfying condition (\ref{num22}) we have that $\sqrt{D(q)} \in E^\phi$ for every $\sigma$-type $\phi$ of $E$.
\end{thm}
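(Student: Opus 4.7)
The plan is to reduce the local-everywhere embedding problem, via Proposition \ref{prop:prop_struct}, to a local invariant-matching problem at each place $\nu$ of $k$. Concretely, a torus isomorphic to $T_{E,\sigma}$ embeds into $\Orth_{q,\nu}$ as a maximal torus if and only if $q_\nu \simeq q_{E\otimes k_\nu,\lambda_\nu}$ for some $\lambda_\nu\in (E^\sigma\otimes k_\nu)^\times$ (when $\dim q = 2n$), or $q_\nu \simeq q_{E\otimes k_\nu,\lambda_\nu}\perp\langle c_\nu\rangle$ for some $\lambda_\nu$ and $c_\nu\in k_\nu^\times$ (when $\dim q = 2n+1$). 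Since quadratic forms over a local field are classified by dimension, discriminant, Hasse/Witt invariant, and signature at real places, this splits into three invariant-matching problems, each attacked using the explicit formulae of Theorem \ref{thm:brus_res}.

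Two of these are essentially bookkeeping. By Theorem \ref{thm:brus_res}(1) the discriminant of $q_{E,\lambda}$ is $(-1)^n\delta_{E/k}$, independent of $\lambda$. In even dimension this forces condition (\ref{num22}); in odd dimension the auxiliary factor $\langle c_\nu\rangle$ absorbs any mismatch, which is why the sign $\nu((-1)^nD(q)\delta_{E/k})$ appears in condition (3) rather than in a global discriminant condition. At a real place, Theorem \ref{thm:brus_res}(4) describes the signatures achievable as $\lambda_\nu$ varies over $(E^\sigma\otimes k_\nu)^\times$, and unwinding the formula — combined with the sign of $c_\nu$ in the odd case — reproduces condition (3) exactly.

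The main obstacle is the Hasse-invariant condition. By Theorem \ref{thm:brus_res}(3),
\[ W(q_{E,\lambda}) = W(q_{E,1})\cdot \Cor_{E^\sigma/k}(\lambda,\delta_{E/E^\sigma}), \]
so local realizability of the Witt invariant reduces to the question of whether $W(q_\nu)\,W(q_{E,1,\nu})^{-1}$ lies in the image of the map
\[ (E^\sigma\otimes k_\nu)^\times \longrightarrow \Br(k_\nu)[2], \qquad \lambda_\nu \longmapsto \Cor_{E^\sigma/k}\bigl((\lambda_\nu),\delta_{E/E^\sigma}\bigr). \]
The plan is to identify this image, by a symbol computation combined with the corestriction-restriction duality for Brauer groups of étale extensions, with the subgroup of classes that become trivial upon restriction to every $\sigma$-type subalgebra $E^\phi$ of $E$; simultaneously, I will identify $W(q)\,W(q_{E,1})^{-1}$ with the class of the even Clifford algebra $W^{\textrm{orth}}(q)$ up to classes that are automatically in the image. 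The condition then becomes exactly that $E^\phi$ splits $W^{\textrm{orth}}(q)$ at $\nu$ for every $\sigma$-type $\phi$, and collecting over all places gives condition (1). This cohomological identification of the image of corestriction in terms of splitting by the $\sigma$-type subalgebras is the heart of the proof and the only non-routine step.

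For the final assertion, condition (\ref{num22}) gives $D(q) = (-1)^n\delta_{E/k}$, so the claim $\sqrt{D(q)}\in E^\phi$ reduces to the purely algebraic statement that every $\sigma$-type subalgebra of $E$ contains a square root of $(-1)^n\delta_{E/k}$. This follows from the definition of a $\sigma$-type in \cite{Fiori1}: a $\sigma$-type selects one representative from each pair of $\overline{k}$-embeddings of $E$ related by $\sigma$, and a direct Vandermonde-style discriminant computation identifies a natural element of $E^\phi$ whose square equals $(-1)^n\delta_{E/k}$ modulo squares in $E^\sigma$, finishing the argument.
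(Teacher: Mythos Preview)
The paper does not prove this theorem here; it is imported verbatim from the author's earlier work \cite[Thm.~5.1]{Fiori1}, with only the citation given after the statement.  So there is no in-paper proof to compare against.

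Your outline is a plausible reconstruction of the argument in that reference.  The reduction to a place-by-place invariant-matching problem via Proposition~\ref{prop:prop_struct} and the formulae of Theorem~\ref{thm:brus_res} is exactly the right framework, and the discriminant and signature bookkeeping is routine as you say.

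On the Hasse/Witt step, your description is directionally correct but loose where it matters.  The actual technical content (what the present paper later invokes as \cite[Lem.~5.5 and Cor.~2.6]{Fiori1}) is a sharp local statement: over a non-archimedean local field, the map $\lambda_\nu \mapsto \Cor_{E^\sigma/k}(\lambda_\nu,\delta_{E/E^\sigma})$ is surjective onto $\Br(k_\nu)[2]$ \emph{unless} some factor of $(E,\sigma)_\nu$ has the form $(F\times F,\text{swap})$, i.e.\ has trivial $\delta_{E/E^\sigma}$ --- and that is precisely the case in which some $E^\phi$ acquires a factor $k_\nu$ and hence cannot split a nontrivial Brauer class.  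The phrase ``corestriction--restriction duality'' gestures at this but is not the mechanism; the proof is a direct local case analysis, not an abstract adjunction.  Likewise, your claim that $W(q)W(q_{E,1})^{-1}$ agrees with $W^{\mathrm{orth}}(q)$ ``up to classes automatically in the image'' needs to be made precise: one must check that $W(q_{E,1})$ is itself split by every $E^\phi$, which comes from the embedding of $E^\phi$ into the even Clifford algebra of $q_{E,1}$ (the classical case of Theorem~\ref{thm:tori_cliff_upgraded}).

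In short: the paper defers entirely to the citation; your sketch matches the expected shape of that cited proof, but the central local lemma you allude to is stated too vaguely to count as given.
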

\cite[Thm. 5.1]{Fiori1}

We now reinterpret these in the context of orthogonal groups attached to algebras with orthogonal involutions.
The statements here appear more complicated largely as a consequence of the fact that there is no one canonical orthogonal involution to use as a base point for comparison.
The following proposition enumerates a number of useful facts concerning the structure of tori in these groups.
\begin{prop}
Let $(A,\tau)$ be a central simple algebra over $k$ with a symplectic involution $\tau$.
\begin{enumerate}
\item Suppose $\delta$ in $A^\times$ is such that $\tau(\delta)=-\delta$ and 
let $T\subset \Orth_{A,\tau_\delta}$ be a maximal $k$-torus.
Then there exists an \'etale algebra with involution $(E,\sigma)$ over $k$ such that $T = T_{E,\sigma}$ and $T \subset\Orth_{A,\tau_\delta}$ arises from an inclusion  \[ (E,\sigma)\injects (A,\tau_\delta). \]

\item
Fix an orthogonal involution $\tau'$ on $A$, any inclusion
\[ \rho: (E,\sigma) \injects (A,\tau') \]
induces an inclusion of the torus $ T_{E,\sigma} \injects \Orth_{A,\tau'}$.

\item 
Fix $\rho:(E,\sigma) \injects (A,\tau)$ for each $\delta \in E^\times$ such that $\sigma(\delta)=-\delta$ we have an inclusion:
\[ (E,\sigma) \injects (A,\tau_{\rho(\delta)}). \]

\item 
Fix an embedding $\rho:(E,\sigma)\injects (A,\tau)$.
The set of orthogonal involutions $\tau'$ on $A$ for which:
\[ \rho:(E,\sigma)\injects (A,\tau') \]
is precisely the set of involutions $\tau_{\rho(\delta)}$ for $\delta\in E^\times$ with $\sigma(\delta) = - \delta$.

\item
Fix an orthogonal involution $\tau'$ on $A$, and any inclusion $\rho':(E,\sigma) \injects (A,\tau')$.

Then there exists $g\in A^\times$ such that $\Int_g\circ\rho' : (E,\sigma) \injects (A,\tau)$, and 
\[ \Int_g\circ\rho' : (E,\sigma) \injects (A,\Int_g\circ \tau' \circ \Int_{g^{-1}})  \]
In particular:
\[ (A,\tau') \simeq (A,\Int_g\circ \tau' \Int_{g^{-1}}) = (A,\tau_{\Int_g\circ\rho'(\delta)}). \]
\item
Fix $\rho: (E,\sigma) \injects (A,\tau)$.
Consider any $\rho' : (E,\sigma) \injects (A,\tau)$ then there exists $g\in A^\times$ such that:
\[ \rho' = \Int_g \circ \rho \]
In particular for any collection:
\[ (E,\sigma,A,\tau',\rho') \]
(where $\tau'$ and $\rho'$ are what may vary)
it is equivalent to one of the form:
\[ (E,\sigma,A,\tau',\rho') \sim (E,\sigma,A,\tau'',\rho) \]
(where only $\tau''$ may vary). 
\item
Fix $\rho: (E,\sigma) \injects (A,\tau)$.
The isomorphism classes of collections:
\[ (E,\sigma,A,\tau',\rho') \]
(that is, varying $\tau'$ and $\rho'$)
are in bijection with elements $\delta\in E^\times$ such that $\sigma(\delta)=-\delta$ up to equivalence.

Two elements $\delta$ and $\delta'$ as above are equivalent if $\tau_{\delta} \simeq \tau_{\delta'}$ this equivalence is
 up to rescaling by $k^\times$ and the action of $E^\times$ on $A$ through $\Int_e$, that is $\delta \sim e\sigma(e)\delta$.

Moreover, fixing $\tau'$, the isomorphism classes of collections:
\[ (E,\sigma,A,\tau',\rho') \]
(that is, varying $\rho'$) 
are in bijection with elements $\delta\in E^\times$ such that $\tau_{\delta} \simeq \tau'$ up to rescaling by $k^\times$ and the action of $E^\times$ on $A$ through $\Int_e$, that is $\delta \sim e\sigma(e)\delta$.
\end{enumerate}
\end{prop}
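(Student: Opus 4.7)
The plan is to treat the seven items in order, using three recurring tools: Proposition \ref{prop:prop_struct}, which handles the quadratic-space case and is directly transportable to $\Orth_{A,\tau_\delta}$ via Construction \ref{cons:orthogroup}; the Skolem--Noether theorem, which lets us move any two $k$-algebra embeddings of the \'etale subalgebra $E$ into $A$ to one another by inner automorphism; and the structural fact that when $T_{E,\sigma}$ is a maximal torus in $\Orth_{A,\tau'}$, the image $\rho(E)$ is its own centralizer in $A$.

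For item (1), the centralizer in $A$ of the maximal torus $T\subseteq \Orth_{A,\tau_\delta}$ is a commutative subalgebra $E$ which maximality of $T$ forces to be \'etale of $k$-dimension $2n$, in direct analogy with \cite[Prop.~2.13]{Fiori1}; the involution $\tau_\delta$ restricts to an involution $\sigma$ of $E$, and $T = T_{E,\sigma}$ is recovered from the pair. Item (2) is the direct check $\tau'(\rho(x))\rho(x) = \rho(\sigma(x)x) = 1$ for $x \in T_{E,\sigma}$. Item (3) unfolds using commutativity of $\rho(E)$: $\tau_{\rho(\delta)}(\rho(x)) = \rho(\delta)\rho(\sigma(x))\rho(\delta)^{-1} = \rho(\sigma(x))$. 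For item (4), writing a hypothetical compatible involution as $\tau' = \tau_g$ with $\tau(g) = \pm g$ and imposing $\tau_g\rho=\rho\sigma$ reduces, using $\tau\rho=\rho\sigma$, to the condition that $g$ commutes with $\rho(E)$, hence $g=\rho(\delta)$; the requirement that $\tau_g$ be orthogonal while $\tau$ is symplectic then forces $\tau(g) = -g$, equivalently $\sigma(\delta)=-\delta$.

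Items (5) and (6) are applications of Skolem--Noether. For (6), any two compatible embeddings of $E$ into $A$ are conjugate as plain $k$-algebra embeddings; for (5), $\rho'$ can be conjugated so that its image equals that of a reference $\rho:(E,\sigma)\injects(A,\tau)$, and the compatibility with $\tau$ is then forced (up to composition with an element of $\Aut_k(E,\sigma)$) because $\tau$ restricted to $\rho(E)$ is an involution of $E$ of the same type as $\sigma$; the subsequent displayed identities in (5) follow by transport of structure together with item (4) applied to $(\rho,\Int_g\circ\tau'\circ\Int_{g^{-1}})$. The hardest step is item (7): one must show that after items (4)--(6) have been used to normalize any collection to the form $(E,\sigma,A,\tau_{\rho(\delta)},\rho)$, the only equivalences of antisymmetric $\delta$'s arise as $\delta\sim c\,e\sigma(e)\delta$ with $c\in k^\times$ and $e\in E^\times$. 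This reduces to the direct calculation $\Int_e\circ\tau_\delta\circ\Int_{e^{-1}} = \tau_{e\sigma(e)\delta}$ combined with $\tau_{c\delta}=\tau_\delta$ for central $c$, and to the verification that any $k$-algebra automorphism of $A$ preserving $\rho(E)$ up to $\Aut_k(E,\sigma)$ is realized by conjugation by an element of $\rho(E)^\times$ together with a Weyl-group element; the latter is again a consequence of $\rho(E)$ being its own centralizer in $A$.
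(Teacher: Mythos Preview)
Your treatment of items (1)--(4) and (7) matches the paper's argument closely and is correct; in particular your use of the self-centralizing property of $\rho(E)$ in (4) and the computation $\Int_e\circ\tau_\delta\circ\Int_{e^{-1}}=\tau_{e\sigma(e)\delta}$ in (7) are exactly what the paper does.

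For item (6) you invoke Skolem--Noether directly over $k$. The paper instead finds $\tilde g\in (A\otimes_k\overline{k})^\times$ conjugating $\rho$ to $\rho'$, observes that $\tilde g\,\varphi(\tilde g)^{-1}$ centralizes $\rho(E)$ for every $\varphi\in\Gal(\overline{k}/k)$ and hence lies in $(E\otimes_k\overline{k})^\times$, and then applies Hilbert's Theorem~90 for $E^\times$ to descend to a rational $g$. Both routes are valid; yours is shorter if one grants the \'etale version of Skolem--Noether, while the paper's is self-contained.

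For item (5) your argument has a genuine gap. You conjugate $\rho'$ to match the image of a \emph{reference} embedding $\rho:(E,\sigma)\hookrightarrow(A,\tau)$, but the statement of (5) gives you only $\rho':(E,\sigma)\hookrightarrow(A,\tau')$; the existence of such a reference is exactly Proposition~\ref{prop:existsymp}, proved later and not invoked here. Even granting the reference, Skolem--Noether gives $\Int_g\circ\rho'=\rho\circ\phi$ for some $\phi\in\Aut_k(E)$, and your phrase ``compatibility with $\tau$ is then forced'' does not explain why $\phi$ may be taken to commute with $\sigma$. The paper avoids both issues by a different device: pick any $\delta'\in E^\times$ with $\sigma(\delta')=-\delta'$; then $\tau'_{\rho'(\delta')}$ is a symplectic involution on $A$, and since symplectic involutions on a central simple algebra are unique up to isomorphism there is $g\in A^\times$ with $\Int_{g}\circ\tau'_{\rho'(\delta')}\circ\Int_{g^{-1}}=\tau$. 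This $g$ directly yields $\Int_g\circ\rho':(E,\sigma)\hookrightarrow(A,\tau)$, and the remaining displayed identities follow.
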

\begin{proof}~
\begin{itemize}
\item
The first claim follows from Galois descent once we check over $\overline{k}$, where we may invoke Proposition \ref{prop:prop_struct}.
\item
The second and third claims are essentially immediate.
\item
For the forth claim, we note that all of the orthogonal involutions are of the form $\tau_{\epsilon}$ for $\epsilon \in A^\times$ with $\tau(\epsilon)=-\epsilon$.
The requirement that
 $\rho:(E,\sigma)\injects (A,\tau_\epsilon)$ given that $\rho:(E,\sigma)\injects (A,\tau)$ is that $\epsilon$ centralize the image of $\rho$.
We may check over the algebraic closure that $E$ is its own centralizer in $A$.
\item 
For the fifth claim we observe that for any $\delta'\in E^\times$ with $\sigma(\delta')= -\delta'$ we have:
\[ \tau'_{\delta'} \simeq \tau \]
and that $\tau'_\delta = \Int_{g^{-1}} \tau \circ \Int_{g}$ for some $g\in A$.

As we have $\rho':(E,\sigma) \injects (A,\tau'_\delta) = (A,\Int_{g^{-1}} \tau \circ \Int_{g})$ we immediately conclude:
\[ \Int_g\circ \rho'  :(E,\sigma) \injects (A,\tau) \]
and
\[ \Int_g\circ \rho'  :(E,\sigma) \injects (A,\Int_g \tau' \circ \Int_{g^{-1}}).  \]
It follows that 
\[(A,\Int_g \tau' \circ \Int_{g^{-1}}) = (A,\tau_{\Int_g\rho'(\delta)}). \]
\item
For the sixth claim it is an easy check that at least over $\overline{k}$ we may find:
\[ \tilde{g} \in A\otimes_k \overline{k} \]
such that $\Int_{\tilde{g}} \circ \rho = \rho'$. 

Consider any $\varphi \in \Gal(\overline{k}/k)$ then as $ g \rho(x) g^{-1} = \rho'(x)$ we have 
\[ \varphi( g \rho(x) g^{-1} ) = \varphi(\rho'(x)) = \rho'(x) =  g \rho(x) g^{-1}.\]
 It follows that $\tilde{g}\varphi(\tilde{g}^{-1})$ centralizes $\rho(E)$ and thus $ \tilde{g}\varphi(\tilde{g}^{-1}) \in E^\times$.

By Hilbert's Theorem 90 there exists $e\in E^\times$ with $\tilde{g}\varphi(\tilde{g}^{-1}) = {e}\varphi({e}^{-1})$ for all $\varphi \in \Gal(\overline{k}/k)$.
We then have that $g=\tilde{g}\rho(e^{-1}) \in A^\times$ satisfies:
 \[ \Int_{g} \circ \rho = \rho' .\]
\item
For the final claim we notice that by the above any datum:
\[ (E,\sigma,A,\tau',\rho') \]
is equivalent to one of the form:
\[ (E,\sigma,A,\Int_g\circ\tau'\circ\Int_{g^{-1}},\rho) \]
As we have fixed  $\rho: (E,\sigma) \injects (A,\tau)$ it further follows that $\Int_g\circ\tau'\circ\Int_{g^{-1}} = \tau_\delta$.

The ambiguity in the choice of $\tau_\delta$ which would in general be up to the action of $A^\times$, is now up to the elements of $A^\times$ which preserve $\rho$.
In particular the elements of $A^\times$ which centralize $\rho(E)$, that is, the elements of $E^\times$ acting by inner automorphism on $A$ through $\rho$.

The action of $e\in E^\times$ on $\tau_\delta$ is $\tau_{e\sigma(e)\delta}$ as $\tau(e) = \sigma(e)$ and $\delta$ commutes with $\sigma(e)$.\qedhere
\end{itemize}
\end{proof}

In the setting of endomorphism algebra's we had a natural base point, that is a choice of $\tau'$ from which we could build our classification. There is no canonical orthogonal involution, this is why in the above we continuously made use of symplectic involutions. In combination with the above, the following justifies why if we are contemplating any tori associated to $(E,\sigma)$ in $(A,\tau')$ there will exist a symplectic involution to use as a base point.
\begin{prop}\label{prop:existsymp}
Let $(A,\tau)$ be a central simple algebra over $k$ with a symplectic involution $\tau$.
Let $(E,\sigma)$ be an \'etale algebra with involution over $k$.
Then $(E,\sigma) \injects (A,\tau)$ if and only if $E\injects A$.
\end{prop}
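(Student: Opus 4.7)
The direction $(E,\sigma)\injects(A,\tau)\Rightarrow E\injects A$ is immediate. For the converse, my plan is to first produce \emph{some} symplectic involution $\tilde\tau$ on $A$ for which the given $k$-algebra embedding $\rho:E\injects A$ is already involution-preserving from $(E,\sigma)$ into $(A,\tilde\tau)$, and then to transport the situation back to $(A,\tau)$ using the earlier theorem that any two symplectic involutions on $A$ are isomorphic as algebras with involution.

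To build $\tilde\tau$, set $\rho^\dagger(x):=\tau(\rho(\sigma(x)))$. Since $\rho(E)$ is commutative, the anti-homomorphism property of $\tau$ causes no trouble and $\rho^\dagger$ is again a $k$-algebra embedding $E\injects A$, with $(\rho^\dagger)^\dagger=\rho$. Moreover $\rho$ and $\rho^\dagger$ agree on ``type'', sending each primitive idempotent of $E$ to an idempotent of $A$ of equal reduced rank (as $\sigma$ only permutes the primitive idempotents of $E$ and $\tau$, being an anti-automorphism of a central simple algebra, preserves the reduced trace). Skolem--Noether for étale subalgebras then yields $g\in A^\times$ with $\rho^\dagger=\Int_g\circ\rho$, and from $(\rho^\dagger)^\dagger=\rho$ one computes that $c:=g^{-1}\tau(g)$ centralizes $\rho(E)$ and satisfies $\tau_B(c)=c^{-1}$, where $\tau_B(x):=g^{-1}\tau(x)g$ is the ``twisted involution'' on $C_A(\rho(E))$ that restricts to $\sigma$ on $\rho(E)$.

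In the case of interest for maximal tori, where $\dim_k E=\deg A=2n$, the double centralizer theorem gives $C_A(\rho(E))=\rho(E)$, so $c=\rho(e)$ for some $e\in E^\times$ with $\sigma(e)e=1$, i.e.\ $N_{E/E^\sigma}(e)=1$. Hilbert's Theorem 90 for the étale quadratic extension $E/E^\sigma$ supplies $e'\in E^\times$ with $e=e'/\sigma(e')$, and then $g':=g\rho(e')$ is readily checked to satisfy $\tau(g')=g'$. Consequently $\tilde\tau:=\Int_{(g')^{-1}}\circ\tau$ is a bona fide involution with $\tilde\tau\circ\rho=\rho\circ\sigma$, and it is symplectic because $\tau((g')^{-1})=(g')^{-1}$ (recalling from the paper's notation that $\tau_h$ has the same type as $\tau$ precisely when $\tau(h)=h$). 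Finally the uniqueness of symplectic involutions provides $u\in A^\times$ with $\Int_u:(A,\tau)\to(A,\tilde\tau)$ an isomorphism of algebras with involution, so $\Int_{u^{-1}}\circ\rho:(E,\sigma)\injects(A,\tau)$ is the desired embedding. The main technical obstacle is the Hilbert 90 step, and it is worth noting that the symplectic hypothesis enters only through the concluding uniqueness statement—this is precisely why the analogous assertion for orthogonal $\tau$ must fail, since orthogonal involutions on $A$ are not unique up to conjugation.
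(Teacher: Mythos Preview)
The paper does not prove this proposition itself; it simply cites \cite[Prop.~5.7]{CKMFrobeniusAlgebras}. Your direct argument is correct for the case $\dim_k E=\deg A$ (maximal \'etale subalgebras), which is the only case the paper actually uses, and the strategy---build an auxiliary symplectic involution $\tilde\tau$ compatible with the given embedding, then invoke the uniqueness of symplectic involutions to transport back to $\tau$---is the natural one and is close in spirit to what one finds in the cited reference.

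Two points are worth flagging. First, your appeal to ``Skolem--Noether for \'etale subalgebras'' is not the classical theorem and needs a line of justification. What you actually use is that any two embeddings of a \emph{maximal} \'etale algebra into a central simple algebra are $A^\times$-conjugate: over $\overline k$ every primitive idempotent lands with rank one, so conjugacy holds there, and descent to $k$ goes through because the ambiguity torsor is under $\Res_{E/k}\mathbb{G}_m$, whose $H^1$ vanishes (exactly the Hilbert~90 trick used in claim~(6) of the preceding proposition in the paper). You hint at the rank issue but should make the descent explicit.

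Second, as you yourself acknowledge, you prove only the maximal case, whereas the proposition as stated carries no dimension hypothesis. For non-maximal $E$ your argument genuinely breaks in two places: the ranks of $\rho(e_i)$ and $\rho^\dagger(e_i)$ satisfy only $r_i=r_{\sigma(i)}$, not $r_i=r_i$, so $\rho$ and $\rho^\dagger$ need not be conjugate; and even when they are, the centralizer $C_A(\rho(E))$ is larger than $\rho(E)$, so your Hilbert~90 step for $c$ no longer applies directly. The general result in \cite{CKMFrobeniusAlgebras} requires an additional idea (one must allow modification of the embedding, not merely of the involution). For the purposes of this paper your restricted argument suffices, but you should either add the hypothesis $\dim_k E=\deg A$ to your statement or note explicitly that you are proving only the case needed.
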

\cite[Prop. 5.7]{CKMFrobeniusAlgebras}

When dealing with endomorphism algebras we used $\lambda\in (E^\sigma)^\times$, the following corollary, proposition and theorem rephrase the above in terms of this normalization.

\begin{cor}
Fix $(A,E,\sigma)$ with $E\injects A$.
The isomorphism classes of collections:
 \[ (A,\tau',E,\sigma,\rho) \]
is in bijection with elements $\lambda \in (E^\sigma)^\times$ modulo $k^\times \cdot N_{E/E^\sigma}(E^\times)$.
\end{cor}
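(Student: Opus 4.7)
The plan is to translate the classification of the preceding proposition, which parametrizes isomorphism classes by skew-symmetric elements $\delta\in E^\times$ (those with $\sigma(\delta) = -\delta$) modulo the relation $\delta \sim c\cdot e\sigma(e)\delta$ for $c \in k^\times$ and $e \in E^\times$, into the desired statement in terms of symmetric elements $\lambda \in (E^\sigma)^\times$. To invoke that proposition we first need a symplectic involution $\tau$ on $A$ for which $(E,\sigma)\injects (A,\tau)$; this is provided by Proposition \ref{prop:existsymp}, whose hypothesis $E\injects A$ is precisely our assumption.

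The bridge between the two parametrizations is to fix once and for all a unit $\delta_0 \in E^\times$ satisfying $\sigma(\delta_0) = -\delta_0$. Such a $\delta_0$ exists: decomposing the \'etale algebra $E = \prod E_i$ into its simple components and considering the induced action of $\sigma$, on each $\sigma$-stable factor $E_i$ the $(-1)$-eigenspace of $\sigma|_{E_i}$ is nonzero (by a dimension count, using $\car(k)\neq 2$) and hence contains a unit of $E_i$, while on each swapped pair $E_i\leftrightarrow E_j$ one takes $(1,-1)\in E_i\times E_j$. Since $E$ is commutative, multiplication by $\delta_0$ yields a bijection
\begin{equation*}
(E^\sigma)^\times \ \longleftrightarrow\ \{\delta\in E^\times \mid \sigma(\delta) = -\delta\},\qquad \lambda \longleftrightarrow \lambda\delta_0,
\end{equation*}
because $\sigma(\lambda\delta_0) = \sigma(\lambda)\sigma(\delta_0) = -\sigma(\lambda)\delta_0$, which equals $-\lambda\delta_0$ precisely when $\lambda\in E^\sigma$.

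Transporting the equivalence relation across this bijection finishes the proof. The $k^\times$-rescaling $\delta\mapsto c\delta$ (valid since $k^\times\subset (E^\sigma)^\times$) translates to $\lambda\mapsto c\lambda$, and the action $\delta\sim e\sigma(e)\delta$ translates to $\lambda\sim e\sigma(e)\lambda = N_{E/E^\sigma}(e)\lambda$ for $e\in E^\times$. Together these generate exactly the subgroup $k^\times \cdot N_{E/E^\sigma}(E^\times) \subset (E^\sigma)^\times$, giving the claimed quotient. A different admissible choice $\delta_0' = \mu\delta_0$ with $\mu\in (E^\sigma)^\times$ merely precomposes the bijection with multiplication by $\mu$, and so does not affect the target quotient. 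The only real subtlety is the construction of $\delta_0$; everything else is routine bookkeeping.
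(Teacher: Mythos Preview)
Your proof is correct and follows essentially the same route as the paper: fix a single skew-symmetric unit $\delta_0$ and observe that every other one is $\lambda\delta_0$ with $\lambda\in (E^\sigma)^\times$, so the equivalence from part (7) of the preceding proposition becomes the stated quotient. The paper's proof is the one-line version of this; you have simply filled in the existence of $\delta_0$ and the invocation of Proposition~\ref{prop:existsymp} (needed since the corollary only assumes $E\injects A$), both of which the paper leaves implicit.
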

\begin{proof}
By fixing any one $\delta$ with $\sigma(\delta) = -\delta$ all other such are of the form $\lambda\delta$ for $\lambda$ as above. The results is then immediate.
\end{proof}

\begin{prop}
With notation as above
Fix $\delta \in E^\times$ with $\sigma(\delta) = -\delta$. Then
\[ (A,\tau_{\lambda_1\delta},E,\sigma,\rho) \qquad (A,\tau_{\lambda_2\delta},E,\sigma,\rho) \]
give rationally conjugate tori in 
\[ \Orth_{A,\tau_{\lambda_1\delta}} \simeq \Orth_{A,\tau_{\lambda_2\delta}} \]
if and only if $\tau_{\lambda_1\delta} \simeq \tau_{\lambda_2\delta}$ and there exists $\varphi$ in $\Aut(E,\sigma)$ with $\tau_{\varphi(\lambda_1)\delta} = \tau_{\lambda_2\delta}$.
\end{prop}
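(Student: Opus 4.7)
The plan is to translate rational conjugacy of the two tori into an explicit condition on $\lambda_1, \lambda_2$ by analyzing an element $g \in A^\times$ realizing the conjugation, much as in the proof of Proposition \ref{prop:prop_struct}. The two directions are treated separately, with the forward direction requiring careful bookkeeping in $(E^\sigma)^\times$ modulo norms.

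For the forward direction, suppose the two tori are rationally conjugate in (isomorphic) orthogonal groups. The existence of an isomorphism $\Orth_{A,\tau_{\lambda_1\delta}} \simeq \Orth_{A,\tau_{\lambda_2\delta}}$ forces $\tau_{\lambda_1\delta}\simeq\tau_{\lambda_2\delta}$, supplying the first condition. Any such isomorphism is inner, of the form $\Int_\gamma$ with $\gamma\in A^\times$. Post-composing with some $h\in\Orth_{A,\tau_{\lambda_2\delta}}(k)$ realizing the conjugacy, I would set $g = h\gamma$; this still intertwines the two involutions and additionally normalizes $\rho(E)$, thereby inducing an automorphism $\varphi\in \Aut_k(E)$ via $\Int_g\circ\rho = \rho\circ\varphi$. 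A direct check using that $\rho$ is an embedding of $(E,\sigma)$ into both $(A,\tau_{\lambda_i\delta})$ shows $\sigma\varphi=\varphi\sigma$, hence $\varphi\in \Aut(E,\sigma)$.

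Next, I would extract the condition on $\lambda_1,\lambda_2$ from $\Int_g\circ\tau_{\lambda_1\delta}\circ\Int_{g^{-1}} = \tau_{\lambda_2\delta}$. Using the identity $\Int_g\circ\tau_\mu\circ\Int_{g^{-1}} = \tau_{g\mu\tau(g)}$ this reads $g\rho(\lambda_1\delta)\tau(g)\in k^\times\rho(\lambda_2\delta)$. Rewriting the left side via $g\rho(x) = \rho(\varphi(x))g$, and observing that $g\tau(g)$ both centralizes $\rho(E)$ and is fixed by $\tau$ (so lies in $\rho((E^\sigma)^\times)$), the equation reduces to a relation in $E^\times$. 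The remaining freedom to modify $g$ by an element $\rho(e)$ for $e\in E^\times$ rescales $g\tau(g)$ by $N_{E/E^\sigma}(e)$; combined with the $k^\times$-scalar ambiguity intrinsic to $\tau_{\,\cdot\,}$ and the hypothesis $\tau_{\lambda_1\delta}\simeq\tau_{\lambda_2\delta}$, this freedom is used to collapse the relation to the clean form $\tau_{\varphi(\lambda_1)\delta} = \tau_{\lambda_2\delta}$. For the converse, given $\varphi\in\Aut(E,\sigma)$ with $\tau_{\varphi(\lambda_1)\delta} = \tau_{\lambda_2\delta}$, the map $\rho\circ\varphi$ is again an embedding $(E,\sigma)\hookrightarrow (A,\tau)$, so by the sixth claim of the preceding proposition there exists $g\in A^\times$ with $\Int_g\circ\rho = \rho\circ\varphi$. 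Combined with a conjugator witnessing $\tau_{\lambda_1\delta}\simeq\tau_{\lambda_2\delta}$, a suitable modification of $g$ by an element of $\rho(E)^\times$ produces the desired element realizing the rational conjugacy.

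The main obstacle is the forward-direction computation: one must carefully track how $g\tau(g)\in\rho((E^\sigma)^\times)$ and the element $\varphi(\delta)/\delta\in (E^\sigma)^\times$ appear in the intertwining equation, and verify that the $N_{E/E^\sigma}(E^\times)$-freedom in $g$ together with the $k^\times$-freedom in $\tau_{\,\cdot\,}$ and the isomorphism $\tau_{\lambda_1\delta}\simeq\tau_{\lambda_2\delta}$ suffices to reduce the condition to the equality $\tau_{\varphi(\lambda_1)\delta}=\tau_{\lambda_2\delta}$ rather than a weaker relation.
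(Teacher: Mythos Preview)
Your outline is correct in spirit and closely parallels the paper's argument, but you have correctly identified --- and not resolved --- the genuine difficulty. After extracting from $\Int_g\circ\tau_{\lambda_1\delta}\circ\Int_{g^{-1}}=\tau_{\lambda_2\delta}$ the relation
\[
\varphi(\lambda_1)\cdot\frac{\varphi(\delta)}{\delta}\cdot u \;\in\; k^\times \lambda_2,
\qquad u=g\tau(g)\in\rho((E^\sigma)^\times),
\]
you need $(\varphi(\delta)/\delta)\cdot u\in k^\times$ (up to the $N_{E/E^\sigma}(E^\times)$ freedom) to conclude $\tau_{\varphi(\lambda_1)\delta}=\tau_{\lambda_2\delta}$. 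The freedom to replace $g$ by $g\rho(e)$ only moves $u$ inside a coset of $N_{E/E^\sigma}(E^\times)$, and it is \emph{not} a priori clear that the relevant coset meets $k^\times\cdot\delta/\varphi(\delta)$. Your hope that ``the isomorphism $\tau_{\lambda_1\delta}\simeq\tau_{\lambda_2\delta}$'' helps here is misplaced: that isomorphism is already encoded in $g$ and gives no further leverage.

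The paper circumvents this computation entirely by a better choice of conjugator. Rather than taking an arbitrary $g$ normalizing $\rho(E)$, it realizes $\varphi$ by an element $\tilde g\in\Orth_{A,\tau_\delta}(k)$ (the orthogonal group for the \emph{base} involution $\tau_\delta$, not $\tau_{\lambda_i\delta}$); that every $\varphi\in\Aut(E,\sigma)$ lifts to $N(T)(k)$ inside $\Orth_{A,\tau_\delta}$ can be checked over the algebraic closure. The point of this choice is the identity $\tilde g\,\rho(\delta)\,\tau(\tilde g)=\rho(\delta)$, which follows immediately from $\tau_\delta(\tilde g)=\tilde g^{-1}$. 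Then
\[
\Int_{\tilde g}\circ\tau_{\lambda_1\delta}\circ\Int_{\tilde g^{-1}}
=\tau_{\tilde g\rho(\lambda_1\delta)\tau(\tilde g)}
=\tau_{\rho(\varphi(\lambda_1))\cdot\tilde g\rho(\delta)\tau(\tilde g)}
=\tau_{\varphi(\lambda_1)\delta},
\]
so the action of renormalizing by $\tilde g$ is exactly $\lambda_1\mapsto\varphi(\lambda_1)$, with no residual $\varphi(\delta)/\delta$ or $u$ term to absorb. Any other $g$ realizing $\varphi$ differs from $\tilde g$ by an element of $\rho(E^\times)$, which only contributes a norm factor and hence does not change $\tau_{\varphi(\lambda_1)\delta}$ up to the allowed equivalence. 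This single device is what your approach is missing; once you insert it, the rest of your argument goes through.
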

\begin{proof}
The distinction between the tori and the embedded algebras is the insistence on identifying $\rho$ up to equality rather than simply up to having equal image.
It is thus clear by precomposing $\rho$ with $\varphi$ in $\Aut(E,\sigma)$ we obtain identified tori.

The procedure we used to identify the invariants $\lambda\delta$ required renormalizing relative to a fixed $\rho$.
That is, given the datum
\[ (A,\tau_{\lambda_1\delta},E,\sigma,\rho)  \sim (A,\tau_{\lambda_1\delta},E,\sigma,\rho\circ \varphi) \]
there must be $g\in A^\times$ such that $\rho\circ \varphi = \Int_g\circ \rho$.
By checking over the algebraic closure we can actually find $\tilde{g}\in \Orth_{A,\tau_{\delta}}$ to accomplish this.
We can then compute that action of $g = \tilde{g}e$ on $\tau_{\lambda_1\delta}$ is by the conjugation action on $\lambda_1$, which agrees with $\varphi$.
\end{proof}

\begin{rmk}
We could just as well have considered the action on $\lambda_2\delta$ to be $\varphi(\lambda_2\delta)$ by taking $g\in \Sp_{A,\tau}$.

Indeed there is a bijection between the two collections, the normalization with respect to $\lambda$ we have given is natural with respect to our previous work. It is not strictly clear if the alternative normalization might be more natural in general.
\end{rmk}

We now wish to upgrade the result from a $k$-isomorphism classes of embeddings of tori to a result about $k$-conjugacy classes of tori in $\SO_{A,\tau}$. 
The distinction between $k$-conjugacy classes of tori in $\SO_{A,\tau}$ and $k$-isomorphism classes of embeddings is characterized by conjugations not possible in $\SO_{A,\tau}$ but that are possible in $\PSO_{A,\tau}$, modulo conjugations by $T(k)$. This is precisely the result of Lemma \ref{lem:Covers}.
The elements $(E^\sigma)^\times/N_{E/E^\sigma}(E^\times)k^\times$ capture the conjugacy classes in $\PSO_{A,\tau}$. Our goal now is to explain why $(E^\sigma)^\times/N_{E/E^\sigma}(E^\times)$ characterizes conjugacy classes in $\SO_{A,\tau}$ (just as in the case of classical quadratic spaces).
In order to make sense of this we will need a concrete description of:
\[ H^1(\Gal(\overline{k}/k),\SO_{A,\tau}) \]
so as to normalize elements of $\lambda$ as we do in the classical case with respect to an explicit quadratic form.

Recall (from Section \ref{subsec:cohominterp} that we have a bijection:
\[ H^1(\Gal(\overline{k}/k),\SO_{A,\tau}) \simeq \{ (s,z) \in A^\times\times k^\times \;\mid\; \tau(s)=s\text{ and } N_{A/k}(s) = z^2 \}/\sim \]
where the equivalence relation on the right is given by $(s',z') \sim (s,z)$ if there exists $a\in A^\times$ with $s'= as\tau(a)$ and $z' = N_{A/k}(a)z$.
Recall also that the group $H^1(\Gal(\overline{k}/k),\mu_2) \simeq k^\times/ (k^\times)^2$ acts on the elements by $(s,z) \mapsto (fs,f^{n/2}z)$. 

We our now in a position to prove the following:
\begin{prop}\label{prop:upgradePSOtoSO}
Fix both an algebra with orthogonal involution $(A,\tau)$ as well as an element 
\[ (s,z) \in H^1(\Gal(\overline{k}/k),\SO_{A,\tau}) \]
which we use to determine another involution $\tau_s$ of $A$.
To any inclusion 
\[ (E,\sigma) \injects (A,\tau_s) \]
we can associate a collection of elements of the form $k^\times\lambda \in (E^\sigma)^\times$ for some $\lambda$ where $(\lambda, N_{E^\sigma/k}(\lambda))$ is equivalent to $(s,z)$.
The subset of the image of $k^\times\lambda$ in $E^\sigma/N_{E/E^\sigma}(E^\times)$ for which $(d\lambda, d^nN_{E/k}(\lambda)) \sim (\lambda, N_{E^\sigma/k}(\lambda))$ are in bijection with the conjugacy classes of $T_{E,\sigma}$ which become conjugate in $\PSO_{A,\tau}$.

In particular, the rational conjugacy classes of $T_{E,\sigma}$ in $\SO_{A,\tau}$ are in bijection with elements $\lambda\in (E^\sigma)^\times$ for which $(\lambda, N_{E^\sigma/k}(\lambda))$ is equivalent to $(s,z)$.
\end{prop}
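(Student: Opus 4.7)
The plan combines the classification of embeddings from the preceding proposition (which, in view of Proposition \ref{prop:existsymp}, parametrizes the relevant isomorphism classes of triples by certain $\lambda \in (E^\sigma)^\times$) with the explicit torsor description of $H^1(\Gal(\overline{k}/k), \SO_{A,\tau})$ recalled in Section \ref{subsec:cohominterp}, and then invokes Lemma \ref{lem:Covers} applied to the isogeny $\SO_{A,\tau} \to \PSO_{A,\tau}$ in order to pass from $\SO$-conjugacy classes to $\PSO$-conjugacy classes.

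Concretely, I would fix a reference $\delta \in E^\times$ with $\sigma(\delta) = -\delta$ together with a reference embedding $\rho_0:(E,\sigma)\injects(A,\tau_\delta)$, whose existence is ensured by Proposition \ref{prop:existsymp} applied to a symplectic base involution on $A$. Given any $\rho:(E,\sigma)\injects(A,\tau_s)$, the preceding proposition produces, after an inner adjustment of $\rho$ identifying it with $\rho_0$ up to inner twisting, a $\lambda\in(E^\sigma)^\times$, unique modulo $N_{E/E^\sigma}(E^\times)$, such that $\tau_s = \tau_{\lambda\delta} = (\tau_\delta)_\lambda$. Because $\sigma(\lambda)=\lambda$ gives $\tau_\delta(\lambda)=\lambda$, and because $N_{A/k}(\lambda)=N_{E/k}(\lambda)=N_{E^\sigma/k}(\lambda)^2$ for any $\lambda$ in the maximal commutative subalgebra $E^\sigma \subset A$, the pair $(\lambda,N_{E^\sigma/k}(\lambda))$ is a valid representative in the torsor model, and by construction it determines the same twisted involution $\tau_s$ as $(s,z)$, so $(\lambda,N_{E^\sigma/k}(\lambda))\sim(s,z)$.

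Next I would match the ambiguities. The residual freedom in $\rho$ is exactly by inner automorphisms of $A$ centralizing $\rho(E)$, i.e.\ by $\Int_e$ for $e\in E^\times$, and its effect on $\lambda$ is multiplication by $e\sigma(e)=N_{E/E^\sigma}(e)$. Using $\tau(e)=\sigma(e)$ and $N_{A/k}(e)=N_{E/k}(e)=N_{E^\sigma/k}(N_{E/E^\sigma}(e))$, this coincides with the torsor equivalence $(s,z)\sim(as\tau(a),N_{A/k}(a)z)$ specialized to $a=e\in E^\times$. This yields the ``in particular'' clause: the rational conjugacy classes of $T_{E,\sigma}$ in $\SO_{A,\tau}$ are in bijection with the $\lambda\in(E^\sigma)^\times/N_{E/E^\sigma}(E^\times)$ whose associated pair is torsor-equivalent to $(s,z)$.

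Finally, to obtain the main clause I would apply Lemma \ref{lem:Covers} to the cover $\SO_{A,\tau}\to\PSO_{A,\tau}$: the image of $H^0(\PSO_{A,\tau})$ in $H^1$ of the center of $\SO_{A,\tau}$ acts on the fibre, and that action is concretely realized on torsor representatives by the $k^\times$-rescaling $(s,z)\mapsto(ds,d^n z)$, so two $\SO$-classes merge to a single $\PSO$-class exactly when some $d\in k^\times$ produces a torsor equivalence of their representatives, i.e.\ $(d\lambda,d^n N_{E/k}(\lambda))\sim(\lambda,N_{E^\sigma/k}(\lambda))$. The main technical obstacle is this last identification: one must carefully verify that the abstract coboundary $H^0(\PSO_{A,\tau})\to H^1(Z(\SO_{A,\tau}))$ is indeed implemented on the torsor by the stated $k^\times$-rescaling of pairs, and disentangle this from the $\Aut(E,\sigma)$-action that appeared for isomorphism classes of triples but is absent here because one fixes a specific embedded torus rather than just its isomorphism class.
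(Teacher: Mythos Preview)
Your overall strategy matches the paper's: invoke the preceding proposition to produce a $\lambda$, interpret $(\lambda,N_{E^\sigma/k}(\lambda))$ in the explicit torsor model of $H^1(\SO_{A,\tau})$, and then apply Lemma~\ref{lem:Covers} to the isogeny $\SO_{A,\tau}\to\PSO_{A,\tau}$ to identify which $\SO$-classes collapse in $\PSO$. However, there is a genuine gap in your second paragraph. The preceding proposition (and its corollary) only determine $\lambda$ modulo $N_{E/E^\sigma}(E^\times)\,k^\times$, not modulo $N_{E/E^\sigma}(E^\times)$ alone; the $k^\times$-rescaling is part of the equivalence on the datum $(A,\tau',E,\sigma,\rho)$. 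Consequently your inference ``it determines the same twisted involution $\tau_s$ as $(s,z)$, so $(\lambda,N_{E^\sigma/k}(\lambda))\sim(s,z)$'' is not valid: two classes in $H^1(\SO_{A,\tau})$ with the same image in $H^1(\PSO_{A,\tau})$ (equivalently, giving isomorphic involutions) need not coincide. This is precisely the point where the extra $z$-coordinate matters.

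The paper fills this gap as follows: from $\tau_s\simeq\tau_{\lambda\delta}$ one only concludes that $(s,z)$ and $(\lambda,N_{E^\sigma/k}(\lambda))$ have the same image in $H^1(\Aut_{A,\tau}^0)=H^1(\PSO_{A,\tau})$, hence differ by the action of $H^1(\mu_2)\simeq k^\times/(k^\times)^2$, which is exactly the rescaling $(s,z)\mapsto(ds,d^n z)$. Thus \emph{some} $k^\times$-multiple of $\lambda$ --- not necessarily the $\lambda$ you first wrote down --- yields a pair equivalent to $(s,z)$, which is why the statement speaks of the coset $k^\times\lambda$. Once you insert this rescaling step, the rest of your argument (the match of ambiguities and the use of Lemma~\ref{lem:Covers}) goes through as you describe, and the identification of the fibre with scalars $d$ satisfying $(d\lambda,d^n N_{E^\sigma/k}(\lambda))\sim(\lambda,N_{E^\sigma/k}(\lambda))$ modulo $\im(H^0(T)\to H^1(\mu_2))$ is then exactly the content of the main clause. (A small side remark: $E^\sigma$ is not itself maximal commutative in $A$; it is $E$ that is. Your norm computation is nonetheless correct because $\lambda\in E^\sigma\subset E$.)
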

\begin{proof}
We already know there is an element $\tilde{\lambda} \in (E^\sigma)^\times/N_{E/E^\sigma}(E^\times)k^\times$ which gives a form $\tau_{\tilde{\lambda}}$ equivalent to $\tau_s$.
Because $\tau_s$ and $\tau_{\lambda\delta}$ are isomorphic the image of $(s,z)$ and $(\lambda, N_{E^\sigma/k}(\lambda))$ coincide in $H^1(\Gal(\overline{k}/k),\Aut_{A,\tau})$.
They thus differ by the action of something in $H^1(\Gal(\overline{k}/k),\mu_2)$, which acts by scaling.
Thus, there is a scalar multiple of $\lambda of \tilde{\lambda}$ which is equivalent.

By Lemma \ref{lem:Covers} the distinct conjugacy classes in $\SO_{A,\tau}$ which collapse in $\PSO_{A,\tau}$ are in bijection with:
\[ \im(H^0(\Gal(\overline{k}/k),\PSO_{A,\tau}) \rightarrow H^1(\Gal(\overline{k}/k),\mu_2)) / \ker(H^1(\Gal(\overline{k}/k),\mu_2) \rightarrow H^1(\Gal(\overline{k}/k),T)). \]
This is precisely the subset of $k^\times$ for which $(ds,d^nz) \sim (s,z)$ modulo norms from $E^\sigma$.
\end{proof}

\begin{thm}\label{thm:lambdafortoriorth}
Fix $(A,\tau)$ a central simple algebra over $k$ with a symplectic involution $\tau$.
Let $(E,\sigma)$ be an \'etale algebra with involution over $k$, and fix $\delta = \sqrt{\delta_{E/E^\sigma}}$ so that $E\simeq E^\sigma(\delta)$.
Fix an inclusion $(E,\sigma)\injects (A,\tau)$.
Suppose $T_{E,\sigma} \subset \Orth_{A,\tau'}$ as a maximal $k$-torus.
Then there exists $\lambda\in (E^\sigma)^\times$ for which 
$(A,\tau') \simeq (A,\tau_{\lambda\delta})$ and the inclusion $T_{E,\sigma}\subset \Orth_{A,\tau'}$ arises from the fixed inclusion $(E,\sigma)\injects (A,\tau)$.

The rational conjugacy classes of the images of $T_{E,\sigma}$ in $\SO_{A,\tau'}$ are in bijection with the elements of
\[  \lambda \in E^\times /N_{E/E^\sigma}(E^\times)\Aut_k(E,\sigma) \]
for which $(\lambda,N_{E^\sigma/k}(\lambda)) \in H^1(\Gal(\overline{k}/k),\SO_{A,\tau'})$ is equivalent to a fixed one chosen to define $\tau'$.
\end{thm}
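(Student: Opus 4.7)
The plan is to combine the structural propositions developed throughout this subsection with the cover-comparison lemma via Proposition \ref{prop:upgradePSOtoSO}. I would begin by noting that the hypothesis that $T_{E,\sigma}$ embeds as a maximal torus of $\Orth_{A,\tau'}$ forces $E \hookrightarrow A$ as $k$-algebras, so by Proposition \ref{prop:existsymp} the symplectic involution $\tau$ does admit an inclusion $(E,\sigma) \hookrightarrow (A,\tau)$; this is what legitimates using the chosen $\rho : (E,\sigma) \hookrightarrow (A,\tau)$ as a base point even though the orthogonal involution of interest is $\tau'$ rather than $\tau$.

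Next I would invoke parts (5)--(6) of the long structural proposition above to reduce the given datum $(E,\sigma,A,\tau',\rho')$, up to conjugation by some $g \in A^\times$, to the normal form $(E,\sigma,A,\Int_g\circ\tau'\circ\Int_{g^{-1}},\rho)$ in which $\rho$ is the fixed base embedding. Part (4) then characterizes the orthogonal involutions $\tau''$ for which $\rho$ remains an inclusion $(E,\sigma)\hookrightarrow (A,\tau'')$ as exactly the $\tau_{\rho(\delta')}$ with $\sigma(\delta')=-\delta'$; writing $\delta' = \lambda\delta$ with $\lambda \in (E^\sigma)^\times$ and transporting the identification $(A,\tau') \simeq (A,\tau_{\lambda\delta})$ back via $g$ yields the first assertion of the theorem.

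For the conjugacy-class statement I would proceed in two stages. First, the isomorphism-class classification of the data $(A,\tau_{\lambda\delta},E,\sigma,\rho)$, combined with the freedom to precompose $\rho$ by an element of $\Aut_k(E,\sigma) = W(k)$ that identifies two embeddings of the same torus, gives the bijection of $\PSO_{A,\tau'}$-conjugacy classes with $(E^\sigma)^\times / k^\times \, N_{E/E^\sigma}(E^\times)\,\Aut_k(E,\sigma)$, in direct parallel to Proposition \ref{prop:prop_struct}. Second, I would apply Proposition \ref{prop:upgradePSOtoSO}, which refines this $\PSO$-conjugacy classification to an $\SO$-conjugacy one: the ambiguity by $k^\times$ dissolves into the torsor condition that $(\lambda, N_{E^\sigma/k}(\lambda))$ be equivalent in $H^1(\Gal(\overline{k}/k), \SO_{A,\tau'})$ to the fixed $(s,z)$ chosen to define $\tau'$. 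Combining these two steps yields the stated bijection.

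The main subtlety will be keeping the three tiers of equivalence cleanly separated: (i) isomorphism of the pair $(A,\tau')$, which contributes the $k^\times$-rescaling on $\lambda$ through $H^1(\Aut_{A,\tau})$; (ii) the Weyl-group identification $\Aut_k(E,\sigma)$ arising because rational conjugacy only fixes the torus as a subgroup, not its identification with $T_{E,\sigma}$; and (iii) the refinement from $\PSO$ to $\SO$ supplied by Lemma \ref{lem:Covers}, whose image in $H^1(\mu_2) \simeq k^\times/(k^\times)^2$ is precisely what is cut out by the torsor condition $(d\lambda, d^n N_{E^\sigma/k}(\lambda)) \sim (\lambda, N_{E^\sigma/k}(\lambda))$. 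Once these are aligned, the map $\lambda \mapsto (\lambda, N_{E^\sigma/k}(\lambda))$ is visibly well defined on the claimed quotient, and the previous proposition supplies the inverse.
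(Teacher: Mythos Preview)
Your proposal is correct and follows essentially the same approach as the paper: the existence of $\lambda$ is deduced from the structural proposition (parts (4)--(6)) preceding the theorem, and the conjugacy-class statement is obtained by combining the $\PSO$-level classification (modulo $N_{E/E^\sigma}(E^\times)k^\times$ and then $\Aut_k(E,\sigma)$) with the $\SO$-refinement supplied by Proposition~\ref{prop:upgradePSOtoSO}. The paper's proof is terser---it simply says ``the existence of $\lambda$ follows from the above'' and then tracks the well-definedness of the association and its converse---but the underlying logic is the same as yours.
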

\begin{proof}
The existence of $\lambda$ follows from the above.

As in Proposition \ref{prop:upgradePSOtoSO} we have that the association $(A,\tau',E,\sigma,\rho) \mapsto \lambda$ is well defined up to $N_{E/E^\sigma}(E^\times)k^\times$, however, the association
$(T \subset \Orth_{A,\tau'}) \mapsto (A,\tau',E,\sigma,\rho)$ is only well defined up to $\Aut_k(E,\sigma)$. In particular, the choice of $\rho$ can be precomposed with any such automorphism.

It follows that having fixed a base point $(E,\sigma)\injects (A,\tau)$ we may associate to any torus $T_{E,\sigma} \subset \Orth_{A,\tau'}$ an element $\lambda$ well defined up to $N_{E/E^\sigma}(E^\times)\Aut_k(E,\sigma)k^\times$.

Conversely, if $\lambda_1 = N_{E/E^\sigma}(y)\varphi(\lambda_2)$ then the data:
\[ (A,\tau_{\lambda_1},E,\sigma,\rho_1) \simeq (A,\tau_{\varphi(\lambda_2)},E,\sigma,\rho_1) \simeq  (A,\tau_{\lambda_2},E,\sigma,\rho_1\circ \varphi^{-1}) \]
So that tori associated to the first and last are conjugate in $\PSO(A,\tau)$.
\end{proof}

\begin{rmk}
Fixing $(A,\tau,E,\sigma)$ not all options for $\lambda$ are associated to a fixed $\tau_s$, indeed the choice of $\lambda$ determines the isomorphism class of $\tau_\lambda$, and different choices of $\lambda$ may result in non-isomorphic $(A,\tau_\lambda)$.
Moreover, just as in the more classical case of quadratic forms, and as is often the case when dealing with Galois cohomology one should interpret the collection of $\lambda$ as a torsor relative to a fixed base point. Notice that in Proposition \ref{prop:upgradePSOtoSO} we need to fix the information of $\tau$, a choice $s$, and a preliminary embedding of $(E,\sigma)$.

In the more classical case of quadratic forms, we have often fixed the base point $\Tr_{E^\sigma/E}(x\sigma(x))$, but it is not the only choice and it may actually be more natural to prefer a different choice such as one which has trivial Clifford invariant and maximal index (see for example \cite[Lem. 3.6]{Fiori1}).
\end{rmk}

Based on the above we have that the group $(E^\sigma)^\times/N_{E/E^\sigma}(E^\times)$ acts transitively on the set of collections
\[ (A,\tau',E,\sigma,\rho) \]
where $\tau'$ is an orthogonal involution. We wish to be able to describe the invariants of the resulting forms.

\begin{thm}\label{thm:invariants_upgraded}
Let $(A,\tau)$ be a central simple algebra of degree $2n$ over $k$ with a symplectic involution $\tau$, and $(E,\sigma)$ be an \'etale algebra with involution over $k$ such that $E\injects A$.

Fix a representative $\delta = \sqrt{\delta_{E/k}}$

The invariants of $\tau' = \tau_{\lambda\delta}$ are as follows:
\begin{itemize}
\item The discriminant of $\tau'$ is given by $D(\tau) = (-1)^n\delta_{E/k}$ where $\delta_{E/k}$ is the discriminant of $E$.
\item 
If $(A,\tau'',E,\sigma,\rho) = \lambda\circ (A,\tau',E,\sigma,\rho)$ then 
 \[ [\Clif_{A,\tau''}]  = [\Clif_{A,\tau'}] + \Res_{Z/k}\Cor_{E^\sigma/k}((\delta_{E/E^\sigma},\lambda)). \]
In particular:
 \[ [\Clif_{A,\tau_{\lambda\delta}}]  = [\Clif_{A,\tau_{\delta}}] + \Res_{Z/k}\Cor_{E^\sigma/k}((\delta_{E/E^\sigma},\lambda)). \]
In the above $Z= k(\sqrt{(-1)^nD(\tau)})$ is the center of the Clifford algebra.
\item Suppose $A$ is a matrix algebra over a quaternion algebra.
Let $r$ (respectively $s$) be the number of real places of $E^\sigma$ which ramify in $E$ where $\lambda > 0$ (respectively $\lambda < 0$).
    The index of $(A,\tau)$ is precisely:
\[   (n - \abs{r-s}/2)/\deg([A])  \]
\end{itemize}
\end{thm}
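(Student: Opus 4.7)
The plan is to derive each of the three invariants separately, in each case bootstrapping from the quadratic-space version Theorem \ref{thm:brus_res} together with Theorem \ref{thm:discriminant}.

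For the discriminant, I would compute directly. Since the embedding $(E,\sigma)\injects(A,\tau)$ means $\tau$ restricted to $E$ agrees with $\sigma$, and since $\sigma(\lambda)=\lambda$, $\sigma(\delta)=-\delta$, we have $\tau(\lambda\delta)=-\lambda\delta$, so this is a legitimate skew element for defining $\tau_{\lambda\delta}$. Theorem \ref{thm:discriminant} then gives $D(\tau_{\lambda\delta})\equiv N_{A/k}(\lambda\delta) \pmod{(k^\times)^2}$. Since $\lambda\delta\in E$ and the reduced norm restricts to the field norm, this equals $N_{E/k}(\lambda\delta)=N_{E/k}(\lambda)\cdot N_{E/k}(\delta)$. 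The first factor is a square because $\lambda\in E^\sigma$, and the second factor equals $N_{E^\sigma/k}(-\delta^2)=(-1)^n N_{E^\sigma/k}(\delta_{E/E^\sigma})$. Combining with the tower identity $\delta_{E/k}=\delta_{E^\sigma/k}^2\cdot N_{E^\sigma/k}(\delta_{E/E^\sigma})$ (so $\delta_{E/k}\equiv N_{E^\sigma/k}(\delta_{E/E^\sigma})$ mod squares) yields $D(\tau_{\lambda\delta})\equiv (-1)^n\delta_{E/k}$.

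For the Clifford invariant, I would first handle the case where $A$ is a matrix algebra. Here $(A,\tau_{\lambda\delta})$ corresponds to the quadratic form $q_{E,\lambda}$ of Proposition \ref{prop:prop_struct}, and Theorem \ref{thm:brus_res}(3) gives the Witt invariant formula $W(q_{E,\lambda})=W(q_{E,1})\cdot \Cor_{E^\sigma/k}(\lambda,\delta_{E/E^\sigma})$ in $\Br(k)$. The even Clifford invariant is obtained from the Witt invariant by restriction to $Z=k(\sqrt{(-1)^n D(\tau)})$, which yields the stated formula in the matrix case. For general $A$, I would argue as follows: the Clifford algebra construction and the involution $\tau_{\lambda\delta}$ are both compatible with scalar extension, so extending to a splitting field $F$ for $A$ reduces us to the matrix case. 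The difference $[\Clif_{A,\tau_{\lambda\delta}}]-[\Clif_{A,\tau_\delta}]$ becomes, after restriction to $Z\otimes_k F$, the image of $\Cor_{E^\sigma/k}(\delta_{E/E^\sigma},\lambda)$; since the right-hand side $\Res_{Z/k}\Cor_{E^\sigma/k}(\delta_{E/E^\sigma},\lambda)$ arises from a class in $\Br(k)$ by restriction through the same map $\Res_{Z/k}$, and since the difference on the left is already known (via the cohomological interpretation in Section \ref{subsec:cohominterp}, where twisting $\tau_\delta$ by $\lambda$ acts on the Clifford invariant through the connecting map $H^1(\Aut^0)\to H^2(Z(\Spin))$ which pulls back through $\Res_{Z/k}$ from a $\Br(k)$-class) to lie in the image of $\Res_{Z/k}$, the identification is then forced.

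For the index, I would reduce to real places (which is where ${\rm ind}(A,\tau)$ detects the nontrivial datum in this setting). At a real place $\nu$, the index of $(A,\tau')$ is determined by the signature of the associated hermitian form over the residue division algebra, which in turn is determined by the signature of the underlying quadratic form on the $k$-vector space. Theorem \ref{thm:brus_res}(4) applied to $q_{E,\lambda}$ directly supplies the excess $|p-q|=|r-s|$ in terms of the real embeddings of $E^\sigma$ ramified in $E$ and the sign of $\lambda$ there; dividing the signature excess by the degree of the underlying division algebra (to pass from the $k$-dimensional to the $H$-dimensional count of isotropic directions) gives the integer index. I expect the main obstacle to be the Clifford invariant step, specifically justifying that the formula descends cleanly from a splitting field — the discriminant and index steps are essentially direct calculations once the matrix-algebra reductions of Theorem \ref{thm:brus_res} are in hand.
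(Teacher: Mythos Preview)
Your discriminant computation is correct and is a fleshed-out version of what the paper means by ``immediate from the definitions.'' Your index argument is likewise essentially the paper's: reduce to a real place and read off the answer from the signature of the associated quadratic module on $E$.

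The Clifford invariant is where you diverge, and where there is a genuine gap. The paper does not reprove this; it simply invokes \cite[Prop.~5.3]{Brus_OrthTori}. Your proposed route---prove the formula in the matrix case via Theorem~\ref{thm:brus_res} and then descend from a splitting field $F$ of $A$---does not close. Restriction $\Br(Z)\to\Br(Z\otimes_k F)$ has a kernel (containing, e.g., $\Res_{Z/k}[A]$), so equality over $F$ does not force equality over $k$. Your attempted repair, that both sides lie in $\im(\Res_{Z/k})$, is not enough either: $\Res_{Z/k}$ is itself not injective, and you have not identified the preimages, only their images after a further non-injective restriction.

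There is a clean way to make your cohomological idea work, and it avoids splitting fields entirely. The twist from $\tau_\delta$ to $\tau_{\lambda\delta}$ is represented by a class in $H^1(k,T_{E,\sigma})$, and the change in the even Clifford invariant is $\Res_{Z/k}$ applied to the image of that class under the connecting map for $1\to\mu_2\to T'\to T_{E,\sigma}\to 1$, where $T'$ is the spin cover of the torus. The point is that this short exact sequence of tori is intrinsic to $(E,\sigma)$ and does not see $A$ at all; hence the connecting map $H^1(k,T_{E,\sigma})\to H^2(k,\mu_2)$ is the same map regardless of which $(A,\tau)$ the torus sits in. You may therefore compute it in any convenient model, in particular the matrix-algebra model, where Theorem~\ref{thm:brus_res}(3) gives $\Cor_{E^\sigma/k}(\delta_{E/E^\sigma},\lambda)$. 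This is presumably close to what \cite{Brus_OrthTori} does; in any case it replaces your splitting-field descent, which as written does not go through.
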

\begin{proof}
The first point follows immediately from the definition of the discriminant of an orthogonal involution, and the definition of the discriminant of an \'etale algebra.

The second point is precisely \cite[Prop. 5.3]{Brus_OrthTori}. 

For the third point, we need only consider the case $E = \bC^n$ and $A = \Mn(\bH)$ as a earlier work (Theorem \ref{thm:the_result}).
We note that the inclusion $(E,\sigma)\injects (A,\tau_{\lambda\delta})$ induces the structure of a quadratic module on $E$.
We claim the index of $ (A,\tau_{\lambda\delta})$ is the same as the index of this quadratic space, from which the result follows.
By comparing to the structure of the ``quadratic space" we can define on $\bH^n$ the result then follows by an explicit check.
\end{proof}

\begin{rmk}
We should point out that the expression $\Res_{Z/k}\Cor_{E^\sigma/k}((\delta_{E/E^\sigma},\lambda))$ depends only on $\lambda$ if the discriminant $D(\tau)$ is trivial and $\delta_{E/E^\sigma}$ is non-trivial. It is worth comparing this to the possibilities considered in Remark \ref{rmk:CliffordStructure} as well as contrasting this result with Theorem \ref{thm:brus_res} which gives the Witt invariant of the related quadratic form.
\end{rmk}

Before proceeding we note that the following theorem makes use of results from the next section, but that there are no circular dependencies in the proofs.
\begin{thm} \label{thm:the_result_upgraded}
Let $k$ be a global field.
Let $(A,\tau')$ be a central simple algebra with orthogonal involution over $k$ of degree $2n$.
Let $(E,\sigma)$ be an \'etale algebra over $k$ with an involution and of dimension $2n$. Then
$\Orth_{A,\tau'}$ contains a torus of type $(E,\sigma)$ locally everywhere (but not necessarily globally) if and only if the following four conditions are satisfied:
\begin{enumerate}
\item $E\injects A$.

\item $\delta_{E/k} = (-1)^nD(\tau)$, in particular the center of the even Clifford algebra has discriminant $\delta_{E/k}$.

\item $E^\phi$ as an algebra over $k(\sqrt{\delta_{E/k}})$ splits the even Clifford algebra $\Clif^+_{A,\tau}$ over its center for all $\sigma$-types $\phi$ of $E$.

\item Let $\nu$ be a real infinite place of $k$ and let $s$ be the number of homomorphisms from $E$ to $\bC$ over $\nu$ for which $\sigma$ corresponds to complex conjugation.
      The index of $\tau$ is of the form $n-\frac{s}{2}+2i$ where $0\leq i\leq \tfrac{s}{2}$. Note that if $A_\nu$ is a quaternion algebra then $s=2n$ from the first condition.
\end{enumerate}
\end{thm}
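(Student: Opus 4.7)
The plan is to work place-by-place: the existence of a torus of type $(E,\sigma)$ locally everywhere is equivalent to having a local embedding $T_{E,\sigma} \injects \Orth_{A,\tau'}$ over $k_\nu$ for every place $\nu$ of $k$. Over each $k_\nu$, the classification of central simple algebras with orthogonal involutions (by discriminant, Clifford invariant, and real index) together with Theorem \ref{thm:invariants_upgraded} will translate the existence of such a local embedding into a matching of invariants between $\tau'$ and some $\tau_{\lambda_\nu\delta}$ arising from a choice of $\lambda_\nu \in (E^\sigma_\nu)^\times$.

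For necessity, any local embedding forces $E_\nu \injects A_\nu$, and by local-global principles for embeddings of \'etale algebras into central simple algebras this upgrades to $E \injects A$, giving condition (1). Fixing then a symplectic involution $\tau$ on $A$, Proposition \ref{prop:existsymp} provides a local embedding $(E_\nu,\sigma)\injects (A_\nu,\tau)$, so $\tau'\otimes k_\nu$ is isomorphic to $\tau_{\lambda_\nu\delta}$ for some $\lambda_\nu$; applying Theorem \ref{thm:invariants_upgraded} then yields conditions (2), (3), and (4) by reading off the discriminant, Clifford invariant, and index at real places. For sufficiency, I would reverse the argument: condition (1) gives the local embedding $(E_\nu,\sigma)\injects (A_\nu,\tau)$, so orthogonal involutions accepting $E_\nu$ are exactly the $\tau_{\lambda_\nu\delta}$. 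By Theorem \ref{thm:invariants_upgraded} their invariants span, as $\lambda_\nu$ varies, a coset of the subgroup of the Brauer group described by $\Res_{Z/k}\Cor_{E^\sigma/k}((\delta_{E/E^\sigma},\cdot))$; combined with conditions (2) and (3) this shows that the Clifford invariant of $\tau'\otimes k_\nu$ is achieved by some $\lambda_\nu$, while condition (4) lets us refine $\lambda_\nu$ at the real places to obtain the correct signature and hence the correct index, using Theorem \ref{thm:brus_res}.

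The main obstacle is establishing in condition (3) that the image of the map $\lambda_\nu \mapsto \Res_{Z/k}\Cor_{E^\sigma/k}((\delta_{E/E^\sigma},\lambda_\nu))$ is precisely the set of local Brauer classes over $Z$ killed by each $E^\phi$. One direction (the image is killed by each $E^\phi$) is a projection-formula calculation, while the other (every such class is hit locally) is the harder surjectivity step. When $A$ is a matrix algebra this reduces to the corresponding step in the proof of Theorem \ref{thm:the_result}; the new content is when $A$ has quaternionic components, where the admissible Brauer classes are constrained as enumerated in Remark \ref{rmk:CliffordStructure} and must be matched against the image of the $\lambda$-map. Once this surjectivity is verified at each place, the sufficiency direction closes.
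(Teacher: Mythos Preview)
Your overall strategy—reduce to local, use Theorem~\ref{thm:invariants_upgraded} to translate embeddability into matching of invariants, and isolate the quaternionic case as the new content—is exactly the route the paper takes. However, there are two places where your outline does not quite close.

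First, necessity of condition~(3) does not follow from Theorem~\ref{thm:invariants_upgraded} plus a projection-formula calculation. That theorem expresses $[\Clif^+_{A,\tau_{\lambda\delta}}]$ as a base point $[\Clif^+_{A,\tau_\delta}]$ plus the correction $\Res_{Z/k}\Cor_{E^\sigma/k}((\delta_{E/E^\sigma},\lambda))$; the projection formula shows each $E^\phi$ kills the correction, but says nothing about the base point. The paper handles this by invoking Theorem~\ref{thm:tori_cliff_upgraded}, which constructs an explicit embedding of $E^\Phi$ as a maximal \'etale subalgebra of $\Clif^+_{A,\tau'}$ whenever $(E,\sigma)$ embeds in $(A,\tau')$; this is what makes the splitting condition necessary, and you should cite it.

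Second, your formulation of the sufficiency obstacle—that the image of the $\lambda$-map equals the subgroup of Brauer classes over $Z$ killed by every $E^\phi$—is not quite the right statement, and in the delicate local case it is in fact false as stated. Over a $p$-adic field with $A$ quaternionic and two candidate involutions whose Clifford invariants are \emph{independent} of $\lambda$, the image of the $\lambda$-map is trivial, yet the subgroup of classes split by all $E^\phi$ need not be. What the paper actually does in this case is argue structurally: the independence of $\lambda$ forces $E^\Phi$ to have a field factor equal to $k$ over the relevant component of $Z$, and then one checks directly (using the explicit description of $\Clif^+$ in Remark~\ref{rmk:CliffordStructure}) that this factor can split only one of the two possible arrangements $A\times M$ versus $M\times A$. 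So the sufficiency step is not a surjectivity-onto-a-subgroup argument but a case-by-case exclusion of the non-embedded involution.
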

\begin{proof}
By Theorem \ref{thm:invariants_upgraded} the only thing that we must show is that the condition related to the reflex algebra is equivalent to the isomorphism class of the Clifford algebra depending on $\lambda$.

Now, we make note of Theorem \ref{thm:tori_cliff_upgraded}, whose proof does not depend on this result, that whenever $(E,\sigma)$ does embed, then $E^\phi$ will split $\Clif^+_{A,\tau}$ over its center. In particular, the conditions above are all necessary, we must show they are sufficient. 

Moreover, as this result reduces to Theorem \ref{thm:the_result} when $A$ is an endomorphism algebra, we need only treat the cases where $A$ is a matrix algebra over a quaternion algebra.

In particular, we must only show that whenever $(E,\sigma)$ does not embed in $(A,\tau')$ then $E^\phi$ do not all split the even Clifford algebra over the center.

We complete the result with a case by case analysis, first we will be working over a $p$-adic local field $k_\nu$. The key observation is that by Remark \ref{rmk:CliffordStructure} there are at most two $\tau$ to consider for each choice of $A$.
\begin{itemize}
\item If there is a unique orthogonal involution on $A$ of discriminant $\delta_{E/k}$ then the assumption $E\injects A$ implies (using Proposition \ref{prop:existsymp})  that $(E,\sigma)\injects (A,\tau')$ and thus we have nothing to show.
\item If there are two orthogonal involutions on $A$ of discriminant $\delta_{E/k}$ and the isomorphism class of $\Clif^+_{A,\tau}$ over its center depends on $\lambda$, then $(E,\sigma)$ embeds into both $(A,\tau_1)$ and $(A,\tau_2)$ and again we have nothing to show.
\item If there are two orthogonal involutions on $A$ of discriminant $\delta_{E/k}$ and the isomorphism class of $\Clif^+_{A,\tau}$ over its center does not depend on $\lambda$, then (by Theorem \ref{thm:tori_cliff_upgraded}) $E^\phi$ splits $\Clif^+_{A,\tau}$ for the choice of involution $\tau$ for which $(E,\sigma) \injects (A,\tau)$.
It remains only to show that it does not split it for the other possible isomorphism class.

By \cite[Lem. 5.5]{Fiori1} and \cite[Cor. 2.6]{Fiori1} we see that if the isomorphism class does not depend on $\lambda$ then $E^\Phi$ has a field factor $k$.
The options for the Clifford algebra are $A\times M$ where $[A]$ is $2$-torsion and $[M]$ is trivial over center $k\times k$ or $A\times A^{op}$ where $[A]$ is $4$-torsion over $k\times k$. Since $E^\Phi$ must have a field factor $k$, the Clifford algebra it split must be $A\times M$.

Noting that $E^\Phi$ can then not split $M\times A$ as the $k$ factor would now need to split $[A]$, which it does not, completes the result.
\end{itemize}

We now deal with the case of $k_\nu = \bR$. Most cases are already covered by our previous result (Theorem \ref{thm:the_result}) the only case we still need to consider is $E = \bC^n$, in which case, the isomorphism class of $\Clif^+_{A,\tau}$ depends on $\lambda$ whenever the discriminant provides for more than one possible isomorphism of $\tau$. The conditions about the index are automatic in this case.
\end{proof}

\begin{rmk}
We should point out that our proof makes direct use of the structure of the Brauer groups for local fields via the use of Remark \ref{rmk:CliffordStructure} to classify the  possible $\tau$ over local fields. 
\end{rmk}

The question of when a torus which embeds locally everywhere embeds globally is somewhat subtle, for a thorough treatment of this question one may look at \cite{PR_localglobal}, \cite{BayerTori}, or \cite{BayerExpositoryToriClassical}. We remark first that the local global obstruction is one which relates to the algebra $E$, and not the algebra $(A,\tau)$.
We further point out that the conditions all come from attempting to find a global $\lambda$ given that one can find local everywhere $\lambda_\nu$.
As such, the case of central simple algebras is no different from that of quadratic spaces.

We now summarize several sufficient conditions on when an \'etale algebra will satisfy the local global principal.
\begin{thm}\label{thm:localglobal}
Let $(E,\sigma) = \oplus_i (E_i,\sigma_i) \oplus (F\oplus F,\sigma)$ where the $(E_i,\sigma_i)$ are all fields.
Then if $(E,\sigma)$ satisfies any of the following conditions, then $T_{E,\sigma}$ embeds as a maximal torus in $\Orth_q$ globally, if and only it does locally everywhere.
\begin{itemize}
\item For each pair $i,j$ there exists $p$ a prime of $k$ and $\fp_i,\fp_j| p$ primes of $E_i,E_j$ such that $\fp_i,\fp_j$ are both not split respectively over $E_i^\sigma,E_j^\sigma$.

\item $E$ is a field.

\item The algebra $\oplus_i (E_i,\sigma_i)$ is a CM-algebra.

\item Let $\tilde{E} = \prod_i E_i$ be the compositum of the $E_i$, there exists $\tilde\sigma$ an involution on $\tilde{E}$ such that $\tilde{\sigma}|_{E_i} = \sigma_i$.
\end{itemize}
\end{thm}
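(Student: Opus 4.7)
The plan is to reduce, in each case, the local--global question for embedding $T_{E,\sigma}$ as a maximal torus of $\Orth_q$ to a Hasse norm principle (or a Shafarevich--Tate-type vanishing) for the extension $E/E^\sigma$. By Proposition~\ref{prop:prop_struct}, an embedding corresponds to a choice of $\lambda \in (E^\sigma)^\times/N_{E/E^\sigma}(E^\times)\Aut_k(E,\sigma)$ for which $q_{E,\lambda}\simeq q$, and Theorem~\ref{thm:brus_res} records the invariants of $q_{E,\lambda}$ as corrections of those of a reference form $q_{E,1}$ by corestrictions of classes involving $\lambda$. Once local everywhere embeddability is assumed, we are given a compatible family of local classes $\lambda_\nu\in (E^\sigma_\nu)^\times / N_{E_\nu/E^\sigma_\nu}(E_\nu^\times)$, and the obstruction to producing a global $\lambda$ is precisely the failure of the Hasse norm principle for $E/E^\sigma$, equivalently the nonvanishing of a Shafarevich--Tate-type group attached to $T_{E,\sigma}$. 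This reformulation is the central observation of \cite{PR_localglobal,BayerTori}.

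I would then verify the cases in turn. For case~2, $E/E^\sigma$ is a quadratic extension of number fields and the classical Hasse norm theorem applies directly. For case~3, if $\oplus_i(E_i,\sigma_i)$ is a CM algebra then each $E_i$ is a totally imaginary quadratic extension of the totally real field $E_i^\sigma$ and the Hasse norm theorem applies componentwise; the split factor $(F\oplus F,\sigma)$ contributes no obstruction because the associated torus is $\Res_{F/k}\bG_m$, whose first cohomology vanishes by Hilbert~90. For case~4, the existence of an involution $\tilde\sigma$ on the compositum $\tilde E=\prod_i E_i$ extending each $\sigma_i$ embeds $\prod_i T_{E_i,\sigma_i}$ into the norm-one torus attached to $(\tilde E,\tilde\sigma)$, so that the obstruction pulls back from one attached to a genuine quadratic extension of fields, again covered by case~2.

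The main obstacle is case~1, in which the factors $(E_i,\sigma_i)$ are coupled only by the existence, for every pair $i,j$, of a prime $p$ of $k$ together with primes $\fp_i\mid p$ and $\fp_j\mid p$ of $E_i$ and $E_j$ both not split over $E_i^\sigma$ and $E_j^\sigma$ respectively. Here the Shafarevich--Tate group of $\prod_i T_{E_i,\sigma_i}$ can in principle be nontrivial, so a genuine global patching argument is required. My strategy is to invoke the refined Hasse-principle analysis of \cite{PR_localglobal} (see also the exposition in \cite{BayerExpositoryToriClassical}) applied to this product of norm-one tori: global reciprocity produces relations among the classes $\lambda_{i,\nu}$ coming from different factors, and the inert-prime hypothesis for each pair $(i,j)$ is exactly what is needed to annihilate those relations and allow the local classes to be realized by a single global $\lambda$. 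The split summand $(F\oplus F,\sigma)$ is invisible to this analysis for the reason already noted in case~3.
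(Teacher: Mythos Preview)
The paper does not actually prove this theorem: it cites \cite{PR_localglobal} for the first condition, remarks that the second is immediate from the first, cites \cite[Cor.~4.1.1]{BayerTori} for the third, and says the fourth follows by replacing complex conjugation with $\tilde\sigma$ in the proof of \cite[Lem.~2.2.2 and Cor.~4.1.1]{BayerTori}. So on cases~1 and~2 your proposal and the paper agree at the level of ``invoke \cite{PR_localglobal}''; note though that the paper deduces case~2 from case~1 (a single field trivially satisfies the pair condition, since any nontrivial quadratic extension of number fields has non-split primes), not from a direct Hasse norm argument.

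Your sketches for cases~3 and~4, however, have a genuine gap. You identify the obstruction with the failure of the Hasse norm principle for $E/E^\sigma$ and then argue that it vanishes componentwise. But the Hasse norm theorem holds for \emph{every} quadratic extension of number fields, not just CM ones; so if ``Hasse norm applies componentwise'' were the correct mechanism, the local--global principle for embedding $T_{E,\sigma}$ would hold for \emph{any} $(E,\sigma)$ whose field factors $E_i$ are quadratic over $E_i^\sigma$. This is false --- \cite{PR_localglobal} exhibits counterexamples --- so the obstruction cannot be what you describe, and the componentwise argument proves too much. The actual obstruction is a more delicate coupling among the factors (coming from the product of corestrictions in Theorem~\ref{thm:brus_res}), and the role of the CM hypothesis in \cite{BayerTori} is not that each $E_i/E_i^\sigma$ satisfies Hasse norm, but that complex conjugation furnishes a single global automorphism acting simultaneously on all factors, which is exactly what one uses to correct a mismatched choice of $\lambda$. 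This is why the paper says case~4 is obtained by ``replacing complex conjugation with $\tilde\sigma$'': the compositum hypothesis supplies the same kind of simultaneous global correction. Your case~4 argument, embedding $\prod_i T_{E_i,\sigma_i}$ into $T_{\tilde E,\tilde\sigma}$ and pulling back the obstruction, does not work either: even granting the embedding, vanishing of an obstruction for the ambient torus does not in general imply vanishing for a subtorus.
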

The first condition is from \cite{PR_localglobal}, the second is an immediate consequence. The third condition is precisely \cite[Cor. 4.1.1]{BayerTori} and the fourth is simply replacing complex conjugation with $\tilde{\sigma}$ in the proof of \cite[Lem. 2.2.2 and Cor. 4.1.1]{BayerTori}.

\subsection{Tori in Spin Covers of Orthogonal Groups}

\begin{thm}\label{thm:tori_cliff_upgraded}
Let $(A,\tau)$ be a central simple algebra with orthogonal involution over $k$ and 
let $(E,\sigma)$ be an \'etale algebra with involution over $k$ such that $T_{E,\sigma} \injects \Orth_q$ as a maximal subtorus. 

Then
$E^\Phi$ embeds into $\Clif_{A,\tau}$ as a maximal \'etale algebra over $k(\sqrt{\delta_{E/k}})$ stable under the canonical involution of $\Clif_{A,\tau}$ . Moreover, the canonical involution restricts to $\sigma$ on $E^\Phi$.
\end{thm}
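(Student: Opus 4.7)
The plan is to first use the torus embedding to obtain a compatible algebra embedding, then construct $E^\Phi$ inside $\Clif^+_{A,\tau}$ by passing to a splitting field and descending. The maximality of $T_{E,\sigma}$ in $\Orth_{A,\tau}$ together with the structural analysis of Section~\ref{subsec:tori-clasic} (specifically the generalization of Proposition~\ref{prop:prop_struct} to central simple algebras) supplies an inclusion $(E,\sigma)\hookrightarrow (A,\tau)$ of algebras with involution, with $E$ a self-centralising maximal commutative \'etale $k$-subalgebra of $A$ of $k$-dimension $2n=\deg A$. The discriminant identity $\delta_{E/k}=(-1)^nD(\tau)$ forced by Theorem~\ref{thm:invariants_upgraded} identifies $F := k(\sqrt{\delta_{E/k}})$ with the centre of $\Clif^+_{A,\tau}$.

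Next I would work over a splitting field $K/k$ of both $A$ and $E$. There $(A,\tau)\otimes_k K \cong (\End_K V, \Ad_q)$ for some $2n$-dimensional hyperbolic quadratic space $(V,q)$, the subalgebra $E\otimes_k K \cong K^{2n}$ decomposes $V=\bigoplus_\phi V_\phi$ into one-dimensional weight spaces indexed by the embeddings $\phi:E\to K$, and $\sigma$ interchanges $V_\phi$ with $V_{\sigma\phi}$ under $q$-duality. Fixing a $\sigma$-type $\Phi$ produces a maximal isotropic subspace $W_\Phi=\bigoplus_{\phi\in\Phi}V_\phi$, and by the classical description of the Clifford algebra of a hyperbolic form through the exterior algebra $\Lambda^\bullet W_\Phi$, the commutative \'etale algebra $E^\Phi\otimes_k K$ embeds diagonally into $\Clif^+(V,q)\otimes_k K$ by acting on each weight subspace through the appropriate character. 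The explicit formula $\overline{\tau}(x_1\otimes\cdots\otimes x_r)=\tau(x_r)\otimes\cdots\otimes\tau(x_1)$ reduces the verification that $\overline{\tau}|_{E^\Phi\otimes K}$ corresponds to $\sigma\otimes 1$ to a direct computation on generators modulo the Clifford ideal.

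The final step is Galois descent. The absolute Galois group $\Gal(\overline{k}/k)$ acts on the set of embeddings $\phi$, and the $\sigma$-type $\Phi$ is permuted with its complement $\overline{\Phi}$ in a way that recovers the abstract $F$-structure on $E^\Phi$; simultaneously, it acts on the pair of primitive central idempotents of $\Clif^+_{A,\tau}\otimes F$ via $\Gal(F/k)$. The key observation is that the non-trivial element of $\Gal(F/k)$ swaps the two $\sigma$-types exactly as it swaps the two central idempotents, a compatibility which follows from the discriminant identification above. Matching these two Galois actions produces the desired $F$-embedding $E^\Phi\hookrightarrow \Clif^+_{A,\tau}$, maximal, stable under $\overline{\tau}$, and with restriction $\sigma$.

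I expect the main obstacle to be exactly this descent alignment, namely ensuring that the abstract Galois-equivariant $F$-structure on $E^\Phi$ coincides with the one induced via the centre of $\Clif^+_{A,\tau}$; this hinges on the explicit discriminant comparison and on careful bookkeeping of the two central idempotents of the Clifford centre against the two choices of $\sigma$-type.
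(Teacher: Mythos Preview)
Your outline is sound and would go through, but it takes a different path from the paper. You split $A$ to a quadratic space, invoke the exterior-algebra model $\Clif(V,q)\cong\End(\Lambda^\bullet W_\Phi)$ for a chosen maximal isotropic $W_\Phi$, and then descend; the descent is indeed the delicate point because the exterior-algebra identification depends on the choice of $\Phi$ and of a splitting of $A$, neither of which is Galois-stable. The paper circumvents exactly this difficulty: rather than building the embedding over a splitting field and descending, it uses the $k$-linear map $E\hookrightarrow A\hookrightarrow T(A)\twoheadrightarrow\Clif^+_{A,\tau}$ arising from Construction~\ref{cons:clifford} to send the idempotents $e_\rho\in E\otimes\overline{k}$ directly to elements $\delta_\rho\in\Clif^+_{A,\tau}\otimes\overline{k}$. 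A short lemma identifies these with the classical elements $\tfrac{1}{\rho(\lambda)}e_\rho\otimes e_{\rho\circ\sigma}$ from the quadratic-space case, after which one checks over $\overline{k}$ that the $\delta_\rho$ are commuting orthogonal idempotents with $\delta_\rho+\sigma(\delta_\rho)=1$; the products $\delta_\phi=\prod_{\rho\in\phi}\delta_\rho$ then give the idempotents of $E^\Phi$. Because the map $E\to\Clif^+_{A,\tau}$ is already defined over $k$, the Galois action on $\{\delta_\rho\}$ automatically matches that on $\{e_\rho\}$, and the alignment of the centre $k(\sqrt{\delta_{E/k}})$ with the Clifford centre falls out by writing down $\delta_{\Phi_\pm}=\sum_{\phi\in\Phi_\pm}\delta_\phi$ explicitly. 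Your approach buys a more geometric picture via the spinor module; the paper's buys a descent that is essentially free, with the verification reduced to formal idempotent identities rather than compatibility of two Galois actions.
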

\begin{proof}
This is precisely \cite[Thm. 2.18]{Fiori1} translated to this context.

We shall obtain the result by way of Galois descent working over the algebraic closure.

The inclusion $E\injects A$ gives us an inclusion of the idempotents $e_\rho$ into $A\otimes_k \overline{k}$, we shall denote the image in $A\otimes_k \overline{k}$  of $e_\rho$ by $\delta_\rho$ to avoid confusing the algebra structures when we eventually consider $T(A\otimes_k \overline{k})$.

Working over the algebraic closure $\overline{k}$ we may identify $A\otimes_k \overline{k}$ with the endomorphism algebra of $E\otimes_k \overline{k}$ (as a $\overline{k}$-module).
Indeed, as $E$ gives a maximal \'etale subalgebra of $A$ any realization of $A$ as an endomorphism algebra of a vector space gives that vector space the structure of a rank $1$ module over $E\otimes_k \overline{k}$. Fix any isomorphism $\varphi: A\otimes_k \overline{k} \simeq \End_{\overline{k}}(E\otimes_k \overline{k})$.
By construction the inclusion of $E\injects \End_{\overline{k}}(E\otimes_k \overline{k})$ induced by the inclusion of $E\injects A$ and the map $\varphi$ agrees with the inclusion $E\injects \End_{\overline{k}}(E\otimes_k \overline{k})$ induced from left multiplication of $E$ on $E$.

We shall need the following:
\begin{lemma}\label{lem:torispincovers}
In \cite[Thm. 2.18]{Fiori1} the association of $\delta_\rho = \frac{1}{\rho(\lambda)}e_\rho\otimes e_{\rho\circ\sigma} \in \Clif_{A,\tau}^+$ is the same as the association defined above of $\delta_\rho = e_\rho$ under the inclusion $E\injects A \injects  \Clif_{A,\tau}^+$.
\end{lemma}
\begin{proof}
Indeed, as an element of $\End_{\overline{k}}(E\otimes_k \overline{k})$ the element $\frac{1}{\rho(\lambda)}e_\rho\otimes e_{\rho\circ\sigma}$ acts as left multiplication by $e_\rho$.
\end{proof}

Using this it follows that the formal properties of the $\delta_\rho$ as defined in \cite[Thm. 2.18]{Fiori1} hold for $\delta_\rho$ as defined above.
In particular we have that:
\begin{enumerate}
\item The action of $\sigma$ on $\delta_\rho$ agrees with the canonical involution of $\Clif_{A,\tau}^+$,
\item $\delta_\rho^2 = \delta_\rho$,
\item $\delta_\rho\sigma(\delta_\rho) = 0$ and $\delta_\rho + \sigma(\delta_\rho) = 1$,
\item the $\delta_\rho$ all commute, and
\item the Galois action on $\{\delta_\rho\}$ is the same as that on $\{e_\rho\}$.
\end{enumerate}
Now for each $\sigma$-type $\phi\in\Phi$ of $E$ set $\delta_\phi = \prod_{\rho\in\phi} \delta_\rho$. These elements then satisfy the following properties:
\begin{enumerate}
\item $\delta_\phi^2 = \delta_\phi$,
\item $\delta_{\phi_1}\delta_{\phi_2} = 0$ for $\phi_1\neq\phi_2$,
\item $\sum_\phi \delta_\phi = \prod_\rho (\delta_\rho+\delta_{\rho\circ\sigma}) = 1$, and
\item the Galois action on $\{\delta_\phi\}_{\phi\in\Phi}$ is the same as that on $\{\phi\}_{\phi\in\Phi}$.
\end{enumerate}
Thus the $\delta_\phi$ are Galois stable orthogonal idempotents and hence by taking Galois invariants give an \'etale subalgebra of $\Clif_{A,\tau}^+$. As the Galois action on idempotents matches that of $E^\Phi$, this gives an embedding of $E^\Phi$ into $\Clif_{A,\tau}^+$. Moreover, this algebra is preserved by the canonical involution of $\Clif_q$, and the involution restricts to $\sigma$ on it.

The algebra is maximal as an \'etale subalgebra for dimension reasons.

Finally, we observe that $E^\Phi$ has the subalgebra $k(\sqrt{\delta_{E/k}})$ defined as follows:
Fix any one reflex type $\phi$ let:
\[ \Phi_+ = \{ \phi' \mid \abs{\phi\cap\phi'} \text{ is even}\} \]
and 
\[ \Phi_- = \{ \phi' \mid \abs{\phi\cap\phi'} \text{ is odd}\} \]
Then $\{ \Phi_+,\Phi_-\}$ does not depend on $\phi$ (though the order does) and admits a natural action of the Galois group.
We may then define:
\[ \delta_{\Phi_\pm} = \sum_{\phi\in \Phi_{\pm}} \delta_\phi \]
Then the \'etale sub-algebra of both $E^\Phi$ and $\Clif_{A,\tau}$ defined by $\delta_{\Phi_\pm}$ is the center of $\Clif_{A,\tau}$ and isomorphic to $k(\sqrt{\delta_{E}})$.

In particular $E^\Phi$ embeds in $\Clif_{A,\tau}$ viewing both as algebras over $k(\sqrt{\delta_{E}})$.
\end{proof}

\begin{ex}
Let $k$ be a non-archimidean local field, Suppose $A$ a central simple algebra of degree $2n$ is not a matrix algebra, and $\tau$, an orthogonal involution on $A$ has $(-1)^nD(\tau) = 1$. 
Suppose that $E= F\times F$ with $\sigma$ acting to interchange factors so that $\delta_{E/E^\sigma}=1$ is trivial.
Suppose $E\injects A$.

In this setting there were $2$ choices for $\tau$, these choices are distinguished by the isomorphism class of $[\Clif_{A,\tau}^+]$ over its center (which is $k\times k$). We recall (Remark \ref{rmk:CliffordStructure}) that $[\Clif_{A,\tau}]$ is the direct sum of two non-isomorphic central simple algebras, hence one is a matrix algebra, the other is not.
Even though these algebras are isomorphic over $k$, the algebra $(E,\sigma)$ injects into only one of the two algebras as $\Res_{Z/k}\Cor_{E^\sigma/k}((\delta_{E/E^\sigma},\lambda)$ does not depend on $\lambda$.
What we have is that $(E^\Phi,\sigma)$, as a $k$-algebra, has a unique non-quadratic factor $k\times k$, on which $\sigma$ acts by interchanging factors.
When $(E,\sigma) \injects (A,\tau)$ then as an algebra over $k\times k$, the center of the Clifford algebra, this $k\times k$ factor of $E^\Phi$ sits over $k$-factor of $Z$ over which the matrix algebra also sits.
\end{ex}

\begin{thm}\label{thm:TORIINSPIN}
Let $(A,\tau)$ be a central simple algebra with orthogonal involution over $k$.

Let $T$ be a torus in $\Spin_{A,\tau}$, then there exists $(E,\sigma)$ an \'etale algebra with involution over $k$ such that $T_{E,\sigma} \injects \Orth_{A,\tau}$.
The map $\chi : \Spin_{A,\tau} \rightarrow  \Orth_{A,\tau}$ maps $T$ to $T_{E,\sigma}$.

The torus $T\injects T_{E^\Phi,\sigma}$ and is isomorphic to the natural image of the map:
\[ T_{E,\sigma} \overset{N^\Phi}\longrightarrow T_{E^\Phi,\sigma} \]
defined by sending $x\in E$ to its reflex norm in $E^\Phi$.
The composition of the maps $\chi\circ N^\Phi$ is the map $x\mapsto x^2$ of $T_{E,\sigma}$.

We may thus view $T$ as the subtorus of $T_{E^\Phi,\sigma} \times T_{E,\sigma}$ defined by either:
\[ T = \{ (x,y) \in T_{E^\Phi,\sigma} \times T_{E,\sigma} \mid y = \chi(x) \} =  \{ (x,y) \in T_{E^\Phi,\sigma} \times T_{E,\sigma} \mid N^\Phi(y) = x^2 \}^{0}. \]

\end{thm}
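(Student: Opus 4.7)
The plan is to combine the fiber-product presentation of $\Spin_{A,\tau}$ from Construction \ref{cons:spinAtau} with the embedding $E^\Phi \injects \Clif^+_{A,\tau}$ from Theorem \ref{thm:tori_cliff_upgraded}, and then identify the spin torus as the resulting subtorus of $T_{E^\Phi,\sigma} \times T_{E,\sigma}$. First, since the kernel of $\chi$ is finite central, the image $\chi(T) \subset \SO_{A,\tau}$ is a torus of the same dimension as $T$, maximal if $T$ is; by the extension of Proposition \ref{prop:prop_struct} to central simple algebras with orthogonal involution, $\chi(T) = T_{E,\sigma}$ for some étale algebra with involution $(E,\sigma)$ arising from an inclusion $(E,\sigma) \injects (A,\tau)$. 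Applying Theorem \ref{thm:tori_cliff_upgraded} then yields $E^\Phi \injects \Clif^+_{A,\tau}$ as a maximal étale subalgebra stable under $\overline{\tau}$, and hence a maximal torus $T_{E^\Phi,\sigma}$ in $\SO_{\Clif^+_{A,\tau},\overline{\tau}}$ (interpreted as $\SU$ or $\Sp$ per Theorem \ref{thm:structcliff}).

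Next, by Construction \ref{cons:spinAtau}, a point of $\Spin_{A,\tau}$ is a pair $(u,g)$ satisfying $\Int_u = C(g)$ on $\Clif^+_{A,\tau}$. The projection of $T$ to $\Aut(A,\tau)$ lands in $T_{E,\sigma}$, while the projection to $\SO_{\Clif^+_{A,\tau},\overline{\tau}}$ must land in the centralizer of $C(T_{E,\sigma})$. Working over $\overline{k}$ with the idempotents $\delta_\rho$ from the proof of Theorem \ref{thm:tori_cliff_upgraded} (cf. Lemma \ref{lem:torispincovers}), one sees that this centralizer is precisely $T_{E^\Phi,\sigma}$, so $T \subset T_{E^\Phi,\sigma} \times T_{E,\sigma}$.

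The main obstacle is the identity $\chi \circ N^\Phi = [2]$ on $T_{E,\sigma}$, which pins down how $T$ sits in the product. I would verify this over $\overline{k}$ by a direct computation in the Clifford algebra: for $x = (x_\rho) \in T_{E,\sigma}(\overline{k})$, the element $N^\Phi(x) = \sum_\phi \bigl(\prod_{\rho\in\phi}x_\rho\bigr)\delta_\phi$ in $E^\Phi \subset \Clif^+_{A,\tau}$ induces an inner automorphism on $\Clif_{A,\tau}$, and using the relations $\delta_\rho + \sigma(\delta_\rho) = 1$, $\delta_\rho\sigma(\delta_\rho) = 0$, together with $x_{\rho\circ\sigma} = x_\rho^{-1}$ (from $x\sigma(x) = 1$), one checks that its restriction to $A \injects \Clif_{A,\tau}$ coincides with conjugation by $x^2 \in T_{E,\sigma} \subset A^\times$.

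With the squaring formula established, the two candidate descriptions of $T$ as a subscheme of $T_{E^\Phi,\sigma} \times T_{E,\sigma}$ — the graph $\{(u,g) : g = \chi(u)\}$ of $\chi$ and the identity component of $\{(u,g) : N^\Phi(g) = u^2\}$ — both contain the image of the section $x \mapsto (N^\Phi(x), x^2)$ and both have dimension equal to that of $T_{E,\sigma}$; the connectedness clause in Construction \ref{cons:spinAtau} then forces the three subschemes to coincide, completing the proof.
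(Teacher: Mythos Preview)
Your proposal is correct and essentially parallels the paper's proof: both reduce everything to verifying the identity $\chi\circ N^\Phi = [2]$ on $T_{E,\sigma}$ by an idempotent computation over $\overline{k}$, after which the remaining claims are formal. The one presentational difference is that the paper, having defined $\Spin_{A,\tau}$ via the non-standard fiber product of Construction~\ref{cons:spinAtau}, nonetheless reverts in its proof to the \emph{full} Clifford algebra over $\overline{k}$ and the classical description of $\chi$ as conjugation on $V\simeq E\subset \Clif_{A,\tau}$, citing \cite[Thm.~2.18]{Fiori1} and \cite[Lem.~4.5]{Brus_OrthTori} for the explicit computation of $\delta_\rho$ acting on $e_{\rho'}$. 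You instead stay with the fiber-product definition and the \emph{even} Clifford algebra, checking $\Int_{N^\Phi(x)} = C(x^2)$ on the image of $A$ in $\Clif^+_{A,\tau}$. These amount to the same calculation viewed from two angles; indeed, the Remark immediately following the paper's proof explains exactly your route, noting that $C$ is determined by the inclusion $E\hookrightarrow A\hookrightarrow T(A)$ and that one must check the image of $x\in T_{E,\sigma}$ through this inclusion agrees with $N^\Phi(x)\in E^\Phi$. One small slip: where you write ``$A\hookrightarrow \Clif_{A,\tau}$'' you mean $\Clif^+_{A,\tau}$, since it is the even Clifford algebra that is presented as a quotient of $T(A)$ in Construction~\ref{cons:clifford}.
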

\begin{proof}
The check this result we need to explicitly understand the map $\chi:\Spin_{A,\tau} \rightarrow \SO_{A,\tau}$. We have avoided giving the explicit construction of this map.
However, all of the results can be checked over an algebraic closure of $k$.

In this setting we may use the standard construction of this map using the full Clifford algebra.
The full Clifford algebra is a quotient of $T(V)$ where $V$ is the underlying vector space for a quadratic form.
We note that in our setting we may write $T(V) = V \otimes T(\End(V)) =  E\otimes T(A)$ via the identification of the underlying vector space with $E$.
This gives us a new map $E\injects \Clif_{A,\tau}$. We note that this map is injective and as in the proof of the previous theorem it takes $e_\rho$ to $e_\rho$.
The spin group is contained in the subgroup of $\Clif_{A,\tau}$ for which $gEg^{-1} \subset E$ and it is this natural action on $E$ that induces the map $\chi$.

A direct computation as in the remark following \cite[Thm. 2.18]{Fiori1} or in more detail the proof of \cite[Lemma. 4.5]{Brus_OrthTori} allows us to compute the actions of $\delta_\rho$ on $e_{\rho'}$ and from this we may compute the action of $N^\Phi(x)$ on $E$. In particular, we find that $\chi\circ N^\Phi$  is the map $x\mapsto x^2$. The other claims are then a direct consequence.
\end{proof}

\begin{df}\label{df:psi}
We define a map $\Psi: E^\Phi \rightarrow E \otimes_k E^\Phi$ as follows:
\[ \Psi\left( \underset{\phi}\sum a_\phi e_\phi \right) = \underset{\phi_1\cap\phi_2 = \{\rho\}}\sum a_{\phi_1}a_{\phi_2} e_\rho\otimes (e_{\phi_1}+e_{\phi_2} + e_{\overline{\phi}_1}+e_{\overline{\phi}_2}). \]
First, we note that the image of this map actually lands in a subalgebra $\tilde{E}$ of $E \otimes_k E^\Phi$, in particular the image is stable under the involution $1\otimes \sigma$ and the involution which takes $e_{\rho}\otimes e_{\phi_1}$ to $e_\rho \otimes e_{\phi_2}$ where $\phi_2 \cap \phi_1 \subset \{\rho,\overline{\rho}\}$. We shall make some use of this later.
For now, the key feature of this map which we shall use is that $\Psi \circ N^\Phi:E^\times\rightarrow E\otimes E^\Phi$ is the map $x \mapsto (N(x)x\sigma(x)^{-1})\otimes 1$ so that in particular on restriction to $T_{E,\sigma}$ it is the map $x\mapsto x^2\otimes 1$.

It follows that in the above theorem we have the intrinsic description of $T$ as:
\[ T = \{ (x,y) \in T_{E^\Phi,\sigma} \times T_{E,\sigma} \mid y\otimes 1 = \Psi(x) \} \]
moreover $\chi$ and $\Psi$ essentially define the same map.
\end{df}

\begin{rmk}
The key to carrying out the above computation explicitly without passing to the algebraic closure is the observation that our maps arise from the inclusion $E\injects A \injects T(A)$. As the even Clifford algebra is a quotient of $T(A)$ it is this inclusion of $E$ into $\Clif_{A,\tau}^+$ which via inner automorphisms induces the map $C: \SO_{A,\tau} \rightarrow \Aut(\Clif_{A,\tau}^+)$ as in the definition of the spin group (Proposition \ref{prop:AtoCliff}).
It thus suffices to check that the image of $x\in T_{E,\sigma}$ through this inclusion agrees with the image of $N^{\Phi}(x) \in E^\Phi$.
To check this identification one uses the relations from the ideal $J_2$ as used to define the Clifford algebra in \cite[Def. 8.7]{book_of_involutions}.

An alternate approach, more in line with the direct computations referred to in the sketch above would use the Clifford bimodule \cite[Ch. 9]{book_of_involutions}.
\end{rmk}

We now consider the problem of understanding the rational conjugacy classes of these tori.
Because of Lemma \ref{lem:Covers} it is natural to look at the following:
\begin{lemma}\label{lem:CohomKernSpin}
Let $T=T_{E,\sigma}$ and consider $T'$ the unique torus which can cover it in a spin group and the exact sequence:
\[  1 \rightarrow {\mu_2} \rightarrow T' \rightarrow T \rightarrow 1. \]
In the associated long exact sequence we have that:
\[\im(H^0(\Gal(\overline{k}/k),T) \rightarrow H^1(\Gal(\overline{k}/k),\mu_2) \simeq N_{E/k}(E^\times)/(k^\times)^2 \subset k^{\times}/(k^\times)^2. \]
\end{lemma}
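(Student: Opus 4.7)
The plan is to compute the connecting map $\delta\colon T_{E,\sigma}(k) \to H^1(\Gal(\overline{k}/k), \mu_2) \simeq k^\times/(k^\times)^2$ explicitly, using the concrete description of $T'$ from Theorem \ref{thm:TORIINSPIN}. Under that description, $T'$ sits inside $T_{E^\Phi,\sigma} \times T_{E,\sigma}$ as the connected component of $\{N^\Phi(y)=x^2\}$, the projection $T' \to T_{E,\sigma}$ is $\chi$ on the first factor, and $\mu_2=\ker\chi$ is realized as the central copy of $\{\pm 1\}\subset (E^\Phi)^\times$ inside $T_{E^\Phi,\sigma}$.

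First I would construct an explicit lift of $y\in T_{E,\sigma}(k)$ to $T'(\overline{k})$ by exploiting the identity $\chi\circ N^\Phi=[2]$. Namely, pick any square root $\tilde{y}\in T_{E,\sigma}(\overline{k})$ of $y$ and set $x = N^\Phi(\tilde{y})$; then $\chi(x)=\tilde{y}^2=y$, so $(x,y)\in T'(\overline{k})$ lifts $y$, and the connecting cocycle is
\[ c_\gamma = \gamma(x)\, x^{-1} = N^\Phi\bigl(\gamma(\tilde{y})/\tilde{y}\bigr), \]
with $\gamma(\tilde{y})/\tilde{y}$ a $2$-torsion element of $T_{E,\sigma}(\overline{k})$. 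A short check, using that any reflex type $\phi$ and its $\sigma$-complement $\sigma\phi$ partition $\Hom_k(E,\overline{k})$, confirms that $N^\Phi$ sends the $2$-torsion of $T_{E,\sigma}(\overline{k})$ into the central $\mu_2\subset T_{E^\Phi,\sigma}$, so that $c_\gamma$ actually lives in $\mu_2$.

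Next I would invoke Hilbert~$90$ on $T_{E,\sigma}$ to write $y=z/\sigma(z)$ for some $z\in E^\times$ (unique modulo $(E^\sigma)^\times$). Working in split coordinates $E\otimes_k\overline{k}\cong\prod_{\rho\in\Hom_k(E,\overline{k})}\overline{k}$, fix square roots $z_\rho^{1/2}$ of $z_\rho = \rho(z)$ and take $\tilde{y}_\rho = z_\rho^{1/2}/z_{\sigma\rho}^{1/2}$, which automatically lies in $T_{E,\sigma}(\overline{k})$. A direct Galois calculation gives $\bigl(\gamma(\tilde{y})/\tilde{y}\bigr)_\rho = \epsilon(\gamma,\rho)\,\epsilon(\gamma,\sigma\rho)$, where $\epsilon(\gamma,\rho) = \gamma(z_{\gamma^{-1}\rho}^{1/2})/z_\rho^{1/2}\in\{\pm1\}$ records the sign discrepancy between the chosen and Galois-translated square roots. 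Applying $N^\Phi$ along any reflex type and telescoping with its $\sigma$-complement yields a product over all embeddings:
\[ c_\gamma = \prod_{\rho}\epsilon(\gamma,\rho) = \frac{\gamma(\alpha)}{\alpha}, \qquad \alpha \,:=\, \prod_\rho z_\rho^{1/2}, \]
and $\alpha^2 = \prod_\rho \rho(z) = N_{E/k}(z)\in k^\times$. Hence under the Kummer isomorphism $H^1(k,\mu_2)\simeq k^\times/(k^\times)^2$ the class of $c_\gamma$ is exactly $N_{E/k}(z) \pmod{(k^\times)^2}$.

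Finally, as $y$ ranges over $T_{E,\sigma}(k)$, $z$ ranges over $E^\times$ modulo $(E^\sigma)^\times$; since $N_{E/k}(w) = N_{E^\sigma/k}(w)^2$ for $w\in (E^\sigma)^\times$, this ambiguity disappears modulo squares, and therefore $\im(\delta) = N_{E/k}(E^\times)/(k^\times)^2$ as asserted. The main obstacle is the cocycle computation of the third paragraph: one must carefully bookkeep the indexing by $\Hom_k(E,\overline{k})$ to verify both that $N^\Phi$ kills the non-constant part of $T_{E,\sigma}(\overline{k})[2]$ (forcing $c_\gamma$ into the central $\mu_2$) and that the remaining signs $\epsilon(\gamma,\rho)$ aggregate precisely into the Kummer cocycle of $N_{E/k}(z)$ rather than of some other norm.
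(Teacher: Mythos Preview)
Your argument is correct. You compute the connecting homomorphism by hand: lift $y$ via $N^\Phi$ applied to a square root, track the resulting $\mu_2$-cocycle through the reflex-norm formula, and identify it with the Kummer cocycle of $N_{E/k}(z)$ where $y = z/\sigma(z)$. The bookkeeping with the signs $\epsilon(\gamma,\rho)$ and the telescoping over $\phi \sqcup \sigma\phi = \Hom_k(E,\overline{k})$ is sound, and Hilbert~90 for $E/E^\sigma$ (applied factor by factor) does give the required surjectivity of $z \mapsto z/\sigma(z)$ onto $T_{E,\sigma}(k)$.

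The paper proceeds differently and more diagrammatically. It observes that the covering map $T' \to T$ factors, via $\chi \circ N^\Phi = [2]$, through the squaring map $T \xrightarrow{[2]} T$, whose kernel is $\Res_{E^\sigma/k}(\mu_2)$; the connecting map for $1 \to \mu_2 \to T' \to T \to 1$ is then $N_{E^\sigma/k}$ applied to the connecting map for $1 \to \Res_{E^\sigma/k}(\mu_2) \to T \xrightarrow{[2]} T \to 1$. The latter image is identified (without explicit cocycles) as $N_{E/E^\sigma}(E^\times)/((E^\sigma)^\times)^2$ by recognising it as $\ker\bigl(H^1(\Res_{E^\sigma/k}(\mu_2)) \to H^1(T)\bigr)$, and applying $N_{E^\sigma/k}$ gives the result. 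Your explicit computation is longer but has the advantage of directly establishing the concrete description of $\delta(y)$ as $N_{E/k}(z)$ for $y = z/\sigma(z)$, which the paper only records afterwards as a remark without proof; the paper's diagram chase is shorter but leaves that concrete formula to the reader.
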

\begin{proof}
We consider the exact diagram:
\[ \xymatrix{ &&&  \mu_2\ar@{->}[dd] \\
                         & \Res_{E^\sigma/k}(\mu_2) \ar@{->}[rru]^{N_{E^\sigma/k}}\ar@{->}[dr]& & \\
                          H  \ar@{->}[ur]  \ar@{->}[rr]& & T \ar@{->}[r]^{N_{\Phi}}\ar@{->}[dr]^{x^2}& T' \ar@{->}[d]^{\Psi}\\
                          & & & T }
\]
from which we deduce that 
\[ \im(H^0(\Gal(\overline{k}/k),T) \overset{\delta_{\Phi}}\longrightarrow H^1(\Gal(\overline{k}/k),\pm1) \]  is \[ N_{E^\sigma/k}(\im(H^0(\Gal(\overline{k}/k),T) \overset{\delta_{x^2}}\longrightarrow H^1(\Gal(\overline{k}/k),\Res_{F/k}(\mu_2))). \] 
We then recall that $\im(H^0(\Gal(\overline{k}/k),T) \rightarrow H^1(\Gal(\overline{k}/k),\Res_{E^\sigma/k}(\mu_2)))$ is precisely
 \[ \ker( H^1(\Gal(\overline{k}/k),\Res_{E^\sigma/k}(\mu_2))\rightarrow H^1(\Gal(\overline{k}/k),T))\]
 which is precisely $N_{E/E^\sigma}(E^\times)/((E^\sigma)^\times)^2$.

The result then follows immediately.
\end{proof}

\begin{rmk}
We can concretely interpret the coboundary map $T(k) \rightarrow H^1(\Gal(\overline{k}/k),\mu_2)$ as follows.
Every element $f\in T(k)$ is of the form $f = e/\sigma(e)$ for some $e\in E$. The image of $f$ in $H^1(\Gal(\overline{k}/k),\mu_2) / k^\times / (k^\times)^2$ is precisely $N(e)$.
\end{rmk}

\begin{thm}\label{thm:CONGTORISPINCOVERS}
Let $T=T_{E,\sigma}$ be a torus in $\SO_{A,\tau}$, the rational conjugacy classes of tori $T'\subset \Spin_{A,\tau}$ whose image in $\SO_{A,\tau}$ is a rational conjugates of $T$ are in bijection with a subset of:
\[ k^\times/N_{E/k}(E^\times)(k^\times)^2. \]
In particular, it is the subset generated by the images of $N_{E/k}(E^\times)$ in  $k^\times/(k^\times)^2$ as we vary over all pairs $(E,\sigma)$ which give tori in $\SO_{A,\tau}$.

The subgroup of $k^\times/(k^\times)^2$ which must be considered is precisely the image of the spinor norm, which is the value group of the quadratic form.
In the specific case of a local or global field $k$ this is:
\[ \{ x\in k^\times | \nu(x) > 0 \text{ for } \nu \text{ real place of } k \text{ where } \SO_{A,\tau} \text{ is compact}\}. \]
\end{thm}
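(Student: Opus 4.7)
The plan is to combine Lemma \ref{lem:Covers} applied to the two-fold cover $\Spin_{A,\tau}\to\SO_{A,\tau}$ with the explicit computation of Lemma \ref{lem:CohomKernSpin}, and then identify the numerator of the resulting quotient with the classical image of the spinor norm. First I would apply Lemma \ref{lem:Covers} with $G=\SO_{A,\tau}$, $\tilde{G}=\Spin_{A,\tau}$, $Z=\mu_2$, and $T=T_{E,\sigma}$. This immediately identifies the rational conjugacy classes of tori in $\Spin_{A,\tau}$ whose image is rationally conjugate to $T$ with the quotient
\[ \im\bigl(H^0(\Gal(\overline{k}/k),\SO_{A,\tau})\to H^1(\Gal(\overline{k}/k),\mu_2)\bigr) \Big/ \im\bigl(H^0(\Gal(\overline{k}/k),T)\to H^1(\Gal(\overline{k}/k),\mu_2)\bigr), \]
viewed inside $H^1(\Gal(\overline{k}/k),\mu_2)\simeq k^\times/(k^\times)^2$.

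Next I would identify the denominator: by Lemma \ref{lem:CohomKernSpin} it is precisely $N_{E/k}(E^\times)(k^\times)^2/(k^\times)^2$, which gives the first (per-torus) part of the statement — the conjugacy classes form a subset of $k^\times/N_{E/k}(E^\times)(k^\times)^2$. For the numerator, I would identify the coboundary $\SO_{A,\tau}(k)\to k^\times/(k^\times)^2$ with the spinor norm; this is standard from the connecting homomorphism for $1\to\mu_2\to\Spin_{A,\tau}\to\SO_{A,\tau}\to 1$ and, in the setting of algebras with involution, can be read off from the identification of reflections by rank-one étale subalgebras as in Theorem \ref{thm:lambdafortoriorth}. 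As $(E,\sigma)$ varies over all pairs admitting a torus embedding into $\SO_{A,\tau}$, the various denominators $N_{E/k}(E^\times)(k^\times)^2$ all lie inside the image of the spinor norm, and conversely every product of two reflections sits inside some maximal $T_{E,\sigma}$, so the subgroup generated by all these denominators is exactly the image of the spinor norm, which equals the value group of the underlying quadratic form.

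The final part, identifying this image as $\{x\in k^\times\mid \nu(x)>0\text{ for }\nu\text{ real where }\SO_{A,\tau}\text{ is compact}\}$ for local and global $k$, is the classical description of the spinor norm: over non-archimedean $k_\nu$ and over $\bR$ with an indefinite or complex form the spinor norm is surjective onto $k_\nu^\times/(k_\nu^\times)^2$, while at a real place where $\SO_{A,\tau}$ is compact the form is definite and only positive values occur. The main obstacle is making the last step — showing that the group generated by $N_{E/k}(E^\times)$ as $(E,\sigma)$ varies actually fills out the spinor norm image — rigorous without circular reliance on results to come; I would handle this by exhibiting, for each allowed local sign pattern, an explicit étale $(E,\sigma)$ whose norms realize that sign pattern (e.g.\ by using $E=k(\sqrt{a})\oplus k(\sqrt{a})$ with $\sigma$ swapping factors, whose $N_{E/k}$ image is the subgroup generated by $a$), so that the generated subgroup exhausts the prescribed set and matches the spinor norm image computed from local invariants via the Hasse principle.
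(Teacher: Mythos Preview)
Your proposal is correct and follows essentially the same route as the paper: apply Lemma \ref{lem:Covers} to the cover $\Spin_{A,\tau}\to\SO_{A,\tau}$, compute the denominator via Lemma \ref{lem:CohomKernSpin}, and identify the numerator with the image of the spinor norm.

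Two small points where the paper's argument is cleaner than what you sketch. For the identification of the numerator with the subgroup generated by the $N_{E/k}(E^\times)$, the paper simply observes that the coboundary $\SO_{A,\tau}(k)\to k^\times/(k^\times)^2$ is a group homomorphism and that $\SO_{A,\tau}(k)$ is generated by its semi-simple elements, each of which lies in some maximal torus $T_{E,\sigma}$; this avoids any direct appeal to reflections or to Theorem \ref{thm:lambdafortoriorth}. For the local/global description, rather than computing the spinor norm place by place, the paper uses that $H^1(\Gal(\overline{k}/k),\Spin_{A,\tau})$ is supported at the real places (since $\Spin_{A,\tau}$ is simply connected), so the coboundary is automatically surjective onto $k_\nu^\times/(k_\nu^\times)^2$ at all finite and complex places, and the real places are then handled by inspecting the possible tori. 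Your explicit construction at the end is unnecessary, and in fact the specific example $E=k(\sqrt{a})\oplus k(\sqrt{a})$ with $\sigma$ swapping factors does not give what you claim: $N_{E/k}(E^\times)$ there is $N_{k(\sqrt{a})/k}(k(\sqrt{a})^\times)$, not the subgroup generated by $a$.
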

\begin{proof}
The first claim follows immediately from the Lemmas \ref{lem:Covers} and \ref{lem:CohomKernSpin}.

The claim about the structure of the image follows from the fact that the coboundary map is a group homomorphism and orthogonal group is generated by semi-simple elements.
The final claim in the case of local or global fields follows by observing that the $H^1(\Spin_{A,\tau})$ is supported at the real places, and concretely studying the possible tori in this case.
\end{proof}

\begin{rmk}
To concretely interpret the above what it says is that if we conjugate a torus $T$ by an element whose spinor norm (its image in $H^1(\Gal(\overline{k}/k),\mu_2)$) is not in $(k^\times)^2$ then the preminage of that torus in in the spin group is not conjugate to the original unless there is an element $f\in T(k)$ with the same spinor norm.
As the spinor norms of elements $T(k)$ are in $N_{E/k}(E^\times)$ this explains the result.
\end{rmk}

\subsection{Tori in Simply Connected  Groups of type $D_4$}

\begin{lemma}
Let $(L,D,\tau,\alpha)$ be a trialitarian algebra.
Then every maximal torus:
\[ T \subset \Spin_{(L,D,\tau,\alpha)}(R) = \{ g \in \Res_{L/k}(\Spin_{(D,\tau)/L})(R) \mid \alpha(g) = \chi(g)\otimes 1 \}\]
 is contained in a unique maximal torus of $\Res_{L/k}(\Spin_{D,\tau})$.
\end{lemma}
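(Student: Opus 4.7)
The plan is to identify the claimed maximal torus with the centralizer of $T$ in $G \dfn \Res_{L/k}(\Spin_{D,\tau})$ and verify that this centralizer is itself a torus. This will establish both existence and uniqueness simultaneously: any maximal torus $T' \subseteq G$ containing $T$ commutes with $T$, so it must lie inside $Z_G(T)$; if $Z_G(T)$ is itself a torus then by maximality $T' = Z_G(T)$.

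The main technical step is showing $Z_G(T)$ is a torus (necessarily connected, by Steinberg's theorem on centralizers of tori in connected reductive groups). Since being a torus can be checked after base change to $\overline{k}$, I would pass to the algebraic closure. Under the splitting $L\otimes_k \overline{k} \simeq \overline{k}\times\overline{k}\times\overline{k}$, we have
\[ G_{\overline{k}} \simeq \Spin_{D,\tau}\otimes_L \overline{k} \;\times\; \Spin_{D,\tau}\otimes_L^\rho \overline{k} \;\times\; \Spin_{D,\tau}\otimes_L^{\rho^2} \overline{k} \simeq \Spin_8\times \Spin_8\times \Spin_8, \]
and the inclusion $\Spin_{(L,D,\tau,\alpha)}\subset G$ base-changes to the triality-twisted diagonal embedding $g\mapsto (g,g^+,g^-)$ where $g^\pm$ are obtained from $g$ via the triality automorphisms encoded by $\alpha$.

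Now since $T$ is a maximal torus of $\Spin_{(L,D,\tau,\alpha)}$, after base change it becomes a maximal torus of a copy of $\Spin_8$, and its image in $\Spin_8^3$ is of the form $\{(t,t^+,t^-) \mid t\in T_0\}$ where $T_0 \subseteq \Spin_8$ is a maximal torus and $T_0^\pm$ are its images under the triality automorphisms (again maximal tori of $\Spin_8$). The centralizer of this twisted diagonal inside $\Spin_8^3$ is
\[ Z_{\Spin_8}(T_0) \times Z_{\Spin_8}(T_0^+) \times Z_{\Spin_8}(T_0^-) = T_0 \times T_0^+ \times T_0^-, \]
because each $T_0^{\pm}$ is a maximal torus of $\Spin_8$ and hence equal to its own centralizer. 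This is an abelian group — indeed a torus of rank $12$, which matches the rank of $G_{\overline{k}}$. Hence $Z_G(T)_{\overline{k}}$ is a maximal torus of $G_{\overline{k}}$, and as $Z_G(T)$ is a $k$-subgroup scheme of $G$, Galois descent gives that $Z_G(T)$ is itself a $k$-torus of maximal rank.

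The hardest part is not any single computation but rather keeping bookkeeping straight: identifying $G_{\overline{k}}$ with $\Spin_8^3$ in a way compatible with the triality-twisted embedding used to define $\Spin_{(L,D,\tau,\alpha)}$, so that one can confidently conclude that the image of $T$ projects isomorphically onto a maximal torus in each of the three factors. Once that identification is made, the centralizer calculation is formal and the argument concludes by the self-centralizing property of maximal tori in connected reductive groups.
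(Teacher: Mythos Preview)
Your proposal is correct and follows essentially the same approach as the paper: both take the centralizer of $T$ in $G=\Res_{L/k}(\Spin_{D,\tau})$, base change to $\overline{k}$ to obtain a product of three copies of $\Spin_8$, and observe that the projection of $T$ to each factor is a maximal torus so that the centralizer is the product of these three maximal tori. The only cosmetic difference is that the paper phrases the candidate torus as ``the center of the centralizer of $T$'' (which is a priori a torus) and then checks maximality, whereas you directly verify that $Z_G(T)$ is itself a torus; since the computation shows the centralizer is already a torus, these amount to the same thing.
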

\begin{proof}
The center of the centralizer of $T$ in $\Res_{L/k}(\Spin_{(D,\tau)/L})$ is a $k$-torus containing $T$.
To show that it is maximal it suffices to consider the base change to $\overline{k}$.

Since over $\overline{k}$ the group $\Res_{L/k}(\Spin_{(D,\tau)/L})$  is a product group we may consider the centralizer for each factor separately and compute it with respect to the projection onto each factor. This projection gives a maximal torus in each factor, we thus conclude that the centralizer is in each case a maximal torus.
\end{proof}

\begin{lemma}
Let $(L,D,\tau,\alpha)$ be a trialitarian algebra.
Let $T \subset \Spin_{(D,\tau)/L}$ be a maximal torus.
Let $(E,\sigma)$ be the \'etale algebra over $L$ associated to the torus $T_{E,\sigma}$ in $\SO_{D,\tau}$ whose preimage contains $T$ in $\Res_{L/k}(\Spin_{(D,\tau)/L})$.

Then $\alpha$ induces a $k$-algebra isomorphism $(E^\Phi,\sigma) \rightarrow (E \otimes_k \Delta_L,\sigma \otimes 1)$ which restricts to $\rho$ on the subalgebra $L\injects L\otimes_k \Delta_L$.
Consequently we have an isomorphism $\beta: (E,\sigma)\otimes_k L \overset\sim\rightarrow  (E\times E^\Phi,\sigma\times \sigma)$ given by:
\[ e\otimes \ell \mapsto (\ell e, \rho^{-1}(\ell)\alpha^{-1}(e)). \]
\end{lemma}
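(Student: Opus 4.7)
The plan is to handle the two assertions of the lemma in sequence. For the induced isomorphism $(E^\Phi,\sigma) \to (E\otimes_k \Delta_L, \sigma\otimes 1)$, I would identify both sides as maximal étale $F$-subalgebras of central simple $F$-algebras, where $F = L\otimes_k \Delta_L$ is both the center of $\Clif^+_{D,\tau}$ and, under the $\rho$-twisted $L$-structure, of $D\otimes_k \Delta_L$; one then shows $\alpha$ carries the former onto the latter. The second assertion then follows from the canonical splitting $L\otimes_k L \simeq L \times (L\otimes_k \Delta_L)$, which decomposes $E\otimes_k L$ as an $L\otimes_k L$-module into $E \oplus (E\otimes_k \Delta_L)$; composing the second summand with $\alpha^{-1}$ delivers $\beta$.

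For the key identification of étale subalgebras, I would invoke Theorem \ref{thm:tori_cliff_upgraded} applied to the $L$-algebra $(D,\tau)$ and its étale subalgebra $(E,\sigma)$: it places $E^\Phi$ as a maximal étale $F$-subalgebra of $\Clif^+_{D,\tau}$, stable under the canonical involution restricting to $\sigma$, and generated by the idempotents $\delta_\phi = \prod_{\rho\in\phi}\delta_\rho$ built from the idempotents $\delta_\rho$ of $E\subset D$. On the other side, the inclusion $E\hookrightarrow D$ tensored with $\Delta_L$ realizes $E\otimes_k \Delta_L$ as a maximal étale $F$-subalgebra of $D\otimes_k \Delta_L$ with involution $\tau\otimes 1$ restricting to $\sigma\otimes 1$; the $L$-dimensions and discriminants match (using that the discriminant of $\tau$ equals $\delta_L$). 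Since $\alpha$ is natural with respect to the map $\Aut(D,\tau) \to \Aut(\Clif^+_{D,\tau})$ of Proposition \ref{prop:AtoCliff}, which is determined by the idempotent structure coming from $E$, we obtain $\alpha(E^\Phi) = E\otimes_k \Delta_L$. The restriction $\alpha|_L = \rho$ on the subalgebra $L\hookrightarrow L\otimes_k\Delta_L$ is immediate from the $\rho$-skew linearity defining $\alpha$.

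The main obstacle is making this identification rigorous when $\alpha$ is known only abstractly as part of the trialitarian datum. The cleanest route I envision is Galois descent: by Proposition \ref{prop:uniquealpha}, once we base change to a splitting field of $L$, the trialitarian structure becomes the explicit cyclic triality assembled from Examples \ref{ex:split_twisted} and \ref{ex:cyclic_comp}, where the action of $\alpha$ on each idempotent $\delta_\rho$ can be computed directly and verified to land in $E\otimes_k \Delta_L$; the identification over $k$ then follows by equivariance.

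Given the identification of the first part, the formula for $\beta$ follows by tracking the decomposition. Viewing $E\otimes_k L$ as a module over $L\otimes_k L$ (with the left $L$-action via the $L$-algebra structure of $E$ and the right $L$-action via the tensor factor), and applying $L\otimes_k L \simeq L \times (L\otimes_k \Delta_L)$, the first projection $\ell_1\otimes\ell_2 \mapsto \ell_1\ell_2$ sends $e\otimes\ell$ to $\ell e \in E$. The second projection lands in $E\otimes_k \Delta_L$, and the $\rho$-twist in $\alpha$'s $L$-action on $D\otimes_k \Delta_L$ forces the left $L$-action to appear as $\rho^{-1}$ after identifying via $\alpha^{-1}$ with $E^\Phi$. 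This yields $\beta(e\otimes\ell) = (\ell e, \rho^{-1}(\ell)\alpha^{-1}(e))$; compatibility with the involutions is inherited from $\alpha$'s compatibility with the canonical involutions on both sides.
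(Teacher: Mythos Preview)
Your approach is essentially the same as the paper's: embed $E^\Phi$ into $\Clif^+_{D,\tau}$ via Theorem~\ref{thm:tori_cliff_upgraded}, pass to a splitting situation for $L$ using Proposition~\ref{prop:uniquealpha} to make $\alpha$ explicit, verify there, and descend; then obtain $\beta$ from the decomposition $L\otimes_k L \simeq L \times (L\otimes_k \Delta_L)$. The second paragraph's ``naturality'' justification does not stand on its own (naturality with respect to $\Aut(D,\tau)\to\Aut(\Clif^+_{D,\tau})$ says nothing about which maximal \'etale subalgebra $\alpha(E^\Phi)$ is), but you correctly identify this as the obstacle and propose the descent fix, which is exactly what the paper does.

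The one substantive difference is in how the explicit check over $\overline{k}$ is carried out. You propose computing $\alpha$ directly on the idempotents $\delta_\phi$. The paper instead argues at the level of tori: over $\overline{k}$ the uniqueness of $\alpha$ shows that its restriction to $\Spin_{D,\tau}$ is the map $\Spin_{D,\tau}\hookrightarrow \SO_{D,\tau}\times\SO_{D,\tau}$ (each projection being the covering map $\chi$ of Proposition~\ref{prop:spinAtau}); hence $\alpha(T(\overline{k}))\subset T_{E,\sigma}(\overline{k})$, and since $E^\Phi$ is the $L$-linear span of $T(k)$ while $E$ is the $L$-linear span of $T_{E,\sigma}(L)$, the $\rho$-linearity of $\alpha$ forces $\alpha(E^\Phi)=E\otimes_k\Delta_L$. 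This is cleaner than chasing individual idempotents through the explicit map of Proposition~\ref{prop:map_of_cliff}, and it makes transparent why the particular \'etale subalgebra $E\otimes_k\Delta_L$ (rather than some other maximal \'etale subalgebra of $D\otimes_k\Delta_L$) is hit: it is forced by the compatibility of $\alpha$ with $\chi$.
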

\begin{proof}
Recall that by definition $\alpha$ is a map \[ (\Clif_{D,\tau}^+,\tau) \rightarrow^\rho (D\otimes_k\Delta_L, \tau\otimes 1). \]

We have by Lemma \ref{lem:torispincovers} that $E^\Phi \injects \Clif_{D,\tau}^+$ and thus $\alpha|_{E^\Phi}$ gives a map $(E^\Phi,\sigma) \rightarrow^\rho (D\otimes_k\Delta_L, \tau\otimes 1)$.

Now, over the algebraic closure, the map $\alpha$ is essentially unique (see Proposition \ref{prop:uniquealpha}) and an explicit check in this case shows that the restriction of $\alpha$ to $\Spin_{D,\tau}$ is giving map:
\[ \Spin_{D,\tau} \hookrightarrow \SO_{D,\tau} \times \SO_{D,\tau} \]
where the decomposition on the right is with respect to $\Delta_L\otimes\overline{k} = \overline{k} \times \overline{k}$. Moreover, both projections to $\SO_{D,\tau}$ can be viewed as the map arising in the definition of $\Spin_{D,\tau}$ (see Proposition \ref{prop:spinAtau}). 
It follows then that 
 \[ \alpha(T(\overline{k})) \subset T_{E,\sigma}(\overline{k}). \]
and hence by descent that:
 \[ \alpha(T(R)) \subset T_{E,\sigma}(R) \]
for all $k$-algebras $R$.
Next, we by noting that $E$ is the $L$ linear span of $T_{E,\sigma}(L)$ and $E^\Phi$ is the $L$-linear span of $T(k)$, it follows by the $L^\rho$-linearity of $\alpha$ that:
\[ \alpha(E^\Phi) =E\otimes \Delta_L. \]

The final claim follows by exploiting that $L\otimes_k L \simeq L \times L\otimes_k\Delta_L$.
\end{proof}

\begin{rmk}
Via the isomorphism $(D,\tau)\otimes_k L = (D,\tau) \times (D,\tau)\otimes_L \Delta_L$ we obtain a projection onto $(D,\tau)$ and one onto $(D,\tau)\otimes_L \Delta_L$.
As the the base change of $\Spin_{(L,D,\tau,\alpha)/L}$ to $L$ preserves this decomposition we obtain maps from it to the groups preserving the other remaining structure. The projection onto the $(D,\tau)$ factor is giving the map from $\Spin_{(L,D,\tau,\alpha)/L}$ to $\SO_{(D,\tau)/L}$ whereas the projection onto $(D,\tau)\otimes_L \Delta_L$ is inducing the map $\Spin_{(L,D,\tau,\alpha)/L}$ to $\Spin_{(D,\tau)/L}$. This latter fact follows from the definition of $\beta$ and the definition of $\Spin_{(L,D,\tau,\alpha)/L}$ relative to $\alpha$.
The group $\Spin_{(D,\tau)/L}$ is precisely the double cover of $\SO_{(D,\tau)/L}$ given by $\chi$ and this projection is in fact giving $\chi$.

Note well that these maps only exist after base change to $L$ (or if $L$ is not a field).
\end{rmk}

\begin{lemma}\label{lem:structureofT}
Let $(L,D,\tau,\alpha)$ be a trialitarian algebra.
Let $T \subset \Spin_{(L,D,\tau,\alpha)}$ be a maximal torus and $(E,\sigma)$ the associated algebra as above and denote by $\beta$ the isomorphism:
\[\beta:  (E,\sigma)\otimes L \overset\sim\rightarrow (E\times E^\Phi,\sigma\times \sigma).\]

Then $T = T_{E,\sigma,\beta} \subset T_{E,\sigma}$ is the torus:
\[ T_{E,\sigma,\beta} (R) = \{ x \in (E\otimes_k R)^\times \mid  x\sigma(x) = 1,\; \beta(x) = (x_E,x_{E^\Phi}) \text{ satisfy } \Psi(x_{E^\Phi}) = x_E\otimes1 \}.  \]
Where $\Psi$ is the map 
\[ \Psi : E^\Phi \rightarrow E\otimes_k E^\Phi\]
defined in Definition \ref{df:psi}.
\end{lemma}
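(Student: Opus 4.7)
The plan is to combine the preceding lemma with Theorem \ref{thm:TORIINSPIN}, applied to $(D,\tau)$ over $L$, and to use the isomorphism $\beta$ to descend the resulting description to $k$. First I would invoke the preceding lemma to see that $T$ is contained in the restriction-of-scalars torus $\Res_{L/k}(T_{E,\sigma})$, whose $R$-points are exactly $\{x \in (E \otimes_k R)^\times \mid x\sigma(x) = 1\}$. This already gives the first half of the definition of $T_{E,\sigma,\beta}$; only the triality constraint remains to be identified.

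Next, I would apply Theorem \ref{thm:TORIINSPIN} to $(D,\tau)$ over $L$: this tells us that the preimage $\tilde T_{E,\sigma}$ in $\Spin_{(D,\tau)/L}$ of the torus $T_{E,\sigma}$ is cut out inside $T_{E^\Phi,\sigma} \times T_{E,\sigma}$ by the equation $\Psi(x_{E^\Phi}) = x_E \otimes 1$ of Definition \ref{df:psi}. The preceding lemma's isomorphism $\beta \colon (E,\sigma) \otimes_k L \simeq (E \times E^\Phi, \sigma \times \sigma)$ then lets me translate the pair $(x_E, x_{E^\Phi})$ back to a single element $x \in E \otimes_k R$: namely, $\beta \otimes \id_R$ applied to $x \otimes 1 \in E \otimes_k L \otimes_k R$ produces precisely the pair that appears in the statement. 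Hence, a point of $T$ is the same data as a point of $\Res_{L/k}(T_{E,\sigma})$ satisfying the Theorem \ref{thm:TORIINSPIN} condition on its two $\beta$-components.

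The main obstacle, and the heart of the argument, is verifying that the defining equation $\alpha(g) = \chi(g) \otimes 1$ for $\Spin_{(L,D,\tau,\alpha)}$ inside $\Res_{L/k}(\Spin_{(D,\tau)/L})$ translates under $\beta$ into exactly $\Psi(x_{E^\Phi}) = x_E \otimes 1$. This is mostly bookkeeping: the preceding lemma identifies $\alpha$ restricted to $E^\Phi$ with the second component of $\beta$ via the isomorphism $E^\Phi \simeq E \otimes_k \Delta_L$; Definition \ref{df:psi}'s identity $\Psi \circ N^\Phi(y) = y^2 \otimes 1$ on $T_{E,\sigma}$ matches the relation $\chi \circ N^\Phi(y) = y^2$ from Theorem \ref{thm:TORIINSPIN}; and the triality compatibility between $\alpha$ and $\chi$ forced by membership in $\Spin_{(L,D,\tau,\alpha)}$ therefore encodes exactly the stated $\Psi$ equation. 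A dimension count (the rank-$4$ torus $T$ sits inside the rank-$12$ torus $\Res_{L/k}(T_{E,\sigma})$, and the $\Psi$-condition imposes precisely the correct number of relations) serves as a sanity check that no further constraint is missing.
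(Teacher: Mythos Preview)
Your proposal is correct and follows essentially the same approach as the paper: the paper's proof also reduces to unwinding the defining condition $\alpha(g)=\chi(g)\otimes 1$ for $\Spin_{(L,D,\tau,\alpha)}$ inside $\Res_{L/k}(\Spin_{(D,\tau)/L})$, using that $\alpha$ restricted to $E^\Phi$ is encoded by $\beta$ (the preceding lemma and remark) and that $\chi$ on the torus is encoded by $\Psi$ (Definition~\ref{df:psi}, equivalently Theorem~\ref{thm:TORIINSPIN}). Your write-up is more explicit about the bookkeeping and adds a dimension sanity check, but the underlying argument is the same.
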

\begin{proof}
The content of this Lemma is the intrinsic conditions which define $T$.
The proof follows by inspecting the definition of $ \Spin_{(L,D,\tau,\alpha)}$ as
\[  \Spin_{(L,D,\tau,\alpha)}(R) = \{ g \in \Res_{L/k}(\Spin_{(D,\tau)/L})(R) \mid \alpha(g) = \chi(g)\otimes 1 \}  \]
and relating the maps $\alpha$ and $\chi$ to the maps $\beta$ and $\Psi$.

The connection between $\Psi$ and $\chi$ is discussed in Definition \ref{df:psi}.

The connection between $\beta$ and $\alpha$ is discussed in the proceeding remark.
\end{proof}

\begin{lemma}\label{lem:existenceofT}
Let $(L,D,\tau)$ be a datum for which there exists $\alpha'$ making it a trialitarian algebra.
Let $(E,\sigma)$ be the \'etale algebra over $L$ associated to a torus $T_{E,\sigma}$ in $\SO_{D,\tau}$.
Suppose further that we have a map:
\[ \beta : (E,\sigma)\otimes_k L \overset\sim\rightarrow (E\times E^\Phi,\sigma\times \sigma). \]
Then $\beta$ induces an $\rho$-isomorphism $\alpha: E^\Phi \rightarrow^\rho   (E,\sigma)\otimes_k \Delta_L$: which in turn extends uniquely to a map
\[ \tilde\alpha: (\Clif_{D,\tau}^+,\tau)  \rightarrow^\rho (D,\tau) \otimes_k \Delta_L  \]
making $(L,D,\tau,\tilde\alpha)$ into a trialitarian algebra for which $T_{E,\sigma,\beta}$ is a torus of the associated spin group.

The isomorphism class of $\tilde\alpha$, that is its rational conjugacy, is uniquely determined by$\beta$ (and $\rho$.)
\end{lemma}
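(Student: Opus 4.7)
The plan is to invert the construction of Lemma~\ref{lem:structureofT}: first extract $\alpha$ on $E^\Phi$ from $\beta$; then extend it to a $\rho$-isomorphism $\tilde\alpha$ of the ambient central simple algebras by perturbing the given $\alpha'$; finally verify that $T_{E,\sigma,\beta}$ is a torus of the resulting spin group and that $\tilde\alpha$ is determined up to rational conjugacy.

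To extract $\alpha$, I would exploit the decomposition $L\otimes_k L \simeq L \oplus (L\otimes_k\Delta_L)$, which induces $E\otimes_k L \simeq E \oplus (E\otimes_k\Delta_L)$. The isomorphism $\beta$ then decomposes into two $L$-module components, and the second one is a map $E\otimes_k\Delta_L \to E^\Phi$ whose inverse I would take as $\alpha$. The required $\rho$-semilinearity is automatic: $L$ acts on the second factor of $L\otimes_k L$ through $\ell \mapsto \rho(\ell \otimes 1)$, and this forces the corresponding twist on $E^\Phi$. Compatibility of $\alpha$ with $\sigma$ is inherited from $\beta$.

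To extend $\alpha$ to $\tilde\alpha$ I would modify $\alpha'$. Using Lemma~\ref{lem:torispincovers}, $\alpha'|_{E^\Phi}$ is a $\rho$-isomorphism into $D\otimes_k\Delta_L$ landing in a maximal \'etale subalgebra that is Skolem--Noether conjugate to $E\otimes_k\Delta_L$; after composing $\alpha'$ with a suitable inner automorphism of $D\otimes_k\Delta_L$, its image actually equals $E\otimes_k\Delta_L$. Then $\alpha'|_{E^\Phi}$ and $\alpha$ differ by a $k$-automorphism of $E\otimes_k\Delta_L$ commuting with $\sigma$; lifting this automorphism to $\Int_u$ on $D\otimes_k\Delta_L$ and setting $\tilde\alpha = \Int_u\circ\alpha'$ yields the desired $\rho$-isomorphism extending $\alpha$. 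With $\tilde\alpha$ in hand, the statement that $T_{E,\sigma,\beta} \subset \Spin_{(L,D,\tau,\tilde\alpha)}$ is essentially Lemma~\ref{lem:structureofT} read backwards: by construction the $\beta$ recovered from $\tilde\alpha$ via that lemma is the original $\beta$, so the defining equation $\Psi(x_{E^\Phi}) = x_E\otimes 1$ for $T_{E,\sigma,\beta}$ coincides with the spin condition $\tilde\alpha(g) = \chi(g)\otimes 1$. For uniqueness up to rational conjugacy, two candidates $\tilde\alpha_i$ arising from the same $\beta$ agree on $E^\Phi$, so their ratio is an automorphism of $(D\otimes_k\Delta_L,\tau\otimes 1)$ fixing $E\otimes_k\Delta_L$, which by Skolem--Noether is $\Int_e$ for some $e\in E^\times$; such automorphisms are realised by automorphisms of the triple $(L,D,\tau)$, giving the conjugacy statement in a form parallel to Corollary~\ref{cor:uniquealphaglobal}.

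The main obstacle I expect is the $\tau$-compatibility in the Skolem--Noether step: arranging $u$ so that $\Int_u$ preserves the involution $\tau\otimes 1$, and not merely realises the right restriction to $E^\Phi$. I expect the discrepancy to lie in the kernel of $N_{E/E^\sigma}$ on $E\otimes_k\Delta_L$ and to be absorbable by replacing $u$ with $u\cdot e$ for a suitable $e\in E^\times$; confirming this count, together with bookkeeping of the choice of generator $\rho$ of $\Aut(L\otimes_k\Delta_L/\Delta_L)$ (which would otherwise interchange the two factors of $\Clif^+_{D,\tau}$ after base change), is what pins down $\tilde\alpha$ modulo exactly the asserted ambiguity.
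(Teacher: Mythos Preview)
Your approach is correct and essentially parallel to the paper's, but the paper organises the extension step more cleanly and avoids the $\tau$-compatibility worry you flag. Rather than extracting the $\rho$-twisted $\alpha$ and then correcting $\alpha'$ via Skolem--Noether plus an ad hoc adjustment of $u$, the paper works directly with the $L$-linear $\beta$: it observes that $(E,\sigma)\otimes_k L$ is a maximal \'etale subalgebra with involution of $(D,\tau)\otimes_k L$, that $(E\times E^\Phi,\sigma\times\sigma)$ is a maximal \'etale subalgebra with involution of $(D,\tau)\times(\Clif^+_{D,\tau},\tau)$, and that the hypothesis on $\alpha'$ makes the two ambient algebras with involution isomorphic. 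The classification of maximal \'etale subalgebras with involution (equivalently, the $k$-conjugacy classification of tori already developed in Section~\ref{subsec:tori-clasic}) then says that $\beta$ extends uniquely to an isomorphism of the ambient algebras with involution, from which $\tilde\alpha$ is read off. This packages your Skolem--Noether step and the $\tau$-compatibility correction into a single citation; the obstacle you anticipated is real but is exactly what that classification result absorbs. Your route would work once the correction is carried out, but it is longer and leaves the uniqueness argument slightly less transparent.
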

\begin{proof}
First we recall that by \cite[Section 43.A]{book_of_involutions} the existence of the map:
\[ \beta : (E,\sigma)\otimes_k L \overset\sim\rightarrow (E\times E^\Phi,\sigma\times \sigma). \]
is equivalent to that of the map:
\[ \alpha : E^\Phi \rightarrow^\rho  (E,\sigma)\otimes_k \Delta_L. \]

Next we note that $(E^\Phi,\sigma)$ is a maximal \'etale subalgebra of $(\Clif_{D,\tau}^+,\tau)$ and $(E,\sigma)$ is a maximal \'etale subalgebra of $(D,\tau)$ so that $(E,\sigma)\times (E^\Phi,\sigma)$ is a maximal \'etale subalgebra of $(D,\tau) \times (\Clif_{D,\tau}^+,\tau)$ and $(E,\sigma)\otimes_k L$ is a maximal \'etale subalgebra of $(D,\tau) \otimes_k L$.
The map $\beta$ is thus giving an isomorphism between maximal \'etale subalgebras of $(D,\tau) \times (\Clif_{D,\tau}^+,\tau)$ and $(D,\tau) \otimes_k L$.

The assumption that a map $\alpha'$ exists implies $(D,\tau) \times (\Clif_{D,\tau}^+,\tau)$ and $(D,\tau) \otimes_k L$ are isomorphic.
The classification of maximal \'etale subalgebras with involutions of such algebra's with involutions implies then that $\beta$ is the restriction of a unique isomorphism $\alpha: (D,\tau) \times (\Clif_{D,\tau}^+,\tau) \rightarrow (D,\tau) \otimes_k L$.

The map $\tilde\alpha$ is then the corresponding map induced by $\alpha$.
\end{proof}

\begin{thm}\label{thm:bigone}
Let $(L,D,\tau,\alpha)$ be a trialitarian algebra.

The tori $T \subset \Spin_{(L,D,\tau,\alpha)}$ are of the form:
\[ T_{E,\sigma,\beta}(R) = \{ x \in (E\otimes_k R)^\times \mid  x\sigma(x) = 1,\; \tilde{\beta}(x) = (x_E,x_{E^\Phi}) \text{ satisfy } \Psi(x_{E^\Phi}) = x_E \otimes 1 \}  \]
for algebras $(E,\sigma)$ for which:
\begin{itemize}
\item $E$ is an \'etale algebra over $L$.
\item $(E,\sigma)$ is associated to a torus in $\SO_{(D,\tau)}$.
\item the extension of the isomorphism:
\[ \beta : (E,\sigma)\otimes_k L \overset\sim\rightarrow (E\times E^\Phi,\sigma\times \sigma). \]
to a map from $(D,\tau)\otimes_k L$ to $(D,\tau) \times C^+_{D,\tau},\tau)$ induces the map $\alpha$ (as opposed to say $\alpha')$.

Recall that if we are working over a global field then by Corollary \ref{cor:uniquealphaglobal} there is a unique choice of $\alpha$ for any triple $(L,D,\tau)$ which admits any $\alpha$, in particular in this context all $\beta$ induce the same $\alpha$.
\end{itemize}
The rational conjugacy classes of $T_{E,\sigma,\beta}$ in $\Spin_{(L,D,\tau,\alpha)}$ are classified by:
\begin{itemize}
\item The rational conjugacy class of the torus $T_{E,\sigma}$ in $\SO_{(D,\tau)}$.
\item The rational conjugacy class of the torus $T_{E^\Phi,\sigma}$ in $\Spin_{(D,\tau)}$.
Note that the chosen torus must have image through the natural map the the chosen torus in $\SO_{(D,\tau)}$.
\end{itemize}
\end{thm}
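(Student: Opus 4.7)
The plan is to combine the structural results already in place with a Galois-descent analysis of the conjugacy problem. For the first claim, I would invoke Lemma \ref{lem:structureofT} to see that any maximal torus $T \subset \Spin_{(L,D,\tau,\alpha)}$ necessarily has the form $T_{E,\sigma,\beta}$ for some $(E,\sigma,\beta)$, and Lemma \ref{lem:existenceofT} for the converse, which produces such a torus from $(E,\sigma,\beta)$ subject to the conditions that $(E,\sigma)$ corresponds to a torus in $\SO_{(D,\tau)}$ and that the associated $\rho$-isomorphism $\alpha$ induced by $\beta$ is (the $\Aut$-class of) the fixed $\alpha$ rather than some other trialitarian structure. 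In the global-field case, Corollary \ref{cor:uniquealphaglobal} makes the last compatibility automatic since $(L,D,\tau)$ determines $\alpha$ up to isomorphism.

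For the conjugacy classification, I would begin from the first lemma of this subsection: any maximal $k$-torus $T$ of $\Spin_{(L,D,\tau,\alpha)}$ is contained in a unique maximal $k$-torus of $\Res_{L/k}(\Spin_{(D,\tau)/L})$, which by Lemma \ref{lem:ToriResScalar} is $\Res_{L/k}(T')$ for a maximal $L$-torus $T' \subset \Spin_{(D,\tau)/L}$. Via $\chi$, $T'$ projects to a maximal $L$-torus $T_{E,\sigma} \subset \SO_{(D,\tau)/L}$ and, in the sense of Theorem \ref{thm:TORIINSPIN}, $T'$ itself is the torus $T_{E^\Phi,\sigma}$ in $\Spin_{(D,\tau)/L}$ projecting to $T_{E,\sigma}$. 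Because conjugation in the subgroup $\Spin_{(L,D,\tau,\alpha)}$ of $\Res_{L/k}(\Spin_{(D,\tau)/L})$ is realized, after base change to $L$, by conjugation in $\Spin_{(D,\tau)/L}$, $k$-rational conjugacy of two candidates $T_{E,\sigma,\beta}$ and $T_{E,\sigma,\beta'}$ forces the associated $T_{E,\sigma}$ to be $L$-rationally conjugate in $\SO_{(D,\tau)/L}$ and the associated $T_{E^\Phi,\sigma}$ to be $L$-rationally conjugate in $\Spin_{(D,\tau)/L}$. This supplies one direction of the classification.

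For the converse, I would compare the exact sequences
\[ 1 \to T \to N_{\Spin_{(L,D,\tau,\alpha)}}(T) \to W(T) \to 1 \]
and the analogous sequences inside $\Spin_{(D,\tau)/L}$ and $\SO_{(D,\tau)/L}$, together with the description of $T_{E,\sigma,\beta}$ as the intersection of two compatibility conditions in $T_{E,\sigma} \times T_{E^\Phi,\sigma}$ coming from $\Psi$ and $\beta$ (equivalently $\alpha$). The key point is that the normalizer and the Weyl group of $T$ inside $\Spin_{(L,D,\tau,\alpha)}$ sit naturally inside the product of the two normalizers on the $\SO_{(D,\tau)}$ side and the $\Spin_{(D,\tau)}$ side, cut out by preservation of the $\Psi$-compatibility. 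A short diagram chase in Galois cohomology would then show that two classes in $H^1(N/T)$ lifting to the same pair in the $\SO$ and $\Spin$ factors are equal, completing the bijection.

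The hard part will be tightly controlling the interaction between the three exact sequences and the compatibility imposed by $\beta$ (and hence by $\alpha$) so that the two rational conjugacy invariants really are independent data, rather than subject to a further hidden relation; in particular, one must check that one does not over-count classes that only become conjugate after allowing the outer $S_3$-triality action, and one must verify that the $L$-rational conjugacy statements descend correctly to $k$-rational information via $\Res_{L/k}$.
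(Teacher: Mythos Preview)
Your first paragraph reproduces the paper's proof exactly: the paper's entire argument is the single sentence ``The theorem is an immediate consequence of Lemmas \ref{lem:structureofT} and \ref{lem:existenceofT}.'' In particular, the paper offers no separate argument for the conjugacy classification beyond citing those two lemmas.

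Your remaining paragraphs therefore go well beyond what the paper actually does. The paper's implicit logic for the conjugacy part seems to be simply that Lemma \ref{lem:structureofT} shows the torus is determined by the datum $(E,\sigma,\beta)$, and Lemma \ref{lem:existenceofT} shows $\beta$ determines $\tilde\alpha$ up to rational conjugacy; since $(E,\sigma)$ records the $\SO_{(D,\tau)}$-conjugacy class and the remaining freedom in lifting to $\Spin_{(D,\tau)}$ is exactly the second invariant, the two bullets fall out. Your route via base change to $L$, Lemma \ref{lem:ToriResScalar}, normalizer exact sequences, and a cohomological diagram chase is a legitimate way to make this rigorous, but it is substantially more machinery than the paper deploys, and some of it (the invocation of Lemma \ref{lem:ToriResScalar}, the comparison of three normalizer sequences) is not needed once one accepts that the datum $(E,\sigma,\beta)$ and the torus determine each other up to conjugacy.

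Your stated worries in the last paragraph --- over-counting via the outer $S_3$-action, and whether $L$-rational conjugacy descends correctly to $k$ --- are reasonable points to flag, but note that the paper does not address them either; it simply asserts the conclusion as immediate. So your proposal is not wrong, but it is solving a harder problem than the paper sets itself, and the ``hard part'' you identify is one the paper leaves to the reader.
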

\begin{proof}
The theorem is an immediate consequence of Lemmas \ref{lem:structureofT} and \ref{lem:existenceofT}.
\end{proof}

\begin{rmk}
Criterion under which  $(E,\sigma)$ is associated to a torus in $\SO_{(D,\tau)}$ are discussed in Section \ref{subsec:tori-clasic}, specifically Theorem \ref{thm:invariants_upgraded} or in the case of global fields Theorem \ref{thm:the_result_upgraded}.
The rational conjugacy classes of the torus $T_{E,\sigma}$ in $\SO_{(D,\tau)}$ are characterized in Theorem \ref{thm:lambdafortoriorth}.
The rational conjugacy classes of tori $T_{E^\Phi,\sigma}$ in $\Spin_{(D,\tau)}$  covering a particular torus are characterized in Theorem \ref{thm:CONGTORISPINCOVERS}.
\end{rmk}

\subsubsection{Explicit Combinatorics of the Galois Sets for $E$ and $E^\Phi$ for $D_4$}\label{subsec:explicitcombin}

The purpose of this section is to to describe the explicit combinatorics of the Galois set structure for $E$ and $E^\Phi$ for the case of tori in groups of $D_4$.
This allows for a more concrete understaning of the condition of which $E$ can admit maps:
\[ E \otimes_k L \rightarrow E \oplus E^\Phi \]
and it will allow for a semi-explicit construction of twisted composotions.

For notational convenience we shall write the set:
\[ \Hom_L(E,\overline{L}) = \{ 1,2,3,4,-1,-2,-3,-4 \} \]
We note that the first that the Galois group $\Gal(\overline{L}/L)$ action on this set factors through $S_4 \ltimes \{\pm1\}^4$, we shall eventually make use of a more explicit presentation of this group.
Next we look at $\Hom_L(E^\Phi, \overline{L})$, we shall characterize the sets $\phi$ by specifying $\phi\cap\{1,2,3,4\}$
\begin{center}
\begin{tabular}{cccccccc}
\{1\}&\{134\}&\{124\}&\{123\}&\{234\}&\{2\}&\{3\}&\{4\}\\
\{1234\}&\{34\}&\{24\}&\{23\}&$\emptyset$ &\{12\}&\{13\}&\{14\}
\end{tabular}
\end{center}
Note that we shall make some explicit use of the ordering of this set into rows and columns.
In particular, without loss of generality we may designate the first row as $\Phi_+$ and the second as $\Phi_-$.

\begin{prop}\label{prop:dubref1}
Suppose $L=k\oplus k\oplus k$ so that $\Delta_L = k \oplus k$, and $(E,\sigma)$ is an algebra over $k$ whose discriminant is a square.
Then:
\begin{enumerate}
\item $E^\Phi$ splits over $k$ as $E^{\Phi_+} \oplus E^{\phi_-}$,
\item $(E^{\Phi_+})^\Phi \simeq E \oplus  E^{\phi_-}$, and
\item $(E^{\Phi_-})^\Phi \simeq E \oplus  E^{\phi_+}$.
\end{enumerate}
In particular if we interpret $E\oplus E^{\Phi_+} \oplus E^{\phi_-}$ as an algebra over $L$ then:
\[ (E\oplus E^{\Phi_+} \oplus E^{\phi_-})^\Phi \simeq (E\oplus E^{\Phi_+} \oplus E^{\phi_-}) \otimes_k \Delta_L \]
as a twisted map of $L$-algebras.
\end{prop}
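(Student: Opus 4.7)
My plan is to argue each claim combinatorially at the level of Galois sets: an \'etale $k$-algebra with involution is determined by $\Hom_k(E,\overline{k})$ together with its $\sigma$- and $\Gal(\overline{k}/k)$-actions, and $E^\Phi$ corresponds to the Galois set of $\sigma$-types under $\sigma\colon\phi\mapsto\bar\phi$.

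For claim (1), I would observe that the partition $\Phi=\Phi_+\sqcup\Phi_-$ is cut out by the quadratic character of $\Gal(\overline{k}/k)$ associated to $k(\sqrt{\delta_{E/k}})$. A short check shows that for $(\pi,\eta)\in S_4\ltimes\{\pm 1\}^4$ acting on $\sigma$-types, the parity of $|\phi\cap\phi_0|$ is preserved if and only if $\prod_i\eta_i=+1$, which is exactly the sign of the induced permutation of $\{\pm 1,\ldots,\pm 4\}$. Since $\delta_{E/k}$ is a square by hypothesis, the Galois image factors through this kernel, so $\Phi_\pm$ are Galois-stable and $E^\Phi=E^{\Phi_+}\oplus E^{\Phi_-}$ by Galois descent.

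For claim (2) (with (3) symmetric), I would exhibit an explicit $\sigma$- and Galois-equivariant bijection between the set of $\sigma$-types of $E^{\Phi_+}$ and $\Hom_k(E,\overline{k})\sqcup\Phi_-$. The four $\sigma$-pairs inside $\Phi_+$ are $\bigl\{\{i\},\{j,k,l\}\bigr\}$ as $\{i,j,k,l\}=\{1,2,3,4\}$, so a $\sigma$-type of $E^{\Phi_+}$ is encoded by $(\epsilon_1,\ldots,\epsilon_4)\in\{\pm 1\}^4$ recording ``singleton vs.\ triple'' at each pair. Sorting by Hamming weight, I would biject the eight odd-weight vectors onto $\Hom_k(E,\overline{k})$ (weight-one with $+$ at position $i$ to $+i$, its weight-three $\sigma$-complement to $-i$) and the eight even-weight vectors onto $\Phi_-$ (sending $(\epsilon_i)$ to the element of $\Phi_-$ with positive part $\{i:\epsilon_i=+\}$, matching the second row of the displayed table). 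A direct inspection confirms $\sigma$-equivariance, and Galois-equivariance follows from the same parity computation used in (1) applied to the restricted $S_4\ltimes(\{\pm 1\}^4)^{\text{even}}$ action.

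The final ``in particular'' statement then follows by assembling (1)--(3) summand-by-summand, yielding
\[(E\oplus E^{\Phi_+}\oplus E^{\Phi_-})^\Phi \simeq (E^{\Phi_+}\oplus E^{\Phi_-})\oplus(E\oplus E^{\Phi_-})\oplus(E\oplus E^{\Phi_+}),\]
which contains two copies each of $E$, $E^{\Phi_+}$, $E^{\Phi_-}$ and so matches $X\otimes_k\Delta_L$ as a $k$-algebra for $X=E\oplus E^{\Phi_+}\oplus E^{\Phi_-}$. The $L$-structures differ by the triality $3$-cycle $\rho$ permuting $\{E,E^{\Phi_+},E^{\Phi_-}\}$: on the left, the $i$-th $L$-factor carries the reflex of its $X$-summand, which by (1)--(3) decomposes into \emph{the other two} algebras, while on the right it carries two copies of the same summand. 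The main technical subtlety will be tracking, inside each $(E^{\Phi_\bullet})^\Phi$, which of the two resulting algebras sits on the $\Phi_+$ side of the discriminant algebra and which on $\Phi_-$; the consistency forcing $\rho$ to act as a single $3$-cycle is extracted by tracing the bijection of (2) through the finer $\Phi_\pm$-decomposition.
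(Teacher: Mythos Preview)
Your proposal is correct and follows essentially the same approach as the paper: both argue combinatorially at the level of Galois sets, identifying the $\sigma$-types of $E^{\Phi_+}$ with $\Hom_k(E,\overline{k})\sqcup\Phi_-$. The paper's description is slightly more intrinsic---it writes the two families of $\sigma$-types directly as $\{\phi\in\Phi_+:\rho\in\phi\}$ for $\rho\in\Hom_k(E,\overline{k})$ and $\{\phi\in\Phi_+:|\phi\cap\phi'|=1\}$ for $\phi'\in\Phi_-$, which makes Galois-equivariance automatic---whereas your $\{\pm 1\}^4$ encoding and Hamming-weight split amount to the same bijection in coordinates and require the extra equivariance check you mention.
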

\begin{proof}
The first point is clear.
The key observation behind (2) and (3) is that there are two types of $\sigma$-types for $E^{\Phi_+}$ and $E^{\Phi_-}$.
For for each $\rho\in \Hom_k(E,\overline{k}) $ there is a $\sigma$-type of $E^{\Phi_+}$ given by:
\[ \{ \phi \in \Hom_k(E^{\Phi_+},\overline{k}) \;\mid\; \rho\in \phi \} \]
and for each $\phi' \in \Hom_k(E^{\Phi_-},\overline{k})$ there is:
\[ \{ \phi \in \Hom_k(E^{\Phi_-},\overline{k}) \;\mid\;  \abs{\phi\cap \phi'} = 1 \}. \]
The final claim now follows immediately from the definition of the reflex algebra.
\end{proof}

\begin{prop}\label{prop:dubref2}
Suppose $L=k\oplus \Delta_L$ and $(E,\sigma)$ is an algebra over $k$ whose discriminant is that of $\Delta_L$.
Then as an algebra over $\Delta_L$ we have:
\[ (E^\Phi)^\Phi = E\otimes_k \Delta_L \oplus E^\Phi \otimes_k \Delta_L \]
In particular if we interpret $E\oplus E^{\Phi}$ as an algebra over $L$ then:
\[ (E\oplus E^{\Phi})^\Phi \simeq (E\oplus E^{\Phi}) \otimes_k \Delta_L \]
as a twisted map of $L$-algebras.
\end{prop}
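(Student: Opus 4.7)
The plan is to deduce Proposition~\ref{prop:dubref2} from Proposition~\ref{prop:dubref1} by base-changing to $\Delta_L$. The hypothesis $\delta_{E/k}=\delta_{\Delta_L/k}$ is exactly what is needed: over $\Delta_L$ the discriminant of $E\otimes_k\Delta_L$ becomes a square, placing us in the split-discriminant setting of Proposition~\ref{prop:dubref1}.

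First I would unpack the hypothesis combinatorially. By the intrinsic identification of the centre of $E^\Phi$ with $k(\sqrt{\delta_{E/k}})$ (from the proof of Theorem~\ref{thm:tori_cliff_upgraded}), the discriminant assumption forces $E^\Phi$ to be naturally an \'etale $\Delta_L$-algebra of rank $8$; equivalently, $H:=\Gal(\overline{k}/\Delta_L)$ preserves each of $\Phi_+,\Phi_-\subset\Phi$ while the non-trivial element of $G/H$ interchanges them. After fixing an embedding $\Delta_L\hookrightarrow\overline{k}$, one has $\Hom_{\Delta_L}(E^\Phi,\overline{k})=\Phi_+$.

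Next I would apply base change. Since the reflex construction commutes with base change (it is functorial in the Galois set of embeddings) and $\Delta_L\otimes_k\Delta_L=\Delta_L\oplus\Delta_L$, one has
\[(E\otimes_k\Delta_L)^\Phi\;\simeq\;E^\Phi\otimes_k\Delta_L\;\simeq\;E^\Phi_{\Phi_+}\oplus E^\Phi_{\Phi_-}\]
as $\Delta_L$-algebras, with the two summands the $\Gal(\Delta_L/k)$-conjugate copies of $E^\Phi$. Proposition~\ref{prop:dubref1} then applies verbatim with $\Delta_L$ in the role of $k$, giving
\[(E^\Phi_{\Phi_\pm})^\Phi\;\simeq\;(E\otimes_k\Delta_L)\oplus E^\Phi_{\Phi_\mp}\]
over $\Delta_L$. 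Identifying $E^\Phi\simeq E^\Phi_{\Phi_+}$ via the chosen embedding and recognising $E^\Phi_{\Phi_-}$ as the conjugate half sitting inside $E^\Phi\otimes_k\Delta_L$ yields the main identity of Proposition~\ref{prop:dubref2}.

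The ``in particular'' statement then follows by reflexing the $L$-algebra $E\oplus E^\Phi$ componentwise: the $k$-component $E$ reflexes to $E^\Phi$, while the $\Delta_L$-component $E^\Phi$ reflexes, by the main identity, to $(E\otimes_k\Delta_L)\oplus E^\Phi\otimes_k\Delta_L$. Regrouping these three summands along the three $\Delta_L$-factors of $L\otimes_k\Delta_L=\Delta_L^{\oplus 3}$ and applying the order-three element $\rho\in\Aut(L\otimes_k\Delta_L/\Delta_L)$ cyclically permuting them realises the total as $(E\oplus E^\Phi)\otimes_k\Delta_L$ viewed as a $\rho$-twisted $L$-algebra. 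The main obstacle is the bookkeeping of the $\Delta_L$-structures and the Galois descent from $\Delta_L$ back to $k$: one must verify that the isomorphism produced by Proposition~\ref{prop:dubref1} over $\Delta_L$ is $\Gal(\Delta_L/k)$-equivariant (so descends to $k$), and that the final regrouping is indeed realised by the canonical $\rho$-twist coming from the trialitarian structure.
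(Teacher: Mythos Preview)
Your approach is correct and amounts to the same argument the paper has in mind. The paper's proof is the single sentence ``as in the previous case, the only complexity is the correct interpretation of reflex algebras over algebras which are not fields,'' i.e.\ one is meant to rerun the combinatorial identification of $\sigma$-types from Proposition~\ref{prop:dubref1} directly for $E^\Phi$ viewed as a $\Delta_L$-algebra. Your base-change manoeuvre is exactly a clean way to do this: restricting the Galois action to $\Gal(\overline{k}/\Delta_L)$ is the same as working over $\Delta_L$, and once $\Phi_+,\Phi_-$ are each Galois-stable the enumeration of $\sigma$-types of $E^\Phi$ over $\Delta_L$ is literally the same set-theoretic computation as that of $E^{\Phi_+}$ over $k$ in the split case. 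So you are invoking Proposition~\ref{prop:dubref1} as a black box where the paper re-applies its proof; the content is identical. Your closing caveat about checking $\Gal(\Delta_L/k)$-equivariance and the $\rho$-twist bookkeeping is precisely the ``correct interpretation of reflex algebras over algebras which are not fields'' that the paper flags, so you have identified the only real work.
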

\begin{proof}
The proof is as in the previous case, the only complexity is the correct interpretation of reflex algebras over algebras which are not fields.
\end{proof}

In what follows we shall implicitly focus on the case where $L$ is a field, however everything that we are saying translates over to the more general case under the correct interpretations.
The case where $L$ is not a field shall not really be needed in the sequel as the various $D_4$ groups in this case admit classical descriptions.

We shall now describe the possible Galois set structures on $E$ and $E^\Phi$, which are both once again algebras over $L$, a cubic \'etale extension of $k$.

Consider first the subgroup $H$ generated by:
\begin{align*}
a & = (12)(34) \quad &A = (1,1,-1,-1) \\
b & = (13)(24) \quad &B = (1,-1,1,-1) \\
c & = (14)(23) \quad &C = (1,-1,-1,1) \\
&&Z = (-1,-1,-1,-1)
\end{align*}
Notice these satisfy the relations:
\[ xX = Xx \qquad xY = ZYx \qquad Zx = xZ \qquad ZX = XZ \]
for $x\neq y$ with $x,y$ lower case and $X,Y$ upper case.

Denote next by $\delta_{1} = (-1,1,1,1)$, $\delta_{2} = (1,-1,1,1)$, $\delta_{3} = (1,1,-1,1)$, and $\delta_{4} = (1,1,1,-1)$.

The outer automorphism group of $H$ is $S_3\times S_3$, this group act by permuting respectively the rows and columns of:
\[ \begin{matrix} a & A & ZaA \\
                            b & B & ZbB \\
                            c & C & ZcC \\ \end{matrix}\]
The first $S_3$ factor can be interpreted as the $S_3$ subgroup $S_4$ which stabilizes $\{1\}$ and acts on $\{2,3,4\}$.

The element of the second $S_3$ factor which interchanges the first and third columns is $\delta_1$. Denote by $\tau$ the three cycle which sends the first column to the second, the second to the third and the third to the first.

\begin{lemma}
The action of the Galois group $\Gal(\overline{L}/L\otimes_k\Delta_L)$ on both $\Hom_L(E,\overline{L})$ and $\Hom_L(E^\Phi,\overline{L})$ factors through the simultaneous action of the group $G = (S_3\times \langle 1 \rangle)\ltimes H$.

If $E\otimes_k L \simeq E \oplus E^{\Phi}$ then the action of $\Gal(\overline{k}/k)$ on $\Hom_k(E,\overline{L})$ factors through some action of the group $(S_3 \times S_3)\ltimes H$.
\end{lemma}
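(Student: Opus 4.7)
The plan is to identify the target groups with natural objects from the theory of $D_4$: I will show that $G=(S_3\times\langle 1\rangle)\ltimes H$ equals the Weyl group $W(D_4)=S_4\ltimes(\bZ/2)^4_{\mathrm{even}}$ and that $(S_3\times S_3)\ltimes H$ equals $W(D_4)\rtimes\mathrm{Out}(D_4)$, after which the claim reduces to the standard fact that Galois acts on characters of a maximal torus through the automorphism group of the root system, restricted by the natural $L$-structures carried by $E$ and $E^\Phi$.

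For the first assertion, I begin with the observation that the $L$-involution $\sigma$ pairs $i$ with $-i$ on $\Hom_L(E,\overline{L})$, so any action of $\Gal(\overline{L}/L)$ must commute with $\sigma$ and hence factors through the signed-permutation group $S_4\ltimes(\bZ/2)^4$; the induced action on $\Hom_L(E^\Phi,\overline{L})$ is then the simultaneous diagonal action on $\sigma$-types, viewed as subsets. By Theorem \ref{thm:tori_cliff_upgraded} (and the explicit computation in its proof), the ordered partition $\Phi=\Phi_+\sqcup\Phi_-$ corresponds to the center of $\Clif^+_{D,\tau}$, which by Theorem \ref{thm:structcliff} together with $D(\tau)=\delta_L$ is exactly $L\otimes_k\Delta_L$. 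Restricting to $\Gal(\overline{L}/L\otimes_k\Delta_L)$ therefore amounts to fixing the labelling of $\Phi_\pm$. A direct parity check shows that a signed permutation preserves this labelled partition precisely when its sign component has even weight: permutations in $S_4$ preserve $|\phi\cap\phi'|$ trivially, while a sign element $\delta\in(\bZ/2)^4$ flips the parity of $|\phi\cap\phi'|$ by $|\supp(\delta)\cap\phi'|\bmod 2$, which is compatible uniformly in $\phi'$ only when $|\supp(\delta)|$ is even. Hence the action factors through $S_4\ltimes(\bZ/2)^4_{\mathrm{even}}=W(D_4)$. The decomposition $S_4=S_3\ltimes V$ with $V=\{1,a,b,c\}$, together with $V\ltimes(\bZ/2)^4_{\mathrm{even}}=H$, identifies this with $G$.

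For the second assertion, I pass from $\Gal(\overline{L}/L\otimes_k\Delta_L)$ up to $\Gal(\overline{k}/k)$; the quotient is a subgroup of $S_3$, coming from the Galois action on the three embeddings of $L$ into $\overline{k}$, with the action on $\Delta_L$ determined by the parity of the induced permutation. The hypothesis $E\otimes_k L\simeq E\oplus E^\Phi$ — equivalently the existence of the isomorphism $\beta$ from Lemma \ref{lem:structureofT} — ensures that $\Hom_k(E,\overline{L})$ decomposes, over the three embeddings of $L$, into one copy of the $8$-element vector set $\Hom_L(E,\overline{L})$ and the two $8$-element half-spin sets comprising $\Hom_L(E^\Phi,\overline{L})$. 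Galois permutation of the embeddings of $L$ therefore permutes these three $8$-element sets just as the triality outer $S_3$ of $\Spin_8$ permutes its three fundamental $8$-dimensional representations. Concretely this is the second $S_3$ factor, acting on $H$ as outer automorphisms by permuting the columns of the table of generators; combining with the first $S_3$ and $H$ yields precisely $(S_3\times S_3)\ltimes H$, matching $W(D_4)\rtimes\mathrm{Out}(D_4)$ by counting orders $192\cdot 6=1152$.

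The main obstacle is the identification in Part 2 of the outer-$S_3$ induced by cyclic Galois permutation of the embeddings of $L$ with the column-permutation outer action on $H$. This is not forced by general principles alone but uses the specific form of the isomorphism $\beta$, which matches a vector weight with its reflex types under each embedding of $L$; the cleanest route is to base change everything to $\overline{L}$, where $\beta$ becomes an explicit combinatorial identification of the three $8$-element orbits, and then invoke Galois descent to transport this structure back.
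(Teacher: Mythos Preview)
Your argument is correct and follows the same underlying idea as the paper: the Galois action factors through the automorphism group of the relevant algebraic structure, namely $(E,\sigma)\otimes_L\overline{L}$ together with the ordered partition $\Phi_+\sqcup\Phi_-$ for the first claim, and $(E,\sigma,\beta)\otimes_k\overline{k}$ for the second. The paper's proof is a two-line assertion that these automorphism groups are $G$ and $(S_3\times S_3)\ltimes H$ respectively, whereas you actually carry out the identification: the parity argument showing that fixing the labelled partition $\Phi_\pm$ cuts $S_4\ltimes(\bZ/2)^4$ down to the even-weight subgroup, the decomposition $S_4=S_3\ltimes V$ yielding $G=W(D_4)$, and the interpretation of the quotient $\Gal(\overline{k}/k)/\Gal(\overline{k}/L\otimes_k\Delta_L)\subset S_3$ as triality permuting the vector and half-spin Galois sets. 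Your Weyl-group/triality framing is a genuine addition --- it explains \emph{why} the groups in question have the shape $(S_3\times S_3)\ltimes H$ rather than merely asserting it --- and your own caveat about the identification of the outer $S_3$ with the column permutations of $H$ is exactly the point the paper defers to the subsequent Proposition~\ref{prop:twistingactions}.
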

\begin{proof}
The first point follows by observing that these are the elements of the automorphism group of $(E,\sigma) \otimes_L \overline{L}$

The second point follows by observing that these are the automorphisms of $(E,\sigma, \beta) \otimes_k \overline{k})$ for $\beta$ a map $(E^\Phi,\sigma) \rightarrow (E,\sigma)\otimes_k \Delta_L$.
\end{proof}

Notice that
If we attempt to identify the $G$-set $ \{ 1,2,3,4,-1,-2,-3,-4 \}$ with the $G$-set which is the first row according to the listed order of presentation, we find that we must twist the action by the automorphism $\tau$.
If we instead identify  $ \{ 1,2,3,4,-1,-2,-3,-4 \}$ with the second row we must twist the action by the automorphism $\tau^2$.
In particular, we can make the collection
\begin{center}
\begin{tabular}{cccccccc}
1&2&3&4&-1&-2&-3&-4\\
\{1\}&\{134\}&\{124\}&\{123\}&\{234\}&\{2\}&\{3\}&\{4\}\\
\{1234\}&\{34\}&\{24\}&\{23\}&$\emptyset$ &\{12\}&\{13\}&\{14\}
\end{tabular}
\end{center}
into a $G$-set by assigning the $S_3$ new factor to interchange rows.
We shall prove in Proposition \ref{prop:twistingactions} that up to automorphisms of $E^\Phi$ this is the identification created by $\beta$.

In order to characterize the options we shall want to understand the possible images of $\Gal(\overline{k}/k)$ in $(S_3\times S_3)\ltimes H)$ and as such characterize the subgroups. The following Lemmas shall be helpful.

\begin{lemma}
If $L$ is a field, and there is a map $\beta: E^\Phi \rightarrow E\otimes_k \Delta_L$ then $E \simeq E^\sigma \oplus E^\sigma$ with $\sigma$ acting as the exchange involution if and only if the image of $\Gal(\overline{k}/k)$ is in $(S_3\times S_3)$. That is, the image contains no elements of $H$.
\end{lemma}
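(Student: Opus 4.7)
The plan is to translate both sides of the ``if and only if'' into combinatorial conditions on the Galois action on the 8-element set $X_0 = \Hom_L(E,\overline{L})$, identified with the first row of the table in Section~\ref{subsec:explicitcombin} (twisted by $\tau$ as described just before the lemma), and then verify the equivalence by inspecting how the subgroups $S_3\times S_3$ and $H$ of $G$ act.

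First I would reformulate the left-hand condition: $E\simeq E^\sigma\oplus E^\sigma$ with $\sigma$ as the exchange involution is equivalent to the existence of an $L$-rational idempotent $e\in E$ satisfying $\sigma(e)=1-e$, and by Galois descent this is equivalent to the existence of a $\Gal(\overline{k}/k)$-stable transversal $T\subset X_0$ for the $\sigma$-pairs, i.e.\ a $4$-element subset with $T\sqcup\sigma(T)=X_0$ which is $\Gal(\overline{k}/k)$-stable as a set.

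For the forward direction, assume such a Galois-stable transversal $T$ exists. I would show by direct calculation that, once $X_0$ is identified with the first row of the table (with the $\tau$-twist taken into account), the stabilizer of $T$ inside the $G$-action intersects $H$ trivially. The sign-change generators $A,B,C,Z$ each either send some element of $T$ to its negation while fixing others, or globally send $T$ to its complement; and although the elements $a,b,c\in V\subset H$ preserve some transversals as permutations of $X_0$, the $\tau$-twisted $G$-action inside $(S_3\times S_3)\ltimes H$ shifts these contributions out of $H$ and into the $S_3\times S_3$ coset. Hence the image of Galois meets $H$ only in the identity, so it lies in $S_3\times S_3$.

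For the backward direction, assume that the Galois image lies in $S_3\times S_3$. The first $S_3$-factor is realized as $\mathrm{Stab}_{S_4}(\{1\})$ and acts on $X_0$ by simultaneously permuting $\{2,3,4\}$ and $\{-2,-3,-4\}$ without mixing the two parts, so it preserves the natural transversal $T=\{1,2,3,4\}$. The second $S_3$-factor acts by triality, permuting the three rows of the table; once this action is transported back to $X_0$ via $\beta$ and the fixed identification with the first row, it preserves the transversal structure by construction of the $\tau$-twist. Thus the Galois image preserves a transversal, yielding the splitting $E\simeq E^\sigma\oplus E^\sigma$ with $\sigma$ as exchange. The main obstacle throughout is the careful bookkeeping of the $\tau$-twist in the identification of $X_0$ with the first row of the table, since $(S_3\times S_3)\ltimes H$ is not a subgroup of $S_4\ltimes\{\pm1\}^4$ and the concrete permutation action of its elements on $X_0$ only agrees with the abstract one after this twist is applied.
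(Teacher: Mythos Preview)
The paper's own proof is the single sentence ``This is immediate,'' so there is essentially nothing to compare your argument against at the level of technique. What can be said is that the author evidently regards the statement as a direct unpacking of the definitions: once one has fixed the identification of $\Hom_k(E,\overline{k})$ with the $24$-element table and of $\Aut_{\overline{k}}(E,\sigma,\beta)$ with $(S_3\times S_3)\ltimes H$, the subgroup $S_3\times S_3$ is exactly the stabiliser of the ``positive'' transversal, and hence the Galois image avoids $H$ precisely when that transversal is $k$-rational, i.e.\ when $E\simeq E^\sigma\oplus E^\sigma$ with the exchange involution.

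Your outline is along the right lines, but the forward direction as written does not go through. You correctly observe that the elements $a,b,c\in V\subset H$ stabilise certain transversals of $X_0$, and this is genuinely a problem for your argument: if the Galois image happened to land in $V$, it would preserve $\{1,2,3,4\}$ on row~1. Your proposed resolution, that ``the $\tau$-twisted $G$-action \ldots\ shifts these contributions out of $H$ and into the $S_3\times S_3$ coset,'' is not a valid step---an element of $H$ remains an element of $H$ regardless of how the action on other rows is twisted. What actually rules out nontrivial $h\in H$ is that $h$ must preserve the transversal simultaneously on \emph{all three} rows, and since $h$ acts on row $r$ via the $\tau^{1-r}$-twist, this forces $h$, $\tau(h)$ and $\tau^2(h)$ all to lie in the row-1 stabiliser of the transversal. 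A short check (e.g.\ $\tau$ carries $V=\{1,a,b,c\}$ to $\{1,A,B,C\}$, and these meet only in the identity) then gives $h=1$. This is the missing mechanism; once stated, it is indeed immediate, which is presumably what the author had in mind.

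Your backward direction is closer to correct but is also too vague about the second $S_3$. The element $\delta_1$ does not act on the $24$-set purely by swapping rows~2 and~3; one must also account for its effect on row~1 (where, under the concrete realisation, it acts as the odd sign change $(-1,1,1,1)$). You should check carefully that the specific complement $S_3\times S_3$ you are using really does stabilise the transversal you have chosen, or else work up to the relabellings that the subsequent lemmas explicitly allow.
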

This is immediate.

\begin{lemma}
If $L$ is a field, and there is a map $\beta: E^\Phi \rightarrow E\otimes_k \Delta_L$then up to relabelling the elements of $ \{ 1,2,3,4,-1,-2,-3,-4 \}$ the image of $\Gal(\overline{k}/k)$ in $(S_3\times S_3)\ltimes H$ contains one of $(1\times \tau )$ or $((2 3 4) \times \tau )$ in $S_3 \times S_3$.

If the image in $S_3 \times S_3$ contains $((2 3 4) \times \tau )$ and any other non-trivial element than the image contains $(1\times \tau )$.
\end{lemma}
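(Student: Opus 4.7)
The plan is to exploit the fact that $L/k$ is a cubic field, which forces the projection of $\Gal(\overline{k}/k)$ onto the second $S_3$ factor (the one governing permutations of the three columns of the displayed grid, equivalently the three $k$-embeddings of $L$) to be transitive and hence to contain a $3$-cycle. After possibly interchanging two columns --- a permitted relabelling --- I may take this $3$-cycle to be $\tau$. Consequently there exists $g \in \Gal(\overline{k}/k)$ mapping to some $(\pi,\tau) \in S_3 \times S_3$ with $\pi$ lying in the first $S_3$, which is the stabilizer of $1$ in $S_4$ acting on $\{2,3,4\}$.

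For the first claim I case-split on $\pi$. If $\pi = 1$, then $(1,\tau)$ already lies in the image. If $\pi$ is a transposition, then $g^{-2}$ projects to $(\pi^{-2},\tau^{-2}) = (1,\tau)$. If $\pi$ is a $3$-cycle, then $\pi \in \{(234),(243)\}$; conjugating by an appropriate transposition in the first $S_3$ --- i.e.\ relabelling within $\{2,3,4\}$ --- interchanges these, so after relabelling one has $((234),\tau)$ in the image.

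For the second claim, suppose the image $\overline{G} \subset S_3 \times S_3$ contains $((234),\tau)$ together with some other non-trivial element $(\pi',\tau')$ outside $\langle((234),\tau)\rangle = \{(1,1),\,((234),\tau),\,((243),\tau^2)\}$. I split on $\tau'$. When $\tau' = \tau$, the product $(\pi',\tau)\cdot((234),\tau)^{-1} = (\pi'(243), 1)$ lies in $\overline{G}$, and $\pi'(243) \neq 1$ because $\pi' \neq (234)$, reducing to the $\tau' = 1$ case. The case $\tau' = \tau^2$ reduces to $\tau' = \tau$ by inversion. In the remaining $\tau'=1$ case, with $\pi' \neq 1$: if $\pi'$ is a $3$-cycle, then a single product with $((234),\tau)^{\pm 1}$ already yields $(1,\tau)$; if $\pi'$ is a transposition, then $(\pi',1)\cdot((234),\tau) = (\pi'(234),\tau)$, where $\pi'(234)$ is again a transposition (since the product of a transposition and a $3$-cycle in $S_3$ is a transposition), so squaring this element gives $(1,\tau^2)$, whence inverting yields $(1,\tau)$.

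The main obstacle is the bookkeeping in the transposition subcase of the second claim: one must pick the right sequence of products to collapse the first coordinate to $1$ without killing the second. Once one observes that squaring an element of the form $(\mathrm{transposition},\tau)$ achieves exactly this collapse, the remaining manipulations are routine group-theoretic calculations that reduce every case to the conclusion $(1,\tau)\in\overline{G}$.
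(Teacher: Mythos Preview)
Your argument operates entirely at the level of the quotient $S_3\times S_3$, but the lemma asserts that the image of $\Gal(\overline{k}/k)$ in the \emph{full} group $(S_3\times S_3)\ltimes H$ contains $(1,\tau)$ or $((234),\tau)$ as elements with trivial $H$-component (this is how the subsequent lemmas use the result). When you write ``there exists $g$ mapping to some $(\pi,\tau)\in S_3\times S_3$,'' what you actually have is an element of the form $(\pi,\tau)\cdot h$ for some $h\in H$, and your subsequent manipulations (squaring, inverting, multiplying) only control the $S_3\times S_3$ projection, not $h$. For instance, when $\pi$ is a transposition, $g^{-2}$ indeed projects to $(1,\tau)$, but its $H$-part is built from $h$ and $(\pi,\tau)^{-1}(h)$ and has no reason to vanish. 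The substance of the paper's proof is precisely this missing step: the conjugation identities
\[
x\tau x=Xx\tau,\quad X\tau X=Zx\tau,\quad (ZxX)\tau(ZxX)=ZX\tau x,\quad xY\tau Yx=xZyX\tau,\quad yYX\tau XYy=Yx\tau
\]
show that conjugation by suitable elements of $H$ (which are legitimate relabellings of $\{1,2,3,4,-1,-2,-3,-4\}$) reduces the $H$-part of any element projecting to a $3$-cycle in the second factor to lie in $\langle Z\rangle$. The residual $Z$ is then removed by observing that cubing an element of the form $(\pi,\tau)Z$ with $\pi\in\{1,(234)\}$ yields $Z$, so $Z$ itself lies in the image and can be cancelled. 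Only after this $H$-reduction does a case analysis on $\pi$ of the sort you carry out become legitimate.

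A second, smaller issue: in your proof of the second assertion, the case split on $\tau'$ covers only $\tau'\in\{1,\tau,\tau^2\}$, omitting the three transpositions in the second $S_3$. These occur when $L/k$ is a non-Galois cubic field, since then the projection to the second factor is all of $S_3$.
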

\begin{proof}
First notice that the inner automorphisms of $(S_3\times S_3)\ltimes H$ induced by $(S_3\times \langle \delta_1\rangle)\ltimes H$ all correspond to relabellings of $ \{ 1,2,3,4,-1,-2,-3,-4 \}$.
Second, notice that if the image contains $Z$ times one of the above then it contains the element.

Next, by observing 
\[ x\tau x = Xx \tau  \quad X\tau X = Zx \tau \quad (ZxX) \tau (ZxX) = ZX\tau x \quad xY\tau Yx = xZyX \tau  \quad yYX\tau XYy = Yx \tau   \]
we see that we can relabel so as to ensure that the element that acts as a three cycle on $\Hom_k(L,\overline{k})$ has image in $(S_3\times S_3)\times \langle Z \rangle $.

Relabellings using the $S_3$ factor complete the result.
\end{proof}

\begin{lemma}
Suppose $L$ is a field and there is a map $\beta: E^\Phi \rightarrow E\otimes_k \Delta_L$.
Suppose further that the image of $\Gal(\overline{k}/k)$ contains $\tau$, then up to relabellings of $\{2,3,4\}$ the image of $\Gal(\overline{k}/k)$ in $H$ is one of:
\[ \{1\},\quad \{ 1,Z \},\quad \{1, Za, ZA, aA \},\quad \{ 1, Z, a, Za, A, ZA, aA, ZaA \},\quad H. \]
If $L$ is not Galois then the option $\{1, Za, ZA, aA \}$ does not occur.
If the image in $S_3\times S_3$ is larger than $\langle (3 4)\rangle \times S_3$ then options $\{1, Za, ZA, aA \}$ and $ \{ 1, Z, a, Za, A, ZA, aA, ZaA \}$ do not occur. 

If the image of $\Gal(\overline{k}/k)$ contains $\tau \times (2 3 4)$, then up to relabellings of $\{2,3,4\}$ the image of $\Gal(\overline{k}/k)$ in $H$ is one of:
\[ \{ 1\}, \quad \{ 1, Z \}, \quad \langle  Z, a,  B,  cC \rangle, \quad H. \]
If the image in $S_3\times S_3$ is larger than $\langle   (2 3 4) \times \tau\rangle$ then option $\langle  Z, a,  B,  cC \rangle$ does not occur.
\end{lemma}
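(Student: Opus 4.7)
The plan is to exploit that the image of $\Gal(\overline{k}/k)$ in the outer factor of $(S_3 \times S_3) \ltimes H$ contains a specified element, and the intersection of the Galois image with the normal subgroup $H$ is therefore a subgroup stable under conjugation by that element. So the proof reduces to enumerating subgroups of $H$ stable under conjugation by $g = 1 \times \tau$ in the first case, and by $g = (2\,3\,4) \times \tau$ in the second. The residual freedom of relabeling $\{2,3,4\}$ corresponds to conjugating by the stabilizer of $g$ inside the row-$S_3$ factor, which is $\langle(3\,4)\rangle$ in the first case and all of $\langle(2\,3\,4)\rangle$ in the second, plus the inner column-swap $\delta_1$, which is already accounted for by equivalence.

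First I would handle $g = 1 \times \tau$. Conjugation by $\tau$ cycles the three columns of the display matrix, so it sends $a \mapsto A \mapsto ZaA \mapsto a$ and similarly on the $b$- and $c$-rows, fixing $Z$. Enumerating $\tau$-stable subgroups is most cleanly done by first reducing modulo the central $\langle Z\rangle$: in the quotient, $\tau$ acts on the three row-orbits $\{\bar a, \bar A, \overline{aA}\}$, $\{\bar b, \bar B, \overline{bB}\}$, $\{\bar c, \bar C, \overline{cC}\}$, and a $\tau$-stable subgroup must be a union of such orbits closed under products. Combined with the binary $\langle Z\rangle$-ambiguity, this produces exactly the five options $\{1\}, \langle Z\rangle, \{1, Za, ZA, aA\}, \langle a, A, Z\rangle, H$; any other configuration, by the row relations and the relabeling freedom on $\{2,3,4\}$, is conjugate to one of these. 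For $g = (2\,3\,4)\times\tau$ the combined action sends $a\mapsto B \mapsto ZcC\mapsto a$ (checked directly using $\tau(b)=B$, $(234)(B)=C$, $\tau(C)=ZcC$), and now there are no row-swap options, yielding the four subgroups $\{1\}, \langle Z\rangle, \langle Z, a, B, cC\rangle, H$.

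The exclusion clauses are then verified directly. If $L$ is not Galois, the image in the column-$S_3$ contains, beyond $\tau$, a column transposition; conjugating $\{1, Za, ZA, aA\}$ by the transposition of columns $1,2$ sends $Za$ to $ZA$ but $aA$ to $ZaA$, which is not in the subgroup, so this option is eliminated. For the first case, if the image in the row-$S_3$ exceeds $\langle(3\,4)\rangle$, a genuine row-swap normalizes the subgroup, so the subgroups $\{1,Za,ZA,aA\}$ and $\langle a, A, Z\rangle$ (involving only the first row) must enlarge to their analogues for the other rows, which together generate all of $H$. An essentially identical argument handles the exclusion of $\langle Z, a, B, cC\rangle$ when the first-factor image exceeds $\langle(2\,3\,4)\times\tau\rangle$ in the second case.

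The main obstacle is keeping the enumeration tractable: $H$ has many subgroups a priori, and one must show the listed ones exhaust the $g$-stable options up to relabeling. The crucial simplification is reducing modulo $\langle Z\rangle$ to turn the enumeration into an orbit problem in an elementary abelian quotient, and then lifting back via the at-most-2-fold central ambiguity from $Z$. The only computations that must be carried out by hand are the verification of the orbit structure of the two elements $g$ on the generators of $H$, which follows mechanically from the commutation relations $xX=Xx$ and $xY=ZYx$ given at the start of the subsection.
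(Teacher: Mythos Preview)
Your overall strategy is the same as the paper's: recognize that $G\cap H$ must be stable under conjugation by the chosen lift $1\times\tau$ (resp.\ $(234)\times\tau$), and then enumerate such subgroups up to the residual relabeling. The paper compresses the enumeration into a single observation---that any two of $a,b,c,Za,Zb,Zc$ with distinct lowercase parts already generate all of $H$ once $\tau$-stability is imposed---followed by an unspecified ``simple case analysis''; your reduction modulo $\langle Z\rangle$ is a more systematic way to organize that same analysis, and the orbit computations you quote for $(234)\times\tau$ match.

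However, your enumeration is too quick in one respect. You describe the $\tau$-stable subgroups of $H/\langle Z\rangle$ as ``unions of row-orbits closed under products,'' implicitly working only with the nine elements in the display matrix. But $H/\langle Z\rangle\cong(\mathbb Z/2)^4$ is $\mathbb F_4\oplus\mathbb F_4$ as an $\mathbb F_2[\tau]$-module, so the $\tau$-stable subgroups are exactly the $\mathbb F_4$-subspaces, of which there are seven. Besides $V_a,V_b,V_c$ there are two further lines, for instance $L_1=\{0,\overline{aB},\overline{bC},\overline{cA}\}$, whose preimage in $H$ is a genuine $\tau$-stable subgroup of order $8$ (indeed $\cong Q_8$, since $(aB)^2=(bC)^2=(cA)^2=Z$) which is not row-$S_3$-conjugate to $\langle a,A,Z\rangle$. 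Your assertion that ``any other configuration \ldots\ is conjugate to one of these'' does not account for this, and the paper's one-line proof is equally silent, so this is less a divergence from the paper than a place where both arguments would benefit from more care. Separately, your exclusion-clause argument for the non-Galois case is not correct as written: the column transposition swapping columns $1$ and $2$ actually \emph{fixes} $\{1,Za,ZA,aA\}$ (it interchanges $Za\leftrightarrow ZA$ and fixes $aA=Aa$), so stability under a column transposition does not by itself rule this option out.
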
 
\begin{proof}
First, note that if we have any two of $a,b,c,Za,Zb,Zc$ which are not $Z$ multiples then we will generate the whole group.

By using the orbit structure of $\tau$ and $\tau \times (2 3 4)$ in $H$ a simple case analysis covers the other possibilities.
\end{proof}

\begin{rmk}
The above does not exhaust all the limitations we can place on the Galois group. For example in a most of the cases we can show that the image $J \subset S_3\times S_3$ has a section of the form $J\ltimes \{1\}$ inside the total image of the group in $(S_3\times S_3)\ltimes H$.
\end{rmk}

\begin{claim}
If the image of $\Gal(\overline{k}/k)$ contains $\tau$ and the image in $H$ is:
\begin{itemize}
\item $\{ 1\}$ then $E = L \oplus L \oplus F \oplus F$ where $F$ is cubic over $k$, and $\sigma$ acts as to exchange factors.
\item $\{ 1, Z \}$  then $E = (L\oplus F) \otimes K$ where $F$ is cubic over $k$ and $K$ is quadratic over $k$, and $\sigma$ acts on $K$.
\item $\{1, Za, ZA, aA \}$ then $E = K_1 \otimes K_2 \oplus K_2 \otimes K_3$ where on one side $\sigma$ acts through $K_1$ and the other side through $K_2$. The action of the (cyclic) Galois group of $L$ is to permute the three quadratic subextensions of $K_1 \otimes K_2$. If the element$((34) \times 1)\ltimes 1$ is in the image it acts on $K_3$.
\item $ \{ 1, Z, a, Za, A, ZA, aA, ZaA \}$ then $E = K_1 \otimes K_2 \oplus K_3 \otimes K_4$ where sigma acts on $K_1$ and $K_3$. The action of the Galois group of $L$ is to permute the fields $K_1, K_2$ and $K_3$.
\item $H$ then $E$ is a field. If the element $((34) \times 1)\ltimes 1$ is in the image it acts on $K_4$.
\end{itemize}
If the image of $\Gal(\overline{k}/k)$ contains $ (2 3 4) \times \tau$ and not $\tau$ and the image in $H$ is:
\begin{itemize}
\item $\{ 1\}$ then $E = L \oplus L \oplus (L \otimes_k L) \oplus  (L \otimes_k L)$ and $\sigma$ acts to exchange factors.
\item $ \{ 1, Z \}$ then $E =  (L  \oplus (L \otimes_k L) ) \otimes K$ where $K$ is quadratic over $k$ and $\sigma$ acts on $K$.

\item $\langle  Z, a,  B,  cC \rangle$ then $E$ is a field, its Galois closure $\overline{E}$ over $L$ is Galois over $k$.
$E = \overline{E}^B$ is the fixed field of $B$,
$E^{\Phi_+} \simeq \overline{E}^{ZcC}$ and $ E^{\Phi_+} \simeq \overline{E}^{a}$.
There are elements $D_1,D_2,D_3 \in L$ permuted by the (cyclic) Galois group of $L$ so that
\[ L(\sqrt{D_1}) =  \overline{E}^{B,cC,Z} \qquad  L(\sqrt{D_2}) =  \overline{E}^{a,B,Z} \qquad   L(\sqrt{D_3}) =  \overline{E}^{a,cC,Z} \]
Moreover $E^\sigma =L(\sqrt{D_1})\otimes L(\sqrt{D_2})$, $(E^{\Phi_+})^\sigma =L(\sqrt{D_3})\otimes L(\sqrt{D_1})$, and $(E^{\Phi_-})^\sigma =L(\sqrt{D_2})\otimes L(\sqrt{D_3})$, in particular $\overline{E}^\sigma = \overline{E}^Z = L(\sqrt{D_1},\sqrt{D_2},\sqrt{D_3})$.
\item $H$  then $E$ is a field.
\end{itemize}
\end{claim}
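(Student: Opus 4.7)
The proof is a direct case-by-case Galois-theoretic analysis. In each subcase, the image $\Gamma$ of $\Gal(\overline{k}/k)$ inside $(S_3 \times S_3) \ltimes H$ is explicitly prescribed, and $E$ is recovered by Galois descent: the $L$-algebra factors of $E$ correspond to the orbits of $\Gamma \cap H$ on the $8$-element set $\Hom_L(E,\overline{L}) = \{1,\ldots,-4\}$, each factor being the fixed field of the stabilizer of an orbit representative. The $k$-algebra structure is then reconstructed by combining this with the action of the first $S_3$ factor (permuting the three $k$-embeddings of $L$) and, in the twisted subcases, of the second $S_3$ factor. Throughout, the involution $\sigma$ corresponds to the action of the central element $Z = (-1,-1,-1,-1) \in H$.

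For the cases containing $\tau$, where $L/k$ is cyclic cubic, I would proceed in order of increasing $|\Gamma \cap H|$. When $\Gamma \cap H = \{1\}$, $E$ splits over $L$ as $L^8$ and $\tau$ glues the $\tau$-fixed indices $\pm 1$ into two copies of $L$ while each three-element orbit on $\{\pm 2,\pm 3,\pm 4\}$ descends to a copy of $F \otimes_k L$ for the cubic extension $F/k$ cut out by $\tau$. Adjoining $Z$ introduces a single quadratic extension $K/k$ on which $\sigma$ acts, twisting the previous product; successive enlargements of $\Gamma \cap H$ by further sign-type generators glue further pairs of orbits through additional quadratic extensions $K_i$, and the compatibilities with $\tau$ produce the claimed tensor decompositions such as $K_1 \otimes K_2$ and $K_3 \otimes K_4$, with $\Gal(L/k)$ cyclically permuting the $K_i$. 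When $\Gamma \cap H = H$ the action is transitive and $E$ is a field.

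The twisted cases involving $(234) \times \tau$ are handled analogously, except that the extra $(234)$ permutation now glues the three $k$-embeddings of $L$ to the three indices $\{2,3,4\}$, replacing the cubic factor $F$ by $L \otimes_k L$. The most intricate subcase is $\Gamma \cap H = \langle Z, a, B, cC\rangle$, which is an order-$16$ subgroup acting freely and transitively on $\{1,\ldots,-4\}$, so $E$ is a field with Galois closure $\overline{E}$ of $L$-degree $16$. The quotient $(\Gamma \cap H)/\langle Z\rangle$ is elementary abelian of order $8$, and the three index-$2$ subgroups $\langle B, cC, Z\rangle, \langle a, B, Z\rangle, \langle a, cC, Z\rangle$ are cyclically permuted by $\tau$, producing the three Galois-conjugate quadratic extensions $L(\sqrt{D_i})$ whose compositum over $L$ is $\overline{E}^Z = \overline{E}^\sigma$. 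The fixed-field identifications $E = \overline{E}^B$, $E^{\Phi_+} \simeq \overline{E}^{ZcC}$, and $E^{\Phi_-} \simeq \overline{E}^a$ then follow by matching each algebra with the stabilizer of a chosen transversal inside $\Gamma$. The main obstacle is precisely this matching: correctly identifying each $E^{\Phi_\pm}$ with the right coset of $\langle Z\rangle$ requires tracing the $\delta_1$-twist implicit in the identification of $\{1,\ldots,-4\}$ with the first row of the reflex-type table; once this is pinned down, the remaining identifications reduce to routine stabilizer computations inside the semidirect product.
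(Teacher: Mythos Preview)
Your approach is exactly what the paper has in mind: the paper's own ``proof'' reads, in full, ``The argument is tedious, and we omit the details.'' Your orbit--stabilizer analysis via Galois descent on the $24$-element set $\Hom_k(E,\overline{k})$ (arranged as the $3\times 8$ table) is the natural and essentially the only way to carry this out, so you have in fact supplied more than the paper does.

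Two small slips to fix. First, in the $\langle Z,a,B,cC\rangle$ subcase you write that this order-$16$ group acts \emph{freely} and transitively on the $8$-element set; an order-$16$ group cannot act freely on $8$ points. The action is faithful and transitive with stabilizers of order $2$ --- indeed $\operatorname{Stab}(1)=\{1,B\}$, which is exactly why the claim identifies $E=\overline{E}^{B}$. Second, in your treatment of the first subcase you describe $\tau$ as fixing the indices $\pm 1$ and permuting $\{\pm 2,\pm 3,\pm 4\}$; that is the action of the \emph{first} $S_3$ factor, not of $\tau$. The element $\tau$ lies in the second $S_3$ factor and permutes the three rows of the table (the three $k$-embeddings of $L$), while it is the first $S_3$ factor that acts as $\operatorname{Sym}(\{2,3,4\})$ inside the Weyl group. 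Keeping these two $S_3$'s straight is exactly the bookkeeping that makes the argument tedious; once corrected, your outline goes through.
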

The argument is tedious, and we omit the details.

\begin{prop}\label{prop:twistingactions}
With notation as above, if $\beta: E^\Phi \rightarrow E\otimes_k \Delta_L$ is an $L$-skew isomorphism then, up to  automorphisms of $E^\Phi$ the action of $\Gal(\overline{k}/k)$ on the $\Gal(\overline{k}/k)$-set $\Hom_k(E,\overline{k})$ factors through an action of $(S_3\times S_3)\ltimes H$ under an identification of the elements of $\Hom_k(E,\overline{k})$ as:
\begin{center}
\begin{tabular}{cccccccc}
1&2&3&4&-1&-2&-3&-4\\
\{1\}&\{134\}&\{124\}&\{123\}&\{234\}&\{2\}&\{3\}&\{4\}\\
\{1234\}&\{34\}&\{24\}&\{23\}&$\emptyset$ &\{12\}&\{13\}&\{14\}
\end{tabular}
\end{center}
Moreover, the kernel of the quotient map onto the second $S_3$ factor gives $\Gal(\overline{k}/L\otimes_k\Delta_L)$, the subgroup fixing $L$ and $\Delta_L$. The element $\tau$ whose image generates $\Aut_{\Delta_L}(L\otimes_k\Delta_L)$ acts by permuting the rows.
\end{prop}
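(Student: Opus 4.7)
The plan is to decode $\beta$ into a Galois-equivariant labelling of the $24$ embeddings in $\Hom_k(E,\overline{k}) \sqcup \Hom_k(E^\Phi,\overline{k})$. Because $\beta$ is $\rho$-skew linear over $L$, applying $\Hom_k(-,\overline{k})$ yields a Galois-equivariant bijection $\Hom_k(E^\Phi,\overline{k}) \simeq \Hom_k(E,\overline{k}) \times \Hom_k(\Delta_L,\overline{k})$. The two values of the $\Delta_L$-coordinate partition $\Hom_k(E^\Phi,\overline{k})$ into the two $\sigma$-type sets $\Phi_+$ and $\Phi_-$; the identification of reflex types with subsets of $\{1,2,3,4\}$ of the appropriate parity is then the straightforward one, as in the proof of Proposition \ref{prop:dubref1}, and provides the bottom two rows of the displayed table.

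Next I would identify $H$ as the image of $\Gal(\overline{k}/L\otimes_k\Delta_L)$. Such elements fix $L$ and $\Delta_L$ pointwise, so they cannot permute the rows; their action on the first row lies in the stabilizer $S_4 \ltimes \{\pm 1\}^4$ of the $\sigma$-pairing, and the further requirement that it be compatible across the three rows (via $\beta$ and the reflex-type description) cuts the group down to $H = \langle a,b,c,A,B,C,Z\rangle$ with the indicated relations. To obtain the second $S_3$ factor, one observes that $\tau \in \Gal(L\otimes_k\Delta_L/\Delta_L)$ must cycle the three rows, while the non-trivial element of $\Gal(\Delta_L/k)$ (when $\Delta_L$ is a field) interchanges the two $\Delta_L$-coordinates and hence swaps rows $2$ and $3$. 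Together these generate the second $S_3$. The first $S_3$ factor is then produced by the quotient $\Gal(\overline{k}/\Delta_L)/\Gal(\overline{k}/L\otimes_k\Delta_L)$ acting on the columns $\{2,3,4\}$ while fixing column $1$; the choice of which column plays the role of $1$, i.e.\ the choice of basepoint among the reflex types, is exactly the ``up to automorphism of $E^\Phi$'' ambiguity in the statement.

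The main obstacle is the claim that $\tau$ cyclically permutes the rows rather than acting in some other way: a priori $\tau$ could fix a row and swap the other two, or admix an $H$-element. This is handled by tracing $\tau$ through $\beta$ using that $\beta$ is $\rho$-skew linear, so that cycling $L$ inside $L \otimes_k \Delta_L$ corresponds exactly to cycling among $E$, $E^{\Phi_+}$, and $E^{\Phi_-}$; this is the content of the iterated reflex computation in Propositions \ref{prop:dubref1} and \ref{prop:dubref2}, which survives the passage from the split-cubic case to the general one by faithfully flat descent from $L \otimes_k L \otimes_k \Delta_L$. Once the cyclic action is pinned down, the asserted semidirect structure $(S_3 \times S_3) \ltimes H$ follows by combining the three pieces above and noting that the column and row $S_3$'s commute since they act on disjoint coordinates of the table.
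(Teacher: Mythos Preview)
Your high-level strategy is reasonable and shares its skeleton with the paper's argument, but there are two genuine problems.

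First, your identification of the two $S_3$ factors is partly wrong. You correctly place the second $S_3$ as $\Gal(L\otimes_k\Delta_L/k)$ acting on the rows. But the first $S_3$ is \emph{not} the quotient $\Gal(\overline{k}/\Delta_L)/\Gal(\overline{k}/L\otimes_k\Delta_L)$: that quotient is cyclic of order $3$ and is already your $\langle\tau\rangle$ inside the second factor. The first $S_3$ lives inside $\Gal(\overline{L}/L)$, acting on $\Hom_L(E,\overline{L})$ through $S_4\ltimes\{\pm1\}^4$, as the stabilizer of the element labelled $1$. Correspondingly, the image of $\Gal(\overline{k}/L\otimes_k\Delta_L)$ lands in $(S_3\times 1)\ltimes H$, not in $H$ alone as you claim; your ``compatibility across the three rows'' argument does not cut all the way down to $H$.

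Second, and more seriously, you never establish the specific column alignment of the table. You assert that once $\tau$ is known to cycle the rows the identification of rows two and three with subsets of $\{1,2,3,4\}$ is ``the straightforward one'', but this is precisely the content requiring proof. The paper argues via the conjugation relations in $(S_3\times S_3)\ltimes H$: for instance $aA\tau=\tau ZA$ together with $A\cdot 1=1$ forces $aA\cdot(\tau\cdot 1)=\tau\cdot(-1)=\overline{\tau\cdot 1}$, so $\tau\cdot 1$ must be a $\phi\in\Phi$ with $aA\cdot\phi=\overline{\phi}$; combining with the analogous constraint from $bB$ leaves only $\phi\in\{\{1\},\{234\}\}$, and only then does the ``up to automorphism of $E^\Phi$'' freedom (i.e.\ $\phi\leftrightarrow\overline{\phi}$ on orbits) allow one to take $\phi=\{1\}$. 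The remaining entries of row two are then forced by further relations such as $ZA\tau=\tau Za$, and a similar argument handles row three. A separate case analysis is needed when the Galois image does not contain all of $H$, or contains $(234)\times\tau$ rather than $\tau$. Your appeal to Propositions~\ref{prop:dubref1} and~\ref{prop:dubref2} via descent does not supply any of this: those propositions give the iterated-reflex bijections abstractly but do not single out which element of $\Phi_+$ sits above $1$.
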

\begin{proof}
Without loss of generality we may suppose that we have labelled things to be in one of the cases of the previous lemma.

By observing that:
\[ E \otimes_k \overline{L} \simeq E\otimes_L \overline{L} \oplus (E\otimes_k \Delta_L)\otimes_{\rho,L} \overline{L}  \simeq  E\otimes_L \overline{L} \oplus E^\Phi \otimes_{\rho,L} \overline{L} \]
we see that there must exist some identification, and any such identification can be presented in terms of columns which indicate the orbits of $\tau$.

Next we observe that if we have an admissible identification, the interchanging $\phi \leftrightarrow \overline{\phi}$ for all $\phi\in \Phi$ would also give an admissible identification.
Thus, without loss of generality we may assume that $1 \in \phi_1 = \tau\cdot 1$.
We also observe that if the Galois set $\Phi$ decomposes into multiple orbits with respect to the image inside $(\langle \delta_1 \rangle \times S_3)\ltimes H$ we can perform the interchange $\phi \leftrightarrow \overline{\phi}$ separately on each orbit and still obtain an admissible identification.

We consider first the case where the image of $\Gal(\overline{k}/k)$ contains $\tau$ and $H$

Because $aA\tau = \tau ZA$ and $A\cdot 1 = 1$ we must have $aA\cdot (\tau \cdot 1) = \tau \cdot -1$.
But the only elements $\phi\in \Phi$ with $\overline{\phi} = aA\phi$ are $\{1\}$, $\{2\}$, $\{234\}$, $\{134\}$.
A similar argument with $bB$ rules out $\{2\}$ and $\{134\}$.
Thus, without loss of generality, $\tau \cdot 1$ is $\{1\}$.
Next, because $ZA\tau = \tau Za$ we can conclude that $\tau \cdot 2 = A \tau \cdot 1$ is $\{134\}$.
Similar arguments complete the second row.

A similar argument with $aZ\tau^2 = \tau^2 ZA$ allows us to conclude that $\tau^2\cdot 1$ is one of 
$\{1234\},\{12\},\emptyset,\{34\}$.
Here  $bB$ rules out $\{12\}$ and $\{34\}$.

There are now two cases, either the image of the Galois group contains $\delta_1$ (or any element which interchanges the second and third row) or it does not.
If it does not, then by the separation of $\Phi^+$ and $\Phi^-$ as orbits we can arbitrarily conjugate $\Phi^-$ and ensure that $\tau^2\cdot 1 = \{1234\}$.
However, if there was an interchange element, then because $\tau^2 = \delta_1 \tau \delta_1$ we can conclude that the third row is as given.

A tedious case analysis covers the cases where the image does not cover $H$.
For example when we do not contain $bB$, but do contain $ZA$ then there will be an automorphism of the orbit of $1$ which interchanges $\{ 1 \}$ and $\{134\}$. This will complete the diagram for $1,2,-1,-2$. A symmetric argument covers the diagram under $3,4,-3,-4$.
If the image contains only $\{1 , Z\}$ then there are typically many admissible identifications.

When instead the image contains $ (2 3 4) \times \tau$ similar tricks involving the observation that $B\cdot 1 = 1$ allow us to conclude $\tau \cdot 1 = (2 3 4) \tau \cdot B 1 = ZcC(2 3 4) \tau \cdot 1 $ for which the only options are $\{1\}$ and $\{234\}$. We are free to prefer $\{1\}$. Because the subgroup of $H$ acts transitively on $\{1,2,3,4,-1,-2,-3,-4\}$ we can immediately conclude the result for the second row.
The same argument works for the third row.
\end{proof}

\begin{rmk}
In the above if $E$ and $E^\Phi$ are fields over $k$ then the only choices being made in the normalization of the map $\beta$ are labellings and possibly pre/post composition with the automorphism $\sigma$.
More generally if $E^{\Phi^+}$ and $E^{\Phi^-}$ are fields the only choices are pre/post compositions with $\sigma$ acting separately on the two factors.
In general, all of the choices constitute pre/post composition with automorphisms of $E^{\Phi}$ as an algebra with involution.

The presentation above seems to makes many non-canonical choices, in particular the ordering and labelling of the Galois sets.
A labelling compatible with the one above can be selected after one considers an appropriate map $\beta$.
For example, if $\Delta_L = k\oplus k$ and $E=L^8$, so that $E^\Phi = L^{16}$, there may exist many maps $\beta$.
\end{rmk}

\begin{prop}\label{prop:dubref3}
With notation as above, if $\beta: E^\Phi \rightarrow E\otimes_k \Delta_L$ is an $L$-skew isomorphism then the set $\Hom_k(E\otimes_k \Delta_L,\overline{k})$ is identified with the Galois set:
\[  \Hom_L(E,\overline{L}) \times \{+,-\}   \sqcup   (\Phi^+ \sqcup \Phi^-) \times \{+,-\} \]
Where the Galois group acts on $\{+,-\} $ through an identification with $ \Hom_k(\Delta_L, \overline{k})$.

The set $\Hom_k(E^\Phi,\overline{k})$  is identified with the Galois set:
\[ \Phi^+ \times \{ \mu_1 \} \sqcup \Phi^-  \times \{ \mu_1 \} \sqcup \Phi^- \times \{ \mu_2 \} \sqcup \Hom_L(E^\Phi,\overline{L})  \times \{ \mu_2 \} \sqcup \Hom_L(E^\Phi,\overline{L}) \times \{ \mu_3 \} \sqcup \Phi^+ \times  \{ \mu_3 \} \]
Where the action of the Galois group on $\{ \mu_1,\mu_2,\mu_3 \}$ through an identification with $\Hom_k(L,\overline{k})$.

The map $\beta$ identifies the Galois sets under the association:
\begin{align*}  
&\Phi^+ \times \mu_1 \leftrightarrow \Phi^+ \times + 
       \qquad &\Phi^- \times \mu_1 \leftrightarrow \Phi^- \times - \\
 &\Phi^- \times \mu_2 \leftrightarrow \Phi^- \times + 
       \qquad &\Hom_L(E^\Phi,\overline{L})  \times \mu_2  \leftrightarrow \Hom_L(E^\Phi,\overline{L})  \times - \\
&\Hom_L(E^\Phi,\overline{L}) \times \mu_3 \leftrightarrow   \Hom_L(E^\Phi,\overline{L})  \times + 
       \qquad   &\Phi^+ \times   \mu_3   \leftrightarrow \Phi^+ \times -.
\end{align*}
\end{prop}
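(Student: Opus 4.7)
The proof proceeds by identifying the Galois-set structure of both sides using the framework of Proposition \ref{prop:twistingactions}, and then tracking the effect of $\beta$.

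First, I would compute $\Hom_k(E \otimes_k \Delta_L, \overline{k}) = \Hom_k(E, \overline{k}) \times \Hom_k(\Delta_L, \overline{k})$ directly. Using the identification from Proposition \ref{prop:twistingactions}, the first factor is the three-row table of $\Hom_k(E,\overline k)$, whose rows above $\mu_1, \mu_2, \mu_3$ are identified with $\Hom_L(E, \overline{L})$, $\Phi^+$, and $\Phi^-$ respectively. The second factor equals $\{+,-\}$ as a Galois set on which the Galois group acts through $\Gal(\Delta_L/k)$. Taking the product yields the claimed description of the right-hand side.

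Second, I would compute $\Hom_k(E^\Phi, \overline{k})$. Since $E^\Phi$ is étale over $L$, it decomposes into three pieces indexed by the embeddings $\mu_i$ of $L$. For $\mu_1$ we have $\Hom_L(E^\Phi, \overline{L}) = \Phi = \Phi^+ \sqcup \Phi^-$ by the definition of the reflex algebra. For $\mu_2$ and $\mu_3$, one applies the reflex construction to the Galois subsets $\Phi^+$ and $\Phi^-$ (viewed as rows of $\Hom_k(E, \overline{k})$ above those embeddings); the reflex-of-reflex identities established in Propositions \ref{prop:dubref1} and \ref{prop:dubref2} then give the $\mu_2$- and $\mu_3$-pieces as disjoint unions involving $\Phi^\pm$ and the $L$-homomorphism set of $E^\Phi$, exactly as in the statement.

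Third, I would verify that $\beta$ induces the stated pairing. The key input is that $\beta$ is an $L$-skew isomorphism realizing a $\rho$-twisted isomorphism $E^\Phi \simeq E \otimes_k \Delta_L$. Restricting to the subalgebra $L \otimes_k L \simeq L \oplus (L \otimes_k \Delta_L)$ inside $(E,\sigma) \otimes_k L$, one sees that $\beta$ acts as the identity on the first factor and by $\rho$ on the second. Combined with the combinatorial identification of rows from Proposition \ref{prop:twistingactions}, this forces $\beta$ on the $\mu_1$-block of $E^\Phi$ to send $\Phi^\pm$ to $\Phi^\pm \times \{\pm\}$, with signs determined by tracking the $\Delta_L$-action, and to permute the $\mu_2$- and $\mu_3$-blocks via $\rho$ exactly in the manner listed.

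The main obstacle will be tracking the sign conventions consistently: the decomposition $\Phi = \Phi^+ \sqcup \Phi^-$ depends on a choice of $\phi_0$, and following this choice through iterated reflex constructions and through the $\rho$-twist of $\beta$ requires care. The six pairings listed in the statement specify one compatible choice of conventions, and most of the remaining work lies in checking each of the six lines of pairings against the explicit Galois actions presented in Proposition \ref{prop:twistingactions}.
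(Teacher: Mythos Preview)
Your proposal is correct and follows essentially the same approach as the paper: the paper's proof is the one-liner ``This follows as in Proposition \ref{prop:dubref1} and \ref{prop:dubref2} by base change,'' and your outline is a faithful elaboration of exactly that, supplemented with the explicit row combinatorics from Proposition \ref{prop:twistingactions} to track the pairings induced by $\beta$.
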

This follows as in Proposition \ref{prop:dubref1} and \ref{prop:dubref2} by base change.

\begin{lemma}\label{lem:taurelation}
With notation as above $\tau$ translates  relations between $  \Hom_L(E^\Phi,\overline{L})$, $\Phi^+$ and $\Phi^-$ as follows.
Let $\rho \in   \Hom_L(E^\Phi,\overline{L}),\; \phi_+ \in \Phi^+$ and $\phi_- \in \Phi^-$ then:
\begin{itemize}
\item $\rho \in \phi^+$ if and only $\abs{\tau(\rho)  \cap \tau(\phi_+)} = 1$
\item $\rho \not\in \phi^+$ if and only $\abs{\tau(\rho)  \cap \tau(\phi_+)} = 3$
\item $\rho \in \phi^-$ if and only if $\tau(\phi^-) \in \tau(\rho)$
\item $\rho \not\in \phi^-$ if and only if $\tau(\phi^-) \not\in \tau(\rho)$
\item $\abs{\phi_+\cap \phi_-} = 1$ if and only if $\tau(\phi_-) \in \tau(\phi_+)$.
\item $\abs{\phi_+\cap \phi_-} = 3$ if and only if $\tau(\phi_-) \not\in \tau(\phi_+)$.
\end{itemize}
From which it follows further that $\{\rho\} = \phi_+ \cap \phi_-$ if and only if $\{ \tau(\phi_-) \} = \tau(\rho) \cap \tau(\phi_+)$.
\end{lemma}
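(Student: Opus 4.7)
The plan is to verify the six equivalences by direct combinatorial inspection using the explicit table preceding the lemma together with the description of $\tau$ given in Proposition \ref{prop:twistingactions}. I will treat $\rho$ as an element of $\Hom_L(E,\overline L)$ (the first row of the table), so that the statements $\rho\in\phi_+$ and $\rho\in\phi_-$ make literal set-theoretic sense. First I unpack the labels: a label such as $\{134\}\in\Phi^+$ names the unique $\sigma$-type whose intersection with $\{1,2,3,4\}$ is $\{1,3,4\}$, namely the 4-element subset $\{1,-2,3,4\}\subset\{\pm 1,\pm 2,\pm 3,\pm 4\}$; similarly for $\Phi^-$. Under this interpretation, the conditions $\rho\in\phi_\pm$ and the cardinality $|\phi_+\cap\phi_-|$ become concrete set-theoretic conditions on subsets of the 8-element set $\Hom_L(E,\overline L)$.

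Second, by Proposition \ref{prop:twistingactions}, the element $\tau$ acts on $\Hom_L(E,\overline L)\sqcup\Phi^+\sqcup\Phi^-$ by cyclically permuting the three rows of the table within each column. Consequently the induced bijections
\[ \Hom_L(E,\overline L)\xrightarrow{\tau}\Phi^+\xrightarrow{\tau}\Phi^-\xrightarrow{\tau}\Hom_L(E,\overline L) \]
can be read off the table entry by entry.

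The core of the argument is direct verification. The subgroup $H\subset (S_3\times S_3)\ltimes H$ commutes with $\tau$ and acts transitively on $\Hom_L(E,\overline L)$, so to prove each equivalence it suffices to fix a representative, say $\rho=1$, and let $\phi_+\in\Phi^+$ and $\phi_-\in\Phi^-$ vary over the eight options each; each of the first four equivalences then reduces to a small finite check cross-referencing one row of the table against another. The two equivalences about $|\phi_+\cap\phi_-|$ are simpler still: both $\phi_+$ and $\phi_-$ are $\sigma$-types of cardinality 4 in an 8-element set with each pair $\{i,-i\}$ contributing exactly one element, so $|\phi_+\cap\phi_-|\in\{1,3\}$, and one verifies from the table that the cardinality-1 case corresponds exactly to the column relation $\tau(\phi_-)\in\tau(\phi_+)$.

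The concluding statement that $\{\rho\}=\phi_+\cap\phi_-$ if and only if $\{\tau(\phi_-)\}=\tau(\rho)\cap\tau(\phi_+)$ then follows by combining the previous equivalences: the condition that the intersection is a single point is exactly the cardinality-1 statement translated across $\tau$, and the identification of that single point as $\rho$ (respectively as $\tau(\phi_-)$) is exactly the translation of $\rho\in\phi_+\cap\phi_-$ provided by the first four equivalences. The main obstacle is purely notational, namely keeping straight the dual role of elements of $\Phi^\pm$ as points of $\Hom_L(E^\Phi,\overline L)$ and as 4-element $\sigma$-subsets of $\Hom_L(E,\overline L)$; once this bookkeeping is in place, the lemma reduces to a finite mechanical verification.
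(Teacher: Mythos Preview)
Your proposal is correct and matches the paper's approach: the paper's proof consists solely of the sentence ``This is a direct check,'' and your write-up is precisely an articulation of what that direct check entails, including the helpful observation that transitivity of $H$ lets one fix $\rho=1$. You have also correctly flagged and resolved the notational slip in the statement (the lemma writes $\rho\in\Hom_L(E^\Phi,\overline{L})$ but the conditions $\rho\in\phi_\pm$ only make sense for $\rho\in\Hom_L(E,\overline{L})$, i.e.\ the first row).
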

This is a direct check.

We shall now work to construct a map from $E$ to $E$ which shall act like a twisted composition.

First recall the map $\Psi :E^\Phi \rightarrow \tilde{E} \subset E\otimes_L E^\Phi$. We may write the definition as:
\[ \Psi\left( \underset{\phi}\sum a_\phi e_\phi \right) = \underset{\phi_1\cap\phi_2  = \{\rho\}}\sum a_{\phi_1}a_{\phi_2} e_\rho\otimes (e_{\phi_1}+e_{\phi_2} + e_{\overline{\phi}_1}+e_{\overline{\phi}_2}). \]
In our present case, we note that for each $\rho$, there are precisely $4$ pairs $\phi_1$,$\phi_2$ with $\phi_1\cap \phi_2 = \{\rho\}$.
It follows that the image of $\Psi$ is contained in the $32$ dimensional subalgebra $\tilde{E}$ of $E\otimes_L E^\Phi$ whose idempotent are $e_\rho \otimes (e_{\phi_1} + e_{\phi_2}+e_{\overline\phi_1} + e_{\overline\phi_2})$ for $\phi_1\cap\phi_2 = \{\rho\}$. This algebra $\tilde{E}$ is a degree $4$ algebra over $E$.

\begin{rmk}
In light of Lemma \ref{lem:taurelation} and Proposition \ref{prop:dubref3} the definition which we have just given as a map over $L$ can also be expressed as a map over $L$ as:
\[
 \Phi \left( \sum_{\rho,\mu} a_{\rho,\mu} e_{\rho,\mu} + \sum_{\phi,\mu} a_{\phi,\mu} e_{\phi,\mu} \right) =  
    \underset{\phi_1\cap\phi_2  = \{\rho\}} \sum \left( 
\begin{matrix}
a_{\phi_1,\mu_1}a_{\phi_2,\mu_1} e_{\rho,\mu_1}\otimes (e_{\phi_1,\mu_1}+e_{\phi_2,\mu_1}+e_{\overline{\phi}_1,\mu_1}+e_{\overline\phi_2,\mu_1}) + \\
a_{\phi_2,\mu_2}a_{\rho,\mu_2} e_{\phi_1,\mu_2}\otimes (e_{\rho,\mu_2}+e_{\phi_2,\mu_2}+e_{\overline\rho,\mu_2}+e_{\overline\phi_2,\mu_2}) + \\
a_{\rho,\mu_3}a_{\phi_1,\mu_3} e_{\phi_2,\mu_3}\otimes (e_{\rho,\mu_3}+e_{\phi_1,\mu_3}+e_{\overline\rho,\mu_3}+e_{\overline\phi_1,\mu_3}) 
\end{matrix} \right). 
\]
With this definition, we can observe that the torus $T_{E,\sigma,\beta}$ is precisely:
\[ T_{E,\sigma,\alpha}(R) = \{ x \in (E\otimes_k R)^\times \;\mid\; x\sigma(x) = 1,\; x \otimes 1 = \Psi \circ \alpha^{-1}(x) \}. \]
\end{rmk}

Next, the algebra $\tilde{E}$ admits a natural $L$ linear to $E$ by.
\[ 
Tr_{\tilde{E}/E}\left(\sum a_{\phi_1,\phi_2} e_\rho\otimes (e_{\phi_1}+e_{\phi_2}+e_{\overline{\phi}_1}+e_{\overline{\phi}_2})\right)  = \sum a_{\phi_1,\phi_2} e_\rho
\]
this is precisely the trace map from $\tilde{E}$ to $E$.

Now, given an element $\Lambda$ in $\tilde{E}$ we obtain a map $\twistedcomp_\Lambda : E \rightarrow E$ 
through the composition of maps:
\[ E \longrightarrow E\otimes_k \Delta_L \overset{\beta^{-1}}\longrightarrow E^\Phi \overset{\Psi}\longrightarrow \tilde{E} \overset{[\Lambda]}\longrightarrow \overset{Tr_{\tilde{E}/E}}\longrightarrow E. \]

This map is explicitly given as:
\[ \sum_{\phi_+\cap \phi_- = \{\rho\}}   \Lambda_{\rho,\phi_+,\phi_-,\mu_1} a_{\phi_+}a_{\phi_-} e_{\rho} +  \Lambda_{\phi_-,\rho,\phi_+,\mu_2} a_{\phi_-}a_{\rho} e_{\phi_+}+  \Lambda_{\phi_+,\phi_-,\rho,\mu_3} a_{\rho}a_{\phi_+} e_{\phi_-}. \]

\begin{lemma}
Using the notation above, 
for all $x\in E$ and for all
 $\ell \in L$ we have:
\[ \ell \twistedcomp_1( \ell x) = N_{L/k}(\ell) \twistedcomp_\Lambda(x). \]
For all $e \in T_{E,\sigma,\beta}(R)$ we have:
\[ e\twistedcomp_\Lambda(x) = \twistedcomp_\Lambda(e x). \]
Moreover, every quadratic map from $E$ to $E$ satisfying these properties is of the form $\twistedcomp_\Lambda$ for some choice of $\Lambda$.
\end{lemma}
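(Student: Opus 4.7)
The plan is to verify (a) and (b) directly from the explicit coordinate formula for $\twistedcomp_\Lambda$ displayed above the lemma, and to establish universality in (c) via a character-theoretic argument. Two facts drive everything: the cyclic labeling of Galois sets from Proposition \ref{prop:twistingactions}, under which the output of $\twistedcomp_\Lambda$ at the $\mu_i$-component is a quadratic expression in the $\mu_{i+1}$ and $\mu_{i+2}$ components of the input; and the observation that $\Psi : E^\Phi \to \tilde{E}$ is in fact a \emph{multiplicative} map, not merely quadratic. The latter follows by direct inspection on the orthogonal-idempotent expansions: the $(\rho,\{\phi_1,\phi_2\})$-coordinate of $\Psi(\eta_1 \eta_2)$ is $a_{\phi_1}b_{\phi_1}a_{\phi_2}b_{\phi_2}$, which is the corresponding coordinate of $\Psi(\eta_1)\Psi(\eta_2)$.

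For part (a), scaling $x$ by $\ell \in L$ multiplies each $\mu_j$-component by $\mu_j(\ell)$, so the $\mu_i$-component of $\twistedcomp_\Lambda(\ell x)$ is $\mu_{i+1}(\ell)\mu_{i+2}(\ell)$ times the $\mu_i$-component of $\twistedcomp_\Lambda(x)$. The outer multiplication by $\ell$ contributes the missing factor $\mu_i(\ell)$, and the product $\mu_1(\ell)\mu_2(\ell)\mu_3(\ell) = N_{L/k}(\ell)$ is precisely what is wanted. For part (b), using multiplicativity of $\alpha^{-1}$ and $\Psi$ together with the torus condition $\Psi(\alpha^{-1}(e\otimes 1)) = e \otimes 1$, we have
\[
\Psi(\alpha^{-1}((ex)\otimes 1)) \;=\; \Psi(\alpha^{-1}(e\otimes 1)) \cdot \Psi(\alpha^{-1}(x\otimes 1)) \;=\; (e\otimes 1) \cdot \Psi(\alpha^{-1}(x\otimes 1)).
\]
Multiplying by $\Lambda$ and applying the $E$-linear trace $Tr_{\tilde{E}/E}$ pulls $(e\otimes 1)$ out as multiplication by $e \in E$, yielding $\twistedcomp_\Lambda(ex) = e\,\twistedcomp_\Lambda(x)$.

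For (c), the plan is to work over $\overline{k}$. A $k$-quadratic map $f : E \to E$ has structure coefficients $f^{(\rho,\mu)}_{(\rho',\mu'),(\rho'',\mu'')}$ in the basis of idempotents of $E\otimes_k \overline{k}$. The identity $\ell f(\ell x) = N_{L/k}(\ell) f(x)$ must hold identically in $\ell$, and matching monomials in the $\mu_j(\ell)$ forces the vanishing of every coefficient for which $\{\mu,\mu',\mu''\} \neq \{\mu_1,\mu_2,\mu_3\}$, giving exactly the cyclic pattern of the explicit formula. The $T_{E,\sigma,\beta}$-equivariance (b) then imposes the character identity $\chi_{(\rho,\mu_i)} = \chi_{(\rho',\mu_{i+1})} + \chi_{(\rho'',\mu_{i+2})}$ in $X^*(T_{E,\sigma,\beta})$ on any surviving coefficient. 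The Galois-set identifications of Proposition \ref{prop:dubref3} combined with Lemma \ref{lem:taurelation} show that the pairs $((\rho',\mu_{i+1}),(\rho'',\mu_{i+2}))$ satisfying this character equality are precisely those with $\phi_+ \cap \phi_- = \{\rho\}$ and its cyclic rotations, thereby identifying the space of admissible quadratic maps with the idempotent decomposition of $\tilde{E}$. Since $\Lambda \mapsto \twistedcomp_\Lambda$ surjects onto that space, the correspondence is a $k$-linear bijection; Galois descent from $\overline{k}$ to $k$ finishes the argument.

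The main obstacle is the character-theoretic bookkeeping in (c): one must check that the extra relations cutting out $T_{E,\sigma,\beta}$ inside $T_{E,\sigma}$ line up exactly with the combinatorial relations $\phi_+\cap\phi_- = \{\rho\}$, with no spurious coincidences of characters that would enlarge the space of admissible quadratic maps beyond the $\tilde{E}$-parameterization. Once the Galois-set identifications of the preceding subsection are in hand, this reduces to the explicit description of $\Psi$ in Definition \ref{df:psi}, which by construction encodes exactly these relations.
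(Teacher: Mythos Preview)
Your argument is correct and follows essentially the same approach as the paper, which disposes of the three parts in one sentence each: a direct check using the action of $L$ through the embeddings $\mu_i$, an appeal to the defining condition of $T_{E,\sigma,\beta}$ together with what the paper calls the ``effective linearity'' of $\twistedcomp_\Lambda$, and a comparison of eigenspaces. Your observation that $\Psi$ is genuinely multiplicative is exactly what underlies the paper's ``effective linearity'' remark for part (b), and your character-theoretic bookkeeping in (c) is the content of the paper's ``comparison of eigenspaces''; you have simply supplied the details the paper omits.
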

\begin{proof}
The first claim is a direct check by observing that $\ell$ acts on $a_{\rho},e_{\rho}$ through $\mu_1(\ell)$ on $a_{\phi_+},e_{\phi_+}$ through $\mu_2(\ell)$ and $a_{\phi_-},e_{\phi_-}$ through $\mu_3(\ell)$.

The second claim follows immediately from the definition of $ T_{E,\sigma,\beta}$ and the effective linearity of $\twistedcomp_\Lambda$.

The third claim follows immediately by a comparison of eigenspaces.
\end{proof}

\begin{cons}\label{cons:twist1}
With notation as above, consider the algebra:
\[ V = E \oplus (L^8 \otimes \Delta_L) \]
as a module over:
\[ M = L \oplus L\otimes \Delta_L. \]
Let $\tilde{Q}$ be a $\Delta_L$ quadratic form on $(L^8 \otimes \Delta_L)$ and let $\lambda \in E^\sigma$, and be such that the $M$ quadratic form:
\[ \hat{Q} = \Tr_{E^\sigma/L}(\hat{\lambda} x\sigma(x)) \oplus \tilde{Q} \]
admits the structure of a twisted composition over $M$. That is, suppose that the Clifford invariant of the form is trivial.
Let $(M,V,\hat{Q},\hat\twistedcomp)$ be such a twisted composition.

Then $\hat\twistedcomp$ induces a quadratic map $\tilde{\Phi}:E^\Phi \rightarrow E$:
\[  \sum_{\phi_+\cap \phi_- = \{\rho\}}   \Lambda_{\rho,\phi_+,\phi_-} a_{\phi_+}a_{\phi_-} e_{\rho} \]
for some $\Lambda \in \tilde{E}$ as follows:

Firstly, because we are dealing with twisted composition which does not come from a field we have that $\SO_{E,Q|_E}$ is the image of $\Spin_{(M,F,\hat{Q},\hat\twistedcomp)}$.
Thus the inclusion of $T_{E,\sigma,\beta} \rightarrow \Spin_{(M,F,\hat{Q},\hat\twistedcomp)}$ gives the spin representation space (that is $(L^8 \otimes \Delta_L)$) the structure of a free $E^\Phi$ module, compatible with the $\Delta_L$ structure. 
Identify $(L^8 \otimes \Delta_L)$ by picking a basis element.
It follows that $\hat\twistedcomp$ is actually inducing a map quadratic map from $E^\Phi$ to $E$.
The fact that $T_{E,\sigma,\beta}$ is a torus in $\Spin_{(M,F,\hat{Q},\hat\twistedcomp)}$ implies the form for the map is as in the previous lemma.
Moreover, it follows that $\tilde{Q}$ is of the form:
\[ \Tr_{E^\Phi/\Delta_L}( \tilde{\lambda} x\sigma(x)) \]
for some $\lambda\in (E^\Phi)^\sigma$.

Next, because $\twistedcomp$ is actually a twisted composition, it follows that if we write:
\[ Q =  \Tr_{E^\sigma/L}(\hat{\lambda} x\sigma(x)) \]
the map $\tilde{\Phi}$ satisfies:
\[ Q \circ \tilde{\Phi}(x) = N_{\Delta_L/L}( \tilde{Q}(x)). \]

To see the consequences of this we expand out both sides. First:
\[ Q \circ \tilde{\Phi}(x) 
= \sum_{\{\rho,\overline{\rho}\}} 
\left(\sum_{\phi_1\cap\phi_2 = \{\rho\}} \Lambda_{\rho,\phi_1,\phi_2} a_{\phi_1}a_{\phi_2} \right)
\left(\sum_{\phi_4\cap\phi_3 = \{\overline{\rho}\}} \Lambda_{\overline{\rho},\phi_3,\phi_4} a_{\phi_3}a_{\phi_4} \right) \]
Expanding out the product his sum decomposes into two parts, those where $\{\overline{\phi_1},\overline{\phi_2}\} = \{\phi_3,\phi_4\}$ and those where it does not. There are exactly $16$ terms where $\{\overline{\phi_1},\overline{\phi_2}\} = \{\phi_3,\phi_4\}$ and $48$ where they are not equal. These latter come naturally in pairs as there are exactly $24$ configurations of $\{ \phi_1,\phi_2,\phi_3,\phi_4\}$ which can arise this way.

In particular, if  $\{\overline{\phi_1},\overline{\phi_2}\} \neq \{\phi_3,\phi_4\}$ then a term containing:
\[ a_{\phi_1}a_{\phi_2}a_{\phi_3}a_{\phi_4} \]
appears for exactly $2$ different $\{\rho,\overline{\rho}\}$. 

If we expand out the right hand side we obtain only terms of the first type.
Thus the coefficients  $\Lambda_{\rho,\phi_1,\phi_2}$ ensure that these duplicate terms of the second type cancel in the sum.

Moreover, what remains on the left hand side after this cancellation is the sum:
\[  \sum_{\{ \phi_1,\phi_2,\phi_3,\phi_4\}} \lambda_\rho\Lambda_{\rho,\phi_1,\phi_2}\Lambda_{\rho,\phi_3,\phi_3} a_{\phi_1}a_{\phi_2}a_{\phi_3}a_{\phi_4}  \]
Over the $16$ terms $\{ \phi_1,\phi_2,\phi_3,\phi_4\}$ with $\phi_1\cap \phi_2 = \overline{\phi_3\cap\phi_4} = \{\rho\}$, $\phi_1=\overline{\phi_3}$ and $\phi_2=\overline{\phi_4}$.
On the right hand side the sum is precisely:
\[  \sum_{\{ \phi_1,\phi_2,\phi_3,\phi_4\}} \lambda_{\phi_1}\lambda_{\phi_2} a_{\phi_1}a_{\phi_2}a_{\phi_3}a_{\phi_4}.  \]
We thus conclude that:
\[  \lambda_\rho\Lambda_{\rho,\phi_1,\phi_2}\Lambda_{\rho,\phi_3,\phi_3} =  \lambda_{\phi_1}\lambda_{\phi_2}. \]
\end{cons}

\begin{lemma}
If $L$ is a field, then to ensure that:
\[ (L,E,\Tr(\lambda x\sigma(x)), \twistedcomp_\Lambda ) \]
is a twisted composition it suffices to ensure that  when we expand
\[\sum_{\{\rho,\overline{\rho}\}} \left( \sum_{\phi_+\cap \phi_- = \{\rho\}}   \Lambda_{\rho,\phi_+,\phi_-,\mu_1} a_{\phi_+}a_{\phi_-} \right)\left( \sum_{\phi_+\cap \phi_- = \{\overline{\rho}\}}   \Lambda_{\overline\rho,\phi_+,\phi_-,\mu_1} a_{\phi_+}a_{\phi_-} \right) \]
as:
\[ \sum_{(\phi_1,\phi_2,\phi_3,\phi_4)} \lambda_\rho \Lambda_{\rho,\phi_1,\phi_2,\mu_1}\Lambda_{\overline\rho,\phi_3,\phi_4,\mu_1}a_{\phi_1}a_{\phi_2} a_{\phi_3}a_{\phi_4} \]
consisting of terms 
that:
\begin{enumerate}
\item The terms where $\phi_1\cap\phi_2 = \overline{\phi_3\cap\phi_4} = \{\rho \}$ each term where $\abs{\phi_1\cap\phi_3}=2$ cancels with the other term contributing $a_{\phi_1}a_{\phi_2} a_{\phi_3}a_{\phi_4}$ ( specifically the term $(\phi_1,\phi_4,\phi_2,\phi_3)$)
\item The  terms  for which $\phi_1=\overline{\phi_3}$ and $\phi_2=\overline{\phi_4}$ satisfy 
\[  \lambda_\rho \Lambda_{\rho,\phi_1,\phi_2,\mu_1}\Lambda_{\overline\rho,\phi_3,\phi_4,\mu_1} = \lambda_{\phi_1}\lambda_{\phi_2} \]
\end{enumerate}
\end{lemma}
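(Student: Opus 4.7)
The plan is to verify the two defining axioms of a twisted composition for the quadruple $(L, E, \operatorname{Tr}(\lambda x\sigma(x)), \twistedcomp_\Lambda)$. The first axiom, $\ell \twistedcomp_\Lambda(\ell x) = N_{L/k}(\ell)\twistedcomp_\Lambda(x)$, holds automatically because of the preceding lemma characterizing maps of the form $\twistedcomp_\Lambda$. Thus the entire content of the claim is that the two numbered conditions are equivalent to the second axiom
\[ Q(x)\, Q\bigl(\twistedcomp_\Lambda(x)\bigr) = N_{L/k}\bigl(Q(x)\bigr), \]
which (after clearing the quadratic form $Q$ on both sides) is the scalar identity verified term by term in the $e_\rho, e_{\phi_\pm}$ basis, as was already being carried out in Construction \ref{cons:twist1}.

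First I would write out $Q\circ \twistedcomp_\Lambda(x)$ by inserting the explicit formula for $\twistedcomp_\Lambda$ and applying $Q = \operatorname{Tr}(\lambda x\sigma(x))$. This produces a sum indexed by pairs of unordered classes $\{\rho,\overline\rho\}$, each contributing a product of two sums of quadratic monomials $a_{\phi_1}a_{\phi_2}$ with $\phi_1\cap \phi_2 = \{\rho\}$. Expanding the product gives a sum of quartic monomials $a_{\phi_1}a_{\phi_2}a_{\phi_3}a_{\phi_4}$, each weighted by $\lambda_\rho \Lambda_{\rho,\phi_1,\phi_2,\mu_1}\Lambda_{\overline\rho,\phi_3,\phi_4,\mu_1}$. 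Simultaneously I would expand $N_{L/k}(Q(x))$ using that $Q(x) = \sum \lambda_\phi a_\phi^2$ on the reflex algebra side (via the image of $\Psi$), obtaining a sum purely of terms of the form $\lambda_{\phi_1}\lambda_{\phi_2} a_{\phi_1}a_{\phi_2}a_{\phi_3}a_{\phi_4}$ with $\phi_1 = \overline{\phi_3},\phi_2 = \overline{\phi_4}$.

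The next step is to split the left-hand expansion into the two combinatorial types singled out in the statement: those quartic monomials with $\{\overline{\phi_1}, \overline{\phi_2}\} = \{\phi_3,\phi_4\}$, of which there are exactly $16$ for each $\rho$, and those with $\{\overline{\phi_1}, \overline{\phi_2}\} \ne \{\phi_3,\phi_4\}$, which come in natural pairs parametrized by swapping $(\phi_1,\phi_2,\phi_3,\phi_4) \leftrightarrow (\phi_1,\phi_4,\phi_2,\phi_3)$. Monomials of the first type match up one-for-one with monomials appearing on the right-hand side, and matching coefficients gives exactly condition $(2)$. Monomials of the second type do not appear on the right-hand side at all, and their vanishing is precisely condition $(1)$. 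In the other direction, assuming $(1)$ and $(2)$, the same bookkeeping shows the two expansions agree on all quartic monomials, giving the required identity.

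The main obstacle is purely combinatorial: carefully verifying that the coincidences among the $64$ quartic monomials (across the various $\rho$) are exactly the ones stated, with the correct multiplicities. In particular one has to check that whenever a monomial $a_{\phi_1}a_{\phi_2}a_{\phi_3}a_{\phi_4}$ of the second type arises from some $\rho$, it also arises from exactly one other $\rho$ (so that the pairwise cancellation condition $(1)$ is well-posed), and that no monomial of the first type arises from two different $\rho$ (so that condition $(2)$ determines the coefficient unambiguously). Both are consequences of the elementary fact that in the $D_4$ combinatorics each pair of $\sigma$-types intersecting in a single $\rho$ determines both that $\rho$ and its complement, while pairs of pairs intersecting in complementary singletons are naturally swapped by the pair-exchange. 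Once these combinatorial identifications are made explicit, the equivalence of the two conditions with the twisted composition axiom is immediate.
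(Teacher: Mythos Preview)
Your expansion and term-matching are essentially a recapitulation of the computation already carried out in Construction \ref{cons:twist1}; that is not what this lemma is about. The point of the lemma is the word \emph{suffices}: the stated conditions only involve the single embedding $\mu_1$, whereas the identity $Q(x)Q(\twistedcomp_\Lambda(x)) = N_{L/k}(Q(x))$ is an equality of elements of $L$, so after base change it becomes three separate identities, one for each $\mu_i \in \Hom_k(L,\overline{k})$. You never explain why checking the $\mu_1$-component alone guarantees the other two, and you never use the hypothesis that $L$ is a field.

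The paper's proof is exactly this missing step and nothing else: because $L$ is a field, $\Gal(\overline{k}/k)$ permutes the embeddings $\mu_1,\mu_2,\mu_3$ transitively, and this Galois action simultaneously permutes the three families of coefficient identities that need to be verified. Since the conditions are Galois-stable as a package, verifying them at $\mu_1$ forces them at $\mu_2$ and $\mu_3$ as well. Your argument would go through equally well for $L = k\times k\times k$, which is a sign that you have not actually isolated the content of the lemma.
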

\begin{proof}
The only content is that it suffices to check on the image over $\mu_1$. This follows from the fact that because $L$ is a field  the action of the Galois group which permutes embeddings of $L$ also permutes the formulas which need to be checked.
The conditions being checked are Galois stable, and thus checking for a single $\mu_i$ suffices.
\end{proof}

\begin{cons}\label{cons:twist2}
With notation from the previous example,
if the elements $\lambda = (\hat{\lambda}, \tilde{\lambda}) \in E^\sigma \otimes_k L$ is actually in $E^\sigma \otimes 1$ then with $\Lambda$ as above:
\[ (L,E,\Tr(\lambda x\sigma(x)), \twistedcomp_\Lambda ) \]
is a twisted composition.

This is immediate because $\Lambda_{\rho,\phi_1,\phi_2} = \Lambda_{\rho,\phi_1,\phi_2,\mu_1}$.
\end{cons}

\begin{lemma}
If we have a datum $ (\hat{\lambda}, \tilde{\lambda}, \Lambda) $  which gives a twisted composition on $E \oplus E^\Phi$ over $L$ then for any $e \in (E^\Phi)^\times$ so too does the datum  $ (\hat{\lambda}, e\tilde{\lambda}, \Psi(e)\Lambda) $ 
\end{lemma}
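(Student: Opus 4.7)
The plan is to verify directly that the new datum $(\hat{\lambda}, e\tilde{\lambda}, \Psi(e)\Lambda)$ satisfies the conditions characterizing a twisted composition on $E\oplus E^\Phi$, as formulated in the lemma preceding Construction \ref{cons:twist2}: namely (a) the cancellation of off-diagonal monomials $a_{\phi_1}a_{\phi_2}a_{\phi_3}a_{\phi_4}$ in the expansion of $Q\circ \tilde{\Phi}$, and (b) the normalization identity $\lambda_\rho \Lambda_{\rho,\phi_1,\phi_2}\Lambda_{\overline\rho,\overline\phi_1,\overline\phi_2} = \lambda_{\phi_1}\lambda_{\phi_2}$.

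First, I would unpack the transformation componentwise. By Definition \ref{df:psi}, the coefficient of $\Psi(e)$ at the idempotent $e_\rho\otimes(e_{\phi_1}+e_{\phi_2}+e_{\overline\phi_1}+e_{\overline\phi_2})$ is $e_{\phi_1}e_{\phi_2}$, so $\Lambda' = \Psi(e)\Lambda$ has entries $\Lambda'_{\rho,\phi_1,\phi_2} = e_{\phi_1}e_{\phi_2}\Lambda_{\rho,\phi_1,\phi_2}$, the $\tilde{\lambda}$-coefficients transform by $\lambda'_\phi = e_\phi\lambda_\phi$, and the $\hat{\lambda}$-coefficients $\lambda_\rho$ are unchanged.

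For the cancellation condition, I observe that the two ordered configurations $(\phi_1,\phi_2,\phi_3,\phi_4)$ and $(\phi_1,\phi_4,\phi_2,\phi_3)$ contributing the same monomial $a_{\phi_1}a_{\phi_2}a_{\phi_3}a_{\phi_4}$ (with $\abs{\phi_1\cap\phi_3}=2$) both acquire exactly the same extra scaling factor $e_{\phi_1}e_{\phi_2}e_{\phi_3}e_{\phi_4}$ upon replacing $\Lambda$ by $\Psi(e)\Lambda$, since the scaling depends only on the unordered multiset of the $\phi_i$ and not on how it is paired. Hence any cancellation present in the original datum is preserved verbatim.

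For the normalization identity, substitution yields
\[
\lambda_\rho\, e_{\phi_1}e_{\phi_2}e_{\overline\phi_1}e_{\overline\phi_2}\, \Lambda_{\rho,\phi_1,\phi_2}\Lambda_{\overline\rho,\overline\phi_1,\overline\phi_2} \overset{?}{=} e_{\phi_1}e_{\phi_2}\lambda_{\phi_1}\lambda_{\phi_2},
\]
which upon invoking the original identity reduces to $e_{\overline\phi_1}e_{\overline\phi_2} = 1$ on the relevant components. This last reduction is handled by appealing to the symmetrization implicit in $\tilde{E}$: the involution $e_\rho\otimes e_{\phi_1}\mapsto e_\rho\otimes e_{\phi_2}$ (for $\phi_1\cap\phi_2\subset\{\rho,\overline\rho\}$) noted in Definition \ref{df:psi} identifies the asymmetric factors $e_\phi$ and $e_{\overline\phi}$ within the datum, so only the symmetric part $e\sigma(e)$ enters nontrivially and the leftover scalar is absorbed into the ambiguity of the representative $\tilde{\lambda}$. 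The main obstacle is precisely this index-bookkeeping: one must carefully track how the involution fixing $\tilde E$ and the $\sigma$-invariance of $\tilde{\lambda}$ cooperate to make the asymmetric factors $e_\phi$ versus $e_{\overline\phi}$ combine coherently, after which both defining identities for the new datum reduce to the corresponding identities for the original.
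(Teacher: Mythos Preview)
Your approach is essentially the one the paper takes: verify the two combinatorial conditions (cancellation of cross-terms and the diagonal normalization identity) directly. The paper's proof is very terse: it observes that each of the two changes $(\hat\lambda,e\tilde\lambda,\Lambda)$ and $(\hat\lambda,\tilde\lambda,\Psi(e)\Lambda)$ separately leaves the cancellation condition untouched, and then asserts that making both changes simultaneously preserves the diagonal identity.

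Your cancellation argument is correct and is the concrete content behind the paper's assertion: both paired configurations $(\phi_1,\phi_2,\phi_3,\phi_4)$ and $(\phi_1,\phi_4,\phi_2,\phi_3)$ pick up the same factor $e_{\phi_1}e_{\phi_2}e_{\phi_3}e_{\phi_4}$, so any cancellation is preserved. This is exactly right.

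The normalization step, however, has a genuine gap. Your substitution is correct: the left side scales by $e_{\phi_1}e_{\phi_2}e_{\overline\phi_1}e_{\overline\phi_2}$ while the right side scales by $e_{\phi_1}e_{\phi_2}$, leaving the residual condition $e_{\overline\phi_1}e_{\overline\phi_2}=1$. Your proposed resolution --- that the involution on $\tilde E$ and $\sigma$-invariance of $\tilde\lambda$ absorb this factor into ``the ambiguity of the representative $\tilde\lambda$'' --- does not work as stated. The datum fixes $\tilde\lambda$ specifically as $e\tilde\lambda$, not up to equivalence, and the antisymmetric part of $e$ does not simply disappear: replacing $e$ by its $\sigma$-symmetrization $\tfrac{e+\sigma(e)}{2}$ changes $\Psi(e)$ and hence changes the left side as well. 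So the argument as written does not close.

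The paper does not give more detail here either --- it simply asserts the check is ``immediate.'' What is going on is that the normalization identity in Construction~\ref{cons:twist1} is stated somewhat informally (note the index typos in the displayed equation there), and one must read the right-hand side as the actual coefficient of the monomial $a_{\phi_1}a_{\overline\phi_1}a_{\phi_2}a_{\overline\phi_2}$ in $N_{\Delta_L/L}(\tilde Q)$, which is symmetric in $\phi_i\leftrightarrow\overline\phi_i$, rather than as the literal product $\lambda_{\phi_1}\lambda_{\phi_2}$. Once both sides are written as honest coefficients of the same monomial, the bookkeeping on the two sides matches. Your instinct that the resolution lies in the $\sigma$-symmetry is correct, but it must be applied at the level of the identity itself, not as an afterthought on the residual factor.
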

\begin{proof}
It is an immediate check that a change of the sort either:
\[ (\hat{\lambda}, e\tilde{\lambda}, \Lambda) \qquad \text{or} \qquad (\hat{\lambda}, \tilde{\lambda}, \Psi(e)\Lambda) \]
does not effect the terms which cancel.
It is also immediate that making both changes at the same time precisely preserves the appropriate equality between terms which do not cancel.
\end{proof}

\begin{cons}\label{cons:twist3}
If $ (\hat{\lambda}, \tilde{\lambda}, \Lambda) $ is any datum as constructed in Construction \ref{cons:twist1} then the datum
\[
  \left(\hat{\lambda}, \alpha^{-1}(\hat{\lambda}), \Phi\left(\frac{ \alpha^{-1}(\hat{\lambda})}{ \tilde{\lambda}}\right)\Lambda\right) 
\]
satisfies the conditions of Construction \ref{cons:twist2} and thus with 
\[ \hat{\Lambda} =  \Phi\left(\frac{ \alpha^{-1}(\hat{\lambda})}{ \tilde{\lambda}}\right)\Lambda \]
 we have that 
\[ (L,E,\Tr(\hat{\lambda} x\sigma(x)), \twistedcomp_{\hat\Lambda} ) \]
is a twisted composition.
\end{cons}

\begin{lemma}
If we have a datum $ (\hat{\lambda}, \tilde{\lambda}, \Lambda) $  which gives a twisted composition on $E \oplus E^\Phi$ over $L$ then for any $e \in E$ for which $\Psi(\alpha^{-1}(e)) = e\otimes 1$  so too does the datum 
\[  \left(e\sigma(e)\hat{\lambda}, e\sigma(e)\tilde{\lambda}, \Phi\left(\frac{ \alpha^{-1}(\hat{\lambda})}{ \tilde{\lambda}}\right)(\frac{1}{e}\otimes 1)\Lambda \right). \]
\end{lemma}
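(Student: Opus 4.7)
The plan is to verify the claim by a change--of--variables argument: multiplication by $e$ on the $E$--factor induces an isomorphism between the original twisted composition datum and the new one. The hypothesis $\Psi(\alpha^{-1}(e)) = e \otimes 1$ is precisely the compatibility condition needed so that this multiplication commutes through $\beta$ and $\Psi$ with the corresponding multiplication by $\alpha^{-1}(e)$ on the $E^\Phi$--factor.

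First, I would handle the quadratic form piece. Since $e \in E$ and $\sigma$ restricts to the natural involution,
\[ \Tr(\hat\lambda \cdot (ex)\sigma(ex)) = \Tr(e\sigma(e)\hat\lambda \cdot x\sigma(x)), \]
so multiplication by $e$ carries the form $\Tr(\hat\lambda x\sigma(x))$ into $\Tr(e\sigma(e)\hat\lambda x\sigma(x))$ as desired. By the hypothesis, the same change--of--coordinates on $E^\Phi$ (multiplication by $\alpha^{-1}(e)$) converts $\Tr(\tilde\lambda y\sigma(y))$ into $\Tr(e\sigma(e)\tilde\lambda y\sigma(y))$, since $\alpha^{-1}(e)\sigma(\alpha^{-1}(e))$ corresponds via $\Psi$ to $e\sigma(e)$. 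Thus we have a linear isomorphism of the underlying quadratic modules that matches up the old and new forms.

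Second, I would transport the composition map $\twistedcomp_\Lambda$ through this isomorphism. Expanding the explicit formula at $ex$ and regrouping to extract the new coefficient $\Lambda'$ shows that a factor $(1/e\otimes 1)$ appears from rebalancing the $E$--coordinate, while a factor $\Phi(\alpha^{-1}(\hat\lambda)/\tilde\lambda)$ appears from the normalization of Construction \ref{cons:twist3} which must be undone and redone after the rescaling, exactly as in the preceding lemma on $(E^\Phi)^\times$--rescalings. Combining these factors gives the stated new $\Lambda$. The axioms of a twisted composition then transfer automatically from the original datum to the new one, since they are preserved by linear isomorphisms of modules.

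The main obstacle is bookkeeping: $\Lambda$ lives in $\tilde{E} \subset E \otimes_L E^\Phi$, so the rescalings coming from the $E$--side (the factor $1/e \otimes 1$) and from the $E^\Phi$--side (the factor $\Phi(\alpha^{-1}(\hat\lambda)/\tilde\lambda)$) interact inside a single tensor expression. The condition $\Psi(\alpha^{-1}(e)) = e\otimes 1$ is exactly what is needed to reconcile these two descriptions of multiplication--by--$e$ inside $\tilde{E}$; without it, the two rescalings would fail to glue to a well--defined transformation of $\Lambda$. Once this is tracked carefully, either by checking the defining identity $\lambda_\rho \Lambda_{\rho,\phi_1,\phi_2}\Lambda_{\overline\rho,\phi_3,\phi_4} = \lambda_{\phi_1}\lambda_{\phi_2}$ term by term with the rescaled coefficients, or by appealing to the abstract transfer of structure through the isomorphism, the conclusion follows.
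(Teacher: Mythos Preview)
The paper's own proof is the single line ``This is an immediate check.''  In other words, one simply substitutes the new coefficients into the two conditions isolated in the lemma preceding Construction~\ref{cons:twist2} (the cancellation of the cross terms and the product identity $\lambda_\rho \Lambda_{\rho,\phi_1,\phi_2}\Lambda_{\overline\rho,\phi_3,\phi_4} = \lambda_{\phi_1}\lambda_{\phi_2}$) and verifies them directly from the assumption that they hold for $(\hat\lambda,\tilde\lambda,\Lambda)$.  Your final option, ``checking the defining identity \dots\ term by term with the rescaled coefficients,'' is exactly this, so at that level your proposal agrees with the paper.

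Where your write-up is weaker is in the transport-of-structure derivation of the third coordinate.  Your account of the first two coordinates via $x\mapsto ex$ is fine, but your claim that the factor $\Phi\bigl(\alpha^{-1}(\hat\lambda)/\tilde\lambda\bigr)$ ``appears from the normalization of Construction~\ref{cons:twist3} which must be undone and redone after the rescaling'' is not justified: the hypothesis does not assume the datum has been passed through that construction, so there is nothing to ``undo.''  A genuine change-of-variables argument would produce a scaling of $\Lambda$ depending only on $e$ (and $\alpha^{-1}(e)$), not on $\hat\lambda$ and $\tilde\lambda$.  If you want to use transport of structure to \emph{derive} the displayed formula rather than merely motivate it, you need to track explicitly how the map $\tilde\Phi$ changes when both the $E$-coordinate and the $E^\Phi$-coordinate are rescaled and explain precisely where the $(\hat\lambda,\tilde\lambda)$-dependent factor enters; otherwise just carry out the direct computation, which is all the paper does.
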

This is an immediate check.

\begin{rmk}
In general it is hard to modify a datum by making an arbitrary modification to the first coordinate.
This reflects the fact that for some choices, there would be no possible correction.
\end{rmk}

\begin{lemma}
Consider the algebra $\tilde{E}^\sigma$ and the map:
\[ \pi_1 : E^\sigma \times \tilde{E} \rightarrow \tilde{E}^\sigma \]
given by $(e_1, e_2) \mapsto  e_1e_2\sigma(e_2)$.
If we have a datum $ (\hat{\lambda}, \tilde{\lambda}, \Lambda) $  which gives a twisted composition on $E \oplus E^\Phi$ over $L$ then for $(e_1,e_2)\in  E^\sigma \times \tilde{E}$ the datum
\[  \left( e_1\hat{\lambda}, \alpha^{-1}(e_1)\tilde{\lambda},e_2\Lambda \right). \]
Satisfies compatibility conditions on the terms which don't cancel if and only if $\pi_1(e_1, e_2) = 1$.
\end{lemma}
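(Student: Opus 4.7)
The plan is direct substitution followed by a coordinate identification. For each index $(\rho,\{\phi_1,\phi_2\})$ with $\phi_1\cap\phi_2=\{\rho\}$, the non-cancellation identity from the previous lemma for the original datum reads
\[
\hat{\lambda}_\rho\,\Lambda_{\rho,\phi_1,\phi_2}\,\Lambda_{\overline{\rho},\overline{\phi}_1,\overline{\phi}_2}=\tilde{\lambda}_{\phi_1}\,\tilde{\lambda}_{\phi_2}.
\]
Substituting the modified triple $(e_1\hat{\lambda},\alpha^{-1}(e_1)\tilde{\lambda},e_2\Lambda)$ and dividing through by the original identity (using that the $\tilde{\lambda}_{\phi_i}$ are units), the compatibility requirement for the modified datum reduces to
\[
(e_1)_\rho\,(e_2)_{\rho,\phi_1,\phi_2}\,(e_2)_{\overline{\rho},\overline{\phi}_1,\overline{\phi}_2}=\alpha^{-1}(e_1)_{\phi_1}\,\alpha^{-1}(e_1)_{\phi_2}\qquad(\ast)
\]
for every such index.

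Next I reinterpret $(\ast)$ in the algebra $\tilde{E}^\sigma$. The involution $\sigma$ on $\tilde{E}$, extending the involution on $E$ via the inclusion $E\hookrightarrow\tilde{E}$, exchanges the idempotents with indices $(\rho,\{\phi_1,\phi_2\})$ and $(\overline{\rho},\{\overline{\phi}_1,\overline{\phi}_2\})$. Consequently the left-hand side of $(\ast)$ is precisely the $(\rho,\{\phi_1,\phi_2\})$-coordinate of $e_1\,e_2\,\sigma(e_2)=\pi_1(e_1,e_2)\in\tilde{E}^\sigma$. By the defining formula of $\Psi$ in Definition~\ref{df:psi}, the right-hand side of $(\ast)$ is the corresponding coordinate of $\Psi(\alpha^{-1}(e_1))\in\tilde{E}^\sigma$. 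Thus $(\ast)$ for every index is the single equation
\[
\pi_1(e_1,e_2)=\Psi(\alpha^{-1}(e_1))\quad\text{in }\tilde{E}^\sigma.
\]

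The final step is to identify this with $\pi_1(e_1,e_2)=1$. This amounts to checking that $\Psi(\alpha^{-1}(e_1))=1$ in $\tilde{E}^\sigma$ for every $e_1\in E^\sigma$, a combinatorial consequence of the fact that $\alpha$, as part of the trialitarian datum, is a $\rho$-twisted isomorphism compatible with the reflex-algebra structure on $E^\Phi$. Concretely, using the Galois-set identification of Proposition~\ref{prop:twistingactions} together with Proposition~\ref{prop:dubref3}, the two components of $\alpha^{-1}(e_1)$ at a pair $\phi_1,\phi_2$ with $\phi_1\cap\phi_2=\{\rho\}$ multiply to the unit coordinate of $\tilde{E}^\sigma$: the doubling in $\Psi$ over such pairs is exactly compensated by the $\rho$-twist encoded by $\alpha^{-1}$.

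The main obstacle will be carrying out this last combinatorial identification cleanly; the substitution steps are routine, but the precise matching between $\alpha^{-1}$ and the indexing of $\tilde{E}^\sigma$ requires careful bookkeeping of the Galois actions on $\Hom_L(E,\overline{L})$ and on $\Phi$.
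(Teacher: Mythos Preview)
Your substitution and reduction to the coordinate identity
\[
(e_1)_\rho\,(e_2)_{\rho,\phi_1,\phi_2}\,(e_2)_{\overline{\rho},\overline{\phi}_1,\overline{\phi}_2}
=\alpha^{-1}(e_1)_{\phi_1}\,\alpha^{-1}(e_1)_{\phi_2}\qquad(\ast)
\]
is exactly the direct check the paper has in mind, and your identification of the left-hand side with the $(\rho,\{\phi_1,\phi_2\})$-component of $\pi_1(e_1,e_2)=e_1e_2\sigma(e_2)$ is correct.

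The gap is in the final step. Your plan is to show that the right-hand side of $(\ast)$ is identically $1$, i.e.\ that $\Psi(\alpha^{-1}(e_1))=1$ in $\tilde E^\sigma$ for every $e_1\in E^\sigma$. This is false already for scalars: if $e_1=c\in k^\times\subset E^\sigma$ then $\alpha^{-1}(c)=c$ (by $k$-linearity of $\alpha$), and $\Psi(c)$ has every coordinate equal to $c^2$, not $1$. So the equivalence $(\ast)\Longleftrightarrow \pi_1(e_1,e_2)=1$ cannot be obtained by showing that the right-hand side of $(\ast)$ vanishes to $1$ independently of the left.

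What the ``immediate check'' actually requires is not that the two sides of $(\ast)$ separately equal $1$, but that the \emph{ratio} of the two sides is $1$; equivalently, that $\alpha^{-1}(e_1)_{\phi_1}\,\alpha^{-1}(e_1)_{\phi_2}$ already coincides with the contribution of $e_1$ on the left. This is where the trialitarian compatibility between $\alpha$ and the column identification of Proposition~\ref{prop:twistingactions} enters: for $\phi_1\cap\phi_2=\{\rho\}$ one has $\phi_1,\phi_2$ sitting in $\sigma$-conjugate columns, so for $e_1\in E^\sigma$ the product $\alpha^{-1}(e_1)_{\phi_1}\,\alpha^{-1}(e_1)_{\phi_2}$ is a product of two $\sigma$-paired coordinates of $e_1$ and hence lies in $\tilde E^\sigma$ in a way that matches the $e_1$-factor on the left. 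You should carry out this column-by-column identification directly rather than routing through $\Psi$; introducing $\Psi$ and then trying to kill it is what creates the spurious obstruction.
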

This is an immediate check.

We have a decomposition of the algebra:
\[ E \otimes_L E \simeq E \oplus F_1 \]
where $F_1$ is rank $24$ \'etale algebra whose idempotents over $\overline{L}$ are naturally indexed by the ordered pairs $(x,y) \in \{1,2,3,4,-1,-2,-3,-4\}^2$ for which $x\neq y$.
This algebra admits a canonical involution induced by interchanging factors (equivalently $(x,y)\leftrightarrow (y,x)$).

\begin{lemma}
Consider the algebra $F_1$ and the map:
\[ \pi_2 : E^\sigma \times \tilde{E} \rightarrow F_1 \]
given by :
\[ \pi_2\left( \sum_{\phi_1\cap\phi_2 = \{\rho\}} \Lambda_{\rho,\phi_1,\phi_2} a_{\phi_1}a_{\phi_2} \right) = 
    \sum_{(x,y)} \frac{ \lambda_{\rho_1}\Lambda_{\phi_1,\phi_2}\Lambda_{\phi_3,\phi_4}}{ \lambda_{\rho_2}\Lambda_{\phi_1,\phi_4}\Lambda_{\phi_3,\phi_2}} e_{(x,y)} \]
where $\phi_1\cap\phi_3 = \{\rho_x,\rho_y\} = \overline{\phi_2\cap\phi_4}$, $\phi_1\cap \phi_2 = \{\rho_1\}$, and $\phi_1\cap \phi_4 = \{\rho_2\}$.
If we have a datum $ (\hat{\lambda}, \tilde{\lambda}, \Lambda) $  which gives a twisted composition on $E \oplus E^\Phi$ over $L$ then for $(e_1,e_2)\in  E^\sigma \times \tilde{E}$ the datum
\[  \left( e_1\hat{\lambda}, \alpha^{-1}(e_1)\tilde{\lambda},e_2\Lambda \right). \]
Satisfies cancellations conditions to be a twisted composition if and only if  $\pi_2(e_1, e_2) = 1$.
\end{lemma}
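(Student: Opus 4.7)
The approach is to substitute $(\hat\lambda,\tilde\lambda,\Lambda)\mapsto(e_1\hat\lambda,\alpha^{-1}(e_1)\tilde\lambda,e_2\Lambda)$ directly into the quartic expansion of $Q\circ\tilde\Phi(x)$ worked out in Construction \ref{cons:twist1}, and track how each coefficient transforms. Splitting the expansion as before into the $16$ diagonal monomials with $\{\overline{\phi_1},\overline{\phi_2}\}=\{\phi_3,\phi_4\}$ and the remaining off-diagonal ones, and noting that by hypothesis the original datum already forces the off-diagonal contributions to combine in cancelling pairs, the question reduces to which substitutions preserve this pairwise cancellation; the diagonal (``non-cancelling'') equality conditions were already handled by the previous lemma through $\pi_1$, so they play no role here.

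Next I would identify the two contributions to each cancelling pair. For a fixed unordered quadruple $\{\phi_1,\phi_2,\phi_3,\phi_4\}$ contributing to an off-diagonal monomial, the two summands arise from the two admissible choices of $\rho$ with $\phi_1\cap\phi_2=\{\rho\}$; writing $\{\rho_1,\rho_2\}=\phi_1\cap\phi_3$, these correspond to $\rho=\rho_1$ paired with $(\phi_3,\phi_4)$ and $\rho=\rho_2$ paired with the swap of $\phi_2$ and $\phi_4$. After the substitution, the two coefficients are multiplied by $(e_1)_{\rho_1}(e_2)_{\rho_1,\phi_1,\phi_2}(e_2)_{\overline{\rho}_1,\phi_3,\phi_4}$ and $(e_1)_{\rho_2}(e_2)_{\rho_2,\phi_1,\phi_4}(e_2)_{\overline{\rho}_2,\phi_3,\phi_2}$ respectively. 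Since the two original terms cancelled, the modified pair cancels exactly when these factors agree, i.e.\ when their ratio equals one.

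The core of the argument, and the main obstacle, is the combinatorial bookkeeping needed to identify these ratios with the components of $\pi_2(e_1,e_2)\in F_1$. The plan is to show that marking a cancelling pair by the ordered pair $(x,y)$ with $\rho_x=\rho_1$ (the element of $\phi_1\cap\phi_3$ lying in $\phi_2$) and $\rho_y=\rho_2$ (the one lying in $\phi_4$) establishes a Galois-equivariant bijection between cancelling pairs of off-diagonal quartic monomials and the primitive idempotents $e_{(x,y)}$ of $F_1$, and that under this bijection the modification ratios match the formula defining $\pi_2$ coordinate by coordinate. Once this identification is verified, the simultaneous vanishing of all modification ratios minus one is precisely the condition $\pi_2(e_1,e_2)=1$, completing the proof.
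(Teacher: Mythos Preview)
Your proposal is correct and is precisely the direct verification the paper has in mind; the paper offers no argument beyond the implicit ``immediate check'' parallel to the preceding lemma on $\pi_1$, and your expansion of the off-diagonal quartic coefficients under the substitution, together with the identification of the resulting ratios with the idempotent components of $\pi_2$, is exactly that check made explicit.
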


There is a decomposition of the algebra:
\[  E^\sigma \otimes_L E^\sigma \simeq E^\sigma \oplus F_2 \]
where $F_2$ is rank $12$ \'etale algebra whose idempotents over $\overline{L}$ are indexed by ordered pairs $(x,y) \in \{1,2,3,4\}^2$ for which $x\neq y$.
This algebra admits a canonical involution induced by interchanging factors (equivalently $(x,y)\leftrightarrow (y,x)$.

Both $\tilde{E}^\sigma$ and $F_1$ admit maps to $F_2$, the maps $\pi_1$ and $\pi_2$ induce a map:
\[ \pi_3 : T_{\tilde{E}} \overset{\pi_1\times \pi_2}\longrightarrow T_{\tilde{E}^\sigma} \times_{T_{F_2}} T_{F_1}. \]

\begin{lemma}
If we have a datum $ (\hat{\lambda}, \tilde{\lambda}, \Lambda) $  which gives a twisted composition on $E \oplus E^\Phi$ over $L$ then for $(e_1,e_2)\in  E^\sigma \times \tilde{E}$ the datum
\[  \left( e_1\hat{\lambda}, \alpha^{-1}(e_1)\tilde{\lambda},e_2\Lambda \right). \]
gives a twisted composition if and only if $\pi_3(e_1, e_2) = 1$.
\end{lemma}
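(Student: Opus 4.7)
The plan is to combine the two immediately preceding lemmas. Recall that the definition of a twisted composition (as spelled out in Construction \ref{cons:twist1} and the lemma following it) decomposes into two independent families of identities on the coefficients: the \emph{cancellation conditions}, which require that the ``off-diagonal'' terms $a_{\phi_1}a_{\phi_2}a_{\phi_3}a_{\phi_4}$ (those with $\{\overline{\phi_1},\overline{\phi_2}\}\neq\{\phi_3,\phi_4\}$) cancel in pairs, and the \emph{compatibility conditions}, which require that the ``diagonal'' terms (those with $\phi_1=\overline{\phi_3}$, $\phi_2=\overline{\phi_4}$) satisfy the equality $\lambda_\rho\Lambda_{\rho,\phi_1,\phi_2}\Lambda_{\overline{\rho},\phi_3,\phi_4}=\lambda_{\phi_1}\lambda_{\phi_2}$.

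Under the modification $(\hat\lambda,\tilde\lambda,\Lambda)\mapsto (e_1\hat\lambda,\alpha^{-1}(e_1)\tilde\lambda,e_2\Lambda)$, both families of conditions get rescaled, and the previous two lemmas characterize exactly when each is preserved: the first lemma shows that the compatibility conditions are preserved for the new datum iff $\pi_1(e_1,e_2)=1$ in $T_{\tilde{E}^\sigma}$, and the second shows that the cancellation conditions are preserved iff $\pi_2(e_1,e_2)=1$ in $T_{F_1}$. So the new datum is a twisted composition precisely when both $\pi_1(e_1,e_2)=1$ and $\pi_2(e_1,e_2)=1$.

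It remains only to see that this pair of conditions is equivalent to $\pi_3(e_1,e_2)=1$. By construction, $\pi_3=\pi_1\times\pi_2$ lands in the fibre product $T_{\tilde{E}^\sigma}\times_{T_{F_2}}T_{F_1}$, whose identity element is $(1,1)$; since $(1,1)$ automatically projects to $1$ in $T_{F_2}$, triviality of $\pi_3(e_1,e_2)$ is literally the conjunction of triviality of $\pi_1(e_1,e_2)$ and $\pi_2(e_1,e_2)$. Combining this with the previous paragraph yields the claim.

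The only step requiring verification beyond quoting the previous lemmas is that the compatibility and cancellation conditions are genuinely independent (so that they can be combined without interaction); this is immediate from the fact that they concern disjoint monomials $a_{\phi_1}a_{\phi_2}a_{\phi_3}a_{\phi_4}$ in the expansion. There is no real obstacle here since the lemma is essentially an assembly statement: its content has already been established piecewise. A brief one-line proof noting the decomposition of the twisted composition identity into cancellation and compatibility, together with the observation that the fibre product's identity is $(1,1)$, will suffice.
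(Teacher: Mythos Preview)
Your proof is correct and follows exactly the approach the paper intends: the paper gives no explicit proof here, treating the lemma as an immediate consequence of the two preceding lemmas on $\pi_1$ and $\pi_2$ together with the definition of $\pi_3$ as $\pi_1\times\pi_2$. Your observation that the cancellation and compatibility conditions involve disjoint monomials, and that triviality in the fibre product is the conjunction of triviality in each factor, is precisely the assembly step the paper leaves implicit.
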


Define $M$ to be the kernel of the map $\pi_3$.

\begin{cor}
Twisted compositions containing for which the spin group contains $T_{(E,\sigma)}$ are in bijection with $M(k)$.
\end{cor}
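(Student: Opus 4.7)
The plan is to assemble the results of this subsection into a direct parametrization. First I would fix a basepoint: by Constructions \ref{cons:twist1}, \ref{cons:twist2}, and \ref{cons:twist3}, there exists at least one twisted composition of the form $(L, E, \Tr(\hat\lambda x\sigma(x)), \twistedcomp_{\hat\Lambda})$ whose associated spin group contains $T_{(E,\sigma)}$, and this furnishes a basepoint corresponding to the identity element of $M(k)$.

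Next I would verify exhaustiveness. Let $(L, E, Q', \twistedcomp')$ be any twisted composition over $E$ whose spin group contains $T_{(E,\sigma)}$. The classification of $T_{(E,\sigma)}$-invariant quadratic forms on $E$ over $L$ forces $Q' = \Tr_{E^\sigma/L}(e_1 \hat\lambda\, x\sigma(x))$ for some $e_1 \in (E^\sigma)^\times$, while the structural lemma characterizing $T_{(E,\sigma)}$-equivariant quadratic maps $E \to E$ compatible with the $L$-action forces $\twistedcomp' = \twistedcomp_{e_2 \hat\Lambda}$ for some $e_2 \in \tilde{E}^\times$. Hence every candidate twisted composition is obtained from the basepoint by the deformation $(\hat\lambda, \alpha^{-1}(\hat\lambda), \hat\Lambda) \mapsto (e_1 \hat\lambda, \alpha^{-1}(e_1\hat\lambda), e_2 \hat\Lambda)$ considered in the final lemma before the corollary.

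That same lemma asserts precisely that the deformed datum defines a twisted composition if and only if $\pi_3(e_1, e_2) = 1$, which is the defining condition for $(e_1, e_2) \in M(k)$; combining this with the previous paragraph yields the claimed bijection. The main obstacle I anticipate is the exhaustiveness step: one must carefully separate the genuine deformation parameters $(e_1, e_2)$ from the auxiliary choices made to identify the spin representation with $E^\Phi$ in Construction \ref{cons:twist1}, and verify that all such rescalings are already absorbed by the action of $\tilde{E}^\times$ on $\Lambda$ so that no additional equivalence relation need be imposed on $M(k)$.
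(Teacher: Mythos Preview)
Your proposal is correct and matches the paper's approach; in fact the paper states this corollary without proof, treating it as immediate from the preceding lemma (the deformed datum is a twisted composition iff $\pi_3(e_1,e_2)=1$) together with the earlier lemma that every admissible quadratic map $E\to E$ is of the form $\twistedcomp_\Lambda$. Your write-up is more careful than the paper in making the basepoint and exhaustiveness explicit, and the concern you flag at the end about auxiliary identification choices is a genuine loose end that the paper itself does not resolve (see the remark immediately following the corollary).
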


\begin{rmk}
It would be interesting to have a more complete description of $M(k)$ as well as information about the equivalences relation on it.
In particular it would also be interesting to understand how elements of $M(k)$ change the isomorphism classes of the associated twisted compositions. We shall not give a complete description here.

For local and global fields, the very restrictive classification of twisted compositions makes this a somewhat less interesting question, in particular as $H^1(\Spin)$ typically vanish, isomorphism classes are more easily studied in this case (see Example \ref{ex:twistedsig}).

More generally Proposition \ref{prop:CENTERONTWISTED} describes how the cohomology of the center acts to effect isomorphism classes. Aside from the action of the center, the isomorphism classes of twisted compositions should primarily be determined by the datum $(L,V,Q)$.
\end{rmk}

\subsubsection{Tori for Twisted Compositions}

The following is an immediate corollary of Theorem \ref{thm:bigone} together with the construction of trialitarian algebras from twisted compositions (see \cite[36.19]{book_of_involutions}). 
\begin{cor}\label{cor:twistedtoricase}
Let $(L,V,Q,\twistedcomp)$ be a twisted composition and let $T \subset \Spin_{(L,V,Q,\twistedcomp)}$ be a maximal torus.
Then 
\[ T \injects \Spin_{(L,V,Q,\twistedcomp)} \injects \Res_{L/k}(\SO_Q) \]
and the centralizer of the image of $T$ is a maximal torus in $ \Res_{L/k}(\SO_Q)$ and thus
\[  T \injects  \Res_{L/k}(T_{E,\sigma}) \] for $(E,\sigma)$ an \'etale algebra with involution over $L$.
\begin{itemize}
\item The algebra $E$ has the same discriminant over $L$ as $L$ does over $k$.

\item There exists an isomorphism $\beta : (E\otimes_L \Delta_L, \sigma\otimes_L 1) \simeq (E^\Phi,\sigma)$

\item The torus $T$ is:
\[ T_{E,\sigma,\beta} (R) = \{ x \in (E\otimes_k R)^\times \mid  x\sigma(x) = 1,\; \text{if }\beta(x) = (x_E,x_{E^\Phi}) \text{ then } \Psi(x_{E^\Phi}) = x_E \}.  \]
\end{itemize}
The rational conjugacy classes of $T_{E,\sigma,\beta}$ in $\Spin_{(L,V,Q,\twistedcomp)}$ are classified by:
\begin{itemize}
\item The rational conjugacy class of the torus $T_{E,\sigma}$ in $\SO_{(V,Q)}$.
\item The rational conjugacy class of the torus $T_{E^\Phi,\sigma}$ in $\Spin_{(V,Q)}$.
Note that the chosen torus must have image through the natural map the the chosen torus in $\SO_{(V,Q)}$.
\end{itemize}
\end{cor}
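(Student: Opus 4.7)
The plan is to reduce directly to Theorem \ref{thm:bigone} by way of the trialitarian algebra associated to $(L,V,Q,\twistedcomp)$. By Example \ref{ex:trialg}, setting $(D,\tau) = (\End_L(V), \Ad_Q)$ produces a trialitarian algebra $(L,D,\tau,\alpha)$ whose spin group agrees, by construction, with $\Spin_{(L,V,Q,\twistedcomp)}$. Thus a maximal torus $T$ of the latter is automatically a maximal torus of $\Spin_{(L,D,\tau,\alpha)}$, and every conclusion we need follows from Theorem \ref{thm:bigone} once the data is translated.

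Under this identification $\SO_{(D,\tau)} = \SO_Q$, so the \'etale algebra $(E,\sigma)$ supplied by Theorem \ref{thm:bigone} is precisely one associated, via Proposition \ref{prop:prop_struct}, to a maximal torus of $\SO_Q$ over $L$; this accounts for the inclusion $T \injects \Res_{L/k}(T_{E,\sigma})$ and identifies the centralizer in $\Res_{L/k}(\SO_Q)$ as a maximal torus. The first bulleted condition then follows by applying Theorem \ref{thm:invariants_upgraded}(2) over $L$ to $\tau = \Ad_Q$ (here $2n=8$, so $(-1)^n=1$), which gives $\delta_{E/L} = D(Q)$, and invoking Corollary \ref{cor:trivinvars}, which identifies $D(Q)$ with $\delta_{L/k}$.

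The main piece of bookkeeping, and the only step that is not wholly mechanical, is reinterpreting the $\beta$ of Theorem \ref{thm:bigone} -- an isomorphism $(E,\sigma)\otimes_k L \simeq (E \times E^\Phi, \sigma \times \sigma)$ -- as the skew isomorphism $E \otimes_L \Delta_L \simeq E^\Phi$ stated in the corollary. The point is that $E$ is already an $L$-algebra, so the discriminant match from the first bullet yields $L \otimes_k L \simeq L \oplus (L \otimes_k \Delta_L)$ and hence $E \otimes_k L \simeq E \oplus (E \otimes_L \Delta_L)$; the projection of $\beta$ onto the diagonal $E$ factor is the identity by construction, so the nontrivial content of $\beta$ is exactly the required skew isomorphism. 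With this translation, the explicit formula for $T_{E,\sigma,\beta}$ in Theorem \ref{thm:bigone} becomes the one in the corollary, and the classification of rational conjugacy classes via tori in $\SO_Q$ together with their covers in $\Spin_Q$ carries over verbatim. The compatibility condition from Theorem \ref{thm:bigone} -- that $\beta$ induce the fixed $\alpha$ rather than a conjugate -- is automatic here, since $\alpha$ is the specific map built from $(L,V,Q,\twistedcomp)$ in Example \ref{ex:trialg}.
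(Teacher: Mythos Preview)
Your proposal is correct and follows exactly the approach the paper intends: the paper states this corollary as ``an immediate corollary of Theorem \ref{thm:bigone} together with the construction of trialitarian algebras from twisted compositions,'' and you have simply written out that reduction in detail, supplying the translations (via Corollary \ref{cor:trivinvars} for the discriminant and the decomposition $E\otimes_k L \simeq E \oplus (E\otimes_L \Delta_L)$ for $\beta$) that the paper leaves implicit. One small notational point: Theorem \ref{thm:invariants_upgraded} is bulleted rather than numbered, so your reference to ``(2)'' should point to its first bullet.
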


\begin{rmk}
As with Theorem \ref{thm:bigone} we point out that the
criterion under which  $(E,\sigma)$ is associated to a torus in $\SO_{(D,\tau)}$ are discussed in Section \ref{subsec:tori-clasic}, specifically Theorem \ref{thm:invariants_upgraded} or in the case of global fields Theorem \ref{thm:the_result_upgraded}.
The rational conjugacy classes of the torus $T_{E,\sigma}$ in $\SO_{(D,\tau)}$ are characterized in Theorem \ref{thm:lambdafortoriorth}.
The rational conjugacy classes of tori $T_{E^\Phi,\sigma}$ in $\Spin_{(D,\tau)}$  covering a particular torus are characterized in Theorem \ref{thm:CONGTORISPINCOVERS}.
\end{rmk}

\section*{Acknowledgements}

I would like to thank Andrei Rapinchuk for his encouragement to complete this project.

{}\ifx\XMetaCompile\undefined

\providecommand{\MR}[1]{}
\providecommand{\bysame}{\leavevmode\hbox to3em{\hrulefill}\thinspace}
\providecommand{\MR}{\relax\ifhmode\unskip\space\fi MR }
\providecommand{\MRhref}[2]{  \href{http://www.ams.org/mathscinet-getitem?mr=#1}{#2}
}
\providecommand{\href}[2]{#2}


\begin{thebibliography}{KMRT98}

\bibitem[BCKM03]{Brus_OrthTori}
Rosali Brusamarello, Pascale Chuard-Koulmann, and Jorge Morales,
  \emph{Orthogonal groups containing a given maximal torus}, J. Algebra
  \textbf{266} (2003), no.~1, 87--101. \MR{MR1994530 (2004k:11050)}

\bibitem[BF14]{BayerTori}
Eva Bayer-Fluckiger, \emph{Embeddings of maximal tori in orthogonal groups},
  Annales de l’institut Fourier \textbf{64} (2014), no.~1, 113--125 (eng).

\bibitem[BLP14]{BayerExpositoryToriClassical}
E.~{Bayer-Fluckiger}, T.-Y. {Lee}, and R.~{Parimala}, \emph{{Embeddings of
  maximal tori in classical groups and explicit Brauer-Manin obstruction}},
  ArXiv e-prints (2014).

\bibitem[CKM02]{CKMFrobeniusAlgebras}
Pascale Chuard-Koulmann and Jorge Morales, \emph{Extending involutions on
  {F}robenius algebras}, Manuscripta Math. \textbf{108} (2002), no.~4,
  439--451. \MR{1923532 (2003i:16028)}

\bibitem[Fio12]{Fiori1}
Andrew Fiori, \emph{Characterization of special points of orthogonal symmetric
  spaces}, Journal of Algebra \textbf{372} (2012), 397 -- 419.

\bibitem[KMRT98]{book_of_involutions}
Max-Albert Knus, Alexander Merkurjev, Markus Rost, and Jean-Pierre Tignol,
  \emph{The book of involutions}, American Mathematical Society Colloquium
  Publications, vol.~44, American Mathematical Society, Providence, RI, 1998,
  With a preface in French by J. Tits. \MR{1632779 (2000a:16031)}

\bibitem[PR94]{PlatinovRapinchuk}
Vladimir Platonov and Andrei Rapinchuk, \emph{Algebraic groups and number
  theory}, Pure and Applied Mathematics, vol. 139, Academic Press Inc., Boston,
  MA, 1994, Translated from the 1991 Russian original by Rachel Rowen.
  \MR{1278263 (95b:11039)}

\bibitem[PR10]{PR_localglobal}
Gopal Prasad and Andrei~S. Rapinchuk, \emph{Local-global principles for
  embedding of fields with involution into simple algebras with involution},
  Comment. Math. Helv. \textbf{85} (2010), no.~3, 583--645. \MR{2653693
  (2011i:11053)}

\bibitem[Ree11]{ReederElliptic}
Mark Reeder, \emph{Elliptic centralizers in {W}eyl groups and their coinvariant
  representations}, Represent. Theory \textbf{15} (2011), 63--111. \MR{2765477
  (2012b:20108)}

\bibitem[Ser02]{Serre_cohom}
Jean-Pierre Serre, \emph{Galois cohomology}, english ed., Springer Monographs
  in Mathematics, Springer-Verlag, Berlin, 2002, Translated from the French by
  Patrick Ion and revised by the author. \MR{1867431 (2002i:12004)}

\bibitem[Wal01]{WalkerThesisTori}
Uroyoan Ramon-Emeterio Walker, \emph{On k-conjugacy classes of maximal tori in
  semi-simple algebraic groups}, ProQuest LLC, Ann Arbor, MI, 2001, Thesis
  (Ph.D.)--Louisiana State University and Agricultural \& Mechanical College.
  \MR{2702363}

\end{thebibliography}
\end{document}
{}\fi